\def\blfootnote{\xdef\@thefnmark{}\@footnotetext}
\newtheorem{thm}{Theorem}[section]
\newtheorem{cor}[thm]{Corollary}
\newtheorem{lem}[thm]{Lemma}
\newtheorem{prop}[thm]{Proposition}
\newtheorem{ques}[thm]{Question}
\newtheorem{conj}[thm]{Conjecture}
\theoremstyle{definition}
\newtheorem{defn}[thm]{Definition}
\theoremstyle{remark}
\newtheorem{rem}[thm]{Remark}
\newtheorem{ex}[thm]{Example}
\newtheorem{conv}[thm]{Convention}
\newfont{\eufm}{eufm10}
\newcommand{\ah}{{\mathcal{AH}^0_k}}
\newcommand{\ahc}{\overline{\mathcal{AH}}^0_k}
\newcommand{\G}{\Gamma (G, X\sqcup H)}
\newcommand{\Hl}{\{ H_\lambda \} _{\lambda \in \Lambda } }
\newcommand{\e}{\varepsilon }
\renewcommand{\phi}{\varphi}
\newcommand{\lab}{{\bf Lab}}
\renewcommand{\ll }{\langle\hspace{-.7mm}\langle }
\newcommand{\rr }{\rangle\hspace{-.7mm}\rangle }
\newcommand{\h}{\hookrightarrow _h }
\renewcommand{\d }{{\rm d}}
\newcommand{\dl }{\widehat{\rm d}}
\newcommand{\Lab }{{\bf Lab}}
\newcommand{\td}{{\rm td}}
\newcommand{\Aff}{\mbox{\it Aff}}
\begin{document}

\title{Transitivity degrees of countable groups and acylindrical hyperbolicity}
\author{M. Hull, D. Osin}

\date{}
\maketitle

\begin{abstract}
We prove that every countable acylindrically hyperbolic group admits a highly transitive action with finite kernel. This theorem uniformly generalizes many previously known results and allows us to answer a question of Garion and Glassner on the existence of highly transitive faithful actions of mapping class groups. It also implies that in various geometric and algebraic settings, the transitivity degree of an infinite group can only take two values, namely $1$ and $\infty$. Here by \emph{transitivity degree} of a group we mean the supremum of transitivity degrees of its faithful permutation representations. Further, for any countable group $G$ admitting a highly transitive faithful action, we prove the following dichotomy: Either $G$ contains a normal subgroup isomorphic to the infinite alternating group or $G$ resembles a free product from the model theoretic point of view. We apply this theorem to obtain new results about universal theory and mixed identities of acylindrically hyperbolic groups. Finally, we discuss some open problems.
\end{abstract}

\tableofcontents


\section{Introduction}

Recall that an action of a group $G$ on a set $\Omega$ is \emph{$k$-transitive} if $|\Omega |\ge k$ and for any two $k$-tuples of distinct elements of $\Omega$, $(a_1,..., a_k)$ and $(b_1,...,b_k)$, there exists $g\in G$ such that $ga_i=b_i$ for $i=1,...,k$. The \emph{transitivity degree} of a countable group $G$, denoted $\td(G)$, is the supremum of all $k\in \mathbb N$ such that $G$ admits a $k$-transitive faithful action. For finite groups, this notion is classical and fairly well understood. It is easy to see that $\td (S_n)=n$, $\td (A_n)=n-2$, and it is a consequence of the classification of finite simple groups that any finite group $G$ other than $S_n$ or $A_n$ has $\td (G)\le 5$. Moreover, if $G$ is not $S_n$, $A_n$, or one of the Mathieu groups $M_{11}$, $M_{12}$, $M_{23}$, $M_{24}$, then $td(G)\leq 3$ (see \cite{DixMor}).

For infinite groups, however, very little is known. For example, we do not know the answer to the following basic question: Does there exist an infinite countable group of transitivity degree $k$ for every $k\in \mathbb N$? There are examples for $k=1,2,3$, and $\infty$, but the problem seems open even for $k=4$. There is also a new phenomenon, which does not occur in the finite world: highly transitive actions. Recall that an action of a group is \emph{highly transitive} if it is $k$-transitive for all $k\in \mathbb N$. We say that a group is \emph{highly transitive} if it admits a highly transitive faithful action; it is easy to see that a countably infinite group is highly transitive if and only if it embeds as a dense subgroup in the infinite symmetric group $Sym (\mathbb N)$ endowed with the topology of pointwise convergence.  Obviously $\td (G)=\infty$ whenever $G$ is highly transitive, but we do not know if the converse is true. Yet another interesting question is whether there exists a reasonable classification of highly transitive groups (or, more generally, groups of high transitivity degree). The main goal of this paper is to address these questions in certain geometric and algebraic settings.

We begin by discussing known examples of highly transitive groups. Trivial examples of such groups are $FSym(\mathbb N)$ (the group of all finitary permutations of $\mathbb N$) and the infinite alternating group $Alt(\mathbb N)$ (the group of all finitary permutations that act as even permutations on their supports). Clearly every subgroup of $Sym (\mathbb N)$ that contains $Alt(\mathbb N)$ is also highly transitive. A fairly elementary argument allows one to construct finitely generated groups of this sort; we record the following.

\begin{prop}[Prop. \ref{ht-A}]\label{ext}
For every finitely generated infinite group $Q$, there exists a finitely generated group $G$ such that $FSym(\mathbb N)\lhd G \le Sym(\mathbb N)$ and $G/FSym(\mathbb N) \cong Q$. In particular, $G$ is highly transitive.
\end{prop}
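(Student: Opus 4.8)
The plan is to realize $G$ as an explicit finitely generated subgroup of $Sym(\mathbb N)$ that splits as the semidirect product $Q \ltimes FSym(\mathbb N)$. Since $Q$ is finitely generated and infinite it is countably infinite, so I would fix a bijection $Q \to \mathbb N$ and identify $Sym(Q)$ with $Sym(\mathbb N)$ and $FSym(Q)$ with $FSym(\mathbb N)$ once and for all. Let $\rho \colon Q \hookrightarrow Sym(Q)$ be the left regular representation, $\rho(g)\colon x \mapsto gx$, write $e$ for the identity of $Q$, fix a finite generating set $\{s_1,\dots,s_n\}$ of $Q$, and for each $i$ let $\tau_i = (e\, s_i) \in FSym(Q)$ be the transposition swapping $e$ and $s_i$. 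I would then set
\[
G \;=\; \langle\, \rho(s_1),\dots,\rho(s_n),\ \tau_1,\dots,\tau_n \,\rangle \;\le\; Sym(Q),
\]
a subgroup generated by $2n$ elements, hence finitely generated.

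The main work is to prove $FSym(Q) \le G$. The key observation is the conjugation formula $\rho(g)\,\tau_i\,\rho(g)^{-1} = (g\, gs_i)$, so that $G$ contains the transposition associated with every edge of the Cayley graph $\mathrm{Cay}(Q,\{s_1,\dots,s_n\})$. Since $\{s_1,\dots,s_n\}$ generates $Q$, this graph is connected, and I would finish this step by invoking (and, if a referee wants it, proving by a one-line induction on path length based on the identity $(a\, b)(b\, c)(a\, b) = (a\, c)$) the elementary fact that the transpositions along the edges of a connected graph generate the finitary symmetric group on its vertex set. This yields $FSym(Q) \le G$.

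From there everything is formal bookkeeping with the extension. Since $FSym(Q)\lhd Sym(Q)$ we get $FSym(Q)\lhd G$; since $G$ contains both $\rho(Q)$ and $FSym(Q)$ and $\rho(Q)$ normalizes $FSym(Q)$, we get $G = \rho(Q)\cdot FSym(Q)$; and since $Q$ is infinite, any nontrivial left translation $\rho(g)$ acts on $Q$ without fixed points and so is not finitary, giving $\rho(Q)\cap FSym(Q) = \{e\}$. Hence $G/FSym(Q)\cong \rho(Q)\cong Q$, as desired. Finally, high transitivity of $G$ is immediate: $G\le Sym(\mathbb N)$ acts faithfully on $\mathbb N$, and it contains $FSym(\mathbb N)$, which is $k$-transitive for every $k\in\mathbb N$, so $G$ is $k$-transitive for every $k$.

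I do not expect a genuine obstacle: the only ingredient that is not routine semidirect-product bookkeeping is the graph-theoretic lemma used in the second step, and that is entirely elementary. The one point to keep an eye on is that infiniteness of $Q$ is exactly what forces $\rho(Q)\cap FSym(Q)$ to be trivial, which is what guarantees the quotient is $Q$ itself rather than a proper quotient of it.
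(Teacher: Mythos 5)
Your proof is correct and follows essentially the same route as the paper's: the same generating set $\{\rho(s_i),\tau_i\}$ inside $Sym(Q)$, the same conjugation trick to obtain all Cayley-graph edge transpositions and hence $FSym(Q)$, and the same semidirect-product bookkeeping $G=FSym(Q)\rtimes\rho(Q)$ with the trivial-intersection argument resting on $Q$ being infinite. Your handling of the two small verifications (generating $FSym$ from edge transpositions via $(a\,b)(b\,c)(a\,b)=(a\,c)$, and the fixed-point-free argument for $\rho(Q)\cap FSym(Q)=\{1\}$) is if anything slightly cleaner than the paper's phrasing, but it is the same proof.
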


This proposition can be used, for instance, to construct finitely generated highly transitive torsion groups by taking $Q$ to be finitely generated and torsion. Another interesting particular example of this sort is the family of Houghton groups $H_n$, $n\ge 2$, which correspond to the case $Q=\mathbb Z^{n-1}$. These groups are elementary amenable and finitely presented for $n\ge 3$ (see Example \ref{Hough}).

On the other hand, there are highly transitive groups of a completely different nature. The first such an example is $F_n$, the free group of rank $n\ge 2$. Highly transitive faithful actions of $F_n$ were constructed by McDonough \cite{McD}, and it was shown by Dixon \cite{Dix} that in some sense most finitely generated subgroups of $Sym(\mathbb N)$ are free and highly transitive. Other known examples of highly transitive groups include fundamental groups of closed surfaces of genus at least $2$ \cite{Kit} and, more generally, non-elementary hyperbolic groups without non-trivial finite normal subgroups \cite{Cha}, all free products of non-trivial groups except $\mathbb Z_2\ast \mathbb Z_2$ \cite{GM, Gun, Hic, MS} and many other groups acting on trees \cite{FMS}. Garion and Glassner \cite{GG} used an interesting approach employing Tarski Monster groups constructed by Olshanskii \cite{Ols80} to show that $Out(F_n)$ is highly transitive for $n\ge 4$. Motivated by similarity between $Out(F_n)$ and mapping class groups, they ask which mapping class groups are highly transitive. Another question from \cite{GG} is whether $Out(F_3)$ is highly transitive.

The first main result of our paper is a theorem which simultaneously generalizes most results mentioned in the previous paragraph and allows us to answer the questions of Garion and Glassner. Recall that an isometric action of a group $G$ on a metric space $(X, d)$ is \emph{acylindrical} if for all $\e>0$, there exist $R>0, N>0$ such that for all $x, y\in X$ with $d(x, y)\geq R$, there are at most $N$ elements $g\in G$ satisfying $d(x, gx)\leq \e$ and $d(y, gy)\leq \e$. For example, it is easy to show that the action of a fundamental group of a graph of groups with finite edge groups on the associated Bass-Serre tree is acylindrical. A group is called \emph{acylindrically hyperbolic} if it is not virtually cyclic and admits an acylindrical action on a hyperbolic metric space with unbounded orbits \cite{Osi13}. The class of acylindrically hyperbolic groups includes many examples of interest; for an extensive list we refer to \cite{DGO,MO,Osi13,Osi14}.

Every acylindrically hyperbolic group $G$ has a maximal normal finite subgroup called the \emph{finite radical} of $G$ and denoted $K(G)$ \cite{DGO}.

\begin{thm}\label{main}
Every countable acylindrically hyperbolic group admits a highly transitive action with finite kernel. In particular, every countable acylindrically hyperbolic group with trivial finite radical is highly transitive.
\end{thm}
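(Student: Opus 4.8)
The plan is to build the highly transitive action by constructing a suitable permutation representation on $\Omega = \mathbb{N}$ as a limit of finite partial actions, using the acylindrical hyperbolicity of $G$ to "fill in" the required transitivity constraints one at a time while keeping the kernel under control. The starting point is the standard fact (see \cite{DGO,Osi13}) that a countable acylindrically hyperbolic group $G$ contains a suitable collection of hyperbolically embedded subgroups; in particular, one can find an element $g\in G$ and, more usefully, a free subgroup $F\le G$ (of rank $2$, say) that is hyperbolically embedded after passing to the product with the finite radical, i.e.\ $F\times K(G)\h G$. The hyperbolically embedded structure gives a strong separation/small cancellation property: finite subsets of $G$ behave, with respect to the relevant word metric, like they do in a free product, so one has a lot of room to impose constraints.

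\smallskip

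First I would fix an enumeration of the countable set of "transitivity tasks": each task is a pair of $k$-tuples of distinct points of $\Omega$ together with a demand that some group element carry the first tuple to the second; simultaneously one fixes an enumeration of $G\setminus K(G)$ to record, for each such $t$, a "faithfulness task" demanding that $t$ move some point of $\Omega$ (this is what forces the kernel to land inside $K(G)$, and $K(G)$ being finite normal will automatically be the whole kernel since a normal subgroup acting trivially on a transitive $G$-set intersected with... is controlled). Then I would construct the action as an increasing union of finitely supported partial homomorphisms: at stage $n$, having a finite partial action, I use the hyperbolically embedded free subgroup to find an element $w$ (a suitable word in the generators of $F$) that is "generic" — it avoids all the finitely many previously committed relations — and extend the partial action so that $w$ realizes the $n$-th transitivity task. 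The key algebraic input making this possible is Proposition~\ref{ext}'s circle of ideas together with the small-cancellation-type genericity afforded by $F\times K(G)\h G$: one can always choose the realizing elements to be pairwise "independent" so that no unwanted identity among them is forced, hence no new element is added to the kernel.

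\smallskip

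The main obstacle, and the place where acylindrical hyperbolicity does the real work, is precisely the \emph{faithfulness with finite kernel} clause: it is easy to make \emph{some} highly transitive action (e.g.\ via a free quotient), but ensuring that the kernel is exactly contained in the finite radical $K(G)$ requires that the construction never be forced to identify two distinct elements of $G/K(G)$. This is where I expect to invoke the strongest consequences of hyperbolic embeddedness — the existence of many elements whose "images" can be prescribed freely, analogous to the way one produces common quotients or Cohen–Lyndon type decompositions in \cite{DGO} — to guarantee that at every finite stage the partial representation extends without collapsing $G/K(G)$. A secondary technical point is dealing with $K(G)$ itself: since $K(G)$ is finite and normal, after establishing that the kernel $N$ of the constructed action satisfies $N\le K(G)$, one gets $N=K(G)\cap(\text{kernel})$; but in fact any such action of $G$ restricted to $K(G)$ can have nontrivial kernel, which is exactly why the theorem only claims \emph{finite} kernel rather than faithful, and why the "in particular" statement needs $K(G)=1$. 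I would organize the write-up so that the genericity lemma (every finite partial action extends realizing the next task with independent elements) is isolated as the crux, with the diagonal union and the kernel computation being then routine.
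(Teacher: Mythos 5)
There is a genuine gap, and it sits exactly where you locate "the crux": your proposed genericity lemma --- that every finite partial action extends so as to realize the next transitivity task without collapsing $G/K(G)$ --- is stated but never given a mechanism, and for a group with relations it is not something one can extract from hyperbolic embeddedness in the soft way you describe. Extending finite partial permutation data to an action of all of $G$ is precisely what works for free groups (McDonough, Dixon) and for free products or groups acting on trees, where one has genuine freeness or a tree structure to glue along; an acylindrically hyperbolic group has global relations, so prescribing where finitely many points go under finitely many elements need not be consistent with any homomorphism $G\to Sym(\mathbb N)$, and "finite subsets behave like in a free product" is far too weak to repair this. (Also, Proposition~\ref{ext} is irrelevant here: it constructs groups containing $FSym(\mathbb N)$ and plays no role in the proof of Theorem~\ref{main}; and the fact $F\times K(G)\h G$ is never actually used in your argument. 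Your kernel bookkeeping --- "$N=K(G)\cap(\text{kernel})$" --- is also garbled, though the correct reduction is easy: pass to $G/K(G)$, which is acylindrically hyperbolic with trivial finite radical by Lemma~\ref{finrad}, and prove faithfulness there.)

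The paper's proof avoids partial actions altogether: it constructs a subgroup $H_\infty=\bigcup_i H_i$ and takes the action on the coset space $G/H_\infty$, so each transitivity task is realized by enlarging the point stabilizer, $H_{i+1}=\langle H_i, b_1^{-1}g_ia_1,\dots,b_k^{-1}g_ia_k\rangle$. The entire difficulty is then to choose $g_i$ so that $H_{i+1}$ stays proper and, more importantly, so that faithfulness survives, and this is done by group-theoretic (not permutation-theoretic) tools: one keeps each $H_i$ \emph{small} (contained in a generating set $X$ with $\Gamma(G,X)$ hyperbolic, acylindrical, non-elementary), passes to $P=G\ast\langle t\rangle$, forms $L=\langle H_i, b_1^{-1}ta_1,\dots,b_k^{-1}ta_k\rangle$, checks $L\cap G=H_i$ together with an undistortion estimate (Lemma~\ref{undisfp}), upgrades $L$ to a small subgroup of $P$ (Proposition~\ref{small}), and then applies the small cancellation quotient theorem of \cite{H13} (Theorem~\ref{SCthm}) to map $P$ back onto $G$ by killing $t^{-1}w$ for a suitable $w$, injectively on a prescribed ball. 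This yields Proposition~\ref{indstep}: there exist $g\in G$ and a small $K\supseteq H_i$ with $ga_jK=b_jK$ and $K$ disjoint from any prescribed finite set $B$. Faithfulness then follows by putting, for each $g_j\in G\setminus\{1\}$, a conjugate $g_j^{u_j}$ into the avoided set $B$ at every stage, which is possible because small subgroups have faithful coset actions when $K(G)=\{1\}$ (Lemma~\ref{faith}). None of these steps --- the coset-space formulation, the smallness invariant carried through the induction, the free-product trick with $t$, or the controlled quotient $G\ast\langle t\rangle\twoheadrightarrow G$ --- has an analogue in your outline, so the proposal as written does not constitute a proof.
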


It is well-known and easy to see that a group of transitivity degree at least $2$ cannot have finite normal subgroups (see Lemma \ref{2-trans} (b)).  Thus the kernel of a highly transitive action of a countable acylindrically hyperbolic group $G$ cannot be smaller than $K(G)$. In particular, the assumption about triviality of finite radical cannot be dropped from the second sentence of the theorem.

Our proof of Theorem \ref{main} is based on the notion of a small subgroup of an acylindrically hyperbolic group introduced in Section 2. This notion seems to be of independent interest. In addition, the proof of Theorem \ref{main} makes use of hyperbolically embedded subgroups introduced in \cite{DGO} and small cancellation theory in acylindrically hyperbolic groups developed in \cite{H13}.

Here we mention just few particular cases of our theorem considered in Corollaries \ref{mcg0}, \ref{3d0}, and \ref{RAAG}--\ref{1-rel}. By $\Sigma_{g,n,p}$  we denote a $p$ times punctured compact orientable surface of genus $g$ with $n$ boundary components.

\begin{cor}\label{maincor-1}
\begin{enumerate}
\item[(a)] Let $G$ be a countable group hyperbolic relative to a collection of proper subgroups. Then $G$ is highly transitive if and only if it is not virtually cyclic and has no non-trivial finite normal subgroups.
\item[(b)] For $g,n,p\in \mathbb N\cup\{0\}$, the mapping class group ${\rm Mod}(\Sigma _{g,n,p})$  is highly transitive if and only if $n=0$, $3g+p\ge 5$, and $(g,p)\ne (2,0)$.
\item[(c)] (cf. \cite[Theorem 1]{GG}) $Out(F_n)$ is highly transitive if and only if $n\ge 3$.
\item[(d)] Let $M$ be a compact irreducible $3$-manifold. Then $\pi_1(M)$  is highly transitive if and only if it is not virtually solvable and $M$ is not Seifert fibered.
\item[(e)] A right angled Artin group is highly transitive if and only if it is non-cyclic and directly indecomposable.
\item[(f)] Every $1$-relator group with at least $3$ generators is highly transitive.
\end{enumerate}
\end{cor}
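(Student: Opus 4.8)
The plan is to deduce all six parts from a single principle: \emph{for a countable group $G$ lying in any of the listed classes, $G$ is highly transitive if and only if $G$ is acylindrically hyperbolic with trivial finite radical.} The ``if'' direction is immediate from Theorem \ref{main}. For ``only if'', Lemma \ref{2-trans}(b) already forces $K(G)=1$, so it remains to show that a highly transitive $G$ in the given class cannot fail to be acylindrically hyperbolic; by the known classification of acylindrical hyperbolicity inside each class this reduces to checking that a short list of ``degenerate'' groups is not highly transitive.

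Three elementary observations about permutation groups handle the degenerate cases. First, a group with nontrivial center has no faithful $2$-transitive action on more than two points: if $z\in Z(G)\setminus\{1\}$ then $z$ is fixed-point-free, and for $\beta:=z\alpha\ne\alpha$, $2$-transitivity gives $g\in G_\alpha$ with $g\beta$ arbitrary, while $g\beta=gz\alpha=zg\alpha=\beta$, so $\Omega=\{\alpha,\beta\}$. Second, a nontrivial direct product $G=A\times B$ has $\td(G)\le 2$: in a faithful $2$-transitive action the normal subgroups $A$ and $B$ are both transitive and centralize one another, which forces each to act regularly, and analyzing the induced action one gets $|\Omega|=2$. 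Third, an infinite group with a finite-index infinite abelian subgroup --- in particular $\mathbb{Z}$ and $D_\infty$ --- is not $2$-transitive: such a subgroup may be taken normal, hence transitive, hence regular, so point stabilizers are finite, which is impossible on an infinite set. Together with the trivial fact that a finite group is not highly transitive, these dispose of every degenerate group below.

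One then goes through the classes, quoting the relevant classification of acylindrical hyperbolicity and computing $K(G)$. (a) A group hyperbolic relative to proper subgroups is acylindrically hyperbolic unless virtually cyclic \cite{Osi13}, and a virtually cyclic group with trivial finite radical is $\mathbb{Z}$ or $D_\infty$, excluded above; this is exactly the stated dichotomy. (b) For $n\ge 1$ the boundary Dehn twists generate an infinite central subgroup, and for $(g,p)=(2,0)$ the hyperelliptic involution is central, so these are excluded by the center observation; for $n=0$ and $3g+p<5$ the group is finite, virtually cyclic, or has a nontrivial finite normal subgroup or nontrivial center; in all remaining cases the action of ${\rm Mod}(\Sigma_{g,n,p})$ on the curve complex is acylindrical and non-elementary, so ${\rm Mod}$ is acylindrically hyperbolic with trivial finite radical. (c) $Out(F_1)$ is finite and $Out(F_2)\cong GL_2(\mathbb{Z})$ has center $\{\pm I\}$; for $n\ge 3$, $Out(F_n)$ is acylindrically hyperbolic with trivial center and trivial finite radical. (d) By geometrization an irreducible compact $M$ has $\pi_1(M)$ either virtually solvable, or the fundamental group of a Seifert fibered space (then $\pi_1$ is finite or carries an infinite cyclic normal subgroup that is central in a finite-index subgroup), or hyperbolic, or a group with nontrivial JSJ splitting; in the last two cases $\pi_1(M)$ is acylindrically hyperbolic \cite{Osi13,MO} and torsion-free as the fundamental group of an aspherical manifold, while the first two cases fall to the observations above together with the fact that $GL_2(\mathbb{Z})$ is not transitive on $\mathbb{Z}^2\setminus\{0\}$ (needed for the Sol-type case). (e) A right angled Artin group is $\mathbb{Z}$ or trivial, or a nontrivial direct product when its defining graph is a join (excluded by the direct-product observation), or non-cyclic and directly indecomposable, hence acylindrically hyperbolic and torsion-free. (f) Every $1$-relator group with at least $3$ generators is acylindrically hyperbolic; if it has torsion it is hyperbolic and the cyclic relator subgroup is malnormal, so $K(G)=1$, and otherwise $G$ is torsion-free and $K(G)=1$ trivially.

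The permutation-theoretic lemmas are routine; the real work --- and the main obstacle --- is twofold. First, one must pin down, with correct references, the classification of acylindrical hyperbolicity in each family; for mapping class groups, $Out(F_n)$, and $3$-manifold groups this rests on substantial geometric group theory (e.g.\ \cite{MO} and the references collected in \cite{DGO}). Second, one must correctly identify the finite radical and the sporadic central elements (the hyperelliptic involution, $-I\in GL_2(\mathbb{Z})$, the boundary twists), so that the ``only if'' directions are not vacuous. The $1$-relator case is the most delicate, since when the relator involves all the generators one has to produce an acylindrical action by hand --- for instance via a Magnus--Karrass--Solitar splitting combined with the acylindrical hyperbolicity criterion for groups acting on trees --- rather than invoke a ready-made classification.
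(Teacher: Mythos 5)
Your overall strategy (reduce each class to ``acylindrically hyperbolic with trivial finite radical'', apply Theorem \ref{main} for the ``if'' direction, and kill the degenerate groups by permutation-group arguments) is the same as the paper's. But your second ``elementary observation'' is wrong as stated and justified, and it is exactly the step you need for part (e). You claim that for a non-trivial direct product $G=A\times B$, in any faithful $2$-transitive action both factors are transitive, hence regular, ``and analyzing the induced action one gets $|\Omega|=2$.'' The regularity deduction is fine, but the conclusion is false: the paper's own Example \ref{2cc} exhibits $G=C\times C$ (with $C$ a group having two conjugacy classes) acting faithfully and $2$-transitively on the infinite set $C$, with both factors acting regularly. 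So nothing at the level of $2$-transitivity forces $|\Omega|=2$, and indeed such products have $\td(G)=2$. What is true, and what you actually need, is that such a $G$ admits no faithful $3$-transitive action; but that requires the structure theory of normal subgroups of multiply transitive groups --- the paper uses Cameron's theorem (Lemma \ref{Cameron}) together with Lemma \ref{min} in the proof of Corollary \ref{GP}, or alternatively residual finiteness of RAAGs plus Corollary \ref{p-like} --- and does not follow from the regularity argument you sketch. As written, the exclusion of decomposable right angled Artin groups in (e) has a genuine hole.

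Two smaller points. In (d), your ``non-trivial center'' observation does not apply directly to the Seifert case: the infinite cyclic fiber subgroup is normal in $\pi_1(M)$ but need only be central in a finite-index subgroup, so the center of $G$ itself can be trivial; the paper instead notes that $G$ is not ICC (the conjugacy class of $z\in Z$ lies in $\{z,z^{-1}\}$) and applies Lemma \ref{2-trans}(b), and it disposes of the virtually solvable case uniformly via residual finiteness (Corollary \ref{sol}) rather than your ad hoc $GL_2(\mathbb Z)$ argument, which as presented does not cover all virtually solvable $3$-manifold groups. Finally, in (b) and (c) the triviality of the finite radical of ${\rm Mod}(\Sigma_{g,p})$ for $3g+p\ge 5$, $(g,p)\ne(2,0)$, and of $Out(F_n)$ for $n\ge 3$ is not a formality: the paper proves the former in Lemma \ref{finsub} (via Dehn twists and faithfulness of the action on the curve complex) and derives the latter from $C^\ast$-simplicity; you assert these, acknowledging them as ``the real work,'' so that part is a deferral rather than an error, but it should be either proved or referenced.
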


Recall that a free product of two non-trivial groups has no non-trivial finite normal subgroups. Thus part (a) of Corollary \ref{maincor-1} covers the case of free products considered in \cite{GM, Gun, Hic} as well as some results of \cite{FMS}; it also covers the result about hyperbolic groups \cite{Cha}. Parts (b) and (c) answer the questions of Garion and Glassner mentioned above. Our contribution to (c) is the case $n=3$.

Theorem \ref{main} also allows us to show that in various geometric and algebraic settings, transitivity degree of an infinite group can only take two values, namely $1$ and $\infty$, and infinite transitivity degree is equivalent to being highly transitive. More precisely, we consider three conditions for a group $G$:

\begin{enumerate}
\item[(C$_1$)] $\td (G)=\infty$.
\item[(C$_2$)] $G$ is highly transitive.
\item[(C$_3$)] $G$ is acylindrically hyperbolic with trivial finite radical.
\end{enumerate}

In Section 4, we show that an infinite subgroup $G$ of a mapping class group is either acylindrically hyperbolic or satisfies certain algebraic conditions which imply  $\td (G)=1$. A similar result for subgroups of $3$-manifold groups was obtained in \cite{MO}. Combining this with Theorem \ref{main}, we obtain the following.

\begin{cor}[Cor. \ref{mcg1}, Cor. \ref{3d1}]
Let $G$ be an infinite subgroup of ${\rm Mod}(\Sigma _{g,n,p})$ for some $g,n,p \in \mathbb N\cup\{0\}$ or an infinite subgroup of the fundamental group of a compact $3$-manifold. Then $\td(G)\in \{ 1, \infty\}$ and conditions (C$_1$)--(C$_3$) are equivalent.
\end{cor}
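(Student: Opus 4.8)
The plan is to deduce the corollary by combining Theorem \ref{main}, Lemma \ref{2-trans}(b), and the structural dichotomy for infinite subgroups of mapping class groups established in Section 4 (together with its analogue for infinite subgroups of compact $3$-manifold groups obtained in \cite{MO}): such a subgroup $G$ is \emph{either} acylindrically hyperbolic \emph{or} satisfies certain algebraic conditions forcing $\td(G)=1$. Note first that $G$ is automatically countable, since mapping class groups of surfaces of finite type and fundamental groups of compact $3$-manifolds are finitely generated; hence Theorem \ref{main} is applicable whenever such a $G$ turns out to be acylindrically hyperbolic.

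Two of the three implications hold for any countable group, with no hypothesis on $G$: (C$_3$) $\Rightarrow$ (C$_2$) is exactly the second assertion of Theorem \ref{main}, and (C$_2$) $\Rightarrow$ (C$_1$) is immediate, since a highly transitive faithful action is $k$-transitive for all $k$. So it remains to prove (C$_1$) $\Rightarrow$ (C$_3$); in fact I would prove the formally stronger statement that $\td(G)\ge 2$ implies (C$_3$). Once this is done, Theorem \ref{main} upgrades (C$_3$) to (C$_2$) and hence to $\td(G)=\infty$, so $\td(G)$ can only be $1$ or $\infty$ (it is always at least $1$, for instance via the regular action), and the cycle (C$_1$) $\Rightarrow$ (C$_3$) $\Rightarrow$ (C$_2$) $\Rightarrow$ (C$_1$) closes.

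Thus suppose $\td(G)\ge 2$. By Lemma \ref{2-trans}(b), $G$ has no non-trivial finite normal subgroup. Moreover $G$ cannot satisfy the algebraic conditions appearing in the dichotomy, as those force $\td(G)=1$, contradicting $\td(G)\ge 2$; so the dichotomy forces $G$ to be acylindrically hyperbolic. Its finite radical $K(G)$ is then a finite normal subgroup of $G$, hence trivial, and therefore (C$_3$) holds. The only non-formal ingredient here is the dichotomy of Section 4 and its $3$-manifold counterpart from \cite{MO}; granting those, the corollary is just an assembly of already-established facts, and I do not expect any further obstacle, the one thing to double-check being that the algebraic conditions in those statements are manifestly incompatible with $2$-transitivity — which is automatic, since they were set up precisely to yield $\td(G)=1$.
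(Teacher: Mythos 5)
Your proposal is correct and takes essentially the same route as the paper: the proofs of Corollaries \ref{mcg1} and \ref{3d0} are exactly this assembly, namely Lemma \ref{pl-or-ah} (resp.\ the trichotomy from \cite{MO}) combined with Theorem \ref{main} and Lemma \ref{2-trans}(b), where the step ``the algebraic alternatives force $\td(G)=1$'' is carried out via Lemma \ref{rf} and Corollary \ref{p-like} in the mapping class group case and via Corollary \ref{sol} in the virtually polycyclic case. Note only that the mapping class group dichotomy you invoke is proved in Section 4 under an ICC hypothesis, which your assumption $\td(G)\ge 2$ supplies through Lemma \ref{2-trans}(b), just as in the paper.
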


A similar result can also be proved in some algebraic settings, e.g., for subgroups of finite graph products. Let $\Gamma $ be a graph (without loops or multiple edges) with vertex set $V$ and let  $\{G_v\}_{v\in V}$  be a family of groups indexed by vertices of $\Gamma$. The \emph{graph product} of $\{G_v\}_{v\in V}$ with respect to $\Gamma $, denoted  $\Gamma \{G_v\}_{v\in V}$, is the quotient group of the free product $\ast_{v\in V} G_v$ by the relations $[g,h]=1$ for all $g\in G_u$, $h\in G_v$ whenever $u$ and $v$ are adjacent in $\Gamma$. Graph products simultaneously generalize free and direct products of groups. Basic examples are right angled Artin and Coxeter groups, which are graph products of copies of $\mathbb Z$ and $\mathbb Z_2$, respectively. The study of graph products and their subgroups has gained additional importance in view of the recent breakthrough results of Agol, Haglund, Wise, and their co-authors, which show that many groups can be virtually embedded into right angled Artin groups (see \cite{Agol,HW,Wise-qch} and references therein).

\begin{cor}[Cor. \ref{GP}]\label{maincor2}
Let $G$ be a countably infinite subgroup of a finite graph product $\Gamma \{G_v\}_{v\in V}$. Suppose that $G$ is not isomorphic to a subgroup of one of the multiples. Then $\td (G)\in \{ 1,2,\infty\}$  and conditions (C$_1$)--(C$_3$) are equivalent. If, in addition, every $G_v$ is residually finite, then $\td(G)\in \{ 1, \infty\}$.
\end{cor}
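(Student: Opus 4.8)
The plan is to deduce Corollary \ref{maincor2} from Theorem \ref{main} together with structural results about (subgroups of) graph products, exactly parallel to the mapping class group and $3$-manifold cases. The key input we need is a trichotomy for a countably infinite subgroup $G\le \Gamma\{G_v\}_{v\in V}$ which is not conjugate into a vertex group: either $G$ is acylindrically hyperbolic, or $G$ splits as a direct product of two infinite groups (equivalently, $G$ is ``reducible'' with respect to the graph product structure), or $G$ is virtually cyclic. This should follow from the action of the graph product on the associated Davis/Bass--Serre-type complex (or on the hyperbolic space obtained by coning off a ``small'' full subgraph product, as in \cite{MO, Osi13}): since $G$ is not elliptic, either it contains a loxodromic element with acylindrical behaviour---and then acylindrical hyperbolicity follows from the criterion in \cite{Osi13}---or it is contained in the stabiliser of a point at infinity, which for these complexes forces the direct-product or virtually cyclic conclusion. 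I would cite the relevant classification of subgroups of graph products (this is where the genuine work is, but it is available in the literature on graph products and their ``Tits alternative'' type structure).

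\medskip

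Granting this trichotomy, the first statement follows quickly. If $G$ is acylindrically hyperbolic, I would first note that $G$ cannot have a non-trivial finite normal subgroup unless it is ``small'' in a way that contradicts being acylindrically hyperbolic here—more precisely, in this setting one checks directly that an acylindrically hyperbolic subgroup of $\Gamma\{G_v\}$ not conjugate into a vertex group has trivial finite radical, because any finite normal subgroup would have to centralise a loxodromic element, which is impossible. Hence (C$_3$) holds, so by Theorem \ref{main}, $G$ is highly transitive, giving (C$_2$), and (C$_1$) is immediate; conversely (C$_1$)$\Rightarrow$(C$_3$) because a $2$-transitive group is infinite, not virtually cyclic, and has no finite normal subgroups (Lemma \ref{2-trans}), ruling out the other two cases of the trichotomy. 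If instead $G\cong A\times B$ with $A,B$ infinite, then $\td(G)\le 2$: indeed a $3$-transitive group has trivial centraliser of every non-trivial element (a standard fact, cf. Lemma \ref{2-trans}-type arguments), whereas in $A\times B$ every element of $A$ is centralised by all of $B\ne 1$; and $\td(G)\ge 2$ is impossible too when $G$ has a non-trivial centre, so in fact $\td(G)=1$ here—wait, more carefully: a direct product of two non-trivial groups has non-trivial centre precisely when one factor has non-trivial centre, but in general $A\times B$ with both infinite can still fail $2$-transitivity since $2$-transitive groups have trivial finite normal subgroups but may have infinite centre only if... actually a $2$-transitive group is primitive hence has trivial centre, so $\td(A\times B)=1$ whenever the centre is non-trivial and otherwise $\td(A\times B)\le 2$. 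Thus in all cases $\td(G)\in\{1,2,\infty\}$ and (C$_1$)--(C$_3$) are equivalent.

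\medskip

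For the last sentence, suppose in addition every $G_v$ is residually finite. Then the whole graph product $\Gamma\{G_v\}$ is residually finite (a classical result of Green), hence so is every subgroup $G$. A residually finite group with a finite normal subgroup $N$ and a $2$-transitive action must have $N=1$; but the point is to rule out the value $\td(G)=2$ in the direct-product case. Here I would argue that a residually finite group $G$ with $\td(G)=2$ is impossible when $G\cong A\times B$ with $A,B$ infinite: a $2$-transitive permutation group is primitive, so any non-trivial normal subgroup acts transitively; applying this to $A$ and $B$ (normal in $G$) forces both to act transitively on the same infinite set $\Omega$ while commuting, which makes the action a product action and contradicts primitivity unless one factor is trivial—actually the cleanest route is: a primitive group has the property that the centraliser of any non-trivial normal subgroup is trivial, so $C_G(A)=1$, contradicting $B\le C_G(A)$. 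Hence the direct-product case has $\td(G)=1$, leaving only $\td(G)\in\{1,\infty\}$.

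\medskip

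The main obstacle I anticipate is establishing the trichotomy for subgroups of finite graph products cleanly—specifically identifying the right hyperbolic complex on which the graph product acts acylindrically (one naturally cones off the ``parabolic'' or ``reducible'' full subgraph products) and verifying that non-elliptic, non-loxodromically-acting subgroups are forced into a direct product decomposition. This is analogous to the curve-complex argument for mapping class groups referenced in Section 4, and I would expect to import it from the literature on acylindrical actions of graph products (e.g. results building on \cite{Osi13, MO}) rather than reprove it; the permutation-group facts about primitivity, centralisers, and residual finiteness are routine and follow the pattern of Lemma \ref{2-trans}.
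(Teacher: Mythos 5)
Your proposal has genuine gaps in both halves. First, the structural input you assume is not what the literature (or the paper) provides. The classification imported from \cite{MO} is a four-case statement: (a) $G$ is finite-by-(subgroup of a vertex group), (b) $G$ is virtually cyclic, (c) $G$ contains two infinite normal subgroups $N_1,N_2$ with $|N_1\cap N_2|<\infty$, (d) $G$ is acylindrically hyperbolic. Your trichotomy drops case (a) entirely and upgrades (c) to an actual direct-product decomposition $G\cong A\times B$, which is strictly stronger than what is available; deferring ``the genuine work'' to an unspecified reference does not close this. Moreover, your claim that an acylindrically hyperbolic subgroup not conjugate into a vertex group has trivial finite radical is false: take the graph on three vertices with groups $\mathbb Z_2,\mathbb Z,\mathbb Z$ and the $\mathbb Z_2$-vertex joined to the other two; the graph product is $\mathbb Z_2\times F_2$, which is acylindrically hyperbolic (the action on the Bass--Serre tree of $F_2$ has finite kernel and stays acylindrical), is not isomorphic to a subgroup of any vertex group, and has finite radical $\mathbb Z_2$ --- a finite normal subgroup can perfectly well centralize a loxodromic element. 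Your argument would declare this group highly transitive, whereas $\td(G)=1$. The correct move in case (d) is simply: if $K(G)\ne\{1\}$ then (C$_1$)--(C$_3$) all fail by Lemma \ref{2-trans}(b), and if $K(G)=\{1\}$ they all hold by Theorem \ref{main}.

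Second, the permutation-theoretic half does not go through. ``A $3$-transitive group has trivial centraliser of every non-trivial element'' is false ($Sym(\mathbb N)$ itself), and your corrected version never actually proves $\td(A\times B)\le 2$; the paper's proof of this bound genuinely needs Cameron's theorem (Lemma \ref{Cameron}) to split into the case where $N_1\times N_2$ is an elementary abelian $2$-group (killed by Lemma \ref{min}, which makes it a minimal normal subgroup containing the proper normal subgroups $N_i$) and the case where one $N_i$ is $2$-transitive (killed by the $ab(\alpha)\ne ba(\alpha)$ computation). Worse, your argument for the residually finite refinement rests on ``in a primitive group the centraliser of a non-trivial normal subgroup is trivial,'' which is false: such a centraliser is only semiregular (regular if non-trivial), as $\Aff(1,F)$ with its normal translation subgroup shows. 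Since your argument never actually invokes residual finiteness, it would prove $\td(A\times B)=1$ for every direct product of two infinite groups, contradicting the paper's example, given right after the proof of Corollary \ref{GP}, of a faithful $2$-transitive action of $C\times C$ where $C$ has two conjugacy classes. The paper instead uses Green's theorem (a finite graph product of residually finite groups is residually finite) together with Corollary \ref{p-like}, whose proof converts the commuting infinite normal subgroups into an infinite minimal normal subgroup via Lemma \ref{min} and contradicts residual finiteness; this is where the hypothesis is genuinely used.
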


To state our next result we need some preparation. Let $F_n$ denote the free group of rank $n$ and recall that a group $G$ satisfies a \emph{mixed identity} $w=1$ for some $w\in G\ast F_n$ if every homomorphism $G\ast F_n\to G$ that is identical on $G$ sends $w$ to $1$. We say that the mixed identity $w=1$ is non-trivial if $w\ne 1$ as an element of $G\ast F_n$. For the general theory of mixed identities and mixed varieties of groups we refer to \cite{Ana}. We say that $G$ is \emph{mixed identity free} (or \emph{MIF} for brevity) if it does not satisfy any non-trivial mixed identity.

The property of being MIF is much stronger than being identity free and imposes strong restrictions on the algebraic structure of $G$. For example, if $G$ has a non-trivial center, then it satisfies the non-trivial mixed identity $[a,x]=1$, where  $a\in Z(G)\setminus\{ 1\}$. Similarly, it is easy to show that an MIF group has no finite normal subgroups, is directly indecomposable, has infinite girth, etc. (see Proposition \ref{mif-prop}). Other examples of groups satisfying a non-trivial mixed identity are Thompson's group $F$ \cite{Zar} or any subgroup of $Sym(\mathbb N) $ containing $Alt(\mathbb N)$ (see Theorem \ref{dichotomy} below).

It is also worth noting that MIF groups resemble free products from the model theoretic point of view. More precisely, a countable  group $G$ is MIF if and only if $G$ and $G\ast F_n$ are universally equivalent as $G$-groups for all $n\in \mathbb N$. This means that a universal first order sentence with constants from $G$ holds true in $G$ if and only if it holds true in $G\ast F_n$. Groups universally equivalent to $G$ as $G$-groups are exactly the coordinate groups of irreducible algebraic varieties over $G$ \cite{BMR}. For more details we refer to Section 5 and Proposition \ref{mif}.

\begin{figure}
\centering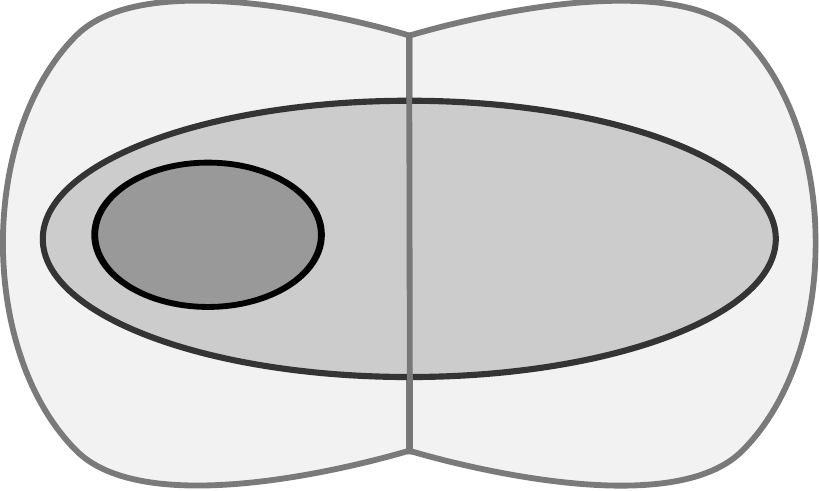\\
  \caption{}\label{classes}
\end{figure}

In Section 5, we prove the following dichotomy for highly transitive countable groups, which captures the principal difference between examples provided by Proposition \ref{ext} and Theorem \ref{main}.

\begin{thm}\label{dichotomy}
Let $G$ be a highly transitive countable group. Then exactly one of the following two mutually exclusive conditions holds.
\begin{enumerate}
\item[(a)] $G$ contains a normal subgroup isomorphic to $Alt(\mathbb N)$.
\item[(b)] $G$ is MIF (equivalently, $G$ and $G\ast F_n$ are universally equivalent as $G$-groups for every $n\in \mathbb N$).
\end{enumerate}
\end{thm}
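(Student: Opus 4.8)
The plan is to prove the two conditions are mutually exclusive first, and then prove that at least one of them holds. For exclusivity, suppose $G$ contains a normal subgroup $A\cong Alt(\mathbb N)$. As noted in the excerpt, any subgroup of $Sym(\mathbb N)$ containing $Alt(\mathbb N)$ satisfies a non-trivial mixed identity; the point is that $Alt(\mathbb N)$ (being locally finite, indeed a direct limit of finite alternating groups) satisfies a non-trivial mixed identity over any of its finitely generated subgroups, and this can be promoted to a mixed identity of $G$ using normality of $A$ in $G$ — roughly, for suitable $a\in A$ the map $x\mapsto [\![a,x]\!]$ lands in $A$ on which a further mixed identity vanishes. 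So (a) implies $G$ is not MIF, i.e. (a) and (b) cannot both hold. (I should double-check whether the cleanest route is via $Alt(\mathbb N)$ itself satisfying a mixed identity of the form $w(a_1,\dots,a_k,x_1,\dots,x_n)=1$; since every finitely generated subgroup of $Alt(\mathbb N)$ is finite, this is essentially the statement that a finite group $F$ satisfies a non-trivial mixed identity over $F$, which is clear: e.g. $x^{|F|!}a^{|F|!}x^{-|F|!}a^{-|F|!}$-type words, or more simply $[x^{|F|!}, a]=1$ with $a\ne 1$.)

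For the main direction, assume $G$ is highly transitive and \emph{not} MIF; I must show $Alt(\mathbb N)\lhd G$. Fix a faithful highly transitive action of $G$ on $\mathbb N$ and fix a non-trivial mixed identity $w=1$ with $w\in G\ast F_n$, $w\ne 1$. Write $w$ in normal form as an alternating product of non-trivial elements $g_1 x^{\e_1}_{i_1} g_2 x^{\e_2}_{i_2}\cdots$ of $G$ and of $F_n$. The strategy is: use high transitivity to choose, for each of the free variables $x_1,\dots,x_n$, a permutation $\phi(x_j)\in G$ so that the product $w(\phi)$ is forced to be a specific non-trivial finitary permutation — concretely, one wants to engineer $w(\phi)$ to be a $3$-cycle (or a single transposition, depending on parity bookkeeping) on some triple of points. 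Since $w=1$ is a mixed identity, \emph{any} substitution gives the identity; this is a contradiction \emph{unless} such a substitution is impossible — and the only obstruction to realizing an essentially arbitrary finitary permutation as a value of $w$ is that the constants $g_i$ appearing in $w$ already lie in some subgroup forcing collapse, which (because the action is highly transitive and faithful) will turn out to force $w$ itself to already be a value-constrained word. Unwinding this, the correct statement to extract is: if $G$ is highly transitive and not MIF then in fact $G$ contains a finitary permutation; then one bootstraps — the finitary permutations in $G$ form a non-trivial normal subgroup, and a non-trivial normal subgroup of a $2$-transitive (hence primitive) permutation group is transitive, and a transitive subgroup of $FSym(\mathbb N)$ containing a non-identity element contains, after conjugating by the highly transitive $G$-action to move supports around and taking commutators, a $3$-cycle, hence contains all of $Alt(\mathbb N)$; normality of $Alt(\mathbb N)$ in $G$ follows since $Alt(\mathbb N)$ is characteristic in $FSym(\mathbb N)=\langle Alt(\mathbb N)\rangle^G$, or directly since the set of $3$-cycles is permuted by $G$.

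The technical heart, and the step I expect to be the main obstacle, is the substitution argument producing a non-trivial finitary permutation from a non-trivial mixed identity. The delicate point is that the constants $g_1,\dots$ from $G$ in the word $w$ are \emph{fixed} infinite permutations over which we have no control, so one cannot naively prescribe the values of the free variables and read off $w(\phi)$; instead one runs a ping-pong / back-and-forth construction along a finite "trajectory" of points, choosing the values of $\phi(x_j)$ on finitely many points at a time (legitimate by $k$-transitivity for large $k$) so that the partial product computed along the trajectory is non-trivial on a chosen point while respecting all the already-committed constraints (injectivity of the partial permutations, and consistency with the fixed behaviour of the $g_i$'s). One has to be careful that the free letters $x^{\e}_{i}$ with repeated indices impose coherent constraints — the same $\phi(x_j)$ is used at every occurrence — which is the usual source of difficulty in such arguments and is handled by processing occurrences in order and keeping a running finite partial definition of each $\phi(x_j)$. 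I expect to need the hypothesis that $w$ is cyclically reduced and, after possibly conjugating $w$ and replacing the mixed identity by an equivalent one, that $w$ genuinely involves at least one free variable (if $w\in G$, then $w=1$ as an element of $G$, contradicting $w\ne1$ in $G\ast F_n$ only if $w\notin G$; actually $w\in G\setminus\{1\}$ cannot be a mixed identity since the identity homomorphism sends it to $w\ne 1$, so $w$ must involve $F_n$ non-trivially, which is what we need). The remaining bootstrapping steps (transitive subgroup of $FSym(\mathbb N)$ with a non-trivial element contains $Alt(\mathbb N)$; characteristicity/normality) are standard permutation-group facts and should be quotable or quick.
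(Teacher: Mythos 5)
Your main direction is essentially the paper's route: split according to whether $G\le Sym(\mathbb N)$ contains a non-trivial finitary permutation; if it does, primitivity (from $2$-transitivity) plus Wielandt's theorem gives $Alt(\mathbb N)\le G$, which is automatically normal since $Alt(\mathbb N)\lhd Sym(\mathbb N)$; if it does not, then every constant occurring in a candidate mixed identity has infinite support and the identity can be violated by a finite-stage substitution. The paper packages that second case as Lemma \ref{non-rel} (a one-variable word $a_kt^{\alpha_k}\cdots a_1t^{\alpha_1}$ with constants of infinite support is violated by some $t\in Sym(\mathbb N)$ agreeing with any prescribed permutation on any prescribed finite set), after reducing to one variable via Remark \ref{1-vs-n}, and then passes from $Sym(\mathbb N)$ back to $G$ by a Baire category/density argument (Corollary \ref{fp}: a generic $t$ freely generates $G\ast\langle t\rangle$ together with $G$, and a mixed identity of the dense subgroup $G$ would hold on all of $Sym(\mathbb N)$ because solution sets of equations are closed). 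Your plan to run the construction internally in $G$, realizing the finitely many committed values by elements of $G$ via $k$-transitivity, is a legitimate variant of the same idea, since $w(\phi)(n_0)\ne n_0$ only depends on finitely many values of the $\phi(x_j)^{\pm 1}$; the one-variable reduction would simplify the repeated-variable bookkeeping you flag. One caution in your bootstrap: the claim that a transitive subgroup of $FSym(\mathbb N)$ with a non-trivial element contains $Alt(\mathbb N)$ is false as a free-standing statement (the finitary permutations preserving a partition of $\mathbb N$ into $2$-element blocks are a transitive counterexample); what you actually need, and what the paper simply quotes, is Wielandt's theorem for infinite \emph{primitive} groups.

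The genuine gap is in the mutual-exclusivity half. You need: if $G$ has a normal subgroup $A\cong Alt(\mathbb N)$, then $G$ is not MIF, and for this you must first exhibit a non-trivial mixed identity of $Alt(\mathbb N)$ itself. Your proposed justification via local finiteness does not work: a mixed identity must vanish under \emph{all} substitutions of the variable from the whole (infinite) group, and $[x^{N},a]=1$ fails in $Alt(\mathbb N)$ for every $N$ (take $x$ a $p$-cycle with $p$ an odd prime not dividing $N$ whose support meets the finite support of $a$ in exactly one moved point; then $x^{N}$ is a $p$-cycle with the same support and $ax^{N}a^{-1}\neq x^{N}$). Local finiteness gives, for each finite subgroup, a word valid on that subgroup, but no single word valid on all of $Alt(\mathbb N)$. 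The paper's construction is genuinely different: take four $3$-cycles $a,b,c,d$ with pairwise disjoint supports; for every $g$ the conjugates $a^g,b^g,c^g,d^g$ are again $3$-cycles with pairwise disjoint supports, so at least one of them has support disjoint from that of $a$ and the corresponding commutator with $a$ vanishes; hence the iterated commutator $[[[[a^x,a],[b^x,a]],[c^x,a]],[d^x,a]]$ is a non-trivial element of $Alt(\mathbb N)\ast\langle x\rangle$ which is a mixed identity of $Alt(\mathbb N)$. Your promotion step from the normal copy $A$ to $G$ via $x\mapsto [a,x]$ is sound --- it is exactly the argument of Proposition \ref{mif-prop}(c) used contrapositively --- but without a correct identity for $Alt(\mathbb N)$ the ``exactly one'' part of the theorem is not established.
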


It is known that infinite normal subgroups of acylindrically hyperbolic groups are also acylindrically hyperbolic and hence cannot be torsion \cite{Osi13}. Thus condition (a) from Theorem \ref{dichotomy} cannot hold if $G$ is acylindrically hyperbolic. Combining this with Theorem \ref{main} we obtain the following.

\begin{cor}[Cor. \ref{ah-mif}]
Let $G$ be an acylindrically hyperbolic group with trivial finite radical. Then $G$ is MIF.
\end{cor}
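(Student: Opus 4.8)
The statement follows quickly by combining the two main theorems with the structural fact recalled just before it, so my plan is mainly to explain how the pieces fit together, plus a reduction to the countable case. First I would reduce to the case where $G$ is countable. A mixed identity $w=1$ with $w\in G\ast F_n$ involves only finitely many elements $g_1,\dots,g_m$ of $G$. If $H\le G$ contains $g_1,\dots,g_m$, then $w$ lies in the subgroup $H\ast F_n\le G\ast F_n$, and any homomorphism $\psi\colon H\ast F_n\to H$ that is the identity on $H$ induces a homomorphism $\widetilde\psi\colon G\ast F_n\to G$ which is the identity on $G$ and equals $\psi$ followed by the inclusion $H\hookrightarrow G$ on $F_n$; then $\widetilde\psi(w)$ is $\psi(w)$ regarded in $G$, and it equals $1$ because $G$ satisfies $w=1$, whence $\psi(w)=1$. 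Since $H\ast F_n$ embeds in $G\ast F_n$, the identity $w=1$ is still non-trivial over $H$. Thus, if $G$ failed to be MIF, I could pass to a countable subgroup $H\le G$ containing $g_1,\dots,g_m$ that is acylindrically hyperbolic with trivial finite radical (the existence of such an $H$ is a standard fact about acylindrically hyperbolic groups, and for all groups considered elsewhere in this paper one may just take $G$ countable from the start), and $H$ would fail to be MIF as well. So from now on I assume $G$ is countable.

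Now for the core of the argument. Since $G$ is a countable acylindrically hyperbolic group with $K(G)=\{1\}$, Theorem \ref{main} shows that $G$ is highly transitive, so Theorem \ref{dichotomy} applies: exactly one of (a) ``$G$ has a normal subgroup isomorphic to $Alt(\mathbb N)$'' or (b) ``$G$ is MIF'' holds. It remains to exclude (a). If $N\lhd G$ with $N\cong Alt(\mathbb N)$, then $N$ is an infinite normal subgroup of an acylindrically hyperbolic group, hence acylindrically hyperbolic by \cite{Osi13}; in particular $N$ contains a loxodromic element, an element of infinite order, so $N$ is not a torsion group. This contradicts the fact that $Alt(\mathbb N)$ is locally finite. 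Hence (a) is impossible, so (b) holds and $G$ is MIF; by Proposition \ref{mif} this is equivalent to $G$ and $G\ast F_n$ being universally equivalent as $G$-groups for every $n\in\mathbb N$.

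As for the main obstacle: once $G$ is known to be countable there is essentially nothing to overcome, the conclusion being immediate from Theorems \ref{main} and \ref{dichotomy} together with the torsion observation. The only point that requires care --- and only if one wants the corollary for arbitrary, possibly uncountable, acylindrically hyperbolic groups --- is the reduction step, i.e.\ producing a countable acylindrically hyperbolic subgroup with trivial finite radical through a prescribed finite subset; in the present setting this can be bypassed.
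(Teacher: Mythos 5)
Your argument is correct and is essentially the paper's own proof: Theorem \ref{main} gives high transitivity, Theorem \ref{dichoto} then forces the dichotomy, and alternative (a) is ruled out exactly as in the paper, since infinite normal subgroups of acylindrically hyperbolic groups are acylindrically hyperbolic by \cite{Osi13} and hence contain infinite-order elements, whereas $Alt(\mathbb N)$ is torsion. Your preliminary reduction to the countable case is a sensible addition (the paper applies Theorem \ref{main}, stated for countable groups, without comment), but the existence of a countable acylindrically hyperbolic subgroup with trivial finite radical containing a prescribed finite set is asserted rather than proved; as you note, this can be bypassed by assuming $G$ countable from the start, which is the setting the paper implicitly works in.
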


Clearly $G$ is not MIF whenever $K(G)\ne \{ 1\}$. In the particular case of non-cyclic torsion free hyperbolic groups, the above corollary was proved in \cite{AR} by different methods.

Theorem \ref{main} and Theorem \ref{dichotomy} are summarized in Fig. \ref{classes}. Here by $\mathcal{AH}^0$ we denote the class of acylindrically hyperbolic groups with trivial finite radical. In Section 5 we show that all inclusions are proper. We mention two examples.
\begin{prop}[Cor. \ref{BS12}, Cor. \ref{mif-tor}]\label{prop-ex}
\begin{enumerate}
\item[(a)] The group $G=\langle a,b,t\mid a^b=a^2, [a,t]=1\rangle$ is highly transitive and MIF, but not acylindrically hyperbolic.
\item[(b)] There exist finitely generated MIF groups of transitivity degree $1$.
\end{enumerate}
\end{prop}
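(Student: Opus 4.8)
The plan is to prove (a) and (b) separately; in each, almost all of the content is concentrated in a single property.

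\textbf{(a).} I would first rewrite $G$ as the amalgamated product $G=BS(1,2)\ast_{\langle a\rangle}\bigl(\langle a\rangle\times\langle t\rangle\bigr)$, with $BS(1,2)=\langle a,b\mid a^b=a^2\rangle$; this already shows that $G$ is torsion-free, so $K(G)=\{1\}$. To see that $G$ is \emph{not} acylindrically hyperbolic, the key observation is that $\langle a\rangle$ is commensurated by the whole of $G$: the generator $a$ normalizes $\langle a\rangle$, the generator $t$ centralizes it, and $b$ conjugates $\langle a\rangle$ onto its index-two subgroup $\langle a^2\rangle$, so $\mathrm{Comm}_G(\langle a\rangle)$ contains $\{a,b,t\}$ and therefore equals $G$. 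Consequently $\langle a\rangle^g\cap\langle a\rangle$ has finite index in $\langle a\rangle$ — in particular is infinite — for every $g\in G$, i.e. $\langle a\rangle$ is an infinite s-normal subgroup of $G$. If $G$ were acylindrically hyperbolic, then by \cite{Osi13} the s-normal subgroup $\langle a\rangle$ would itself be acylindrically hyperbolic, which is impossible since $\langle a\rangle\cong\mathbb{Z}$ is virtually cyclic. Hence $G$ is not acylindrically hyperbolic.

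\textbf{(a), continued.} For high transitivity, Theorem~\ref{main} cannot be applied to $G$ itself, and this is the step I expect to be the main difficulty: one must produce a faithful highly transitive action directly from the amalgam structure. The approach I would try is to carry out the small-cancellation/Baire-category construction underlying the proof of Theorem~\ref{main}, but over the highly transitive quotient $G/\langle\langle a\rangle\rangle\cong F_2$, exploiting that $\langle a\rangle$ (equivalently its normal closure, which has infinite index) is "small" in the appropriate sense rather than that $G$ is acylindrically hyperbolic; a possible alternative is to invoke a high-transitivity criterion for groups acting on trees in the style of \cite{FMS}, applied to the Bass--Serre tree of the amalgam. Once $G$ is known to be highly transitive, the MIF conclusion is automatic: by Theorem~\ref{dichotomy}, $G$ either contains a normal copy of $Alt(\mathbb{N})$ or is MIF, and since $G$ is torsion-free the former cannot occur.

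\textbf{(b).} Here I would construct a finitely generated $G$ as a direct limit $G=\varinjlim G_i$ of iterated small-cancellation quotients of free groups, in the style of Olshanskii \cite{Ols80}, controlling three features through the construction: (i) impose relations $w^{n_w}=1$ with unbounded exponents $n_w$, so that $G$ is torsion; (ii) for each non-trivial $w\in G_i\ast F_n$, impose relations forcing a homomorphism $G_i\ast F_n\to G_{i+1}$ that is the identity on $G_i$ and does not kill the image of $w$, so that in the limit no non-trivial mixed identity survives and $G$ is MIF; and (iii) add enough further structure to guarantee that $G$ has no core-free maximal subgroup — equivalently, that its only faithful primitive permutation representation is the regular one — which forces $\td(G)=1$. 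For (iii) one can observe that a faithful $2$-transitive action of a torsion MIF group would have to be of affine or almost-simple type, that the affine case supplies an abelian normal subgroup (contradicting MIF), and that the almost-simple case must be precluded by the construction itself. The main obstacle in (b) is the tension between the two target properties: MIF forces $G$ to be "wild" (identity-free, centerless, directly indecomposable, with no finite normal subgroups), while $\td(G)=1$ forces all of its faithful permutation representations to be "tame", so the construction must maintain careful simultaneous control of both.
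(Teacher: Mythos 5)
Your proposal for (a) matches the paper on two of the three claims: non-acylindrical hyperbolicity via $s$-normality of $\langle a\rangle$ and \cite[Corollary 1.5]{Osi13}, and MIF via Theorem \ref{dichotomy} plus torsion-freeness, are exactly the paper's arguments. The gap is the high transitivity of $G$, which you yourself flag as the main difficulty but do not close. Your primary plan cannot work as stated: ``smallness'' in the sense of Section 2 only exists inside an acylindrically hyperbolic group (any group containing a small subgroup is acylindrically hyperbolic, and you have just shown $G$ is not), and any action obtained from the quotient $G/\langle\langle a\rangle\rangle\cong F_2$ has the normal closure of $a$ in its kernel, so it can never be faithful for $G$. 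The viable route is precisely your parenthetical alternative, and it is what the paper does: regard $G$ as the HNN extension of $BS(1,2)=\langle a,b\mid a^b=a^2\rangle$ with stable letter $t$ and associated subgroups $A=B=\langle a\rangle$, and apply the Fima--Moon--Stalder criterion (Lemma \ref{FMS}). But then the entire content of the proof is the verification that $\langle a\rangle$ is highly core free in $BS(1,2)$ (the paper checks that in any covering $G=AF\cup S_1\cup\dots\cup S_n$ some $S_k$ projects onto infinitely many positive powers of $b$, which forces $Core_{S_k}(A)=\{1\}$), and this verification is absent from your proposal.

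For (b) the decisive gap is your step (iii). The reduction of a faithful $2$-transitive action to ``affine or almost-simple type'' is an O'Nan--Scott/Burnside-type statement that is a theorem only for \emph{finite} primitive groups; there is no such classification for infinite $2$-transitive groups, so ``the almost-simple case must be precluded by the construction itself'' is not an argument, and $\td(G)=1$ is in any case not equivalent to the absence of core-free maximal subgroups (it only requires that no faithful action be $2$-transitive). Your step (ii) is also more delicate than you indicate: preserving, for every non-trivial $w\in G_i\ast F_n$, a witness tuple through all later quotients is exactly what the paper avoids having to do by hand, replacing it with a genericity argument (Proposition \ref{lim-mif}) that rests on Corollary \ref{ah-mif}, hence ultimately on Theorems \ref{main} and \ref{dichotomy}. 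The key idea you are missing for $\td=1$ is torsion-parity control: the paper works in the closure of the class of non-virtually-cyclic hyperbolic groups satisfying $[x^2,y]=1\Rightarrow[x,y]=1$, where torsion groups are dense by Olshanskii \cite{Ols}, so a generic marked group $T$ there is torsion and MIF; condition $[x^2,y]=1\Rightarrow[x,y]=1$ passes to the closure and makes every involution of $T$ central, so either $T$ has an involution and $\td(T)=1$ by Lemma \ref{2-trans}(b), or every element of $T$ has odd order, in which case no subgroup of $T$ surjects onto $\mathbb Z_2$, while a faithful $2$-transitive action would produce a two-point setwise stabilizer mapping onto $\mathbb Z_2$ modulo the pointwise stabilizer; either way $\td(T)=1$. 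Without a substitute for this (or some other valid obstruction to $2$-transitivity), your construction does not yield the conclusion.
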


The paper is organized as follows: In Section 2 we review properties of acylindrically hyperbolic groups and hyperbolically embedded subgroups necessary for the proof of Theorem \ref{main}. In Section 3 we prove Theorem \ref{main}, and in Section 4 we apply this theorem to various classes of groups to obtain Corollaries \ref{maincor-1}-\ref{maincor2}. In Section 5 we study the relationship between mixed identities and highly transitive actions and prove Theorem \ref{dichotomy}. Some open questions and relevant examples are discussed in Section 6.

\paragraph{Acknowledgments.} We would like to thank Ilya Kapovich, Olga Kharlampovich, and Alexander Olshanskii for helpful discussions of various topics related to this paper. We are especially grateful to Dan Margalit for answering our numerous questions on mapping class groups. The second author was supported by the NSF grant DMS-1308961.


\section{Preliminaries on acylindrically hyperbolic groups}\label{sec2}


\paragraph{2.1. Generating alphabets and Cayley graphs.}
When dealing with relative presentations of groups, we often need to represent the same element of a group $G$ by several distinct generators. Thus, instead of a generating set of $G$, it is more convenient to work with an alphabet $\mathcal A$ given together with a (not necessarily injective) map $\alpha\colon \mathcal A\to G$ such that $\alpha(\mathcal A)$ generates $G$. We begin by formalizing this approach.

Let $\mathcal A$ be a set, which we refer to as an \emph{alphabet}. Let $G$ be a group and let $\alpha\colon \mathcal A\to G$ be a (not necessarily injective) map. We say that $G$ is \emph{generated} by $\mathcal A$ (or $\mathcal A$ is a \emph{generating alphabet} of $G$) if $G$ is generated by $\alpha(\mathcal A)$. Note that a generating set $X\subseteq G$ can be considered as a generating alphabet with the obvious map $\alpha\colon X\to G$.

By the \emph{Cayley graph} of $G$ with respect to a generating alphabet $\mathcal A$, denoted $\Gamma (G, \mathcal A)$, we mean a graph with vertex set $G$ and the set of edges defined as follows. For every $a\in \mathcal A$ and every $g\in G$, there is an oriented edge $(g, g\alpha(a))$ in $\Gamma (G, \mathcal A)$ labelled by $a$. Given a (combinatorial) path $p$ in $\Gamma (G, \mathcal A)$, we denote by $\Lab (p)$ its label. Note that if $\alpha $ is not injective, $\Gamma (G, \mathcal A)$ may have multiple edges. Of course, the identity map on $G$ induces an isometry between vertex sets of the graphs $\Gamma (G, \mathcal A)$ and $\Gamma (G, \alpha(\mathcal A))$. Thus we only need to distinguish between $\Gamma (G, \mathcal A)$ and $\Gamma (G, \alpha(\mathcal A))$ when dealing with labels; in all purely metric considerations we do not need generating alphabets and can simply work with generating sets.

Given a generating set $X$ of $G$ and an element $g\in G$, let $|g|_{X}$ denote the word length of $g$ with respect to $X$, that is the length of a shortest word in  $X^{\pm 1}$ that represents $g$. For $g, h\in G$, we define $d_{X}(g, h)=|g^{-1}h|_{X}$. Finally, let $$B_{X}(n)=\{g\in G\;|\; |g|_{X}\leq n\}.$$

\paragraph{2.2. Hyperbolically embedded subgroups.}
The typical situation when we apply the language described above is the following. Suppose that we have a group $G$, a subgroup $H$ of $G$, and a subset $X\subseteq G$ such that $X$ and $H$ together generate $G$. We think of $X$ and $H$ as abstract sets and consider the disjoint union
\begin{equation}\label{calA}
\mathcal A= X \sqcup H,
\end{equation}
and the map $\alpha \colon \mathcal A\to G$  induced by the obvious maps $X\to G$ and $H\to G$. By abuse of notation, we do not distinguish bretween subsets $X$ and $H$ of $G$ and their preimages in $\mathcal A$. This will not create any problems since the restrictions of $\alpha$ on $X$ and $H$ are injective. Note, however, that $\alpha $ is not necessarily injective. Indeed if $X$ and subgroup $H$ intersect in $G$, then every element of $H \cap X\subseteq G$ will have at least two preimages in $\mathcal A$: one in $X$ and another in $H$ (since we use disjoint union in (\ref{calA})).

\begin{conv}
Henceforth we always assume that generating sets and relative generating sets are symmetric. That is, if $x\in X$, then $x^{-1}\in X$. In particular, every element of $G$ can be represented by a word in  $X\sqcup H$.
\end{conv}

In these settings, we consider the Cayley graphs $\G $ and $\Gamma (H, H)$ and naturally think of the latter as a subgraph of the former. We introduce a \textit{relative metric} $\dl \colon H \times H \to [0, +\infty]$ as follows. We say that a path $p$ in $\G$ is {\it admissible} if it contains no edges of $\Gamma (H, H)$. Let $\dl (h,k)$ be the length of a shortest admissible path in $\G $ that connects $h$ to $k$. If no such a path exists, we set $\dl (h,k)=\infty $. Clearly $\dl $ satisfies the triangle inequality, where addition is extended to $[0, +\infty]$ in the natural way.

\begin{defn}\label{hedefn}
A subgroup $H$ of $G$ is \emph{hyperbolically embedded  in $G$ with respect to a subset $X\subseteq G$}, denoted $H \h (G,X)$, if the following conditions hold.
\begin{enumerate}
\item[(a)] The group $G$ is generated by $X$ together with $H$ and the Cayley graph $\G $ is hyperbolic.
\item[(b)] Any ball (of finite radius) in $H$ with respect to the metric $\dl$ contains finitely many elements.
\end{enumerate}
Further we say  that $H$ is \emph{hyperbolically embedded} in $G$ and write $H\h G$ if $H\h (G,X)$ for some $X\subseteq G$.
\end{defn}

Hyperbolically embedded subgroups were introduced and studied in \cite{DGO} as a generalizaton of relatively hyperbolic groups; indeed, $G$ is hyperbolic relative to $H$ if and only if $H\h(G, X)$ with $|X|<\infty$ \cite[Proposition 4.28]{DGO}.
The following lemma is a particular case of this result.

\begin{lem}\label{heprod}
For any group $G$, $G\h(G\ast\langle t\rangle, \{t\})$.
\end{lem}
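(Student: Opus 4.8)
The plan is to derive the lemma from the characterization of hyperbolically embedded subgroups with finite relative generating set recalled just above, i.e.\ from \cite[Proposition 4.28]{DGO}. By the standing convention the set $\{t\}$ is to be read as $X:=\{t,t^{-1}\}$. It then suffices to check that (i) $G\ast\langle t\rangle$ is hyperbolic relative to $\{G\}$, and (ii) $X\cup G$ generates $G\ast\langle t\rangle$, i.e.\ $X$ is a finite relative generating set for the pair $(G\ast\langle t\rangle,G)$. Item (ii) is immediate; granting (i), the cited proposition applied with ambient group $G\ast\langle t\rangle$, peripheral collection $\{G\}$ and relative generating set $X$ yields $G\h(G\ast\langle t\rangle,X)$, which is the assertion.

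For (i): it is standard that a free product is hyperbolic relative to the collection of its free factors (this is visible from the action on the Bass--Serre tree, whose edge stabilizers are trivial), so $G\ast\langle t\rangle$ is hyperbolic relative to $\{G,\langle t\rangle\}$. Since $\langle t\rangle\cong\Z$ is itself a hyperbolic group, it can be deleted from the peripheral collection without destroying relative hyperbolicity, so $G\ast\langle t\rangle$ is hyperbolic relative to $\{G\}$; I would cite this ``absorption of a hyperbolic peripheral subgroup'' from \cite{DGO,Osi13} (in the hyperbolically embedded language it says: if $\{H,L\}\h(G',Y)$ and $L$ is generated by a finite set $Z$ with $\Gamma(L,Z)$ hyperbolic, then $\{H\}\h(G',Y\cup Z)$).

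One can also verify Definition \ref{hedefn} directly for the subgroup $G\le G\ast\langle t\rangle$ with respect to $X$. Write $\Gamma=\Gamma\big(G\ast\langle t\rangle,\,G\sqcup X\big)$, call an edge a \emph{$t$-edge} if its label lies in $X$ and a \emph{$G$-edge} if its label lies in $G$, and let $T$ be the Bass--Serre tree. For condition (a): $\Gamma$ is a tree of spaces over $T$, in which the piece over a coset $wG$ is the complete graph on $wG$ (of diameter $1$, since all of $G$ is used as letters) and the piece over a coset $w\langle t\rangle$ is a bi-infinite line; these pieces are uniformly hyperbolic and, since $G\cap\langle t\rangle=\{1\}$, they are glued along single vertices, so $\Gamma$ is hyperbolic by the usual free-product combination argument. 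For condition (b) I claim $\dl(g,g')=\infty$ for all distinct $g,g'\in G$, whence every finite-radius ball in $(G,\dl)$ is a single point. To see this, map $\Gamma$ to $T$ by sending a vertex $v$ to the $\langle t\rangle$-type vertex $v\langle t\rangle$; this collapses every $t$-edge and sends each $G$-edge $v\to vh$ to the length-$2$ path $v\langle t\rangle\to vG\to vh\langle t\rangle$. An admissible path from $g$ to $g'$ would project to a walk in $T$ from $g\langle t\rangle$ to $g'\langle t\rangle$; these are distinct $\langle t\rangle$-type vertices, both adjacent to the $G$-type vertex $o_G=1\cdot G$, so every walk between them passes through $o_G$. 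But the projected walk meets a $G$-type vertex only at the midpoint $v_iG$ of the detour produced by some $G$-edge $v_i\to v_{i+1}$ of the path, and meeting $o_G$ forces $v_i\in G$; then that edge has both endpoints in the coset $1\cdot G$ and hence is an edge of the embedded copy of $\Gamma(G,G)$, contradicting admissibility.

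I do not expect a genuine obstacle: the lemma is a special case of a known result, and the only non-formal ingredient is the hyperbolicity statement --- condition (a), equivalently the relative hyperbolicity of $G\ast\langle t\rangle$ over $\{G\}$ used in step (i) --- which is itself standard. Everything else is routine bookkeeping with free-product normal forms and with the Bass--Serre tree.
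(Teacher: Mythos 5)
Your first paragraph is exactly the paper's argument: the lemma is stated there as an immediate particular case of \cite[Proposition 4.28]{DGO}, i.e.\ of the equivalence between relative hyperbolicity and hyperbolic embeddedness with a finite relative generating set, applied to the standard fact that $G\ast\langle t\rangle$ is hyperbolic relative to the free factor $G$. Your additional hands-on verification of Definition \ref{hedefn} (hyperbolicity of $\Gamma(G\ast\langle t\rangle, \{t\}\sqcup G)$ as a tree of uniformly hyperbolic pieces glued along points, and $\dl(g,g')=\infty$ for $g\neq g'$ via the projection to the Bass--Serre tree) is also correct and makes the lemma self-contained, essentially reproducing the free-product example worked out in \cite{DGO}, but it is not needed beyond the citation route.
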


In the next two lemmas, we let $G$ be a group and $H$ a subgroup of $G$.

The following is a particular case of \cite[Proposition 4.35]{DGO}.
\begin{lem}\label{hehe}
Suppose that $H\h(G, X)$ for some $X\subseteq G$ and there is a subset $Y\subseteq H$ and a subgroup $K \h (H, Y)$. Then $K\h (G, X\cup Y)$.
\end{lem}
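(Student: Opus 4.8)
\textbf{Proof proposal for Lemma \ref{hehe} (transitivity of hyperbolic embeddedness).}

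The plan is to prove the statement by transporting admissible paths. Set $\mathcal B = X\cup Y$. Condition (a) of Definition \ref{hedefn} for $K\h(G,\mathcal B)$ is relatively cheap: since $K\le H$, the set $\mathcal B = X\cup Y$ together with $K$ generates $G$ (as $X\cup H$ does and $H$ is generated by $Y\cup K$), and the Cayley graph $\Gamma(G, X\cup Y\sqcup K)$ is quasi-isometric to $\Gamma(G, X\sqcup H)$: one direction is clear since $Y, K\subseteq H$, and conversely each $h\in H$ lies at bounded $\Gamma(H, Y\sqcup K)$-distance expressed via $Y$ and $K$, and $\Gamma(H,Y\sqcup K)$ embeds isometrically on vertices into $\Gamma(G, X\cup Y\sqcup K)$; hence both graphs are hyperbolic. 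So the real content is condition (b): one must show that for every $n$, the ball of radius $n$ in $K$ with respect to the relative metric $\widehat{\d}_{X\cup Y}^{\,G\curvearrowright K}$ — the metric counting lengths of shortest paths in $\Gamma(G, X\cup Y\sqcup K)$ avoiding edges of $\Gamma(K,K)$ — is finite.

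The key step is to compare this relative metric on $K$ with the two given relative metrics: $\widehat{\d}_{X}^{\,G\curvearrowright H}$ on $H$ (finite balls by $H\h(G,X)$) and $\widehat{\d}_{Y}^{\,H\curvearrowright K}$ on $K$ (finite balls by $K\h(H,Y)$). First I would reduce to estimating, for $k_1,k_2\in K$, the quantity $\widehat{\d}_Y^{\,H\curvearrowright K}(k_1,k_2)$ from above in terms of $\widehat{\d}_{X\cup Y}^{\,G\curvearrowright K}(k_1,k_2)$: then finiteness of $\widehat{\d}_Y$-balls gives finiteness of $\widehat{\d}_{X\cup Y}$-balls. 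Take an admissible path $p$ in $\Gamma(G, X\cup Y\sqcup K)$ from $k_1$ to $k_2$ realizing the distance $n$; it has no $K$-edges. The issue is the $X$-edges, which may leave $H$ entirely. The standard trick (exactly as in the proof of \cite[Proposition 4.35]{DGO}) is to cut $p$ at the vertices lying in $H$, obtaining maximal subpaths whose interior vertices lie outside $H$; each such subpath has endpoints in $H$ and, being a path in $\Gamma(G,X\sqcup H)$ with no $H$-edges other than possibly... — more precisely, one replaces each maximal subpath of $p$ between consecutive $H$-vertices by a single edge of $\Gamma(H,H)$ when both endpoints coincide, or keeps track that its endpoints $h', h''\in H$ satisfy $\widehat{\d}_X^{\,G\curvearrowright H}(h',h'')\le (\text{length of that subpath})$ since the subpath is admissible in the sense of $\Gamma(G,X\sqcup H)$ (its interior avoids $H$, hence avoids $\Gamma(H,H)$-edges). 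Summing, the $H$-vertices visited by $p$ form a sequence $k_1 = h_0, h_1,\dots, h_m = k_2$ with $\sum_i \widehat{\d}_X^{\,G\curvearrowright H}(h_{i-1},h_i) \le n$. Now each consecutive pair $h_{i-1},h_i$ lies in $H$ at bounded $\widehat{\d}_X$-distance, so by $H\h(G,X)$ there are only finitely many possibilities for $h_i$ given $h_{i-1}$ and the bound; this already shows the ball in $H$ w.r.t.\ the mixed metric is finite, but I actually want a path in $\Gamma(H, Y\sqcup K)$. For that, observe each $h_{i-1}^{-1}h_i\in H$ is represented by a word in $Y\sqcup K$ of $\widehat{\d}_Y$-controlled... — no: instead run the same cutting argument one level down. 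Concatenating geodesic words in $Y\sqcup K$ for the $h_{i-1}^{-1}h_i$ yields a word in $Y\sqcup K$ representing $k_1^{-1}k_2$, and since $p$ had no $K$-edges one checks the $K$-syllables can be controlled, giving a bound on $\widehat{\d}_Y^{\,H\curvearrowright K}(k_1,k_2)$ in terms of $n$ and finitely many bounded choices.

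The main obstacle I anticipate is bookkeeping the $K$-edges correctly: an admissible path in $\Gamma(G,X\cup Y\sqcup K)$ avoids $K$-edges, but after projecting down to $H$ and re-expanding words over $Y\sqcup K$ we must make sure we do not reintroduce long runs of $K$-letters that would break the "no $K$-edges" bookkeeping needed to invoke $K\h(H,Y)$ — in other words, ensuring the path we produce in $\Gamma(H, Y\sqcup K)$ is genuinely admissible or can be made so without uncontrolled cost. This is handled in \cite{DGO} by working with the auxiliary relative metrics carefully and using that geodesics realizing $\widehat{\d}$ are, by definition, $K$-edge-free; I would follow that template, citing \cite[Proposition 4.35]{DGO} for the general mechanism and only spelling out the specialization where one of the two subsets is $X$ and the outer relative metric is over $X\cup Y$. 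Since the lemma is explicitly stated to be a particular case of that proposition, the cleanest writeup is simply to set up the dictionary between notations and invoke it; I would include the two-paragraph sketch above as the "idea" and defer the verification to the cited result.
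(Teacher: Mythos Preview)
The paper gives no proof at all: the lemma is simply stated as ``a particular case of \cite[Proposition 4.35]{DGO}''. Your final sentence --- set up the dictionary and invoke that proposition --- is therefore exactly what the paper does, and is the correct writeup.

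Your sketch, however, contains a genuine error in the part you call ``relatively cheap''. You assert that $\Gamma(G, X\cup Y\sqcup K)$ is quasi-isometric to $\Gamma(G, X\sqcup H)$ because ``each $h\in H$ lies at bounded $\Gamma(H, Y\sqcup K)$-distance expressed via $Y$ and $K$''. That is false: there is no uniform bound on $|h|_{Y\cup K}$ over $h\in H$ unless $H$ happens to be bounded in that metric, which is certainly not assumed. (Concretely, take $G$ a non-abelian free group, $H$ a non-abelian free factor, $X$ a complementary basis, $K$ trivial, and $Y$ a free basis of $H$: then $\Gamma(G,X\sqcup H)$ is a quasi-tree of diameter-one $H$-cosets, while $\Gamma(G,X\cup Y\sqcup K)$ is the ordinary Cayley tree of $G$, and these are not quasi-isometric.) Hyperbolicity of $\Gamma(G, X\cup Y\sqcup K)$ does hold, but it is not cheap: the argument in \cite{DGO} uses that the cosets of $H$ sit in $\Gamma(G, X\cup Y\sqcup K)$ as uniformly quasiconvex copies of the hyperbolic graph $\Gamma(H, Y\sqcup K)$, and coning them off recovers (up to quasi-isometry) the hyperbolic graph $\Gamma(G, X\sqcup H)$; one then appeals to a Bowditch-type criterion to conclude hyperbolicity before coning off. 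This is real work and is precisely why the paper defers to \cite{DGO}.

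Your outline for condition (b) --- cut an admissible path at its $H$-vertices, bound the $\widehat{\d}_X^{\,G\curvearrowright H}$-distances between consecutive ones, then descend to $\Gamma(H, Y\sqcup K)$ --- is the right shape, and your worry about reintroducing $K$-edges is the right worry; but as written the argument does not close, and you yourself abandon it midstream. If you want to include a sketch, fix the hyperbolicity paragraph (or at least flag it as the nontrivial step) and then cite \cite[Proposition 4.35]{DGO}; otherwise just cite it, as the paper does.
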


Given a group $G$, a subgroup $H\le G$, and $g\in G$, we denote by $H^g$ the conjugate $g^{-1} Hg$.

\begin{lem}\label{malnorm}\cite[Proposition 2.8]{DGO}
If $H\h G$, then for all $g\in G\setminus H$, $|H\cap H^g|<\infty$.
\end{lem}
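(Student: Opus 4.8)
The plan is to induct on $n=|g|_{X\cup H}$, where $X\subseteq G$ is chosen so that $H\h(G,X)$ (such $X$ exists by hypothesis); note $n\ge 1$ since $g\notin H$ while $1\in H$. The heart of the argument is an explicit bound on the relative metric $\dl$ on $H\cap H^g$, which then lets us invoke Definition \ref{hedefn}(b); the induction is only there to reduce to the case in which a geodesic of $\G$ from $1$ to $g^{-1}$ does not ``pass through'' $H$.

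Concretely, fix a geodesic $p$ in $\G$ from $1$ to $g^{-1}$, of length $n$. \emph{Core case: $p$ contains no edge of $\Gamma(H,H)$.} I claim $\dl(1,f)\le 2n+1$ for every $f\in H\cap H^g$. Write $h=gfg^{-1}$; since $f\in H^g=g^{-1}Hg$ we have $h\in H$. Concatenate $p$ (from $1$ to $g^{-1}$), the $H$-edge labelled $h$ (from $g^{-1}$ to $g^{-1}h=fg^{-1}$), and the translate $f\bar p$ (from $fg^{-1}$ to $f$). This path has length $2n+1$ and is admissible: the edges of $p$ avoid $\Gamma(H,H)$ by hypothesis; the middle edge is an $H$-edge with endpoint $g^{-1}\notin H$, so it is not an edge of $\Gamma(H,H)$; and an edge of $f\bar p$ joins $fy$ to $fy'$ where $y,y'$ are joined by an edge of $p$, and since $f\in H$ one has $fy,fy'\in H$ iff $y,y'\in H$, so it lies in $\Gamma(H,H)$ only if the corresponding edge of $p$ does. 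Hence $H\cap H^g$ is contained in the ball of radius $2n+1$ about $1$ in $(H,\dl)$, which is finite by Definition \ref{hedefn}(b).

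\emph{Reduction: $p$ contains an edge $e$ of $\Gamma(H,H)$.} Then both endpoints of $e$ lie in $H$, hence are at distance $\le 1$ from $1$ in $\G$; but consecutive vertices of the geodesic $p$ are at distances $j,j+1$ from $1$, so $j+1\le 1$, forcing $e$ to be the initial edge of $p$, joining $1$ to some $v\in H\setminus\{1\}$ (in particular $n\ge 2$, so the base case $n=1$ falls into the core case). Writing $p=e\cdot q$ with $|q|=n-1$, the translate $v^{-1}q$ is a path of length $n-1$ from $1$ to $(gv)^{-1}$, and it is geodesic — a shorter path, translated by $v$ and preceded by the edge $1\to v$, would give a path of length $<n$ from $1$ to $g^{-1}$ — so $|gv|_{X\cup H}=n-1$, and $gv\notin H$. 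Conjugation by $v\in H$ sends $H$ to $H$ and $H^g$ to $H^{gv}$, hence injects $H\cap H^g$ into $H\cap H^{gv}$; by the induction hypothesis $|H\cap H^{gv}|<\infty$, so $|H\cap H^g|<\infty$.

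The only delicate point is the bookkeeping in the reduction step: recognizing that the sole obstruction to the direct argument is a single $\Gamma(H,H)$-edge at the very start of the chosen geodesic, and that absorbing it into $g$ (passing from $g$ to $gv$) simultaneously shortens $g$ and, through injectivity of conjugation by $v$, preserves exactly the finiteness we want. Everything else is routine; in fact this argument uses only that $\G$ is connected together with the local finiteness in Definition \ref{hedefn}(b), not hyperbolicity of $\G$.
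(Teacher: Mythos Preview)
Your argument is correct. Note, however, that the paper does not supply its own proof of this lemma: it merely cites \cite[Proposition~2.8]{DGO}. The approach you take---for each $f\in H\cap H^g$, building an explicit admissible path in $\G$ from $1$ to $f$ whose length is bounded solely in terms of $|g|_{X\cup H}$, and then invoking condition (b) of Definition~\ref{hedefn}---is the standard one and is essentially what appears in \cite{DGO}. Your reduction step can be streamlined: since conjugation by $v\in H$ carries $H\cap H^g$ bijectively onto $H\cap H^{gv}$, one may simply replace $g$ at the outset by an element of minimal $(X\cup H)$-length in the right coset $gH$; your own observation about where a $\Gamma(H,H)$-edge on a geodesic issuing from $1$ must sit then shows the core case applies immediately, and no induction is needed. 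Your closing remark is also correct: only connectedness of $\G$ and the local finiteness condition (b) are used, not hyperbolicity.
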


\paragraph{2.3. Acylindrically hyperbolic groups.}
 Recall that an isometric action of a group $G$ on a metric space $S$ is {\it acylindrical} if for every $\e>0$, there exist $R,N>0$
such that for every two points $x,y$ with $\d (x,y)\ge R$, there are at most $N$ elements $g\in G$ satisfying
$$
\d(x,gx)\le \e \;\;\; {\rm and}\;\;\; \d(y,gy) \le \e.
$$

Given a group $G$ acting on a hyperbolic space $S$, an element $g\in G$ is called \emph{loxodromic} if the map $\mathbb Z\to S$ defined by $n\mapsto g^ns$ is a quasi-isometry for some (equivalently, any) $s\in S$. Every loxodromic element $g\in G$ has exactly $2$ limit points $g^{\pm\infty}$ on the Gromov boundary $\partial S$. Loxodromic elements $g,h\in G$ are called \emph{independent} if the sets $\{ g^{\pm \infty}\}$ and $\{ h^{\pm \infty}\}$ are disjoint.

We will often use the following.

\begin{thm}[{\cite[Theorem 1.1]{Osi13}}]\label{tri}
Let $G$ be a group acting acylindrically on a hyperbolic space. Then exactly one of the following three conditions holds.
\begin{enumerate}
\item[(a)] $G$ is elliptic.
\item[(b)] $G$  contains a loxodromic element $g$ such that $|G:\langle g\rangle |<\infty$.
\item[(c)] $G$ contains infinitely many loxodromic elements that are independent (i.e., have disjoint limit sets on the boundary).
\end{enumerate}
\end{thm}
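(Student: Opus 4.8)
The plan is to combine the classical Gromov classification of isometric group actions on hyperbolic spaces with two applications of acylindricity that rule out, or sharpen, the degenerate cases. An arbitrary isometric action of $G$ on a hyperbolic space $S$ falls into one of five types according to the cardinality of the limit set $\Lambda(G)\subseteq\partial S$ and the dynamics on it: bounded orbits ($\Lambda(G)=\emptyset$); horocyclic/parabolic ($|\Lambda(G)|=1$, $G$ fixes a point of $\partial S$, no loxodromic element); lineal ($|\Lambda(G)|=2$, $G$ preserves a two-point subset of $\partial S$ and contains a loxodromic); focal/quasi-parabolic ($|\Lambda(G)|\ge 3$ but $G$ fixes a point of $\partial S$ and contains a loxodromic); and general type ($|\Lambda(G)|$ infinite, no global fixed point, independent loxodromics present). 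The goal is to show that acylindricity forces the action into exactly one of: bounded (case (a)); lineal with a loxodromic element of finite index (case (b)); or general type (case (c)).

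First I would prove the key lemma that an acylindrical action either has bounded orbits or contains a loxodromic element; this eliminates the horocyclic type and is the technical heart of the argument. Fix a basepoint $s\in S$ and, assuming $Gs$ is unbounded, choose $g\in G$ with $d(s,gs)$ large. If $g$ were not loxodromic, the broken path $\dots,g^{-1}s,s,gs,g^2s,\dots$ would fail to be a quasigeodesic, so by thin-triangle/tripod estimates consecutive pieces must backtrack, forcing $g^{i}s$ to lie within a bounded distance (independent of $g$) of $g^{j}s$ for suitable $0\le i<j$ comparable to $d(s,gs)$. Running this over a range of exponents produces more than $N$ elements $g^{j-i}$ that displace two points at distance $\ge R$ by at most $\varepsilon$, contradicting acylindricity once $d(s,gs)$ is chosen large relative to the constants $R,N$ attached to $\varepsilon$.

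Next I would record a second consequence of acylindricity: two loxodromic elements of $G$ either have the same pair of limit points on $\partial S$ or are independent, i.e. they can never share exactly one endpoint. Indeed, if loxodromics $g,h$ satisfied $g^{+\infty}=h^{+\infty}$ but $g^{-\infty}\ne h^{-\infty}$, then for large $n$ the elements $g^{-an}h^{bn}$ (with $a,b$ chosen so the two factors have comparable translation, or a similar family obtained by ping-pong near the common endpoint) would coarsely fix two far-apart points of a quasi-geodesic for unboundedly many $n$, again contradicting acylindricity. With these two lemmas and the fact that loxodromic elements of an acylindrical action have translation length bounded below, the case analysis is short. If $Gs$ is bounded we are in (a). Otherwise $G$ contains a loxodromic $g$. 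If every loxodromic of $G$ has limit set $\{g^{+\infty},g^{-\infty}\}$, then $G$ preserves this two-point set, hence sits inside its stabilizer $E$; acylindricity bounds the number of elements of $E$ coarsely fixing two points far apart on the axis of $g$, which together with the lower bound on translation lengths forces $|E:\langle g\rangle|<\infty$, so $|G:\langle g\rangle|<\infty$ and we are in (b). If instead some loxodromic $h$ has a different limit set, then by the second lemma $g$ and $h$ are independent, and a ping-pong argument using the North--South dynamics of $g$ and $h$ on $\partial S$ yields infinitely many loxodromics with pairwise distinct limit sets (for instance suitable conjugates $h^{n}gh^{-n}$, $n\gg 0$), hence infinitely many pairwise independent loxodromics — case (c).

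Finally, mutual exclusivity is immediate: (a) excludes (b) and (c) because an elliptic action has no loxodromic element; and (b) excludes (c) because if $\langle g\rangle$ has finite index then every loxodromic $h$ has a nontrivial power in $\langle g\rangle$, so $h$ and $g$ share both limit points and no two loxodromics of $G$ can be independent. I expect the main obstacle to be the first lemma, ``unbounded orbits $\Rightarrow$ loxodromic'': making the backtracking-and-pigeonhole estimate precise with all hyperbolicity constants tracked uniformly in $g$ is the only genuinely delicate point, while the remaining steps are routine once the Gromov classification and the two acylindricity lemmas are in place.
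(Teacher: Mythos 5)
The paper does not prove this statement at all: it is quoted verbatim from \cite[Theorem 1.1]{Osi13}, and your overall plan --- Gromov's classification into bounded, horocyclic, lineal, focal and general type actions, plus two acylindricity lemmas killing the horocyclic and focal types, plus the virtually cyclic analysis of the lineal case --- is exactly the route taken there. However, your proof of the first lemma, which you correctly identify as the technical heart, has a genuine gap. You fix a single element $g$ with $d(s,gs)$ large and argue that if $g$ is not loxodromic, backtracking plus pigeonhole over the powers $g^{j-i}$ yields more than $N$ elements moving two $R$-separated points by at most $\e$. This cannot work as stated, because in an acylindrical action an element with arbitrarily large displacement of a fixed basepoint need not be loxodromic: in a hyperbolic group with torsion acting on its Cayley graph (a proper, hence acylindrical, action), a conjugate $tut^{-1}$ of an involution $u$ moves the basepoint arbitrarily far yet has only two distinct powers, so your counting over powers of the single element $g$ collapses and no contradiction with acylindricity appears. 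The statement your argument would prove (``large displacement implies loxodromic'') is simply false, so it cannot be repaired locally. What is true, and what the actual proof establishes, is the global statement that an acylindrical action with unbounded orbits contains \emph{some} loxodromic element: one first reduces, via the classification, to the horocyclic case (unbounded orbits, no loxodromics, a unique fixed point $\xi\in\partial S$), and then contradicts acylindricity by exhibiting infinitely many \emph{distinct} group elements --- not powers of one element --- each moving two far-apart points on a ray toward $\xi$ by a bounded amount, exploiting that all of $G$ fixes $\xi$ and that the Busemann quasi-character of a horocyclic action is bounded. The missing idea is thus the use of the global fixed point at infinity rather than a per-element estimate.

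The remainder of your outline is sound and agrees with the argument in \cite{Osi13}: the lemma that two loxodromics of an acylindrical action either have equal limit sets or are independent (which rules out the focal type, since focal actions contain loxodromics sharing exactly one endpoint), the bounded-counting argument showing that in the lineal case the stabilizer of the two boundary points is virtually $\langle g\rangle$, the ping-pong production of infinitely many pairwise independent loxodromics in general type, and the mutual exclusivity of (a)--(c) are all as in the original proof. With the horocyclic exclusion replaced by the global argument described above, your proposal becomes a correct account of that proof.
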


\begin{defn}\label{df:acyl-gp}
A group $G$ is called \emph{acylindrically hyperbolic} if it admits a non-elementary acylindrical action on a hyperbolic space.
\end{defn}

In the case of acylindrical actions on hyperbolic spaces being non-elementary is equivalent to the action having unbounded orbits and $G$ being not virtually cyclic by theorem \ref{tri}.

It is easy to see from the definition that for every group $G$ we have $G\h G$ and $K\h G$ for every finite subgroup $K$ of $G$. Following \cite{DGO}, we call a hyperbolically embedded subgroup $H\h G$ \emph{non-degenerate} if $H$ is infinite and $H\ne G$.

The next result is a part of \cite[Theorem 2.2]{Osi13}. In particular, it allows us to apply all results from \cite{DGO} concerning groups with non-degenerate hyperbolically embedded subgroups to acylindrically hyperbolic groups.

\begin{thm}\label{ah-hes}
A group $G$ is acylindrically hyperbolic if and only if $G$ contains a non-degenerate hyperbolically embedded subgroup.
\end{thm}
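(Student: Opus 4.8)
The statement is a biconditional, so there are two directions. The easy direction is ``$\Leftarrow$'': suppose $G$ contains a non-degenerate hyperbolically embedded subgroup $H \h (G,X)$. I would want to produce a non-elementary acylindrical action of $G$ on a hyperbolic space. The natural candidate is the action of $G$ on the Cayley graph $\Gamma(G, X \sqcup H)$, which is hyperbolic by Definition \ref{hedefn}(a). Condition (b) of the definition — that balls in $H$ with respect to the relative metric $\dl$ are finite — is precisely the input needed to verify acylindricity of this action; this is where one invokes the machinery of \cite{DGO} relating the relative metric to the geometry of geodesics in $\Gamma(G, X\sqcup H)$. Non-elementarity comes from $H$ being infinite and $H \neq G$: one checks the orbits are unbounded (otherwise $G$ would be generated by a bounded-radius ball, forcing $|G:H|$ or $|H|$ to be controlled in a way that contradicts non-degeneracy) and that $G$ is not virtually cyclic (a virtually cyclic group cannot contain an infinite proper hyperbolically embedded subgroup, e.g.\ by Lemma \ref{malnorm}).

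For the forward direction ``$\Rightarrow$'', suppose $G$ acts non-elementarily and acylindrically on a hyperbolic space $S$. By Theorem \ref{tri}, since the action is non-elementary, case (c) holds: $G$ contains infinitely many independent loxodromic elements. The goal is to extract from such an element (together with its elementary closure) a non-degenerate hyperbolically embedded subgroup. Concretely, pick a loxodromic $g$ and let $E(g)$ be the maximal elementary subgroup containing $g$ (the stabilizer of the pair $\{g^{+\infty}, g^{-\infty}\}$ acting with bounded behaviour); acylindricity guarantees $E(g)$ is virtually cyclic, hence infinite, and it is proper because there exist loxodromic elements independent from $g$. The claim to establish is $E(g) \h G$. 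This requires checking both that an appropriate Cayley graph $\Gamma(G, X \sqcup E(g))$ is hyperbolic and that the relative metric has the finite-ball property — and the latter is exactly where acylindricity of the original action gets converted into the bounded-ball condition.

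The main obstacle is clearly this last point: producing the hyperbolically embedded structure from an acylindrical action. This is genuinely the technical heart, and rather than re-proving it I would cite it — the excerpt explicitly says Theorem \ref{ah-hes} ``is a part of \cite[Theorem 2.2]{Osi13}''. So a realistic proof proposal is: for ``$\Leftarrow$'', assemble the action on $\Gamma(G, X\sqcup H)$ and verify non-elementarity and acylindricity using Definition \ref{hedefn} and the distortion estimates of \cite{DGO}; for ``$\Rightarrow$'', reduce via Theorem \ref{tri} to the existence of independent loxodromics and quote the construction of $E(g) \h G$ from \cite{Osi13}. One should also remark why $E(g)$ is non-degenerate (infinite by virtual cyclicity plus loxodromicity; proper by independence), since ``non-degenerate'' is exactly the hypothesis needed to make the statement match the definition of acylindrical hyperbolicity (Definition \ref{df:acyl-gp}) rather than the vacuous facts $G \h G$ and $K \h G$ for finite $K$.
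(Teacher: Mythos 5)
The paper itself offers no proof of Theorem \ref{ah-hes}: it is quoted directly as part of \cite[Theorem 2.2]{Osi13}. So your plan to cite \cite{Osi13} for the forward direction (extracting $E(g)\h G$ from a loxodromic element of a non-elementary acylindrical action) coincides with what the paper does, and your remarks on why $E(g)$ is non-degenerate, and why a virtually cyclic group cannot contain a non-degenerate hyperbolically embedded subgroup (via Lemma \ref{malnorm}), are fine.

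The self-contained sketch you give for the ``$\Leftarrow$'' direction, however, has a genuine gap. If $H\h (G,X)$ is non-degenerate, the graph $\Gamma(G,X\sqcup H)$ is hyperbolic, but the action of $G$ on it is in general \emph{not} acylindrical; the paper states this explicitly just before Lemma \ref{Y} (``if $F\h (G,X)$, then the Cayley graph $\Gamma(G,X\sqcup H)$ is not necessarily acylindrical''). Thus the finite-ball condition on $\dl$ from Definition \ref{hedefn}(b) cannot, by itself, be ``precisely the input needed to verify acylindricity of this action'' --- that verification would fail for the graph you chose. The standard repair, which is exactly Osin's Theorem 5.4 (Lemma \ref{Y} here), is to enlarge $X$ to a set $Y$ so that $H\h (G,Y)$ and $\Gamma(G,Y\sqcup H)$ is acylindrical, and then to check non-elementarity of the $G$-action on this new graph, e.g.\ by producing a loxodromic element via \cite[Corollary 6.12]{DGO} and excluding the elementary cases with Lemma \ref{malnorm} and Theorem \ref{tri}, as is done in the proof of Proposition \ref{small}. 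So either route your ``$\Leftarrow$'' argument through Lemma \ref{Y}, or simply cite \cite[Theorem 2.2]{Osi13} for this direction as well, as the paper does.
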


If $G$ is acylindrically hyperbolic, then $G$ has a unique, maximal finite normal subgroup $K(G)$ called the \emph{finite radical} of $G$ \cite[Theorem 6.14]{DGO}.

\begin{lem}\cite{H13}\label{finrad}
Let $G$ be acylindrically hyperbolic. Then $G/K(G)$ is acylindrically hyperbolic and has trivial finite radical.
\end{lem}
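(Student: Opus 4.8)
The plan is to push a suitable Cayley graph action of $G$ forward along the quotient map to get an acylindrical action of the quotient, and then deduce triviality of the finite radical of the quotient from maximality of $K(G)$. Write $K=K(G)$, let $\bar G=G/K$, and let $\pi\colon G\to\bar G$ be the quotient homomorphism.

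\emph{Step 1: $\bar G$ is acylindrically hyperbolic.} Using the characterization of acylindrical hyperbolicity via Cayley graphs \cite[Theorem 1.2]{Osi13}, choose a symmetric generating set $X$ of $G$ such that $\Gamma(G,X)$ is hyperbolic and the action of $G$ on $\Gamma(G,X)$ is acylindrical and non-elementary. Set $\bar X=\pi(X)$, a generating set of $\bar G$, put $D=\max_{k\in K}|k|_X<\infty$, and consider the graph morphism $\Gamma(G,X)\to\Gamma(\bar G,\bar X)$ induced by $\pi$ on vertex sets. Lifting a geodesic $\bar X$-word from $\pi(g)$ to $\pi(h)$ and then correcting the endpoint by an element of $K$ gives, for all $g,h\in G$,
\[
d_{\bar X}\bigl(\pi(g),\pi(h)\bigr)\ \le\ d_X(g,h)\ \le\ d_{\bar X}\bigl(\pi(g),\pi(h)\bigr)+D .
\]
Consequently this morphism is a quasi-isometry, so $\Gamma(\bar G,\bar X)$ is hyperbolic. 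The same inequalities transfer acylindricity: given $\e>0$, apply acylindricity of the $G$-action with constant $\e+D$ to get $R,N$; then for vertices $\bar x,\bar y$ of $\Gamma(\bar G,\bar X)$ with $d_{\bar X}(\bar x,\bar y)\ge R$ and fixed lifts $x,y\in G$, every lift of an element $\bar g\in\bar G$ with $d_{\bar X}(\bar x,\bar g\bar x),d_{\bar X}(\bar y,\bar g\bar y)\le\e$ moves $x$ and $y$ by at most $\e+D$, so the full $\pi$-preimage of the set of such $\bar g$ lies in the corresponding set for the $G$-action and hence has at most $N$ elements. Finally, the left inequality shows that $\bar G$ has unbounded orbits on $\Gamma(\bar G,\bar X)$, and $\bar G$ is not virtually cyclic (else $G$, being an extension of a finite group by a virtually cyclic one, would be virtually cyclic, contradicting its acylindrical hyperbolicity); so by Theorem \ref{tri} the action of $\bar G$ on $\Gamma(\bar G,\bar X)$ is non-elementary, and $\bar G$ is acylindrically hyperbolic.

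\emph{Step 2: $\bar G$ has trivial finite radical.} By Step 1 the finite radical $\bar K=K(\bar G)$ exists. Its full preimage $\pi^{-1}(\bar K)$ is a normal subgroup of $G$, and it is finite, being an extension of the finite group $\bar K$ by the finite group $K$. By maximality of $K$ among finite normal subgroups of $G$, we get $\pi^{-1}(\bar K)\le K$, while $K=\pi^{-1}(\{1\})\le\pi^{-1}(\bar K)$ trivially; hence $\pi^{-1}(\bar K)=K$, i.e.\ $\bar K=\{1\}$.

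The only place where something has to be checked is Step 1, and there the delicate point is the transfer of acylindricity through the finite-to-one map $\pi$; the displayed comparison of metrics reduces this to bookkeeping, so I do not expect a genuine obstacle. Given Step 1, Step 2 is immediate.
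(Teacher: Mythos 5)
Your proof is correct. Note that the paper itself does not prove this lemma at all — it simply cites \cite{H13} — so any self-contained argument is necessarily a different route, and yours is a natural one: push the Cayley-graph action forward along $\pi\colon G\to G/K(G)$. The two inequalities $d_{\bar X}(\pi(g),\pi(h))\le d_X(g,h)\le d_{\bar X}(\pi(g),\pi(h))+D$ are exactly right (the lift-and-correct argument works because $K$ is finite, so $D<\infty$), they make $\pi$ a quasi-isometry, and the transfer of acylindricity via preimages is sound since each $\bar g$ has $|K|$ lifts, all of which land in the $(\e+D)$-coarse stabilizer of the pair of lifted points. Two cosmetic remarks: it is the \emph{right-hand} inequality (i.e.\ $d_{\bar X}\ge d_X-D$), not the left one, that gives unbounded orbits of $\bar G$; and the acylindricity condition is quantified over all points of the space rather than just vertices, so strictly one should add the standard remark that checking it on vertices suffices after adjusting $\e$ and $R$ by a bounded amount. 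Neither affects the argument. Step 2 is exactly the expected maximality argument, and it correctly uses Step 1 (via \cite{DGO}) to know that $K(\bar G)$ exists. What your approach buys is independence from the machinery of \cite{H13}: it uses only Osin's characterization of acylindrical hyperbolicity via Cayley graphs \cite{Osi13}, the trichotomy of Theorem \ref{tri}, and the existence and maximality of the finite radical from \cite{DGO}, all of which are already quoted in Section \ref{sec2} of the paper.
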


\paragraph{2.4. Small subgroups in acylindrically hyperbolic groups.}  Given a group $G$ and a generating set (or an alphabet) $\mathcal A$, we say that the Cayley graph $\Gamma (G,\mathcal A)$ is \emph{acylindrical} if so is the action of $G$ on $\Gamma (G,\mathcal A)$. A hyperbolic space $S$ is called \emph{non-elementary} if $|\partial S|>2$, where $\partial S$ denotes the Gromov boundary of $S$.

\begin{defn}
We call a subgroup $H$ of a group $G$ \emph{small in $G$} (or just \emph{small} if $G$ is understood) if there exists a generating set $X$ of $G$ such that $H\subseteq X$ and $\Gamma (G,X)$ is hyperbolic, non-elementary, and acylindrical.
\end{defn}

Note that every group containing small subgroups is acylindrically hyperbolic.

\begin{lem}\label{faith}
Assume that a group $G$ is acylindrically hyperbolic and $K(G)=\{ 1\}$. Then for every small subgroup $H\le G$, the action of $G$ on the coset space $G/H$ is faithful.
\end{lem}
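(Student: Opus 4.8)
The plan is to identify the kernel of the action of $G$ on $G/H$ with the normal core of $H$, show that this core is finite by exploiting acylindricity of a suitable Cayley graph, and then invoke the hypothesis $K(G)=\{1\}$ to conclude that it is trivial.

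\emph{Reduction to the normal core.} Let $G$ act on the set of left cosets $G/H$ by left translation. The stabilizer of $gH$ is $gHg^{-1}$, so the kernel of this action is $N:=\bigcap_{g\in G}H^g$, the normal core of $H$. Thus $N\lhd G$ and $N\subseteq H$, and it suffices to prove $N=\{1\}$.

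\emph{A uniform displacement bound for $N$.} Since $H$ is small, fix a generating set $X$ of $G$ with $H\subseteq X$ such that $\Gamma(G,X)$ is hyperbolic, non-elementary, and acylindrical; recall that the last condition means that the action of $G$ on $\Gamma(G,X)$ is acylindrical. Because $N$ is normal and $N\subseteq H\subseteq X$, for every $g\in G$ and every $n\in N$ we have $g^{-1}ng\in N\subseteq X$, hence $d_X(g,ng)=|g^{-1}ng|_X\le 1$. In other words, $N$ moves every vertex of $\Gamma(G,X)$ a distance at most $1$.

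\emph{Finishing via acylindricity.} Apply acylindricity of the $G$-action on $\Gamma(G,X)$ with $\varepsilon=1$ to obtain constants $R,\mathcal N>0$. A non-elementary hyperbolic space is unbounded (a bounded hyperbolic space has empty Gromov boundary), so we may choose vertices $x,y$ of $\Gamma(G,X)$ with $d_X(x,y)\ge R$. By the previous step, every $n\in N$ satisfies $d_X(x,nx)\le 1$ and $d_X(y,ny)\le 1$, so acylindricity forces $|N|\le\mathcal N<\infty$. Hence $N$ is a finite normal subgroup of $G$, and therefore $N\subseteq K(G)=\{1\}$, so the action of $G$ on $G/H$ is faithful.

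The argument is essentially immediate once it is set up correctly; the only place that genuinely uses structure is the displacement bound, where \emph{normality} of $N$ — not merely $N\subseteq X$ — upgrades ``$N$ lies in the ball of radius $1$ about the identity'' to ``$N$ moves every point of $\Gamma(G,X)$ a bounded amount,'' which is exactly what acylindricity consumes. So the main (minor) obstacle is to be careful that it is the normal core of $H$, and not $H$ itself, whose orbits are bounded: $H$ need not act with bounded orbits on $\Gamma(G,X)$, but its normal core does.
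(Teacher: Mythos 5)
Your proof is correct. It follows the same overall reduction as the paper: the kernel $N$ of the action on $G/H$ satisfies $N\le H\subseteq X$ and $N\lhd G$, and the point is to show $N$ is finite, whence $N\le K(G)=\{1\}$. Where you diverge is the finiteness step. The paper simply observes that $N$ is elliptic (its orbits lie in the $1$-ball, since $N\subseteq X$) and then cites \cite[Lemma 7.2]{Osi13}, which says that a normal elliptic subgroup of a group with a non-elementary acylindrical action on a hyperbolic space is finite. You instead use normality more directly: since $g^{-1}ng\in N\subseteq X$ for all $g\in G$, $n\in N$, every element of $N$ displaces \emph{every} vertex by at most $1$, which is strictly stronger than ellipticity and lets you feed $\varepsilon=1$ straight into the definition of acylindricity, using only the unboundedness of a non-elementary hyperbolic space to find two far-apart points. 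So your argument is self-contained and avoids the cited lemma, at the cost of exploiting the special feature $N\subseteq X$ (uniform displacement), whereas the paper's citation covers the general situation where one only knows $N$ is elliptic and normal. Both are valid; yours is the more elementary route for this particular lemma.
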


\begin{proof}
Let $N$ denote the kernel of the action of $G$ on $G/H$. Let $X$ be a generating set of $G$ such that $H\subseteq X$ and $\Gamma (G,X)$ is acylindrical, hyperbolic, and non-elementary. Note that $N$ acts elliptically on $\Gamma (G,X)$ as $N\le H\subseteq X$. Recall that for every acylindrical non-elementary action of a group on a hyperbolic space, every normal elliptic subgroup is finite \cite[Lemma 7.2]{Osi13}. Applying this to the group $N$ acting on $\Gamma (G,X)$, we obtain that $N\le K(G)$ and consequently $N=\{ 1\}$.
\end{proof}

The proof of Theorem \ref{main} will make use of one particular example of small subgroups. Recall that given a generating set $X$ of a group $G$, $|\cdot |_X$ and $\d_X$ denote the corresponding word length and metric on $G$, respectively.

\begin{prop}\label{small}
Let $G$ be a group, $F$ a subgroup of $G$, $X$ a subset of $G$ such that $F\h (G,X)$. Suppose that a subgroup $H\le G$ is generated by a subset $Z$ such that $$\sup_{z\in Z} |z|_X<\infty$$ and $$\d_Z (h_1, h_2)\le K \d_{X\cup F} (h_1, h_2)$$ for all $h_1, h_2\in H$. Then $H$ is small. Moreover, for every subgroup $S\le G$ that properly contains $F$, there exists a subset $Y\subseteq G$ such that $\Gamma (G, Y)$ is hyperbolic and acylindrical, $H\subseteq Y$,  and the action of $S$ on $\Gamma (G, Y)$ is non-elementary.
\end{prop}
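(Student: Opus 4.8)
The plan is to build the generating set $Y$ by starting from $X$ and the subgroup $F$, enlarging to a set that makes $H$ sit inside $Y$, and then using the machinery of \cite{DGO} on hyperbolically embedded subgroups together with the acylindricity criterion of \cite{Osi13}. Concretely, first set $Y_0 = X \cup F \cup Z$. Since $\sup_{z\in Z}|z|_X<\infty$, adding $Z$ to $X$ is a ``bounded perturbation'': the Cayley graphs $\Gamma(G, X\cup F)$ and $\Gamma(G, Y_0)$ are $G$-equivariantly quasi-isometric, so $\Gamma(G,Y_0)$ is still hyperbolic and $F\h(G,Y_0\setminus F)$ is preserved (the relative metric $\widehat{\d}$ on $F$ changes only by bounded multiplicative and additive constants, so balls stay finite). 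Next I would pass from $F$ to the larger subgroup $S$: since $S$ properly contains $F$, I want a generating set in which $S$ acts non-elementarily. Here I would invoke that a group with a non-degenerate hyperbolically embedded subgroup is acylindrically hyperbolic (Theorem \ref{ah-hes}) and that $F\h(G,X)$ forces $G$ to be acylindrically hyperbolic with $F$ non-degenerate whenever $F$ is infinite; for the finite-$F$ case one argues directly. The point is to choose $Y \supseteq Y_0$ so that $\Gamma(G,Y)$ is hyperbolic and acylindrical (this uses the construction of acylindrical Cayley graphs from hyperbolically embedded subgroups, e.g. \cite[Theorem 5.4]{Osi13} or the relevant statement in \cite{DGO}), while keeping $Z \subseteq Y$ so that $H \subseteq \langle Z\rangle \subseteq Y$ — note $H\subseteq Y$ requires $H$ itself, as a subset, to lie in $Y$, so one takes $Y = Y_0 \cup H$, again a bounded enlargement once we know $H$-elements have bounded $Y_0$-length, which is exactly what the hypothesis $\d_Z(h_1,h_2)\le K\,\d_{X\cup F}(h_1,h_2)$ is designed to give after combining with $\sup_{z\in Z}|z|_X<\infty$.

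The key estimate is that $\sup_{h\in H}|h|_{Y_0} < \infty$. To see this, take $h\in H$; writing $h$ as a geodesic word in $Z^{\pm1}$ of length $\d_Z(1,h)$, and bounding $\d_Z(1,h) \le K\,\d_{X\cup F}(1,h) \le K\,|h|_{X\cup F}$, we would need $|h|_{X\cup F}$ bounded — which is false in general. So instead the right reading is: for $h_1,h_2\in H$, $\d_Z(h_1,h_2) \le K\,\d_{X\cup F}(h_1,h_2) \le K\,\d_{Y_0}(h_1,h_2)\cdot C$ where $C$ accounts for replacing $Z$-edges by $X$-words; hence the inclusion $(H,\d_Z)\hookrightarrow (G,\d_{Y_0})$ is a quasi-isometric embedding onto the subgraph $\Gamma(H,Z)$, and in particular $H$ is \emph{undistorted} in $G$ with respect to $Y_0$. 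Then adjoining $H$ to $Y_0$ is a bounded perturbation of the metric \emph{restricted to $H$}, but more importantly: once we know $\Gamma(G,Y)$ is hyperbolic and acylindrical, the fact that $H\subseteq Y$ gives that $H$ acts elliptically, and the quasi-isometric embedding statement will be used to verify non-elementarity of the $S$-action is not destroyed. Thus the logical skeleton is: (1) $Y_0$-metric is QI to $(X\cup F)$-metric; (2) enlarge within the $F\h G$ framework to get a hyperbolic acylindrical $\Gamma(G,Y)$ with the $S$-action non-elementary, using \cite{Osi13, DGO}; (3) check $Z$, hence $H$, can be absorbed into $Y$ without destroying hyperbolicity or acylindricity, using undistortion; (4) conclude $H$ is small by Definition, since $H\subseteq Y$, $\Gamma(G,Y)$ is hyperbolic, non-elementary (even the $S$-action is), and acylindrical.

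The main obstacle I expect is step (2)–(3) done \emph{simultaneously}: one must produce a single generating set $Y$ that is acylindrical, contains $H$, and on which $S$ acts non-elementarily. Acylindricity is fragile under enlarging generating sets, so the argument cannot just ``throw in'' $H$ and $Z$ arbitrarily — it must use that these are bounded perturbations (finite $X$-length for $Z$, undistortion for $H$) so that the acylindrical Cayley graph built from the hyperbolically embedded family $\{F\}$ (or $\{S^g\}$-type constructions) survives. The cleanest route is probably: apply the construction giving $\Gamma(G, X\cup F)$-acylindricity-type results to get a hyperbolic acylindrical graph, then cite that a $G$-equivariant quasi-isometry of Cayley graphs preserves both hyperbolicity and acylindricity (acylindricity is a quasi-isometry invariant of cobounded actions, cf. the discussion around acylindrical actions in \cite{Osi13}), and finally that properness of $F\lneq S$ guarantees, via Theorem \ref{tri} applied to $S$ acting on the hyperbolic graph, that $S$ has unbounded orbits and is not virtually cyclic — hence acts non-elementarily — because $F$ already contributes loxodromics or $S\setminus F$ does. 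The bookkeeping ensuring all three properties hold for the \emph{same} $Y$ is the real content; the individual facts are all available from \cite{DGO, Osi13}.
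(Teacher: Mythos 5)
There is a genuine gap at exactly the point you yourself flag as ``the real content.'' Your plan is to take $Y_0=X\cup F\cup Z$ and then absorb $H$ by setting $Y=Y_0\cup H$, justifying this either as a ``bounded enlargement'' or via invariance of acylindricity under $G$-equivariant quasi-isometries. But adjoining the subgroup $H$ to the generating set is not an equivariant quasi-isometry and not a bounded perturbation: $H$ is unbounded in $\d_{Y_0}$ (your own computation shows $\sup_{h\in H}|h|_{Y_0}=\infty$ in general), while in $\Gamma(G,Y_0\cup H)$ it has diameter $1$. Undistortion (quasi-isometric embeddedness) of $H$ does not by itself imply that coning it off preserves hyperbolicity, and it certainly gives no control on acylindricity — the paper even points out that $\Gamma(G,X\sqcup F)$ with $F\h(G,X)$ need not be acylindrical, so acylindricity cannot be obtained by soft perturbation arguments. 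Thus your proposal never actually produces a single generating set that simultaneously contains $H$ and yields an acylindrical hyperbolic Cayley graph; steps (2)--(3) are deferred to unspecified ``bookkeeping,'' and that bookkeeping is precisely the proof. The paper's mechanism is different: the set $Y=Y(D)$ of Lemma \ref{Y} (Osin's Theorem 5.4) is described explicitly as the set of elements with no deep $(1,y;D)$-separating cosets, and one shows $H\subseteq Y(D)$ \emph{already}, with no enlargement: each $h\in H$ is joined to $1$ by a path labelled by a word in $X$ which is a $(\mu,b)$-quasi-geodesic with constants depending only on $K$ and $M=\sup_{z\in Z}|z|_X$ (this is where both hypotheses on $Z$ are used); such a path penetrates no $F$-coset, so by the separating-coset Lemma \ref{sep} there are no $(1,h;D)$-separating cosets once $D\ge C(\mu,b)$, hence $h\in Y(D)$. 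Acylindricity and hyperbolicity of $\Gamma(G,Y\sqcup F)$ then come directly from Lemma \ref{Y}.

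Your treatment of the non-elementarity of the $S$-action is also incomplete. In $\Gamma(G,Y\sqcup F)$ the subgroup $F$ has diameter $1$, so ``$F$ already contributes loxodromics'' is false; unbounded $S$-orbits are obtained by taking $a\in S\setminus F$ and invoking \cite[Corollary 6.12]{DGO} to get $f\in F$ with $af$ loxodromic. Moreover, Theorem \ref{tri} requires in addition ruling out that $S$ is virtually cyclic, which the paper does via almost malnormality of $F$ (Lemma \ref{malnorm}): if $S$ were elementary, the infinite subgroup $F$ would be of finite index, giving an infinite normal $N\le F$ of $S$ and hence $|F^a\cap F|=\infty$ for $a\in S\setminus F$, a contradiction. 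Neither of these two ingredients appears in your sketch.
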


The proof can be easily extracted from \cite{Osi13}. In order to make our paper self-contained, we recall necessary definitions and results from \cite{Osi13}.

Let $\mathcal A=X\sqcup F$ and let $p$ be a (combinatorial) path in $\Gamma (G, \mathcal A)$. A non-trivial subpath $q$ of $p$ is called an \emph{$F$-component} if $\lab (q)$ is a word in the alphabet $F$ and $q$ is not contained in any bigger subpath of $p$ with this property.  If $q$ is an $F$-component of $p$ and $a,b$ are endpoints of $q$, then all vertices of $q$ belong to the same coset $aF=bF$; in this case we say that $p$ \emph{penetrates} $aF$ and call the number $\dl (1, a^{-1}b)$ the \emph{depth of the penetration}. Given two elements $f,g\in G$, we say that a coset $aF$ is \emph{$(f,g;D)$-separating} if there exists a geodesic $p$ from $f$ to $g$ in $\Gamma (G,\mathcal A)$ such that $p$ penetrates $aF$ and the depth of the penetration is greater than $D$.

The term ``separating" is justified by the following result, which is a simplification of \cite[Lemma 4.5]{Osi13}.

\begin{lem}\label{sep}
Let $G$ be a group, $F$ a subgroup of $G$, $X$ a subset of $G$ such that $F\h (G,X)$. Then for any constants $\mu $ and $b$, there exists $C=C(\mu, b)$ such that the following holds. Let $f,g\in G$ and let $aF$ be an $(f,g;C)$-separating coset. Then every $(\mu, b)$-quasi-geodesic in $\Gamma (G, X\sqcup F)$ that connects $f$ to $g$ penetrates $aF$.
\end{lem}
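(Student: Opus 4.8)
The plan is to argue by contradiction with a short‑cycle argument, using two standard ingredients. First, the graph $\Gamma(G,\mathcal A)$ with $\mathcal A=X\sqcup F$ is $\delta$‑hyperbolic (this is part of the definition of $F\h(G,X)$), so quasi‑geodesics are stable: there is a constant $\kappa=\kappa(\mu,b,\delta)$ such that any $(\mu,b)$‑quasi‑geodesic and any geodesic of $\Gamma(G,\mathcal A)$ with the same endpoints lie within Hausdorff distance $\kappa$ of one another. Second, I will use the basic bounded‑penetration estimate for hyperbolically embedded subgroups from \cite{DGO}: there is a constant $D=D(G,X,F)$ such that for every cycle $c$ in $\Gamma(G,\mathcal A)$ and every \emph{isolated} $F$‑component $e$ of $c$ (one whose vertices lie in no coset of $F$ meeting another $F$‑component of $c$) one has $\dl(1,(e)_-^{-1}(e)_+)\le D\,\ell(c)$. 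The value of $C=C(\mu,b)$ will be chosen at the end, large relative to $\kappa$, $D$, $\mu$, $b$.

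Let $p$ be a geodesic from $f$ to $g$ realizing the $(f,g;C)$‑separation, and let $r$ be an arbitrary $(\mu,b)$‑quasi‑geodesic from $f$ to $g$; suppose, for contradiction, that $r$ does not penetrate $aF$. A geodesic in $\Gamma(G,\mathcal A)$ has no two consecutive $F$‑edges, since their product again lies in $F$; hence the $F$‑component $q$ of $p$ realizing the deep penetration is a single edge. Write $u=(q)_-$, $v=(q)_+\in aF$, so that $\dl(1,u^{-1}v)>C$. Since any two vertices of $aF$ are joined by an $F$‑edge and $p$ is a geodesic, $p$ meets $aF$ only at $u$ and $v$; in particular, if $u\ne f$ the edge of $p$ entering $u$ is an $X$‑edge with other endpoint $u^{-}\notin aF$, and symmetrically at $v$. (If $p=q$ is a single edge then $\d_{X\cup F}(f,g)=1$, so $\ell(r)\le\mu(1+b)$; the cycle formed by $q$ and $r$ is then short, its edge $q$ is isolated in it since $r$ has no $F$‑edge in $aF$, and the estimate above already contradicts $\dl(1,u^{-1}v)>C$. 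The cases $u=f$ or $v=g$ are handled the same way, using that $f$, resp.\ $g$, already lies on $r$. So assume $u\ne f$ and $v\ne g$.)

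By stability, choose vertices $u',v'$ on $r$ with $\d_{X\cup F}(u',u^{-})\le\kappa$ and $\d_{X\cup F}(v',v^{+})\le\kappa$, geodesics $s_u\colon u'\to u^{-}$ and $s_v\colon v^{+}\to v'$ of length $\le\kappa$, and let $\rho$ be the subpath of $r$ joining $u'$ to $v'$. Since $\d_{X\cup F}(u',v')\le 2\kappa+3$ and $\rho$ is a subpath of a $(\mu,b)$‑quasi‑geodesic, $\ell(\rho)\le\mu(2\kappa+3+b)$, so the cycle
$$
c=[u^{-}\to u]\cdot q\cdot[v\to v^{+}]\cdot s_v\cdot\rho\cdot s_u
$$
has length at most some $B=B(\mu,b,\delta)$, and $q$ is an $F$‑component of $c$ because it is flanked by $X$‑edges. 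The subpath $\rho$ contributes no $F$‑component lying in $aF$ (such a component would lie inside an $F$‑component of $r$ inside $aF$, contrary to our assumption), so the only $F$‑components of $c$ connected to $q$ occur on $s_u$ or $s_v$. Each such ``accidental'' component sits on a geodesic of length $\le\kappa$ between two vertices of $aF$, so applying the bounded‑penetration estimate to a short auxiliary cycle through it (iterating if several occur) bounds its $\dl$‑width by a constant depending only on $D$ and $\kappa$; absorbing them all into $q$ — replacing the connecting subarcs of $c$, which run between vertices of $aF$, by single $F$‑edges, without increasing $\ell(c)$ — yields a cycle $c^{\ast}$ of length $\le B$ possessing an \emph{isolated} $F$‑component $E\supseteq q$ whose endpoints differ from $u$ and $v$ by elements of $F$ of $\dl$‑length $\le D^{\ast}=D^{\ast}(\mu,b,\delta)$. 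The triangle inequality for $\dl$ now gives
$$
C<\dl(1,u^{-1}v)\le\dl\big(1,(E)_-^{-1}(E)_+\big)+2D^{\ast}\le D\,\ell(c^{\ast})+2D^{\ast}\le DB+2D^{\ast},
$$
so taking $C:=DB+2D^{\ast}+1$ produces a contradiction; hence $r$ penetrates $aF$.

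The Morse estimate and the length accounting for $c$ are routine. The delicate step — the one I would write out carefully — is the absorption: verifying that, after discarding the accidental $F$‑components of the short connecting geodesics, one is left with a genuinely isolated $F$‑component whose $\dl$‑width is still essentially $\dl(1,u^{-1}v)$. This is exactly where one re‑uses the bounded‑penetration estimate on auxiliary short cycles, together with the fact that the coset $aF$ can be crossed efficiently only along $F$‑edges.
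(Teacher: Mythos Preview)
First, note that the paper does not give its own proof of this lemma: it is quoted as ``a simplification of \cite[Lemma~4.5]{Osi13}'' and used as a black box. So there is nothing in the paper itself to compare your argument against.

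Your overall strategy---Morse stability to produce a short cycle through the deep component~$q$, then the bounded--penetration estimate for isolated $F$--components---is the standard one and is sound in outline. But the absorption step, which you yourself flag as the delicate point, has a real gap. After absorbing the accidental components on $s_u$ and $s_v$ into $q$ you obtain an isolated component $E\supseteq q$ and the estimate $\dl\big((E)_-,(E)_+\big)\le DB$; you then assert that $\dl(u,(E)_-)$ and $\dl(v,(E)_+)$ are bounded by some $D^\ast$, appealing to ``the bounded--penetration estimate on a short auxiliary cycle'' through each accidental component. This appeal does not go through. Concretely, suppose $e_u=[w_1,w_2]$ is the accidental $F$--component on $s_u$ (so $w_1,w_2\in aF$). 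Every short cycle through $e_u$ that you can assemble from the available pieces---segments of $s_u$, the edge $[u^-,u]$, and $F$--edges joining $u$ to $w_1$ or $w_2$---necessarily contains a second $F$--edge in $aF$ (namely the $F$--edge needed to close the cycle back to $w_1$ or $w_2$), so $e_u$ is never isolated in such a cycle and the estimate says nothing about $\dl(w_1,w_2)$. Your claim that $s_u$ is ``a geodesic of length $\le\kappa$ between two vertices of $aF$'' is also not correct: its endpoints $u'$ and $u^-$ need not lie in $aF$.

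What your construction \emph{does} control is $\dl(u,w_2)$ and $\dl(v,z_1)$ (via the short admissible arcs of $c$ through $u^-$ and $v^+$) and $\dl\big((E)_-,(E)_+\big)=\dl(w_1,z_2)$ (via the long admissible arc through $\rho$). But no triangle--inequality chain from these three bounds reaches $\dl(u,v)$: the missing links are precisely $\dl(w_1,w_2)$ and $\dl(z_1,z_2)$, the $\dl$--widths of the accidental components themselves, and these remain uncontrolled. Hence the final inequality $\dl(1,u^{-1}v)\le\dl\big(1,(E)_-^{-1}(E)_+\big)+2D^\ast$ is not established, and the contradiction does not close.
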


It is observed in \cite{Osi13} that if $F\h (G,X)$, then the Cayley graph $\Gamma (G,X\sqcup H)$ is not necessarily acylindrical.  However, the following lemma holds true. Its first claim is a particular case of Theorem 5.4 in \cite{Osi13}; the second claim is not stated in that theorem itself, but is obvious from its proof.

\begin{lem}\label{Y}
Let $G$ be a group, $F$ a subgroup of $G$, $X$ a subset of $G$ such that $F\h (G,X)$. Then there exists a set $Y\supseteq X$ such that $F\h (G,Y)$ and $\Gamma (G,Y\sqcup F)$ is acylindrical. More precisely, given a constant $D>0$, let $Y=Y(D)$ denote the set of all elements $y\in G$ such that the set of all $(1,y;D)$-separating cosets is empty. Then for every large enough $D$, the set $Y$ satisfies the above requirements.
\end{lem}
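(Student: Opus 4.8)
The plan is to set $Y=Y(D)$ for a constant $D$ to be taken large at the end, and to verify the two assertions in turn: that $F\h(G,Y)$, and that $\Gamma(G,Y\sqcup F)$ is acylindrical once $D$ is large. I would begin with the two features of $Y(D)$ that make it the right enlargement of $X$. Since an edge of $\Gamma(G,X\sqcup F)$ labelled by a letter of $X$ contains no $F$-component, no coset is $(1,x;D)$-separating for $x\in X$; thus $X\subseteq Y(D)$, so $G$ is generated by $Y\sqcup F$ and $\Gamma(G,X\sqcup F)$ is a subgraph of $\Gamma(G,Y\sqcup F)$. On the other hand, for $f\in F\setminus\{1\}$ the single $F$-edge from $1$ to $f$ is a geodesic of $\Gamma(G,X\sqcup F)$ penetrating the coset $F$ to depth exactly $\dl(1,f)$, so $Y(D)\cap F$ is precisely the $\dl$-ball of radius $D$ in $F$, which is \emph{finite} by Definition \ref{hedefn}(b). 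This finiteness is the mechanism that keeps $Y$ from being too large.

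For the first assertion I would compare $\Gamma(G,Y\sqcup F)$ with $\Gamma(G,X\sqcup F)$ by the ``unfolding'' operation: replace each $Y$-edge, labelled by some $y\in Y(D)$, by a geodesic of $\Gamma(G,X\sqcup F)$ joining its endpoints, all of whose $F$-components have depth $\le D$ by definition of $Y(D)$. Taking $D$ larger than the constant $C(\mu,b)$ of Lemma \ref{sep} attached to the hyperbolicity data of $\Gamma(G,X\sqcup F)$, one checks, as in \cite{Osi13}, that these new edges only shortcut excursions that stay out of the deep parts of the cosets of $F$, so that hyperbolicity of $\Gamma(G,Y\sqcup F)$ is preserved; and, since an admissible path of $\Gamma(G,Y\sqcup F)$ joining two elements of $F$ unfolds to an admissible path of $\Gamma(G,X\sqcup F)$ of length at most a function of $D$ times the original (using that $Y(D)\cap F$ is the finite $\dl$-ball of radius $D$), a $\dl$-ball of $F$ measured in $\Gamma(G,Y\sqcup F)$ lies inside a rescaled $\dl$-ball measured in $\Gamma(G,X\sqcup F)$ and is therefore finite. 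This gives $F\h(G,Y)$; it is the content of \cite[Theorem 5.4]{Osi13}.

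The main work, and the step I expect to be the genuine obstacle, is acylindricity of $\Gamma(G,Y\sqcup F)$, for which I would follow the strategy behind the proof of \cite[Theorem 5.4]{Osi13}. Fix $\varepsilon>0$ and points $u,v$ with $\d_{Y\cup F}(u,v)\ge R$; normalizing $u=1$, fix a $Y\sqcup F$-geodesic $q$ from $1$ to $v$ and unfold its $Y$-edges into geodesics of $\Gamma(G,X\sqcup F)$, obtaining a path $\widehat q$ from $1$ to $v$ all of whose \emph{new} $F$-components have depth $\le D$. Any $g$ with $\d_{Y\cup F}(1,g)\le\varepsilon$ and $\d_{Y\cup F}(v,gv)\le\varepsilon$ coarsely fixes the endpoints of $q$, hence --- by the hyperbolicity of $\Gamma(G,Y\sqcup F)$ established above --- coarsely preserves $q$ and a long central subpath $q'$ of it. The crux is to turn ``$g$ coarsely preserves $q'$ in the $Y\sqcup F$-metric'' into an explicit bound on the number of such $g$: passing through the unfolding and Lemma \ref{sep}, this becomes a statement about $g$ coarsely preserving a long quasi-geodesic of $\Gamma(G,X\sqcup F)$ along which all coset penetrations are either shallow (from the unfolding) or pinned down by $q'$ itself, and then the finiteness of $\dl$-balls in $F$ together with the near-malnormality of conjugates of $F$ (Lemma \ref{malnorm}) bounds the number of such $g$ by some $N=N(\varepsilon)$ once $R$, hence the length of $q'$, is large enough. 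Since this whole argument only forces $D$ to exceed finitely many hyperbolicity-dependent constants, $Y=Y(D)$ works for every sufficiently large $D$, which is the refined statement asserted in the lemma.
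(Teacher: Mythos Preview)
The paper does not give a proof of this lemma: the sentence introducing it reads ``Its first claim is a particular case of Theorem 5.4 in \cite{Osi13}; the second claim is not stated in that theorem itself, but is obvious from its proof.'' So the paper's own argument is a bare citation to \cite{Osi13}. Your proposal correctly identifies this source and then goes further, sketching the shape of the actual proof in \cite{Osi13} --- the containment $X\subseteq Y(D)$, the finiteness of $Y(D)\cap F$ via the locally finite $\dl$-metric, the unfolding of $Y$-edges to compare the two Cayley graphs, and the acylindricity step. That outline matches the strategy in \cite{Osi13} and already supplies more than the present paper does; there is nothing in the paper itself to compare against beyond the citation you already invoke.
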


We are now ready to prove Proposition \ref{small}.

\begin{proof}
Let $M=\sup_{z\in Z} |z|_X$. For every element $h\in H$, let $p_h$ be a shortest path in $\Gamma (G, X\sqcap H)$ connecting $1$ to $h$ such that $\lab (p_h)$ is a word in the alphabet $X$. In particular, $p_h$ does not penetrate any left coset of $F$. It is straightforward to check that $p_h$ is $(\mu, b)$-quasi-geodesic for some $\mu$ and $b$ that only depend on $K$ and $M$ (in fact, we can take $\mu=KM$ and $b=2KM^2$).

Let $D$ be a constant such that the conclusion of Lemma \ref{Y} holds and $D\ge C(\mu,b)$, where $C(\mu,b)$ is given by Lemma \ref{sep}. Then for any $h\in H$, there are no $(1,h; D)$-separating cosets. Therefore, the set $Y=Y_D$ defined in Lemma \ref{Y} contains $H$. By Lemma \ref{Y}, $\Gamma (G,Y\sqcup F)$ is acylindrical.

It remains to show that the action of $S$ on $\Gamma (G, Y\sqcup F)$ is non-elementary. By Theorem \ref{tri}, it suffices to show that $S$ has unbounded orbits and is not virtually cyclic. Let $a\in S\setminus F$. By \cite[Corollary 6.12]{DGO}, there exists $f\in F$ such that $af$ is loxodromic with respect to the action on $\Gamma (G, Y\sqcup F)$. Since $af\in S$, we obtain that the orbits of $S$ are unbounded. Finally assume that $S$ is elementary. Since $F$ is infinite, it must be of finite index in $S$. In particular, $F$ contains an infinite subgroup $N$ such that $N\lhd S$. Then for every $a\in S$, we obtain $|F^a\cap F|\ge |N|=\infty$ which contradicts Lemma \ref{malnorm} if $a\notin F$. Thus the action of $S$ on $\Gamma (G, Y\sqcup F)$ is non-elementary.
\end{proof}


\section{Highly transitive actions of acylindrically hyperbolic groups}


\paragraph{3.1. Outline of the proof.}
The goal of this section is to prove Theorem \ref{main}. We begin with a brief sketch of the proof.

We first note that by Lemma \ref{finrad}, it suffices to deal with the case of trivial finite radical. Assuming $K(G)=\{ 1\}$, we will construct a subgroup $H_\infty\leq G$ such that the action of $G$ on the left cosets of $H_\infty$ will be highly transitive. $H_\infty$ will be defined as the union of a sequence of subgroups $$H_1\leq H_2\leq...$$ Given $H_i$ and two tuples of pairwise distinct cosets $(a_1H_i,..., a_kH_i)$ and $(b_1H_i,..., b_kH_i)$, we define $H_{i+1}$ by choosing an element $g_i\in G$ and setting
\begin{equation}\label{Hi+1}
H_{i+1}=\langle H_i, b_1^{-1}g_ia_1,..., b_k^{-1}g_ia_k\rangle.
\end{equation}
Then clearly $g_ia_jH_{i+1}=b_jH_{i+1}$, so after enumerating all tuples of elements of $G$ and repeating this process inductively we will get that the action of $G$ on the set $\Omega $ of left cosets of $H_\infty$ is highly transitive, provided $|\Omega|=\infty$. To the best of our knowledge, this natural construction first appeared in \cite{BO}. It was also used by Chaynikov \cite{Cha} to prove Theorem \ref{main} in the particular case when $G$ is hyperbolic.

The main difficulty is to ensure that that the action of $G$ on $\Omega$ is faithful. Note that if the elements $g_i$ is not chosen carefully, then we are likely get $H_\infty=G$. In Chainikov's paper \cite{Cha}, the proof of faithfulness of the action is built on the well-developed theory of quasi-convex subgroups of hyperbolic groups. Currently there is no analogue of this theory in the more general context of acylindrically hyperbolic groups, so we choose another approach based on the notion of a small subgroup introduced in the previous section and small cancellation theory in acylindrically hyperbolic groups developed in \cite{H13}.

More precisely, we prove in Lemma \ref{indstep} that if $H_i$ is small for some $i$, then by choosing the element $g_i$ carefully (small cancellation theory is employed here) one can ensure that the subgroup $H_{i+1}$ defined by (\ref{Hi+1}) is still small. In addition, we show that is $B$ is any finite subset of $G$ disjoint from $H_i$, we can keep it disjoint from $H_{i+1}$. Recall also that the action of $G$ on the space of left cosets $G/H$ is faithful whenever $H$ is a small subgroup and $K(G)=\{1\}$ (Lemma \ref{faith}). These results allow us to iterate the above construction in such a way that the resulting action of $G$ on $G/H_\infty$ is faithful.

\paragraph{3.2. The inductive step.}
The main goal of this subsection is to prove the following proposition, which takes care of the inductive step in the proof of Theorem \ref{main}.
Given a group $G$, a subgroup $H$, and two collections of elements $\bar a=(a_1,..., a_k)$ and $\bar (b_1,..., b_k)$ of $G$, we say that the triple $(\bar a, \bar b, H)$ is \emph{admissible} if
\begin{equation}\label{abij}
a_iH\neq a_jH\;\;\; {\rm and}\;\;\; b_iH\neq b_jH
\end{equation}
for all $1\leq i < j\leq k$.

\begin{prop}\label{indstep}
Let $G$ be a group, $H$ a small subgroup of $G$, $\bar a=(a_1, \ldots, a_k)$ and $\bar b=(b_1, \ldots , b_k)$ elements of $G^k$. Assume that the triple $(\bar a, \bar b, H)$ is admissible. Let also $B\subseteq G$ be a finite subset disjoint from $H$. Then there exists $g\in G$ and a subgroup $K\le G$ such that
\begin{enumerate}
\item[(a)] $H\le K$.
\item[(b)] $K$ is small in $G$.
\item[(c)] $ga_iK=b_iK$ for all $1\le i\le k$.
\item[(d)] $K\cap B=\emptyset$.
\end{enumerate}
\end{prop}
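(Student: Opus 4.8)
The plan is to realize $H$ as the group generated by a subset $Z$ satisfying the hypotheses of Proposition~\ref{small}, then enlarge it by adding the new generators $b_i^{-1}ga_i$ in a controlled way so that the enlarged generating set still satisfies those hypotheses. Since $H$ is small, there is a generating set $X_0$ of $G$ with $H\subseteq X_0$ and $\Gamma(G,X_0)$ hyperbolic, non-elementary and acylindrical. By Theorem~\ref{ah-hes} and the structure theory in \cite{DGO,Osi13}, after passing to a suitable generating set we may assume there is an infinite subgroup $F$ with $F\h(G,X)$ for some $X\subseteq G$, chosen so that $H$ is ``$X$-bounded and $(X\cup F)$-undistorted'' in the sense of Proposition~\ref{small}: every element of a generating set $Z$ of $H$ has bounded $X$-length and $\d_Z\le K\,\d_{X\cup F}$ on $H$. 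The first task is to set this up: take $S\le G$ a subgroup properly containing both $F$ and (a conjugate of) $H$, which exists since $G$ is not elementary. Then Proposition~\ref{small} gives, for the current data, a set $Y$ with $\Gamma(G,Y)$ hyperbolic and acylindrical, $H\subseteq Y$, and the $S$-action non-elementary; but the point is that we want the analogous conclusion \emph{after} adding the new generators.

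Next I would choose the element $g$. This is where small cancellation theory in acylindrically hyperbolic groups from \cite{H13} enters. The idea is to pick $g$ so that the elements $b_i^{-1}ga_i$ are ``long and generic'': more precisely, using Theorem~\ref{tri}(c) find many independent loxodromic elements for the $S$-action on $\Gamma(G,Y\sqcup F)$, and select $g$ of the form (roughly) $w$, a long word in such loxodromics, times a correcting element, so that each $b_i^{-1}ga_i$ is a loxodromic element with a long ``$F$-free'' quasigeodesic axis, and so that these axes satisfy a small cancellation condition relative to $H$ and to each other, and relative to the finite set $B$. The small cancellation hypothesis of \cite{H13} then lets me conclude that in $K=\langle H, b_1^{-1}ga_1,\ldots,b_k^{-1}ga_k\rangle$, the new generators together with $H$ form a generating set $Z'$ for which (i) each $z\in Z'$ has bounded $X$-length — automatic for $g$ chosen with controlled $X$-length — and (ii) $\d_{Z'}\le K'\,\d_{X\cup F}$ on $K$, i.e.\ $K$ is undistorted: a word in $Z'$ labelling a path that is short in $\Gamma(G,X\sqcup F)$ must, by small cancellation, already be short in $Z'$. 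Feeding $Z'$ and $K$ back into Proposition~\ref{small} (applied with the same $F$, and with $S$ replaced by a subgroup properly containing $F$, e.g.\ $S$ itself) yields that $K$ is small, giving (a) and (b). Property (c) is immediate from $ga_iK=b_i(b_i^{-1}ga_i)K=b_iK$.

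For (d), the requirement $K\cap B=\emptyset$: note first $H\cap B=\emptyset$ by hypothesis. The small cancellation description of $K$ gives a normal form / geodesic description of its elements in terms of syllables alternating between $H$-syllables and the new generators (and their inverses); an element of $K\setminus H$ then has a quasigeodesic representative in $\Gamma(G,X\sqcup F)$ of length growing with the number of new-generator syllables, so by making the new generators long enough (length exceeding $\max_{b\in B}|b|_{X\cup F}$) we force $K\setminus H$ to avoid $B$. The admissibility hypothesis~(\ref{abij}) is used precisely to guarantee that the new generators $b_i^{-1}ga_i$ are non-trivial and that the relation $ga_iK=b_iK$ imposes no collapse among the cosets we are trying to separate — i.e.\ that the chosen $g$ genuinely moves the tuple without forcing unwanted coincidences; concretely it ensures the elements $a_j^{-1}a_i\notin H$ and $b_j^{-1}b_i\notin H$, so the $k$ new defining relations are ``independent'' and compatible with the small cancellation setup.

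The main obstacle I anticipate is the simultaneous control required in the choice of $g$: it must be made long/generic enough to (1) satisfy a small cancellation condition strong enough that \cite{H13} yields both the undistortedness estimate $\d_{Z'}\lesssim\d_{X\cup F}$ \emph{and} the syllable normal form for~(d), (2) keep $S$ non-elementary on the relevant Cayley graph so Proposition~\ref{small} still applies with the enlarged generating set, and (3) have bounded $X$-length. Reconciling ``long in $X\cup F$'' with ``bounded in $X$'' is the delicate point: the new generators are long words in $F$-letters and loxodromic syllables, so their $X$-length need not be bounded unless the loxodromic syllables are taken from a fixed finite set and the $F$-contributions are absorbed — one has to exploit that small cancellation needs length measured in $\d_{X\cup F}$ (where $F$-syllables count as length one) rather than in $\d_X$. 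Checking that \cite{H13} provides exactly the form of small cancellation quotient/subgroup statement needed here (a Dehn-function or normal-form style conclusion for the subgroup generated by small cancellation elements over $H$, not merely for a quotient) is the technical heart of the argument, and I would expect the bulk of the section's length to be devoted to it.
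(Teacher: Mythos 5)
Your plan has a genuine gap at exactly the point you flag as its ``technical heart'': you need a subgroup-level small cancellation statement inside $G$ --- normal forms and the undistortion estimate $\d_{Z'}\lesssim \d_{X\cup F}$ for $K=\langle H,\, b_1^{-1}ga_1,\ldots,b_k^{-1}ga_k\rangle$, plus the syllable description used for $K\cap B=\emptyset$ --- and no such statement is available in \cite{H13}. The machinery there (Theorem~\ref{SCthm}) is a \emph{quotient} theorem: it controls $G/\ll t^{-1}w\rr$, injectivity on a prescribed ball, and the image of a generating set; it does not give a free-product-like structure or undistortion for the subgroup generated by $H$ together with generically chosen elements of $G$. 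Building that structure directly is essentially the quasiconvexity-style argument Chaynikov used for hyperbolic groups, which the paper explicitly notes has no analogue in the acylindrically hyperbolic setting; so as written, your steps (b) and (d) rest on an unproved (and not-off-the-shelf) result. A side remark: the tension you worry about between ``long in $X\cup F$'' and ``bounded in $X$'' is not the real issue, since the new generators form a finite set and $\sup_{z\in Z'}|z|_X<\infty$ is automatic; the difficulty is entirely in the Lipschitz/normal-form claim.

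The paper's proof is engineered to avoid this. One first passes to the free product $P=G\ast\langle t\rangle$ and sets $L=\langle H, b_1^{-1}ta_1,\ldots,b_k^{-1}ta_k\rangle\le P$: there, free-product normal forms (Lemma~\ref{undisfp}, using admissibility to prevent cancellation of $t$-letters) give $L\cap G=H$ and the Lipschitz estimate for free. With $F=E(g)\h(P,X\cup\{t\})$ for a loxodromic $g$, Proposition~\ref{small} applied in $P$ (with $S=G$) produces $Y$ with $L\subseteq Y$, $\Gamma(P,Y)$ acylindrical and hyperbolic, and $G$ acting non-elementarily, so $G$ is suitable. Only then is the quotient theorem invoked, killing the single relation $t^{-1}w$ with $w\in G$: the quotient is $G$ itself, $t\mapsto g:=w$, $L\mapsto K$, condition (a) of Theorem~\ref{SCthm} gives smallness of $K$, and injectivity on a ball containing $L\cup B$ gives $K\cap B=\emptyset$ since $L\cap B\subseteq H\cap B=\emptyset$. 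In other words, the ``generic choice of $g$'' and the normal forms you want are obtained by first working where they are trivially true and then transporting them by a controlled quotient, rather than by proving a subgroup small cancellation theorem in $G$.
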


We first recall some well-known results and terminology concerning free products.

\begin{thm}\cite[Chapt IV, Theorem 1.2]{LS}\label{nform}
Let $G_1$, $G_2$ be arbitrary groups. Each element of a free product $G_1\ast G_2$ can be uniquely expressed as $g_1g_2...g_n$ where $n\geq 0$, each $g_i$ is a non-trivial element of one of the factors and successive elements $g_ig_{i+1}$ belong to different factors.
\end{thm}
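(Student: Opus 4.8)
The plan is the classical argument: existence of a reduced expression by an easy reduction, and uniqueness by van der Waerden's permutation trick, which lets the group $G_1\ast G_2$ itself ``read off'' the reduced form of any of its elements. (I take $G_1\ast G_2$ to be equipped with the canonical homomorphisms $\iota_i\colon G_i\to G_1\ast G_2$ and the universal property of the coproduct.)

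For existence, recall that $\iota_1(G_1)$ and $\iota_2(G_2)$ generate $G_1\ast G_2$ (immediate from the universal property: the inclusion of the subgroup they generate, composed with the map supplied by the $\iota_i$, is the identity by uniqueness). So every $g\in G_1\ast G_2$ is a product of finitely many elements of $G_1\cup G_2$. Starting from such a product, I repeatedly delete any factor equal to $1$ and replace any two adjacent factors lying in the same $G_i$ by their product in $G_i$; each move strictly decreases the number of factors, so the process terminates in a product $g=g_1\cdots g_n$ (possibly $n=0$, i.e.\ the empty product, when $g=1$) in which every $g_i$ is a non-trivial element of one of the factors and consecutive $g_i$ lie in different factors. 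This is a reduced expression for $g$.

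For uniqueness, let $W$ be the set of all finite (possibly empty) sequences $(g_1,\dots,g_n)$ with each $g_i$ a non-trivial element of $G_1$ or $G_2$ and consecutive entries in different factors. For $a\in G_1$ I define $\sigma_a\colon W\to W$ by ``prepend $a$ and reduce'': if $a=1$ then $\sigma_a=\mathrm{id}$; otherwise, if $w=(g_1,\dots,g_n)$ with $g_1\in G_1$, put $\sigma_a(w)=(ag_1,g_2,\dots,g_n)$ when $ag_1\ne 1$ and $\sigma_a(w)=(g_2,\dots,g_n)$ when $ag_1=1$, while if $w$ is empty or $g_1\in G_2$ put $\sigma_a(w)=(a,g_1,\dots,g_n)$. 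One checks directly that $\sigma_1=\mathrm{id}$ and $\sigma_{ab}=\sigma_a\circ\sigma_b$ for all $a,b\in G_1$, so $a\mapsto\sigma_a$ is a homomorphism $G_1\to Sym(W)$ (in particular each $\sigma_a$ is a bijection, with inverse $\sigma_{a^{-1}}$); defining $\tau\colon G_2\to Sym(W)$ symmetrically, the universal property of $G_1\ast G_2$ yields a homomorphism $\Phi\colon G_1\ast G_2\to Sym(W)$ restricting to $\sigma$ and $\tau$ on the factors. Now if $g=g_1\cdots g_n$ is a reduced expression, then evaluating $\Phi(g)=\Phi(g_1)\circ\cdots\circ\Phi(g_n)$ on the empty sequence and working from the right produces successively $(g_n),(g_{n-1},g_n),\dots,(g_1,\dots,g_n)$ — at each step one is in the ``prepend'' case and $g_i\ne 1$, so no reduction occurs — whence $\Phi(g)$ sends the empty sequence to $(g_1,\dots,g_n)$. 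Thus the sequence $(g_1,\dots,g_n)$ (including the empty one, when $g=1$) is determined by $g$ alone, so two reduced expressions for the same element of $G_1\ast G_2$ coincide.

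The only genuine computation is the identity $\sigma_{ab}=\sigma_a\circ\sigma_b$: this is a finite case analysis according to whether the first entry of $w$ lies in $G_1$ or $G_2$ (or $w$ is empty) and according to which of $b$, $ab$ are trivial, and it is entirely routine. (If one instead wishes to construct $G_1\ast G_2$ from scratch as the set $W$ with multiplication ``concatenate and reduce'', the very same computation is precisely what establishes associativity of that multiplication, after which both the universal property and the normal form follow at once.)
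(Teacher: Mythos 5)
Your proof is correct: existence by the obvious reduction process and uniqueness via the van der Waerden permutation trick (the action of $G_1\ast G_2$ on the set of reduced sequences, built from the universal property) is a complete argument, and the evaluation at the empty sequence does recover the reduced form. The paper itself gives no proof — it simply cites Lyndon--Schupp — and your argument is essentially the one found in that cited source, so there is nothing to compare beyond noting the agreement.
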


The expression $g_1g_2...g_n$ as in the above theorem representing an element $g\in G_1\ast G_2$ is called the \emph{normal form} of the element $g$. The following corollary is an immediate consequence of the uniqueness of normal forms.

\begin{cor}\label{flength}
Let $g\in G_1\ast G_2$ and let $S_i$ be a generating set for $G_i$. Let $g_1g_2...g_n$ be the normal form for $g$, where $g_i\in G_{j_i}$. Then
\[
|g|_{S_1\cup S_2}=\sum_{i=1}^n|g_i|_{S_{j_i}}.
\]
In particular, the embeddings of metric spaces $(G_i, d_{S_i})\hookrightarrow (G_1\ast G_2, d_{S_1\cup S_2})$ are isometric.
\end{cor}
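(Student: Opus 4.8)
The plan is to establish the two inequalities $|g|_{S_1\cup S_2}\le \sum_{i=1}^n|g_i|_{S_{j_i}}$ and $|g|_{S_1\cup S_2}\ge \sum_{i=1}^n|g_i|_{S_{j_i}}$ separately, using nothing beyond the uniqueness of normal forms (Theorem \ref{nform}) and the definition of word length.

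For the upper bound I would simply concatenate shortest representatives of the syllables: for each $i$ pick a word $w_i$ over $S_{j_i}^{\pm 1}$ of length $|g_i|_{S_{j_i}}$ representing $g_i$. Then $w_1w_2\cdots w_n$ is a word over $(S_1\cup S_2)^{\pm 1}$ representing $g_1g_2\cdots g_n=g$ and of length $\sum_{i=1}^n|g_i|_{S_{j_i}}$, so $|g|_{S_1\cup S_2}\le\sum_{i=1}^n|g_i|_{S_{j_i}}$.

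For the lower bound, start from a geodesic word $w$ over $(S_1\cup S_2)^{\pm 1}$ representing $g$, so $|w|=|g|_{S_1\cup S_2}$. Partition $w$ into maximal blocks $w=v_1v_2\cdots v_m$ where each $v_\ell$ is a word over a single $S_{k_\ell}^{\pm 1}$ and $k_\ell\ne k_{\ell+1}$; then $|w|=\sum_\ell|v_\ell|$, each $v_\ell$ represents some $h_\ell\in G_{k_\ell}$, and $h_1h_2\cdots h_m=g$. Now iteratively delete every block with $h_\ell=1$ and, whenever two surviving adjacent blocks lie over the same factor, amalgamate them into one; each such step strictly decreases the number of blocks, so the process terminates, and it neither lengthens the word nor changes the element represented. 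The resulting word $u_1u_2\cdots u_r$ has each $u_s$ representing a non-trivial element of a factor, with consecutive $u_s$ over different factors; by the uniqueness part of Theorem \ref{nform} this forces $r=n$ and, after relabelling, $u_s$ represents $g_s\in G_{j_s}$. Hence
\[
|g|_{S_1\cup S_2}=|w|\ \ge\ \sum_{s=1}^r|u_s|\ \ge\ \sum_{s=1}^r|g_s|_{S_{j_s}}\ =\ \sum_{i=1}^n|g_i|_{S_{j_i}},
\]
the last inequality because $u_s$ is a word over $S_{j_s}^{\pm 1}$ representing $g_s$. Combining the two bounds gives the formula. For the ``in particular'' statement, note that any $h\in G_i$ has normal form with at most one syllable, so the formula yields $|h|_{S_1\cup S_2}=|h|_{S_i}$ (trivially if $h=1$); applying this to $h_1^{-1}h_2$ for $h_1,h_2\in G_i$ shows $d_{S_1\cup S_2}(h_1,h_2)=d_{S_i}(h_1,h_2)$, i.e. the inclusion is isometric.

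The only delicate point is the bookkeeping in the lower bound: verifying that the deletion-and-amalgamation of blocks terminates, does not increase length, and leaves a word in the syllable shape to which Theorem \ref{nform} applies. Everything else is routine.
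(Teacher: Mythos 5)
Your argument is correct: the easy inequality by concatenating geodesic representatives of the syllables, and the reverse inequality by splitting a geodesic word into factor-blocks, deleting trivial blocks and amalgamating same-factor neighbours, then invoking uniqueness of normal forms (Theorem \ref{nform}) to identify the surviving blocks with the syllables $g_1,\dots,g_n$, is exactly the standard verification. The paper offers no separate proof, stating the corollary as an immediate consequence of the uniqueness of normal forms, so your write-up is just a careful elaboration of the same idea and the bookkeeping you flag (termination, non-increase of length, and the alternating shape at the end) is handled adequately.
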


\begin{lem}\label{undisfp}
Let $G$ be a group with a generating set $A$, $H$ a subgroup of $G$ such that $H\subseteq A$. Let $\bar a=(a_1,..., a_k)$, $\bar b=(b_1,..., b_k)$ be elements of $G^k$. Assume that the triple $(\bar a, \bar b, H)$ is admissible and let $P=G\ast \langle t\rangle$,
$$
L=\langle H, b_1^{-1}ta_1,..., b_k^{-1}ta_k\rangle\le P ,
$$
and
$$
Z= H\cup \{ b_1^{-1}ta_1,..., b_k^{-1}ta_k\}\cup\{a_1^{-1}t^{-1}b_1,..., a_k^{-1}t^{-1}b_k\}.
$$
Then $L\cap G=H$ and the natural map $(L, \d_Z)\to (P, \d_{A\cup\{ t\}})$ is Lipschitz.
\end{lem}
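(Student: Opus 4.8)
The plan is to work inside the free product $P = G \ast \langle t\rangle$ and to identify $L$ explicitly as another free product, after which both claims follow from the structure theory of free products (Theorem \ref{nform} and Corollary \ref{flength}). First I would observe that, because the triple $(\bar a, \bar b, H)$ is admissible, the cosets $a_1H, \ldots, a_kH$ are pairwise distinct and likewise $b_1H,\ldots,b_kH$ are pairwise distinct. Consider the elements $s_j := b_j^{-1} t a_j \in P$ for $1 \le j \le k$. I claim that $H$ together with $s_1, \ldots, s_k$ generates a subgroup of $P$ isomorphic to $H \ast F$, where $F$ is free of rank $k$ with basis $\{s_1, \ldots, s_k\}$; equivalently, that the obvious homomorphism $H \ast F(x_1,\ldots,x_k) \to P$ sending $x_j \mapsto s_j$ is injective with image $L$. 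To see this I would take a reduced word $w$ in $H \ast F(x_1, \ldots, x_k)$ that is nontrivial, substitute $s_j$ for $x_j$, and compute its normal form in $P = G \ast \langle t \rangle$: each syllable $s_j^{\pm 1} = b_j^{-1} t^{\pm 1} a_j$ (or with $a_j, b_j$ swapped for the inverse) contributes exactly one letter $t^{\pm 1}$ that cannot cancel, because between two consecutive $t$-letters coming from $s_j^{\e}$ and $s_l^{\e'}$ the intervening $G$-syllable is $a_j b_l^{-1}$ (when $\e = \e' = +1$), $a_j a_l^{-1}$, $b_j^{-1} b_l^{-1}$, or $b_j^{-1} a_l^{-1}$ according to signs, and reducedness of $w$ in the free product — together with $a_jH \ne a_lH$, $b_jH \ne b_lH$ when $j \ne l$, and the fact that a factor $h \in H \setminus \{1\}$ may sit between them — guarantees this $G$-syllable is never trivial and never swallows the $t$-letters. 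Hence the normal form of $w$'s image has positive $t$-length, so it is nontrivial in $P$. This proves the map is injective, and since its image is generated by $H$ and the $s_j$, the image is exactly $L$; intersecting with $G$, the only elements of $L \ast$-form landing in the $t$-length-zero part are words involving no $x_j$ at all, i.e.\ elements of $H$, giving $L \cap G = H$.

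Next, for the metric statement, I would invoke Corollary \ref{flength}. Put the generating set $A$ on $G$ (with $H \subseteq A$) and $\{t\}$ on $\langle t \rangle$, so that $A \cup \{t\}$ generates $P$ and the inclusion $(G, \d_A) \hookrightarrow (P, \d_{A \cup \{t\}})$ is isometric. On $L \cong H \ast F$, the set $Z = H \cup \{s_1,\ldots,s_k\} \cup \{s_1^{-1},\ldots,s_k^{-1}\}$ is a generating set containing $H$ and the free basis of $F$. Given $\l \in L$ with $\d_Z$-length $n$, write $\l$ as a product of $n$ elements of $Z$; I need to bound $|\l|_{A \cup \{t\}}$ linearly in $n$. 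Each generator in $Z$ has bounded image-length in $P$: an element $h \in H \subseteq A$ has $|h|_{A \cup \{t\}} = 1$, while $s_j^{\pm 1} = b_j^{-1} t^{\pm 1} a_j$ has $|s_j^{\pm 1}|_{A \cup \{t\}} \le |b_j|_A + 1 + |a_j|_A \le M$ where $M := 1 + 2\max_j \max(|a_j|_A, |b_j|_A)$ (the max being finite since there are only finitely many $a_j, b_j$). Therefore $|\l|_{A \cup \{t\}} \le M\,\d_Z(1,\l)$ by the triangle inequality, and applying this to $\l = h_1^{-1} h_2$ gives $\d_{A \cup \{t\}}(h_1,h_2) \le M\, \d_Z(h_1,h_2)$ for all $h_1, h_2 \in L$, i.e.\ the natural map $(L,\d_Z) \to (P,\d_{A \cup \{t\}})$ is Lipschitz with constant $M$.

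The main obstacle I anticipate is the careful bookkeeping in the first step: one must verify that no cancellation can collapse the $t$-letters of the image of a reduced word $w$, and the delicate case is when two syllables $s_j^{\e}$, $s_l^{\e'}$ are adjacent in $w$ with no $H$-syllable between them. Here the relevant non-triviality of the sandwiched $G$-syllable is exactly where one uses the admissibility hypothesis \eqref{abij}: for instance $s_j^{-1} s_l = a_j^{-1} t^{-1} b_j b_l^{-1} t a_l$, and the inner $G$-letter $b_j b_l^{-1}$ is trivial only if $b_j = b_l$, which for $j \ne l$ is excluded by $b_jH \ne b_lH$ (and for $j = l$ the word $x_j^{-1} x_l = x_j^{-1}x_j$ would not be reduced); the cases $s_j s_l^{-1}$, $s_j s_l$, $s_j^{-1} s_l^{-1}$ are handled symmetrically, using $a_jH \ne a_lH$ or $b_jH \ne b_lH$ as appropriate. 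Once this case analysis is laid out, the rest is a routine induction on the syllable length of $w$ tracking the $t$-length of the normal form of its image, and the metric statement is then immediate from Corollary \ref{flength} as above.
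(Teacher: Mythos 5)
Your treatment of $L\cap G=H$ via normal forms in $P=G\ast\langle t\rangle$ is essentially the paper's argument, but the metric half of your proposal proves the wrong inequality. You establish $\d_{A\cup\{t\}}(h_1,h_2)\le M\,\d_Z(h_1,h_2)$, which is immediate because every element of $Z$ has bounded $(A\cup\{t\})$-length; this direction is never the issue. What the lemma is actually needed for (it verifies the hypothesis $\d_Z(h_1,h_2)\le K\,\d_{X\cup F}(h_1,h_2)$ of Proposition \ref{small} inside the proof of Proposition \ref{indstep}; note also the label, ``undistorted in the free product'') is the reverse estimate $\d_Z(h_1,h_2)\le C\,\d_{A\cup\{t\}}(h_1,h_2)$, and that is what the paper proves, with $C=3$: writing a minimal $Z$-word $W=h_1r_{i_1}^{\e_1}\cdots h_mr_{i_m}^{\e_m}h_{m+1}$ for $h\in L\setminus H$ (where $r_i=b_i^{-1}ta_i$), one has $|h|_Z\le 3m$, while the $m$ surviving $t$-letters give $|h|_{A\cup\{t\}}\ge m$ by Corollary \ref{flength}. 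Your own normal-form analysis yields this with almost no extra work (the number of $t^{\pm1}$-letters in the $P$-normal form of $h$ equals the number of $s_i^{\pm1}$-syllables of a minimal $Z$-expression), but as written your proposal never states or uses any lower bound on $|h|_{A\cup\{t\}}$, so the substantive half of the lemma is missing.

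There is also a flaw in your injectivity case analysis: you claim the $G$-syllable sandwiched between two consecutive $t$-letters is ``never trivial'' and that all four sign patterns are settled by admissibility. This fails for same-sign adjacencies: for $s_js_l$ the inner letter is $a_jb_l^{-1}$ (or $a_jhb_l^{-1}$), and nothing in \eqref{abij} prevents $a_j=b_l$, since admissibility only compares $a$'s with $a$'s and $b$'s with $b$'s. The correct statement, as in the paper, is that the inner letter can vanish only when the adjacent exponents of $t$ have the same sign, in which case the two $t$-letters merge into $t^{\pm2}$ rather than cancel; admissibility (together with reducedness when $i_{j-1}=i_j$) is what kills cancellation in the opposite-sign cases $a_jha_l^{-1}$ and $b_jhb_l^{-1}$. (Minor sign slips as well: the inner letters should be $b_jb_l^{-1}$ and $b_ja_l^{-1}$, not $b_j^{-1}b_l^{-1}$ and $b_j^{-1}a_l^{-1}$, though your later display has it right.) With the same-sign case repaired and the undistortion inequality added, your identification $L\cong H\ast F_k$ together with Corollary \ref{flength} does give the full lemma, by essentially the paper's route.
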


\begin{proof}

First, if $h\in H\setminus\{1\}$, then $|h|_Z=|h|_{A\cup \{t\}}=1$.   Assume now that $h\in L\setminus H$. Let $$r_i=b_i^{-1}ta_i$$ for $1\leq i\leq k$  and let $W$ be a minimal word in $Z$ representing $h$. Then $$W= h_1r_{i_1}^{\e_1}...h_mr_{i_m}^{\e_m}h_{m+1}$$ for some $m\geq 1$, where each $h_j$ is a (possibly trivial) element of $H$ and $\e_j\in\{\pm1 \}$. Then $|h|_Z=\|W\|=\sum_{i=1}^{m+1}|h_i|_Z+m$, and note that each $|h_i|_Z\in\{0, 1\}$. Let $g_j=u_jh_jv_j$, where
\[
u_j=\begin{cases} a_{i_{j-1}} & \text{if $\e_{j-1}=1$}\\
b_{i_{j-1}} & \text{if $\e_{j-1}=-1$}
\end{cases}
\]
for $2\le j\le m+1$ and
\[
v_j=\begin{cases} b_{i_j}^{-1} & \text{if $\e_j=1$}\\
a_{i_j}^{-1} & \text{if $\e_j=-1$}
\end{cases}
\]
for $1\le j\le m$. We also define $u_1=v_{m+1}=1$. Then
\begin{equation}\label{normform}
h=g_1t^{\e_1}...g_mt^{\e_m}g_{m+1}.
\end{equation}
We show now that consecutive $t$-letters in this expression do not cancel, which means that (\ref{normform}) becomes a normal form for $h$ in the free product $G\ast \langle t\rangle$ after possibly removing the trivial $g_j$ and combining adjacent powers of $t$. Suppose for some $2\leq j\leq m$, $\e_{j-1}=1$ and $\e_{j}=-1$. Then $g_j=a_{i_{j-1}}h_ja_{i_j}^{-1}$. If $i_{j-1}\neq i_j$, then $g_j\neq 1$ since $a_{i_{j-1}}H\neq a_{i_j}H$. If $i_{j-1}=i_j$, then $h_j\neq 1$, for otherwise $r_{j-1}$ and $r_j$ would freely cancel in the word $W$. Hence $g_j\neq 1$ since it is a conjugate of $h_j$ in this case. By a similar argument, it follows that if $\e_{j-1}=-1$ and $\e_j=1$, then $g_j\neq 1$.  Thus, if $g_j=1$, then $\e_{j-1}=\e_j$ and hence the corresponding  $t$-letters cannot cancel with each other.

Since the normal form for $h$ contains at least one $t$-letter, Theorem \ref{nform} gives that $h\notin G$ and hence $L\cap G=H$. Furthermore, by Corollary \ref{flength},
\begin{equation}\label{lengthsum}
|h|_{A\cup\{t\}}=\sum_{j=1}^{m+1}|g_j|_{A}+m\geq m.
\end{equation}


Since each $|h_j|_Z\in\{0, 1\}$, it follows that
\[
|h|_{Z}=\sum_{j=1}^{m+1}|h_j|_{Z}+m\leq 3m\leq 3|h|_{\mathcal A\cup\{t\}}.
\]
\end{proof}

By definition, a group is acylindrically hyperbolic if it admits a non-elementary acylindrical action on some hyperbolic metric space. In \cite{Osi13} it is shown that an acylindrically hyperbolical group G always has such an action on a hyperbolic space of the form $\Gamma(G, A)$ where $A$ is a generating set of $G$.

\begin{defn}
Let $A$ be a generating set of $G$ such that $\Gamma(G, A)$ is hyperbolic and acylindrical. A subgroup $S\leq G$ is called \emph{suitable with respect to A} if the action of $S$ on $\Gamma(G, A)$ is non-elementary and $S$ does not normalize any non-trivial finite subgroup of $G$.
\end{defn}

The following is a modification of \cite[Theorem 7.1]{H13}; note that condition (a) is a simplification of \cite[Theorem 7.1 (d)]{H13} and condition (b) is an immediate consequence of the proof of \cite[Theorem 7.1]{H13}. Recall that $B_A(N)$ denotes the ball of radius $N$ in $G$ with respect to the metric $d_A$.

\begin{thm}\label{SCthm}
Let $A$ be a generating set of $G$ such that $\Gamma(G, A)$ is hyperbolic and acylindrical, and let $S\leq G$ be suitable with respect to $A$. Let $N\in\mathbb N$ and $t\in G$. Then there exists a group $\overline{G}$ and a surjective homomorphism $\gamma \colon G\to\overline{G}$ such that
\begin{enumerate}
\item[(a)] There exists a generating set $\overline{A}$ of $\overline{G}$ such that $\gamma(A)\subseteq \overline{A}$ and $\Gamma(\overline{G}, \overline{A})$ is hyperbolic, acylindrical, and non-elementary.

\item[(b)] $\gamma|_{B_A(N)}$ is injective.

\item[(c)] $\overline{G}=G/\ll t^{-1}w\rr$ for some $w\in S$. In particular, $\gamma(t)\in \gamma(S)$.
\end{enumerate}
\end{thm}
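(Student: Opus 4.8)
The statement is a mild variant of \cite[Theorem 7.1]{H13}, so the plan is to run the argument of that theorem in the special case of a single element $t$ and read off the three conclusions, adding one short post-processing step to produce the generating set demanded in (a). Recall that the mechanism behind \cite[Theorem 7.1]{H13} is a small cancellation quotient: one kills the normal closure of a single, carefully chosen relator of the form $t^{-1}w$ with $w\in S$. The hypothesis that $S$ be suitable plays two roles here. First, via Theorem \ref{tri} and the structure theory of \cite{DGO}, it guarantees that $S$ contains an \emph{infinite} subgroup $E$ — for instance the maximal elementary subgroup of a suitably chosen loxodromic element of $S$, or a free subgroup of rank two inside $S$ — which, after enlarging $A$ to a generating set $X\supseteq A$ with $\Gamma(G,X)$ still hyperbolic, is hyperbolically embedded, $E\hookrightarrow_h(G,X)$, while $S$ still acts non-elementarily on $\Gamma(G,X)$ and still normalizes no nontrivial finite subgroup; one also arranges (Lemma \ref{Y}) that $\Gamma(G,X\sqcup E)$ is acylindrical. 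Second, suitability is precisely the hypothesis under which the small cancellation machinery of \cite{H13} applies.

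I would then choose $w\in E\le S$ to be a \emph{deep} element of $E$ — a high power of a loxodromic element, or a long reduced word when $E$ is free — so that the one-element family $\{t^{-1}w\}$, read over the alphabet $X\sqcup E$ as a bounded $X$-part followed by a single $E$-letter of large depth, satisfies the small cancellation condition required by the main theorem of \cite{H13} with parameters strong enough for what follows, and so that the relator is long enough relative to $N$. Setting $\overline{G}:=G/\ll t^{-1}w\rr$ and letting $\gamma\colon G\to\overline{G}$ be the quotient map, \cite[Theorem 7.1]{H13} together with its proof then yields: the restriction of $\gamma$ to $B_A(N)$ is injective (this is (b), which is exactly why the excerpt records it as an immediate consequence of the proof); $\overline{G}$ is acylindrically hyperbolic, and its proof exhibits the image $\overline{E}$ of $E$ as a \emph{non-degenerate} hyperbolically embedded subgroup of $\overline{G}$ relative to the image generating set $\gamma(X)\supseteq\gamma(A)$; and $\gamma(t)=\gamma(w)\in\gamma(S)$, which is (c).

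It remains to upgrade the generating set to obtain (a). From $\overline{E}\hookrightarrow_h(\overline{G},\gamma(X))$ with $\gamma(A)\subseteq\gamma(X)$, apply Lemma \ref{Y} to the triple $(\overline{G},\overline{E},\gamma(X))$ to get a set $\overline{Y}\supseteq\gamma(X)$ with $\overline{E}\hookrightarrow_h(\overline{G},\overline{Y})$ and $\Gamma(\overline{G},\overline{Y}\sqcup\overline{E})$ acylindrical, and put $\overline{A}:=\overline{Y}\sqcup\overline{E}$. Then $\gamma(A)\subseteq\gamma(X)\subseteq\overline{Y}\subseteq\overline{A}$; the graph $\Gamma(\overline{G},\overline{A})$ is hyperbolic by the definition of a hyperbolically embedded subgroup and acylindrical by Lemma \ref{Y}; and it is non-elementary, since $\overline{G}$ is acylindrically hyperbolic (hence not virtually cyclic) and, by \cite[Corollary 6.12]{DGO} applied with $\overline{E}$ in place of $F$, contains an element loxodromic for this action, so the action has unbounded orbits and Theorem \ref{tri} leaves only case (c). This gives (a).

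The crux is the relator step of the second paragraph. Since $t$ is an arbitrary element of $G$ — not assumed to lie in, or be conjugate into, any hyperbolically embedded subgroup — killing $t^{-1}w$ is a genuine small cancellation quotient rather than an ordinary Dehn filling, and $w$ must be chosen so that three constraints hold simultaneously: the small cancellation condition of \cite{H13} is met, the ball $B_A(N)$ is not collapsed, and the image $\overline{E}$ stays non-degenerate. (The last point is genuinely delicate in degenerate configurations: if $t$ has finite order, a naive choice of $w$ could force $\gamma(w)$, and with it too much of $\gamma(E)$, to become torsion; this is the reason one must choose $E$, say via the argument that $\bigcap_{f}E(f)$ over loxodromics $f\in S$ is a finite normal subgroup of $S$, hence trivial, so that $t$ avoids $E$.) Managing these constraints at once is precisely the work carried out in \cite{H13}, and the proposal is to invoke that work rather than reproduce it; everything else above is routine bookkeeping with hyperbolically embedded subgroups as developed in Section \ref{sec2}.
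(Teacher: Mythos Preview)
Your proposal is correct and matches the paper's approach: the paper does not give a self-contained proof but simply records the statement as a modification of \cite[Theorem 7.1]{H13}, noting that (a) is a simplification of part (d) there and (b) is immediate from its proof, which is exactly the mechanism you sketch. Your explicit post-processing via Lemma \ref{Y} to manufacture the acylindrical non-elementary Cayley graph $\overline{A}$ is a legitimate way to unpack what the paper asserts is already packaged inside \cite[Theorem 7.1(d)]{H13}; the only imprecision is in your parenthetical about arranging ``$t$ avoids $E$'' --- in the actual argument of \cite{H13} the word $w$ is a long alternating word in two independent loxodromics $f_1,f_2\in S$ (your ``free subgroup of rank two'' option), and the small cancellation condition itself, not any avoidance of $t$, is what guarantees non-degeneracy of the image and injectivity on $B_A(N)$.
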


We are now ready to prove the main result of this subsection.

\begin{proof}[Proof of Proposition \ref{indstep}]
Since $H$ is small, there exists a generating set $X\subseteq G$ such that $H\subseteq X$ and $\Gamma (G,X)$ is hyperbolic, acylindrical, and non-elementary. By Theorem \ref{tri}, there exists $g\in G$ loxodromic with respect to the action on $\Gamma (G,X)$. Then $E(g)\h (G,X)$ (\cite[Corollary 3.16]{H13}) and hence $E(g)\h (P, X\cup \{ t\})$ by Lemma \ref{heprod} and Lemma \ref{hehe}.

Let $A=X\cup E(g)$ and let $P$, $L$, and $Z$ be as in Lemma \ref{undisfp}. Obviously $Z$ is bounded with respect to the metric $\d_{X\cup \{ t\}}$. By Lemmas \ref{undisfp} and \ref{small} applied to the group $P$, hyperbolically embedded subgroup $F=E(g)$, generating set $X\cup \{t\}$, and subgroups $S=G$ and $H=L$, we can chose a generating set $Y$ of $P$ such that $L\subseteq Y$,  $\Gamma (P, Y)$ is acylindrical, hyperbolic, and the action of $G$ on $\Gamma (P, Y)$ is non-elementary.

Since $K(G)=\{1\}$, $G$ does not normalize any non-trivial finite normal subgroup of $P$ and hence $G$ is suitable with respect to $Y$. Since $B$ is finite, we can choose $N\in\mathbb N$ such that $B\subseteq B_Y(N)$. Applying Theorem \ref{SCthm} gives a group $\overline{P}$ and a surjective homomorphism $\gamma\colon P\to \overline{P}$. By condition (c) of Theorem \ref{SCthm} and the obvious Tietze transformation, $\overline{P}\cong G$. In particular, the composition of the natural embedding $G\hookrightarrow  P$ and $\gamma$ is the identity map on $G$. Let $g=\gamma(t)$ and $K=\gamma(L)$. Since $H\leq L$, $H=\gamma(H)\leq \gamma(L)=K$. By construction $b_i^{-1}ta_i\in L$ for $1\leq i\leq k$, so the definition of $g$ and $K$ gives that $ga_iK=b_iK$ for $1\leq i\leq k$. Since $L\subseteq Y$, $K\subseteq \gamma(Y)$ and condition (a) of Theorem \ref{SCthm} gives that $K$ is small in $G$. Since $B\subseteq G$ and $B$ is disjoint from $H$, $B$ is also disjoint from $L$ by Lemma \ref{undisfp}. Finally, $L\cup B\subseteq B_Y(N)$, so condition (c) of Theorem \ref{SCthm} gives that $B\cap K=\emptyset$.
\end{proof}

\paragraph{3.3. Proof of Theorem \ref{main}.}
Below we employ the standard notation $x^y=y^{-1}xy$ for elements $x,y$ of a group.

\begin{proof}
By Lemma \ref{finrad} it suffices to prove the theorem in the case $K(G)=\{1\}$. Let $$\bigcup_{k=1}^\infty(G^{k}\times G^{k})=\{(\bar{a}_1, \bar{b}_1),(\bar{a}_2, \bar{b}_2),...\}.$$ If $\bar{a}_i=(a_1,..., a_k)$, $\bar{b}_i=(b_1,..., b_k)$, and $g\in G$, we write $g\bar aH=\bar bH$ to mean $ga_iH=b_iH$ for all $1\le i\le k$. We also enumerate all elements of $G$:
$$
G=\{ 1, g_1, g_2, \ldots \}.
$$

We will construct a sequence of subgroups $H_0\le H_1\le \ldots $
of $G$ and a sequence of elements $u_1,u_2, \ldots $ of $G$ such that the following conditions hold for every $i\in \mathbb N$.

\begin{enumerate}
\item[(a)] $H_i$ is small in $G$.

\item[(b)] For each $1\leq j\leq i$, we have $g_j^{u_j}\notin H_i$.

\item[(c)] If $i>0$ and the triple $(\bar{a}_{i},\bar{b}_{i}, H_i)$ is admissible, then there exists $t_i\in G$ such that $t_i\bar a_iH_i=\bar b_i H_i$.
\end{enumerate}

We proceed by induction. Set $H_0=\{1\}$. Assume now that we have already constructed subgroups $H_1\le \ldots\le H_i$ satisfying (a)--(c) for some $i\ge 0$.

The action of $G$ on the coset space $G/H_i$ is faithful by (a) and Lemma \ref{faith}. Hence there exists $u_{i+1}\in G$ such that $g_{i+1}^{u_{i+1}}\notin H_i$. Let
$$
B=\{ g_1^{u_1}, \ldots, g_{i+1}^{u_{i+1}}\}.
$$
Note that $B$ is disjoint from $H_i$ by part (b) of the inductive assumption and the choice of $u_{i+1}$.

Let $\bar{a}_{i+1}=(a_1,...,a_k)$ and $\bar{b}_{i+1}=(b_1,...,b_k)$. If the triple $(\bar a_{i+1},\bar b_{i+1}, H_i)$ is not admissible, we set $H_{i+1}=H_i$. All inductive assumptions obviously hold in this case. Henceforth we assume that $(\bar a_{i+1},\bar b_{i+1}, H_i)$ is admissible. Then we can apply Proposition \ref{indstep} to the small subgroup $H=H_i$ and let $H_{i+1}=K$. Conditions (a)--(c) for $H_{i+1}$ follow immediately from the proposition.

Let $H_\infty=\bigcup_{i=1}^\infty H_i$, and let $\Omega=G/H_{\infty}$ denote the corresponding coset space. We first notice that the action of $G$ on $\Omega $ is faithful. Indeed assume that some element $g_j$ acts trivially. Then all conjugates of $g_j$ by elements of $G$ belong to $H_\infty $. In particular, $g_j^{u_j} \in H_\infty $ and hence $g_j^{u_j} \in H_i $ for all sufficiently large $i$. However this contradicts (b).

Further, for any two collections of pairwise distinct cosets $a_1H_{\infty},...,a_kH_{\infty}$ and $b_1H_\infty,...,b_kH_\infty$, there exists $i$ such that $\bar{a}_i=(a_1,...,a_k)$, $\bar{b}_i=(b_1,...,b_k)$. Since $H_i\le H_\infty$, the triple $(\bar a_i, \bar b_i, H_i)$ is admissible. Then by (c) we have $t_{i}a_jH_i =b_jH_i$ for all $1\le j\le k$, which obviously implies $t_{i}a_jH_\infty =b_jH_\infty$. Thus the action of $G$ on $\Omega$ is highly transitive.
\end{proof}

\section{Transitivity degrees of mapping class groups, $3$-manifold groups, and graph products}

\paragraph{4.1. Preliminaries on permutation groups.} We begin by reviewing some basic concepts and facts about multiply transitive groups used in this section. For more details we refer the reader to \cite{C,DixMor}.

By ``countable" we mean finite or countably infinite. Let $\Omega$ denote a countable set and let $Sym (\Omega)$ denote the symmetric group on $\Omega$. We will also use the notation $Sym(\mathbb N)$ for the symmetric group on a countable infinite set. A permutation group $G\le Sym(\Omega)$ is \emph{$k$-transitive} if the action of $G$ on $\Omega $ is $k$-transitive; $G$ is \emph{primitive} if it is transitive and does not preserve any non-trivial equivalence relation (or partition) on $\Omega$. Here an equivalence relation on $\Omega $ is called \emph{trivial}, if it is equality or if all elements of $\Omega $ are equivalent.

In the following two lemmas, we collect some well-known properties of primitive and $2$-transitive groups which will be used many times throughout this section. Since the proofs are short and fairly elementary, we provide them for convenience of the reader.

\begin{lem}\label{prim}
Let $G\le Sym(\Omega)$ be a primitive permutation group.
\begin{enumerate}
\item[(a)] Every non-trivial normal subgroup of $G$ is transitive. In particular, if $G$ is infinite then it has no non-trivial finite normal subgroups.
\item[(b)] For every $\omega\in \Omega$, $Stab_G(\omega)$ is a maximal subgroup of $G$.
\end{enumerate}
\end{lem}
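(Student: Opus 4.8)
The plan is to prove the two standard facts about primitive permutation groups by translating primitivity into the language of block systems and subgroup lattices, as is classical. Throughout, fix $\omega\in\Omega$ and write $G_\omega=Stab_G(\omega)$.

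For part (b), I would first recall the block correspondence: if $G_\omega\le H\le G$, then the orbit $H\omega$ is a \emph{block} for the $G$-action, i.e. for every $g\in G$ either $gH\omega=H\omega$ or $gH\omega\cap H\omega=\emptyset$. Indeed, if $gH\omega\cap H\omega\ne\emptyset$, say $gh_1\omega=h_2\omega$ with $h_1,h_2\in H$, then $h_2^{-1}gh_1\in G_\omega\le H$, so $g\in H$ and $gH\omega=H\omega$. Conversely, if $B\ni\omega$ is a block, then $\{g\in G: gB=B\}$ is a subgroup of $G$ containing $G_\omega$ (the latter because $B\ni\omega$ forces $g\omega\in gB\cap B$, hence $gB=B$ for $g\in G_\omega$). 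Since the $G$-translates of a block $B$ with $\omega\in B$ partition $\Omega$ into sets of size $|B|$, and since $G$ is transitive, the equivalence relation "lie in a common translate of $B$" is $G$-invariant; primitivity forces it to be trivial, so either $B=\{\omega\}$ (whence $H=G_\omega$) or $B=\Omega$ (whence $H=G$). Thus no subgroup lies strictly between $G_\omega$ and $G$, which is exactly maximality. (One should also note $G_\omega\ne G$: this holds because $G$ being primitive includes being transitive, and $|\Omega|\ge 2$ since the equality relation and the full relation must be distinct for the triviality condition to be meaningful — if $|\Omega|=1$ the notion is vacuous, so one adopts the convention $|\Omega|\ge 2$.)

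For part (a), let $N\lhd G$ be a non-trivial normal subgroup and consider the $N$-orbits on $\Omega$. Because $N$ is normal, for $g\in G$ and an $N$-orbit $\mathcal O$ the set $g\mathcal O$ is again an $N$-orbit (since $N g\mathcal O=gN\mathcal O=g\mathcal O$), so $G$ permutes the $N$-orbits; as $G$ is transitive, all $N$-orbits have the same size, and the partition of $\Omega$ into $N$-orbits is $G$-invariant. By primitivity this partition is trivial: either every $N$-orbit is a singleton, meaning $N$ acts trivially on $\Omega$, hence $N=\{1\}$ as $G\le Sym(\Omega)$ is faithful, contradicting non-triviality; or there is a single $N$-orbit, i.e. $N$ is transitive. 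The "in particular" clause is then immediate: a finite normal subgroup of $G$, if non-trivial, would be transitive, forcing $|\Omega|$ finite and hence $G$ finite, contrary to hypothesis.

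None of this is really an obstacle — the only subtlety is getting the block/subgroup dictionary stated cleanly and invoking the right form of primitivity (transitive plus no non-trivial invariant partition). I would present part (b) first, since the block correspondence set up there makes part (a) a one-line application of the same "invariant partition is trivial" principle, and I would keep the write-up short, citing \cite{DixMor} for anyone wanting the textbook treatment.
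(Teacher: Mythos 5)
Your proof is correct and follows essentially the same route as the paper: for (a) the $N$-orbits give a $G$-invariant partition which primitivity forces to be trivial, and for (b) the translates of $K\omega$ (your block $H\omega$) give a $G$-invariant partition, so primitivity forces $K\omega=\{\omega\}$ or $K\omega=\Omega$. The extra verification of the block property and the edge-case remarks are fine but not a departure from the paper's argument.
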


\begin{proof}
Let $G$ be primitive and $N\triangleleft G$. The collection of $N$-orbits defines a $G$-invariant partition of $\Omega$, hence by primitivity this partition is trivial. This means that either $N$ acts transitively on $\Omega$ or $N=\{1\}$.
Further, let $Stab_G(\omega)\leq K\leq G$. Then the $G$-translates of the orbit $K\omega$ form a $G$-invariant partition of $\Omega$. By primitivity either $K\omega=\{\omega\}$ or $K\omega=\Omega$. In the former case we have $K=Stab_G(\omega)$; in the latter case $G=K Stab_G(\omega) =K$.
\end{proof}

Recall that a group $G$ has \emph{infinite conjugacy classes}, abbreviated \emph{ICC}, if every nontrivial conjugacy class of $G$ is infinite. Note that ICC groups cannot have non-trivial center or non-trivial finite normal subgroups.

\begin{lem}\label{2-trans}
Let $G\le Sym(\Omega)$ be a $2$-transitive permutation group.
\begin{enumerate}
\item[(a)] $G$ is primitive.
\item[(b)] If $G$ is infinite, then it is ICC.
\end{enumerate}
\end{lem}

\begin{proof}
Part (a) is obvious. To prove (b), let $g\in G\setminus \{1\}$ and let $\omega_0, \omega_1,\ldots $ be distinct elements of $\Omega$. (Note that $\Omega $ is infinite since so is $G$.) Let also $\sigma$ be an element of $\Omega$ such that $g^{-1}(\sigma)\neq \sigma$. Since $G$ is 2-transitive, for all $j\geq 1$ there exists $x_j\in G$ such that $x_j(\omega_0)=g^{-1}\sigma$ and $x_j(\omega_j)=\sigma$. Then $x_j^{-1}gx_j(\omega_0)=\omega_j$,which means that $g^{x_1}, g^{x_2},...$ are all distinct conjugates of $g$. Hence $G$ is ICC.

\end{proof}

The next lemma is also quite elementary. It will be used to bound transitivity degree of various groups from above.

\begin{lem}\label{min}
Let $P\le Sym (\Omega)$ be an infinite primitive permutation group, $T$ a transitive subgroup of $P$, and $R$ a non-trivial normal subgroup of $P$. Suppose that $[T,R]=1$. Then $R$ is an infinite minimal normal subgroup of $P$.
\end{lem}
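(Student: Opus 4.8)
The plan is to derive all three conclusions — that $R$ is infinite, normal, and minimal — from the hypotheses by exploiting the interplay between transitivity of $T$, centralization $[T,R]=1$, and primitivity of $P$. First I would observe that $R$ is infinite: since $P$ is infinite and primitive, Lemma \ref{prim}(a) tells us that $P$ has no non-trivial finite normal subgroup, and $R \lhd P$ is non-trivial by hypothesis, so $R$ must be infinite. (Normality of $R$ in $P$ is given outright, so nothing is needed there.)

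The heart of the argument is minimality. Let $R_0$ be a non-trivial subgroup of $R$ with $R_0 \lhd P$; I want to show $R_0 = R$. The key point is that $R_0$ is itself a non-trivial normal subgroup of the infinite primitive group $P$, hence by Lemma \ref{prim}(a) it acts transitively on $\Omega$. Now fix $\omega \in \Omega$ and consider the stabilizer $Stab_R(\omega)$. Since $[T,R]=1$ and $T$ is transitive, for any $\omega' \in \Omega$ choose $t \in T$ with $t\omega = \omega'$; then $Stab_R(\omega') = t\, Stab_R(\omega)\, t^{-1} = Stab_R(\omega)$ because elements of $T$ commute with elements of $R$. Thus $Stab_R(\omega)$ is independent of $\omega$, i.e. it is contained in the kernel of the $R$-action on $\Omega$, so it is normal in $R$ and fixes every point; being contained in $R \le Sym(\Omega)$ acting faithfully, in fact $Stab_R(\omega) = \{1\}$, meaning $R$ acts freely on $\Omega$. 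The same reasoning applies verbatim to $R_0$ (since $[T,R_0]=1$ as well, $R_0 \le R$): $R_0$ acts freely, and we showed it acts transitively, so $R_0$ acts simply transitively on $\Omega$. But $R$ also acts freely on $\Omega$ and $R_0 \le R$ with $R_0$ already transitive; given $r \in R$ and any basepoint $\omega$, there is $r_0 \in R_0$ with $r_0 \omega = r\omega$, whence $r_0^{-1} r \in Stab_R(\omega) = \{1\}$, so $r = r_0 \in R_0$. Therefore $R_0 = R$, proving $R$ is minimal.

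The step I expect to be the main obstacle — or at least the one requiring the most care — is establishing that $R$ (and likewise $R_0$) acts \emph{freely}. The commuting relation $[T,R]=1$ together with transitivity of $T$ forces the point stabilizers in $R$ to all coincide, but one must then argue that this common stabilizer is trivial: it is a normal subgroup of $R$ contained in the kernel of the action of $R$ on $\Omega$, and since $R \le Sym(\Omega)$ this kernel is trivial. One subtlety worth spelling out is that transitivity of $R_0$ (needed to run the freeness/transitivity argument for the subgroup) genuinely uses $R_0 \lhd P$ and Lemma \ref{prim}(a), not merely $R_0 \lhd R$ — it is essential that $R_0$ is normal in the whole primitive group $P$, which is exactly the hypothesis in the definition of minimal normal subgroup. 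Once freeness and transitivity of both $R$ and $R_0$ are in hand, the conclusion $R_0 = R$ is immediate, and combined with the first paragraph this completes the proof.
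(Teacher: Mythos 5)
Your proposal is correct and follows essentially the same route as the paper: transitivity of $R_0$ comes from primitivity via Lemma \ref{prim}(a), and the commuting relation $[T,R]=1$ together with transitivity of $T$ is used to show that any element of $R$ fixing one point fixes all of $\Omega$ and is hence trivial, forcing $r=r_0\in R_0$. Your reformulation of that step as "$R$ acts freely" is just a mild repackaging of the paper's computation $s(\alpha)=st(\omega)=ts(\omega)=\alpha$, and the infinitude of $R$ is obtained identically from Lemma \ref{prim}(a).
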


\begin{proof}
Let $R_0$ be a non-trivial subgroup of $R$ such that $R_0\lhd P$. Consider any $r\in R$ and fix some $\omega \in \Omega$. By part (a) of Lemma \ref{prim}, $R_0$ is transitive. Hence there exists $r_0\in R_0$ such that $r_0(\omega)=r(\omega)$. That is, $s(\omega)=\omega$ for $s=r^{-1}r_0$. Since $T$ is transitive, for every $\alpha \in \Omega$ there is $t\in T$ such that $\alpha =t\omega$. Since $[T,R]=1$ we obtain $$s(\alpha)=st(\omega)=ts(\omega)=t(\omega)=\alpha.$$  Thus $s$ fixes $\Omega $ pointwise, i.e., $s=1$, which in turn implies $r=r_0\in R_0$. As this is true for every $r\in R$, we obtain $R_0=R$. It remains to note that $R$ is infinite by Lemma \ref{prim} (a).
\end{proof}

\begin{defn}
We say that a group $G$ is \emph{product-like} if $G$ contains non-trivial normal subgroups $A,B$ such that $|A\cap B|=1$.
\end{defn}

The term ``product-like" reflects the fact that such groups often occur as subgroups of direct products having non-trivial intersections with multiples.

A group $G$ is said to \emph{virtually} have some property $\mathcal P$ if a finite index subgroup of $G$ has $\mathcal P$.

\begin{cor}\label{p-like}
Let $G$ be a countably infinite virtually product-like residually finite group. Then $\td (G)=1$.
\end{cor}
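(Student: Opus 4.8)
The plan is to show that a countably infinite, virtually product-like, residually finite group $G$ cannot admit a faithful $2$-transitive action, so that $\td(G)=1$ (the action on a point is always faithful and $1$-transitive, so $\td(G)\geq 1$ trivially). Suppose for contradiction that $G\leq Sym(\Omega)$ is $2$-transitive and faithful. By Lemma \ref{2-trans}(a), $G$ is primitive, and by Lemma \ref{2-trans}(b), $G$ is ICC; in particular $G$ has no non-trivial finite normal subgroup. Let $G_0\leq G$ be a finite-index product-like subgroup, so $G_0$ contains non-trivial normal subgroups $A,B$ with $A\cap B=\{1\}$, whence $[A,B]=1$ (standard: elements of $A$ and $B$ commute because their commutator lies in $A\cap B$).

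The key step is to upgrade this commuting pair inside $G_0$ to a pair of commuting subgroups of $G$ itself, one of which is normal in $G$ and one of which is transitive, so that Lemma \ref{min} applies. I would pass to a normal core: let $N = \bigcap_{g\in G} G_0^g$, a finite-index normal subgroup of $G$. Since $G$ is primitive and $N$ is a non-trivial (indeed finite-index, hence infinite) normal subgroup, Lemma \ref{prim}(a) gives that $N$ is transitive on $\Omega$. Now I want normal subgroups of $N$ playing the roles of $A$ and $B$; replacing $A,B$ by $A\cap N$ and $B\cap N$ keeps them normal in $G_0\cap N$ (or I can work with their normal closures in $N$), and the subtlety is that $A\cap N$ and $B\cap N$ need not be normal in all of $G$. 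To fix this, take $R$ to be the normal closure in $G$ of a non-trivial element of, say, $A\cap N$ — but then I must check $R$ still commutes with a transitive subgroup. A cleaner route: let $C = C_G(B\cap N)$, the centralizer in $G$ of the non-trivial subgroup $B\cap N$. Then $A\cap N \leq C$, so $C\neq\{1\}$; and $C$ is normal in $G$ provided $B\cap N$ is normal in $G$. So the real work is arranging a non-trivial subgroup that is simultaneously normal in $G$ and has non-trivial centralizer in $G$.

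Here is the argument I would actually write. Let $D = B\cap N \neq \{1\}$; it is normal in the finite-index subgroup $G_0\cap N$, hence has finitely many $G$-conjugates $D = D_1, D_2, \dots, D_m$, and $R := \langle D_1,\dots,D_m\rangle$ is a non-trivial normal subgroup of $G$. Similarly $E := A\cap N \neq\{1\}$ has finitely many conjugates generating a non-trivial normal subgroup $R'$ of $G$. Since $[E,D]=1$ and conjugating preserves this, each $D_i$ commutes with each conjugate of $E$ lying in the same... — this is where care is needed, because $D_i = D^{g_i}$ commutes with $E^{g_i}$, not necessarily with $E^{g_j}$. The robust fix is to use residual finiteness together with the ICC property to derive a contradiction directly: by Lemma \ref{min}, if I can produce a non-trivial $R\lhd G$ and a transitive $T\leq G$ with $[T,R]=1$, then $R$ is an infinite minimal normal subgroup. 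But $R$ being normal in $G$ and contained in (a conjugate-closed version of) $N$, while $G$ is residually finite, means $R$ itself is residually finite, and an infinite minimal normal subgroup that is residually finite is impossible: residual finiteness gives a proper finite-index subgroup $R_1 < R$ with $\bigcap_{g} R_1^g$ of finite index in $R$, and — using that $R$ is also ICC as a normal subgroup of the ICC group $G$ (non-trivial conjugacy classes in $G$ meeting $R$ are infinite) — one shows $R$ has a proper non-trivial normal subgroup, contradicting minimality. I expect the main obstacle to be exactly this last bookkeeping: correctly producing, from the product-like finite-index subgroup, a genuinely $G$-normal non-trivial subgroup $R$ together with a transitive $G$-subgroup centralizing it, so that Lemma \ref{min} fires and the residual-finiteness-versus-minimality contradiction closes the argument.
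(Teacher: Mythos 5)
There is a genuine gap. You correctly set up $G_0$, $A$, $B$ with $[A,B]\le A\cap B=\{1\}$, but then you try to apply Lemma \ref{min} with $P=G$, which forces you to manufacture a non-trivial subgroup normal in all of $G$ commuting with a transitive subgroup. As you yourself observe, this does not work: the $G$-conjugates $D^{g_i}$ only commute with the corresponding $E^{g_i}$, normal closures and cores destroy the commuting structure, and the centralizer trick requires the very $G$-normality you are trying to produce. Your proposal ends by naming this as the remaining obstacle, so the argument never closes. The missing idea is that Lemma \ref{min} only needs \emph{some} infinite primitive permutation group $P$, not $G$ itself, and the paper supplies it by citing the fact (\cite[Theorem 7.2D]{DixMor}) that a finite-index subgroup of an infinite $2$-transitive permutation group is primitive. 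Hence $G_0$ is already primitive on $\Omega$, Lemma \ref{prim}(a) makes $B$ transitive, and Lemma \ref{min} applied with $P=G_0$, $R=A$, $T=B$ immediately yields that $A$ is an infinite minimal normal subgroup of $G_0$ --- no passage to $G$-normal subgroups is needed at all.

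Your closing residual-finiteness argument is also shakier than necessary: the intersection of the conjugates of a finite-index subgroup $R_1<R$ need not have finite index in $R$ when $R$ is not finitely generated, and the detour through ICC is not needed. The clean contradiction is: pick $a\in A\setminus\{1\}$; by residual finiteness of $G_0$ (a subgroup of the residually finite $G$) there is a finite-index normal subgroup $M\lhd G_0$ with $a\notin M$; then $A\cap M\lhd G_0$ is properly contained in $A$, hence trivial by minimality, so $A$ embeds into the finite group $G_0/M$, contradicting that $A$ is infinite.
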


\begin{proof}
Suppose that there exists a faithful, $2$-transitive action of $G$ on a set $\Omega$. By our assumption, $G$ contains a finite index subgroup $G_0$ and non-trivial subgroups $A,B\lhd G_0$ such that $|A\cap B|=1$. By \cite[Theorem 7.2D]{DixMor}, a finite index subgroup in an infinite $2$-transitive permutation group is primitive. Thus $G_0$ acts primitively on $\Omega$. By Lemma \ref{prim} (a), $B$ is transitive. As $A$ and $B$ are normal in $G_0$, we have $[A,B] \le A\cap B=\{ 1\}$. Applying Lemma \ref{min} for $P=G_0$, $R=A$, and $B=T$, we obtain that $A$ is an infinite minimal normal subgroup of $G_0$. However existence of such a subgroup obviously contradicts residual finiteness of $G_0$.
\end{proof}

Note that there are many $2$-transitive solvable groups (e.g., affine groups of fields, see the discussion of affine groups after Question \ref{q1}). However, such groups cannot be residually finite.

\begin{cor}\label{sol}
Let $G$ be an infinite residually finite virtually solvable group. Then $\td (G)=1$.
\end{cor}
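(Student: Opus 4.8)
The goal is to deduce Corollary \ref{sol} from Corollary \ref{p-like}, so the plan is to show that an infinite virtually solvable group $G$ is virtually product-like. First I would reduce to the solvable case: pass to a finite index solvable subgroup $G_0 \le G$. Since being product-like is inherited by the ambient group from a finite index subgroup in the sense needed (the definition of virtually product-like only requires \emph{some} finite index subgroup to be product-like), it suffices to produce inside $G_0$ a further finite index subgroup that is product-like. Note also $G_0$ is still infinite and still residually finite, so Corollary \ref{p-like} will apply once we establish the product-like property.

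The key step is the following structural observation about infinite solvable groups. Let $G_0$ be infinite and solvable with derived series $G_0 = G_0^{(0)} \rhd G_0^{(1)} \rhd \cdots \rhd G_0^{(n)} = \{1\}$, and let $A = G_0^{(n-1)}$ be the last non-trivial term, so $A$ is a non-trivial abelian normal subgroup of $G_0$. If $A$ is infinite, I would argue directly: an infinite abelian group contains a characteristic (hence normal in $G_0$) subgroup that splits off a complement at least up to finite index, or more simply, I want two non-trivial normal subgroups of $G_0$ with trivial intersection. The cleanest route: if $A$ has an element of infinite order, then either $A$ is not cyclic-by-finite (and one finds two independent cyclic subgroups which, after taking their normal closures and intersecting with the abelian group $A$, can be arranged disjoint) or $A$ is virtually cyclic; and if $A$ is finite, then $A$ itself is a non-trivial finite normal subgroup, so we look one level up. The honest way to organize this is: either $G_0$ has a non-trivial finite normal subgroup $N$, in which case residual finiteness gives a finite index subgroup $G_1 \le G_0$ with $N \cap G_1 = \{1\}$; but wait, that alone does not give product-like. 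So instead I would use: an infinite solvable group which is \emph{not} virtually $\mathbb Z$ and has a $2$-transitive action leads to a contradiction, by combining Lemma \ref{min} with residual finiteness exactly as in Corollary \ref{p-like}, while the virtually $\mathbb Z$ case is handled separately since $\mathbb Z$ and its finite extensions have transitivity degree $1$ (a $2$-transitive action would force the group to be ICC by Lemma \ref{2-trans}(b), but infinite virtually cyclic groups are not ICC).

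Let me restructure to make the logic airtight. Suppose for contradiction $G$ admits a faithful $2$-transitive action on $\Omega$. Pass to a finite index solvable $G_0$; by \cite[Theorem 7.2D]{DixMor} $G_0$ acts primitively on $\Omega$, and it remains infinite, residually finite, and solvable. Let $A$ be the last non-trivial term of the derived series of $G_0$, an abelian normal subgroup. By Lemma \ref{prim}(a), $A$ is transitive and infinite. Now apply Lemma \ref{min} with $P = G_0$, $T = A$, and $R = A$ (legitimate since $A$ is abelian, so $[T,R] = [A,A] = 1$): this yields that $A$ is an infinite \emph{minimal} normal subgroup of $G_0$. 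But an infinite minimal normal subgroup of a residually finite group cannot exist: by residual finiteness there is a finite index normal subgroup $M \lhd G_0$ with $A \not\le M$, whence $A \cap M$ is a proper subgroup of $A$ normal in $G_0$, and it is non-trivial because $A \cap M$ has finite index in the infinite group $A$. This contradicts minimality and completes the proof.

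The main obstacle — and the reason a reader might stumble — is recognizing that one should take $A$ to be the \emph{penultimate} (bottom) term of the derived series rather than, say, $G_0'$: one needs $A$ abelian so that $[A,A] = 1$ lets us invoke Lemma \ref{min} with $R = T = A$, and one needs $A$ non-trivial, which forces it to be the last non-trivial derived term. Everything else is a routine repackaging of the argument already used for Corollary \ref{p-like}, with the ``product-like'' hypothesis replaced by the weaker observation that solvability alone furnishes an abelian normal subgroup playing simultaneously the roles of the commuting transitive subgroup and the minimal normal subgroup.
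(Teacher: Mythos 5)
Your restructured argument is correct and is essentially the paper's proof: both produce a non-trivial abelian normal subgroup $A$ of a finite-index solvable subgroup, apply Lemma \ref{min} with $R=T=A$ to conclude that $A$ is an infinite minimal normal subgroup, and derive a contradiction with residual finiteness. The only cosmetic difference is that the paper takes $G_0$ normal in $G$ and $A$ a maximal abelian normal subgroup of $G_0$ (hence normal in $G$), so that Lemma \ref{min} applies with $P=G$ directly, whereas you stay inside $G_0$ and invoke \cite[Theorem 7.2D]{DixMor} to get primitivity of the finite-index subgroup.
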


\begin{proof}
Let $G_0$ be a solvable normal subgroup of $G$ of finite index and let $A$ be the maximal abelian normal subgroup of $G_0$. Since $G_0$ is solvable $A\ne \{ 1\}$. Since $A$ is characteristic in $G_0$, it is normal in $G$. Assume that $G$ acts faithfully and $2$-transitively on an infinite set $\Omega$. Then $A$ is transitive by Lemma \ref{prim} (a). Since $A$ is abelian, we can apply Lemma \ref{min} to $P=G$ and $R=T=A$. As above, we conclude that $A$ is an infinite minimal normal subgroup of $G$, which contradicts residual finiteness of $G$.
\end{proof}

\paragraph{4.2. Mapping class groups}
Our main reference for mapping class groups is \cite{FM}. We review some basic definitions and necessary facts below.

Given $g,n,p\in \mathbb N\cup \{ 0\}$, let $S=\Sigma _{g,n,p}$ be a $p$ times punctured compact, connected, orientable surface of genus $g$ with $n$ boundary components. We also write $\Sigma_{g,p}$ for $\Sigma_{g,0,p}$. By the the \emph{mapping class group} of $S$, denoted ${\rm Mod}(S)$, we mean the group of isotopy classes of orientation preserving homeomorphisms of $S$; both homeomorphisms and isotopies are required to restrict to the identity on $\partial S$ (however we do allow homeomorphisms to permute the punctures).

Given some $S=\Sigma_{g,n,p}$ with $n\ne 0$, we can glue a once punctured closed disc to each boundary component of $S$. Let $S^\prime= \Sigma_{g,p+n}$ be the resulting surface. Assume, in addition, that $S$ is not a disc nor an annulus. Then there is a \emph{capping homomorphism} $\kappa\colon {\rm Mod}(S) \to {\rm Mod} (S^\prime)$ whose kernel is generated by Dehn twists corresponding to components of $\partial S$ \cite[Sections 3.6.1, 3.6.2]{FM}. Recall that a \emph{Dehn twist} $T_a$ around a curve $a$ commutes with every $f\in {\rm Mod}(S)$ fixing $a$. In particular, this implies that $Ker(\kappa)$ is a central subgroup of ${\rm Mod}(S)$.

We say that a surface $S=\Sigma_{g,p}$ is \emph{exceptional} if $3g+p\le 4$. Thus exceptional surfaces are spheres with at most $4$ punctures and tori with at most $1$ puncture. The following is a complete list of mapping class groups of exceptional surfaces (see \cite[Chapter 2]{FM}):
$${\rm Mod}(\Sigma _{1,0})= {\rm Mod}(\Sigma _{1,1})= SL(2, \mathbb Z),$$
$${\rm Mod}(S_{0,0})={\rm Mod}(S_{0,1})=\{1\},$$
and
$${\rm Mod}(\Sigma _{0,2})=\mathbb Z_2,\;\;\; {\rm Mod}(\Sigma _{0,3})=S_3, \;\;\; {\rm Mod}(\Sigma _{0,4})= (\mathbb Z_2 \times \mathbb Z_2)\rtimes PGL(2, \mathbb Z).$$

Mapping class groups of non-exceptional surfaces are acylindrically hyperbolic since they act acylindrically on the corresponding curve complexes, which are hyperbolic \cite{MM,Bow}; mapping class groups of exceptional surfaces are either finite or non-virtually cyclic and hyperbolic. Hence we obtain the following (see Section 8 of \cite{Osi13}).

\begin{lem}\label{Mod-ah}
${\rm Mod}(\Sigma_{g,p})$ is acylindrically hyperbolic for all $(g,p)\notin \{ (0,0), (0,1), (0,2), (0,3)\}$.
\end{lem}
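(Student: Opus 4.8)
The plan is to read off the statement of Lemma~\ref{Mod-ah} as a direct corollary of the dichotomy between exceptional and non-exceptional surfaces, together with known facts about acylindrical hyperbolicity. The set $\{(0,0),(0,1),(0,2),(0,3)\}$ to be excluded is precisely the list of closed-or-punctured spheres $\Sigma_{0,p}$ with $p\le 3$, for which ${\rm Mod}(\Sigma_{0,p})$ is trivial, trivial, $\mathbb Z_2$, or $S_3$, hence finite and not acylindrically hyperbolic (an acylindrically hyperbolic group is infinite and, in fact, not virtually cyclic, by Definition~\ref{df:acyl-gp}). So these cases genuinely must be removed. Everything else splits into two families, and I would treat them in turn.

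First, the exceptional non-sphere cases: $\Sigma_{1,0}$ and $\Sigma_{1,1}$, whose mapping class groups are both $SL(2,\mathbb Z)$, and $\Sigma_{0,4}$, whose mapping class group is $(\mathbb Z_2\times\mathbb Z_2)\rtimes PGL(2,\mathbb Z)$. In each of these cases the group is virtually free and non-elementary (it contains a non-abelian free subgroup of finite index), hence it is a non-elementary hyperbolic group that is not virtually cyclic; such a group acts acylindrically on its own Cayley graph (with respect to a finite generating set) with unbounded orbits, so it is acylindrically hyperbolic by Definition~\ref{df:acyl-gp}. This is the content of the remark in the excerpt that ``mapping class groups of exceptional surfaces are either finite or non-virtually cyclic and hyperbolic.'' Second, the non-exceptional cases $3g+p\ge 5$: here the curve complex $\mathcal C(\Sigma_{g,p})$ is a hyperbolic space by Masur--Minsky \cite{MM} (and Bowditch \cite{Bow}), the action of ${\rm Mod}(\Sigma_{g,p})$ on $\mathcal C(\Sigma_{g,p})$ is acylindrical by Bowditch \cite{Bow}, and it has unbounded orbits with the group not virtually cyclic, so again Definition~\ref{df:acyl-gp} applies. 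Both of these strands are exactly the content of ``Section~8 of \cite{Osi13},'' which is cited, so I would simply invoke that reference for the uniform statement and not reprove acylindricity of the curve complex action.

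The only real bookkeeping point is to double-check the combinatorics: a surface $\Sigma_{g,p}$ is non-exceptional iff $3g+p\ge 5$, and the exceptional ones are $3g+p\le 4$, i.e.\ $(g,p)\in\{(0,0),(0,1),(0,2),(0,3),(0,4),(1,0),(1,1)\}$. Of these seven, the four with $p\le 3$ and $g=0$ give finite mapping class groups and are excluded in the statement, while $(0,4)$, $(1,0)$, $(1,1)$ give virtually free non-elementary groups and are therefore acylindrically hyperbolic. Hence for every $(g,p)\notin\{(0,0),(0,1),(0,2),(0,3)\}$ the group ${\rm Mod}(\Sigma_{g,p})$ is acylindrically hyperbolic, which is the claim. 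I do not expect any genuine obstacle here; the ``hard part'' is purely referential, namely that one must rely on \cite{MM,Bow} for hyperbolicity and acylindricity of the curve complex action and on \cite{Osi13} for packaging these into the statement that the resulting action is non-elementary. No new argument is needed beyond organizing the case split and citing these results.
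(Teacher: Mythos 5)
Your proposal is correct and follows essentially the same route as the paper: the paper likewise disposes of the non-exceptional cases via the acylindrical action on the hyperbolic curve complex (Masur--Minsky, Bowditch, packaged in Section 8 of \cite{Osi13}) and of the remaining exceptional cases by noting their mapping class groups are either finite (the excluded quadruple) or non-virtually cyclic hyperbolic. No gaps; the case bookkeeping matches the paper's.
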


Thus to describe highly transitive mapping class groups we need to know which mapping class groups contain  non-trivial finite normal subgroups. The following lemma is probably known to specialists. However we were unable to find a reference, so we prove it here. The idea of the proof was suggested by Dan Margalit.

\begin{lem}\label{finsub}
If $3g+p\ge 5$ and $(g,p)\ne (2,0)$, then ${\rm Mod}(\Sigma_{g,p})$ has no non-trivial finite normal subgroups.
\end{lem}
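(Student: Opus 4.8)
The plan is to show that a non-trivial finite normal subgroup $N \lhd {\rm Mod}(\Sigma_{g,p})$ must be central, and then invoke the known (and small) structure of the center of mapping class groups to rule it out under the hypotheses $3g+p \geq 5$ and $(g,p) \neq (2,0)$.

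First I would recall the standard fact (e.g. from \cite{FM}) that under the stated hypotheses ${\rm Mod}(\Sigma_{g,p})$ acts faithfully on the curve complex $\mathcal{C}(\Sigma_{g,p})$, which is connected and has infinite diameter, and that the action contains pseudo-Anosov (loxodromic) elements; equivalently one may use Lemma \ref{Mod-ah} together with Theorem \ref{ah-hes} to get a non-degenerate hyperbolically embedded subgroup. The key mechanism is that a finite normal subgroup $N$ must act elliptically on any hyperbolic space on which ${\rm Mod}$ acts acylindrically and non-elementarily, hence by \cite[Lemma 7.2]{Osi13} (already cited in the proof of Lemma \ref{faith}) we get $N \leq K({\rm Mod}(\Sigma_{g,p}))$. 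So it suffices to show $K({\rm Mod}(\Sigma_{g,p})) = \{1\}$. Since the finite radical is finite and normal, and ${\rm Mod}$ is ICC in the relevant range (every non-trivial conjugacy class of an acylindrically hyperbolic group with a non-elementary action is infinite — a finite normal subgroup would contradict this directly), we conclude $N = \{1\}$. Alternatively, and perhaps more in the spirit of Margalit's suggestion, one argues directly: a finite normal subgroup $N$ is centralized by all pseudo-Anosov elements (since $N$ fixes the pair of fixed points of each pseudo-Anosov on the boundary of $\mathcal{C}$, one sees each element of $N$ commutes with a suitable power), and pseudo-Anosovs generate a finite-index — indeed the whole — subgroup in the relevant sense, forcing $N \leq Z({\rm Mod}(\Sigma_{g,p}))$.

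Then I would finish by quoting the computation of the center: for $\Sigma_{g,p}$ with $g \geq 1$ the center of ${\rm Mod}(\Sigma_{g,p})$ is trivial except for the closed genus-$2$ case ${\rm Mod}(\Sigma_{2,0})$, where it is $\mathbb{Z}_2$ generated by the hyperelliptic involution, and for $g = 0$ the center is trivial once $p \geq 5$ (the sphere braid-group-type relations; the hyperelliptic-type central element is non-trivial only for small $p$). This is exactly why the hypotheses exclude $(g,p) = (2,0)$, and why $3g+p \geq 5$ rather than a weaker bound is needed (to avoid the exceptional surfaces of Lemma \ref{Mod-ah} and the small-$p$ sphere cases where either ${\rm Mod}$ is finite or has non-trivial center). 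Combining, $N$ is a non-trivial finite subgroup of a trivial center, a contradiction.

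The main obstacle I expect is the step showing $N$ is central — i.e. upgrading "$N$ acts elliptically, hence lies in the finite radical" to "$N$ is central" — which is really the content of the claim that the finite radical $K({\rm Mod}(\Sigma_{g,p}))$ is trivial. The cleanest route is probably the ICC observation: ${\rm Mod}(\Sigma_{g,p})$ is acylindrically hyperbolic in the stated range with no central torsion coming from boundary Dehn twists (there are no boundary components here, $n = 0$), so one needs a clean reference or short argument that its finite radical vanishes. Care is needed precisely at the boundary cases $(g,p)$ with $3g+p = 5$ and $g \in \{0,1\}$ to ensure we are genuinely in the non-elementary acylindrically hyperbolic (not finite, not virtually cyclic) regime; Lemma \ref{Mod-ah} handles this but one should double-check that $K = \{1\}$ there rather than merely that the group is acylindrically hyperbolic, since acylindrical hyperbolicity alone permits a non-trivial finite radical.
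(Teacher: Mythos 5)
Your reduction does not actually reach the statement. The first route is circular: a finite normal subgroup automatically lies in the finite radical, so ``it suffices to show $K({\rm Mod}(\Sigma_{g,p}))=\{1\}$'' is just a restatement of the lemma, and the claim you use to finish it --- that an acylindrically hyperbolic group with a non-elementary action is ICC --- is false in general (e.g.\ $F_2\times\mathbb{Z}_2$ acts acylindrically and non-elementarily on a tree and has a central involution). Acylindrical hyperbolicity never rules out a non-trivial finite radical, as you yourself concede in your last sentence, so no appeal to Lemma~\ref{faith}-style elliptic/normal arguments or to Theorem~\ref{tri} can do the work here. The second route has the same missing step in a different guise: from normality and finiteness of $N$ you do get that some power $g^k$ of each pseudo-Anosov $g$ centralizes $N$, hence each $n\in N$ preserves $\{g^{\pm\infty}\}$ and lies in the virtually cyclic subgroup $E(g)$; but commuting with $g^k$ does not give commuting with $g$, and ``pseudo-Anosovs generate'' does not upgrade this to $N\le Z({\rm Mod})$. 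So the key implication ``finite normal $\Rightarrow$ central (or $=\{1\}$)'' is exactly what remains unproved. Finally, even the intended last step of quoting the center is delicate at low complexity (the hyperelliptic involutions of $\Sigma_{1,1}$, $\Sigma_{1,2}$, $\Sigma_{0,4}$, $\Sigma_{2,0}$ are the classical trouble spots), so outsourcing the entire content of the lemma to that quotation is risky and needs a precise reference checked against the convention that punctures may be permuted.

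For comparison, the paper's proof is short, direct, and avoids both acylindricity and center computations: if $f$ lies in a finite normal subgroup $N$ and $c$ is any essential simple closed curve, then $[T_c^2,f]=T_c^2T_{f(c)}^{-2}$ lies in $N\cap\langle T_c^2,T_{f(c)}^2\rangle$; by the classification of groups generated by two Dehn twists this subgroup is $\mathbb{Z}$, $\mathbb{Z}^2$ or $F_2$, hence torsion free, so the intersection with $N$ is trivial, forcing $[T_c^2,f]=1$ and therefore $f(c)=c$ for every $c$. Faithfulness of the action of ${\rm Mod}(\Sigma_{g,p})$ on the curve complex under the stated hypotheses then gives $f=1$. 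If you want to salvage your outline, this is the mechanism you would need to insert in place of the ICC claim or the ``commutes with a power'' step: some concretely mapping-class-group-theoretic input (here, torsion freeness of two-twist subgroups plus faithfulness on the curve complex) that converts ``$N$ finite normal'' into ``$N$ acts trivially on curves.''
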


\begin{proof}
For basic properties of Dehn twists used in this proof we refer to \cite[Chapter 3]{FM}. Recall also that under our assumptions the action of ${\rm Mod}(\Sigma _{g,p})$ on the curve complex is faithful \cite{Bir,Vir}.

Assume that an element $f\in {\rm Mod}(\Sigma _{g,p})$  belongs to a finite normal subgroup $N$ and let $c$ be an essential simple closed curve on $\Sigma _{g,p}$. By the classification of subgroups generated by two distinct Dehn twists (see \cite[p. 86]{FM}), the subgroup $H=\langle T_c^2, T_{f(c)}^2\rangle $ is isomorphic to one of the groups $\mathbb Z$, $\mathbb Z^2$, or the free group of rank $2$. Note that $H\cap N=\{1\}$ since neither of the groups from the above list has torsion. On the other hand, $[T_c^2,f]=T_c^2fT_c^{-2}f^{-1}=T_c^2T_{f(c)}^{-2}\in H\cap N$ since $N$ is normal. Therefore $[T_c^2,f]=1$, which implies that $f(c)=c$ (see \cite[Section 3.3]{FM}). Since this is true for every essential simple closed curve $c$, $f$ acts trivially on the curve complex of $\Sigma_{g,p}$ and consequently $f=1$.
\end{proof}

\begin{cor}\label{mcg0}
${\rm Mod}(\Sigma _{g,n,p})$  admits a highly transitive faithful action on a countably infinite set if and only if $n=0$, $3g+p\ge 5$, and $(g,p)\ne (2,0)$.
\end{cor}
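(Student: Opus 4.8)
The plan is to deduce Corollary \ref{mcg0} by combining Theorem \ref{main} with the structural facts about mapping class groups established in Lemmas \ref{Mod-ah} and \ref{finsub}, together with the obvious necessary conditions coming from Lemma \ref{2-trans}(b). The statement is a biconditional, so I would organize the proof into two directions.

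\emph{Sufficiency.} Assume $n=0$, $3g+p\ge 5$, and $(g,p)\ne (2,0)$. First I would observe that under the hypothesis $3g+p\ge 5$ we in particular have $(g,p)\notin\{(0,0),(0,1),(0,2),(0,3)\}$ (indeed each of those has $3g+p\le 3$), so Lemma \ref{Mod-ah} applies and ${\rm Mod}(\Sigma_{g,p})$ is acylindrically hyperbolic. It is a fortiori countable since it is finitely generated. Next, Lemma \ref{finsub} gives that ${\rm Mod}(\Sigma_{g,p})$ has no non-trivial finite normal subgroup, i.e.\ its finite radical $K(G)$ is trivial. Hence Theorem \ref{main} applies and ${\rm Mod}(\Sigma_{g,p})$ is highly transitive, which means it admits a highly transitive faithful action; since the group is infinite (being acylindrically hyperbolic, hence not virtually cyclic, hence not finite), this action is on a countably infinite set, as required.

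\emph{Necessity.} Suppose ${\rm Mod}(\Sigma_{g,n,p})$ admits a highly transitive faithful action on a countably infinite set; in particular it is infinite and $2$-transitive, so by Lemma \ref{2-trans}(b) it is ICC, and therefore has trivial center and no non-trivial finite normal subgroup. I would rule out the three forbidden cases in turn. If $n\ne 0$, then (excluding the trivial cases of a disc or annulus, where the group is trivial or $\mathbb Z$ and cannot act highly transitively on an infinite set) there is a capping homomorphism $\kappa\colon {\rm Mod}(S)\to{\rm Mod}(S')$ with non-trivial central kernel generated by boundary Dehn twists; this central subgroup is non-trivial, contradicting that the group is ICC. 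If $3g+p\le 4$ (and $n=0$), then $S=\Sigma_{g,p}$ is exceptional, and from the explicit list the group is either finite or is $SL(2,\mathbb Z)$ or $(\mathbb Z_2\times\mathbb Z_2)\rtimes PGL(2,\mathbb Z)$; a finite group cannot act $2$-transitively on an infinite set, while $SL(2,\mathbb Z)$ has non-trivial center $\{\pm I\}$ and the group $(\mathbb Z_2\times\mathbb Z_2)\rtimes PGL(2,\mathbb Z)$ has the non-trivial finite normal subgroup $\mathbb Z_2\times\mathbb Z_2$ — in either case this contradicts ICC. Finally, if $(g,p)=(2,0)$ (and $n=0$), the hyperelliptic involution is a central element of order $2$ in ${\rm Mod}(\Sigma_{2,0})$, generating a non-trivial finite normal subgroup, again contradicting ICC. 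Thus all three conditions $n=0$, $3g+p\ge5$, $(g,p)\ne(2,0)$ must hold.

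The routine part is the sufficiency direction, which is essentially a citation of Theorem \ref{main} once the two lemmas are in hand. The part needing the most care is necessity: I must make sure every excluded case genuinely produces an obstruction to high transitivity, and the cleanest uniform obstruction is the ICC property from Lemma \ref{2-trans}(b), which is violated in every forbidden case by exhibiting either finiteness or a non-trivial central (hence finite normal, or at least center-witnessing) subgroup — the hyperelliptic involution for genus $2$, the boundary twists for $n\ne0$, and the explicit exceptional groups otherwise. The one genuinely delicate point is the $(g,p)=(2,0)$ case, where one must invoke the well-known fact that ${\rm Mod}(\Sigma_{2,0})$ has non-trivial center generated by the hyperelliptic involution (a fact from \cite{FM}); alternatively one could cite directly that ${\rm Mod}(\Sigma_{2,0})$ has a non-trivial finite normal subgroup.
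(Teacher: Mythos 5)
Your proof is correct and follows essentially the same route as the paper: necessity via the ICC obstruction of Lemma \ref{2-trans}(b) (central boundary twists for $n\ne 0$, the explicit exceptional groups, and the hyperelliptic involution for $(g,p)=(2,0)$), and sufficiency by combining Lemma \ref{Mod-ah}, Lemma \ref{finsub}, and Theorem \ref{main}. The only differences are cosmetic (e.g.\ you spell out the exceptional-case groups in more detail than the paper does).
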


\begin{proof}
Let $G={\rm Mod}(\Sigma _{g,n,p})$. If $n\ne 0$, then either $G$ is trivial (if $\Sigma _{g,n,p}$ is a closed disk or a closed annulus) or has a non-trivial central subgroup (the kernel of the capping homomorphism) and hence $\td (G)=1$. Similarly $\td(G)=1$ if $(g,p)=(2,0)$ since $Z(G)=\mathbb Z_2$ in this case. Further if $n=0$ and $\Sigma _{g,n,p}$ is exceptional, then $G$ is either trivial or has a non-trivial finite normal subgroup, so it cannot have a highly transitive faithful action on a countably infinite set again. In all remaining cases, $G$ is acylindrically hyperbolic and has trivial finite radical by the previous lemma, so the result follows from Theorem \ref{main}.
\end{proof}

Let us now discuss subgroups of mapping class groups. The following observation will be used several times.

\begin{rem}\label{g+n}
Let $S=\Sigma_{g,n,p}$. By gluing $\Sigma _{1,1,0}$ to every boundary component of $S$, we obtain an embedding ${\rm Mod}(S)\to {\rm Mod}(\Sigma_{g+n,p})$. Thus, if we want if we want to prove certain claim for subgroups of ${\rm Mod}(\Sigma_{g,n,p})$, it suffices to prove it for $n=0$.
\end{rem}

\begin{lem}\label{pl-or-ah}
Let $G\le {\rm Mod}(\Sigma_{g,n,p})$. Assume that $G$ is non-trivial and ICC. Then  $G$ is either virtually product-like or acylindrically hyperbolic with trivial finite radical.
\end{lem}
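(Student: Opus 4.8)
The plan is to use the Thurston-type classification of subgroups of mapping class groups together with the theory of reducing systems and the subsurface projection machinery, reducing to the action on the curve complex of a canonical subsurface. By Remark \ref{g+n} we may assume $n=0$, so $G\le {\rm Mod}(\Sigma_{g,p})$. First I would recall the trichotomy for subgroups of mapping class groups (due to McCarthy--Papadopoulos and Ivanov): any subgroup $G$ either (i) contains a pseudo-Anosov element and hence, being non-virtually-cyclic here, acts non-elementarily on the curve complex $\mathcal C(\Sigma_{g,p})$; or (ii) is ``reducible'', i.e. fixes (setwise) a nonempty multicurve; or (iii) is finite. Since $G$ is ICC, case (iii) is excluded (a finite group is not ICC unless trivial, and $G$ is assumed non-trivial), and the virtually-cyclic possibility inside (i) is excluded because virtually cyclic groups are not ICC. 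So in case (i) $G$ acts acylindrically (the action of ${\rm Mod}$ on $\mathcal C$ is acylindrical by \cite{Bow}, and restrictions of acylindrical actions are acylindrical) and non-elementarily on the hyperbolic space $\mathcal C(\Sigma_{g,p})$; since $G$ is ICC it has trivial finite radical, so $G$ is acylindrically hyperbolic with trivial finite radical and we are done.

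The substantive case is (ii): $G$ preserves a nonempty multicurve. Here I would pass to the \emph{canonical reduction system} $\sigma$ of $G$ — the minimal $G$-invariant reducing multicurve, which is canonical and hence genuinely $G$-invariant (not merely invariant under a finite-index subgroup, since $\sigma$ is defined intrinsically and any finite-index refinement forces it to be $G$-invariant after intersecting over the finitely many $G$-translates). Cutting $\Sigma_{g,p}$ along $\sigma$ yields a finite collection of complexity-smaller subsurfaces $\Sigma^{(1)},\dots,\Sigma^{(m)}$ permuted by $G$, and there is a homomorphism from a finite-index subgroup $G_0$ (the stabilizer of each piece and of each component of $\sigma$) to the product $\prod_j {\rm Mod}(\Sigma^{(j)})\times \mathbb Z^{|\sigma|}$, where the $\mathbb Z^{|\sigma|}$ records the Dehn twists about the curves of $\sigma$, and this map has finite kernel. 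The twist subgroup $\mathbb Z^{|\sigma|}$ is central in $G_0$ and normal in $G_0$; the projection to any single factor ${\rm Mod}(\Sigma^{(j)})$ gives another normal subgroup of $G_0$ after taking its preimage/kernel. I would then argue: if the Dehn-twist subgroup of $G_0$ is nontrivial it is an infinite abelian normal subgroup, giving a central, hence abelian, normal subgroup and making $G_0$ product-like (pair it with the complementary normal subgroup, or directly: $G$ has a nontrivial center on a finite-index subgroup, contradicting ICC-ness of $G$ — but $G$ is ICC, so actually this branch must be ruled out, forcing the twist group to be trivial or forcing product-likeness). More carefully: since $G$ is ICC, no finite-index subgroup can have nontrivial center, so either way we are pushed toward the product structure; if $m\ge 2$, the kernels of the two coordinate projections are nontrivial normal subgroups of $G_0$ with trivial intersection, so $G_0$ is product-like and $G$ is virtually product-like; if $m=1$ and the twist group is trivial, then $G_0$ embeds (up to finite kernel) into ${\rm Mod}$ of a lower-complexity surface, and I would induct on complexity $3g+p$.

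The base of the induction is the list of exceptional and low-complexity surfaces recorded before Lemma \ref{Mod-ah}: for these ${\rm Mod}$ is finite, virtually cyclic, or hyperbolic, and the claim is checked directly (a subgroup is finite — impossible for nontrivial ICC —, virtually cyclic — impossible for ICC —, or acts non-elementarily on a hyperbolic Cayley/curve-graph with trivial finite radical). The main obstacle I anticipate is the bookkeeping in case (ii): ensuring the finite-index subgroup $G_0$ over which the product/twist decomposition is defined can be chosen so that the relevant subgroups are genuinely \emph{normal} in $G_0$ (not just subnormal or defined up to conjugacy), so that ``product-like'' applies with its exact definition ($A,B\lhd G_0$, $|A\cap B|=1$), and correctly handling the interaction of the central twist subgroup with the ICC hypothesis — in particular making the logical structure ``ICC $\Rightarrow$ twist part trivial OR product-like'' airtight, since a priori ICC-ness of $G$ only restricts $G$ itself, and one must trace how a central element of $G_0$ would produce a finite conjugacy class in $G$ (via the finite orbit of $G$ acting on $G_0$ by conjugation, whose point stabilizers have finite index). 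Once that is pinned down, the induction on $3g+p$ closes the argument.
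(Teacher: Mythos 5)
Your proposal follows essentially the same route as the paper's proof: reduce to $n=0$ via Remark \ref{g+n}, split into the pseudo-Anosov case (where ICC rules out virtual cyclicity and finite normal subgroups, giving acylindrical hyperbolicity with trivial finite radical) and the reducible case, and there cut along an invariant multicurve, pass to a finite-index subgroup $G_0$, use that the kernel of the cutting homomorphism is central and hence trivial by ICC, and obtain either two complementary non-trivial normal subgroups of $G_0$ with trivial intersection (virtual product-likeness) or an embedding of $G_0$ into the mapping class group of a single piece, which is handled recursively. The only differences are cosmetic (you induct on complexity $3g+p$ and invoke Bowditch's acylindricity of the curve-complex action, while the paper uses Ivanov's ``trivial or infinite irreducible'' restrictions to fall back to the pseudo-Anosov case and DGO's hyperbolically embedded virtually cyclic subgroups), so the argument is correct in substance.
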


\begin{proof}
By Remark \ref{g+n}, we can assume that $G\le {\rm Mod}(\Sigma_{g,p})$. We consider two cases.

{\it Case 1.} If $G$ contains a pseudo-Anosov element $a$, then it is either virtually cyclic or acylindrically hyperbolic. Indeed in the non-exceptional case it follows from the fact that $a$ is contained in a virtually cyclic hyperbolically embedded subgroup of $G$ \cite[Theorem 2.19]{DGO} and Theorem \ref{ah-hes}. Since $G$ is ICC, it cannot be virtually cyclic and cannot contain non-trivial finite normal subgroups. Thus $G$ is acylindrically hyperbolic with trivial finite radical.

{\it Case 2.} Now assume that $G$ does not contain a pseudo-Anosov element. Then by Ivanov's theorem \cite{Iva92} $G$ is reducible, i.e., it fixes a multicurve. Moreover, in this case there is a multicurve $c$ and a finite index subgroup $G_0\le G$ with the following properties. Let $S_1, \ldots , S_k$ be the connected
components of $S-c$. We naturally think of $S_i$ as a punctured closed surface (cuts along components of $c$ give rise to new punctures). Then $G_0$ fixes every $S_i$ setwise and the restriction of $G_0$ to $S_i$ is either trivial or infinite irreducible. This gives a homomorphism
\begin{equation}\label{reducible}
\rho \colon G_0\to {\rm Mod}(S_1)\times \cdots \times {\rm Mod}(S_k),
\end{equation}
whose kernel is generated by Dehn twists around components of $c$  \cite[Proposition 3.20]{FM}. In particular, $Ker(\rho)$ is central in $G_0$. Since $G$ is ICC, so is $G_0$ and hence $\rho $ is injective.

Let $G_1=\rho (G_0)\cap {\rm Mod}(S_1)$ and $G_2= \rho (G_0)\cap ({\rm Mod}(S_2)\times \cdots \times {\rm Mod}(S_k))$. If both $G_1$ and $G_2$ are non-trivial, then $G$ is product-like. If $G_2=\{ 1\}$, then the composition of $\rho$ and the projection map to the first factor in (\ref{reducible}) is injective and its image is either trivial or irreducible; in the latter case we complete the proof as in the first case. If $G_1=\{1\}$, then the composition of $\rho $ with the projection to ${\rm Mod}(S_2)\times \cdots \times {\rm Mod}(S_k)$ is injective and we complete the proof by induction.
\end{proof}

We will also need the following result. It is well-known to specialists (see, e.g., the remark in the end of Section 6.4.3 of \cite{FM}), but we were unable to locate the proof in the literature. We are grateful to Dan Margalit for explaining the argument to us.

\begin{lem}\label{rf}
The group ${\rm Mod}(\Sigma_{g,n,p})$ is residually finite for all $g,n,p\in \mathbb N\cup\{ 0\}$.
\end{lem}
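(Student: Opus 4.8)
\textbf{Proof plan for Lemma \ref{rf}.}

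The plan is to reduce to the classical residual finiteness of mapping class groups of closed surfaces, which is a theorem of Grossman \cite{Gro}. First I would use Remark \ref{g+n}: the capping-type construction of gluing $\Sigma_{1,1,0}$ to each boundary component of $\Sigma_{g,n,p}$ embeds ${\rm Mod}(\Sigma_{g,n,p})$ into ${\rm Mod}(\Sigma_{g+n,p})$, so it suffices to treat surfaces with no boundary, i.e.\ the groups ${\rm Mod}(\Sigma_{g,p})$. (Residual finiteness passes to subgroups, so an embedding into a residually finite group is all we need.)

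Next I would handle the punctures by induction on $p$. The point of departure is the Birman exact sequence: for $\Sigma_{g,p}$ not of one of the small sporadic types, forgetting the last puncture gives a short exact sequence
\[
1 \longrightarrow \pi_1(\Sigma_{g,p-1}) \longrightarrow {\rm Mod}(\Sigma_{g,p}) \longrightarrow {\rm Mod}(\Sigma_{g,p-1}) \longrightarrow 1,
\]
where the kernel is the fundamental group of the $(p-1)$-punctured genus-$g$ surface (the "point-pushing" subgroup). Both the kernel (a finitely generated free or surface group, hence residually finite) and, by induction, the quotient are residually finite. However, an extension of residually finite groups need not be residually finite, so one cannot conclude directly; the standard fix is to observe that the kernel $\pi_1(\Sigma_{g,p-1})$ is finitely generated and that ${\rm Mod}(\Sigma_{g,p})$ acts on it by conjugation through a homomorphism to ${\rm Out}(\pi_1(\Sigma_{g,p-1}))$ whose intersection with the inner automorphisms is controlled — i.e.\ one uses that the relevant outer automorphism group is residually finite (Grossman) together with the fact that a finitely generated group which is residually finite and has residually finite outer automorphism group gives rise to residually finite extensions of the required kind. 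Concretely, I would cite that ${\rm Mod}(\Sigma_{g,p})$ embeds into ${\rm Aut}$ of a residually finite group in a way detecting nontrivial elements, or simply invoke the known result (e.g.\ \cite{FM}, end of Section 6.4.3, attributing it to Grossman) that residual finiteness of closed mapping class groups propagates to the punctured case via the Birman sequence.

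The cleanest route, and the one I would actually write, is: (1) reduce to no boundary via Remark \ref{g+n}; (2) dispose of the finitely many exceptional small surfaces by hand (their mapping class groups are finite, or $SL(2,\mathbb Z)$, or virtually free, all obviously residually finite); (3) for the remaining cases invoke Grossman's theorem that ${\rm Mod}(\Sigma_{g,0})$ is residually finite and is "conjugacy separable enough" so that the action on $\pi_1$ behaves well, and then run the induction on the number of punctures through the Birman exact sequence, using at each step that $\pi_1(\Sigma_{g,p-1})$ is residually finite with residually finite outer automorphism group. The main obstacle is precisely this last point — extensions of residually finite groups are not automatically residually finite — and it is resolved exactly by Grossman's input on ${\rm Out}$ of surface groups; everything else is bookkeeping. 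I would present the argument at the level of citing these facts rather than reproving Grossman's theorem.
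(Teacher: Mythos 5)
Your overall plan follows the same skeleton as the paper's proof: reduce to $n=0$ via Remark \ref{g+n}, quote Grossman \cite{Gross} for the closed case, and then handle punctures by relating ${\rm Mod}(\Sigma_{g,p})$ to automorphisms of the surface group $\pi_1(\Sigma_{g,p-1})$. The difference is in the packaging of the punctured case. You run an induction on $p$ through the Birman exact sequence and then try to repair the failure of residual finiteness for extensions by invoking residual finiteness of the relevant ${\rm Out}$. The paper instead avoids any induction: it passes to the finite-index pure subgroup ${\rm PMod}(\Sigma_{g,p})$ (using that (residually finite)-by-finite groups are residually finite), cites Birman \cite{Bir69} for a \emph{faithful} representation of ${\rm PMod}(\Sigma_{g,p})$ by automorphisms of $\pi_1(\Sigma_{g,p-1})$, and concludes by Baumslag's theorem \cite{Baum} that the automorphism group of a finitely generated residually finite group is residually finite. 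Your route can be made to work, but as phrased it has a soft spot: the principle ``kernel finitely generated, residually finite, with residually finite outer automorphism group $\Rightarrow$ the extension is residually finite'' is false in general (central extensions, e.g.\ Deligne's examples over $Sp(2g,\mathbb Z)$, violate it even with kernel $\mathbb Z$). What saves you here is that the point-pushing kernel $\pi_1(\Sigma_{g,p-1})$ has trivial center in the non-sporadic cases, so the conjugation map $G\to {\rm Aut}(N)$ has kernel $C_G(N)$ meeting $N$ trivially, and one separates elements using ${\rm Aut}(N)$ (Baumslag) or the quotient, respectively; you should state this centerless/centralizer condition explicitly rather than appeal to ${\rm Out}$. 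You also need the middle term of your exact sequence to be the puncture-fixing (finite-index) subgroup rather than all of ${\rm Mod}(\Sigma_{g,p})$, which is the same bookkeeping the paper handles by passing to ${\rm PMod}$. With those two adjustments your argument is correct; the paper's version is simply the non-inductive form of the same idea, with Baumslag's theorem on ${\rm Aut}$ replacing the ${\rm Out}$-based extension lemma.
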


\begin{proof}
Since a subgroup of a residually finite group is residually finite, we can assume that $n=0$ by Remark \ref{g+n}. For $p=0$, residual finiteness of ${\rm Mod}(\Sigma_{g,p})$ is a classical result of Grossman \cite{Gross}. Now assume that $p>0$.  It is easy to see that any (residually finite)-by-finite group is residually finite. Thus it suffices to prove residual finiteness of ${\rm PMod}(\Sigma_{g,p})$, the subgroup of all elements of ${\rm Mod}(\Sigma_{g,p})$ that fix the punctures. The latter group has a faithful representation by automorphisms of $\pi_1(\Sigma_{g,p-1})$ \cite{Bir69}. Since $\pi_1(\Sigma_{g,p-1})$ is residually finite and the automorphism group of a residually finite finitely generated group is residually finite \cite{Baum}, we obtain that ${\rm PMod}(\Sigma_{g,p})$ is residually finite, and hence so is ${\rm Mod}(\Sigma_{g,p})$.
\end{proof}

The next corollary refers to conditions (C$_1$)-(C$_3$) stated in the introduction.

\begin{cor}\label{mcg1}
Let $G$ be an infinite subgroup of ${\rm Mod}(\Sigma_{g,n,p})$. Then $\td(G)\in \{ 1, \infty\}$ and conditions (C$_1$)-(C$_3$) are equivalent. Moreover, (C$_1$)-(C$_3$) are also equivalent to
\begin{enumerate}
\item[(C$_4$)] $G$ is non-trivial, ICC, and not virtually product-like.
\end{enumerate}
\end{cor}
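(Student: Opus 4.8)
The plan is to prove the cycle of equivalences (C$_1$) $\Rightarrow$ (C$_4$) $\Rightarrow$ (C$_3$) $\Rightarrow$ (C$_2$) $\Rightarrow$ (C$_1$), which also yields $\td(G)\in\{1,\infty\}$ once we observe that (C$_1$) (i.e.\ $\td(G)=\infty$) forces high transitivity in this setting and anything failing (C$_1$) must have $\td(G)=1$.

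First I would handle (C$_2$) $\Rightarrow$ (C$_1$) $\Rightarrow$ (C$_4$). The implication (C$_2$) $\Rightarrow$ (C$_1$) is trivial. For (C$_1$) $\Rightarrow$ (C$_4$): if $\td(G)=\infty$ then in particular $G$ admits a faithful $2$-transitive action, so by Lemma \ref{2-trans} $G$ is ICC (hence non-trivial, and has no non-trivial finite normal subgroups and trivial center). It remains to rule out $G$ being virtually product-like. Suppose $G$ were virtually product-like; since $G\le {\rm Mod}(\Sigma_{g,n,p})$ is residually finite by Lemma \ref{rf}, Corollary \ref{p-like} gives $\td(G)=1$, contradicting $\td(G)=\infty$. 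This establishes (C$_1$) $\Rightarrow$ (C$_4$).

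Next, (C$_4$) $\Rightarrow$ (C$_3$) is exactly Lemma \ref{pl-or-ah}: a non-trivial ICC subgroup of a mapping class group is either virtually product-like or acylindrically hyperbolic with trivial finite radical; since (C$_4$) excludes the former alternative, we are left with (C$_3$). Then (C$_3$) $\Rightarrow$ (C$_2$) is precisely Theorem \ref{main}: a countable acylindrically hyperbolic group with trivial finite radical is highly transitive (note $G$ is countable, being a subgroup of the countable group ${\rm Mod}(\Sigma_{g,n,p})$, which is finitely generated). This closes the cycle, so (C$_1$)--(C$_4$) are all equivalent.

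Finally, for the dichotomy $\td(G)\in\{1,\infty\}$: if $G$ satisfies the equivalent conditions, then (C$_2$) gives a highly transitive faithful action, so $\td(G)=\infty$. If $G$ does not satisfy them, then in particular (C$_1$) fails, i.e.\ $\td(G)<\infty$; I claim $\td(G)=1$, equivalently that $G$ admits no faithful $2$-transitive action. Indeed, a faithful $2$-transitive action would make $G$ ICC by Lemma \ref{2-trans}, hence non-trivial; and, as in the proof of Corollary \ref{p-like} (using residual finiteness via Lemma \ref{rf} together with Lemma \ref{min} and primitivity of finite-index subgroups of infinite $2$-transitive groups), $G$ cannot be virtually product-like. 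Thus $G$ would satisfy (C$_4$), contradicting our assumption. Hence $\td(G)=1$. The only subtle point — and the step I expect to require the most care — is the second half of this last paragraph: deducing $\td(G)=1$ (not merely $\td(G)\ne\infty$) from the failure of (C$_1$), which forces us to re-run the primitivity-plus-residual-finiteness argument of Corollary \ref{p-like} rather than quote it verbatim, since a priori $G$ might admit a faithful $2$-transitive but not highly transitive action; showing this cannot happen is what pins the transitivity degree to $\{1,\infty\}$.
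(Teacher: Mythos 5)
Your proposal is correct and follows essentially the same route as the paper: (C$_4$) $\Rightarrow$ (C$_3$) via Lemma \ref{pl-or-ah}, (C$_3$) $\Rightarrow$ (C$_2$) via Theorem \ref{main}, and the failure of (C$_4$) forcing $\td(G)=1$ via Lemma \ref{2-trans}(b), Lemma \ref{rf}, and Corollary \ref{p-like}. The step you flag as subtle is just the contrapositive of the paper's argument that non-(C$_4$) implies $\td(G)=1$, and Corollary \ref{p-like} can in fact be quoted verbatim there (it directly yields $\td(G)=1$ for countably infinite, residually finite, virtually product-like groups), so no re-running of its proof is needed.
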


\begin{proof}
We first note that (C$_4$) $\Rightarrow$ (C$_3$) by Lemma \ref{pl-or-ah}. Further, (C$_3$) $\Rightarrow$ (C$_2$) by Theorem \ref{main} and the implication (C$_2$) $\Rightarrow$ (C$_1$) is obvious. Assume now that $G$ does not satisfy (C$_4$). If $G$ is not ICC, then $\td(G)=1$ by part (b) of Lemma \ref{2-trans}. If $G$ is virtually product-like, then  $\td (G)=1$ by Lemma \ref{rf} and Corollary \ref{p-like}. Thus $\td(G)=1$ whenever $G$ does not satisfy (C$_4$). In particular, we obtain (C$_1$) $\Rightarrow$ (C$_4$). This completes the proof of the equivalence of (C$_1$)-(C$_3$) and also shows that $\td (G)\in \{ 1, \infty\}$.
\end{proof}

\paragraph{4.3. $3$-manifold groups.}  Let $G$ be a subgroup of the fundamental group of a compact $3$-manifold. It was proved in \cite[Theorem 5.6]{MO} that one of the following three (mutually exclusive) conditions holds:
\begin{enumerate}
\item[(I)] $G$ is acylindrically hyperbolic with trivial finite radical.
\item[(II)] $G$ contains an infinite cyclic normal subgroup $Z$ and $G/Z$ is acylindrically hyperbolic.
\item[(III)] $G$ is virtually polycyclic.
\end{enumerate}
Moreover, if $G$ is itself a fundamental group of a compact irreducible $3$-manifold $M$ (this is always the case if $G$ is finitely generated by the Scott's core theorem), then we can replace condition (II) with a more geometric one.
\begin{enumerate}
\item[(II*)] $M$ is Seifert fibered.
\end{enumerate}

\begin{cor}\label{3d0}
Let $G$ be an infinite subgroup of the fundamental group of a compact $3$-manifold. Then $\td(G)\in \{ 1, \infty\}$ and  conditions (C$_1$)--(C$_3$) are equivalent.
\end{cor}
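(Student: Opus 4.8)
The plan is to argue exactly as in the proof of Corollary \ref{mcg1}, with the trichotomy \cite[Theorem 5.6]{MO} recorded above (the mutually exclusive cases (I), (II), (III)) playing the role that Lemma \ref{pl-or-ah} played for mapping class groups. The implications (C$_3$) $\Rightarrow$ (C$_2$) and (C$_2$) $\Rightarrow$ (C$_1$) are immediate: the first is Theorem \ref{main}, and the second is the trivial observation that a highly transitive group has infinite transitivity degree. So everything comes down to proving (C$_1$) $\Rightarrow$ (C$_3$), i.e. that $\td(G)=\infty$ forces $G$ into case (I). Since $\td(G)=\infty$ in particular means $G$ admits a faithful $2$-transitive action and $G$ is infinite by hypothesis, Lemma \ref{2-trans}(b) gives that $G$ is ICC. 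It then suffices to show that a subgroup of a $3$-manifold group satisfying (II) cannot be ICC, and that one satisfying (III) has transitivity degree $1$ outright.

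First suppose $G$ is as in (II), so $G$ contains an infinite cyclic normal subgroup $Z=\langle z\rangle$. For any $g\in G$, conjugation by $g$ restricts to an automorphism of $Z$, hence carries $z$ to a generator of $Z$, so $g^{-1}zg\in\{z,z^{-1}\}$. Thus the conjugacy class of the nontrivial element $z$ has at most two elements, so $G$ is not ICC, contradicting the previous paragraph; case (II) is excluded. Next suppose $G$ is as in (III), so $G$ is virtually polycyclic. Polycyclic groups are residually finite, and a (residually finite)-by-finite group is residually finite, so $G$ is residually finite; it is also virtually solvable and infinite, so $\td(G)=1$ by Corollary \ref{sol}, contradicting $\td(G)=\infty$; case (III) is excluded. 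Hence $G$ satisfies (I), which is precisely (C$_3$), and the chain (C$_3$) $\Rightarrow$ (C$_2$) $\Rightarrow$ (C$_1$) $\Rightarrow$ (C$_3$) yields the equivalence.

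It remains to record that $\td(G)\in\{1,\infty\}$: if $\td(G)\ge 2$, then $G$ admits a faithful $2$-transitive action, so the argument above applies verbatim and places $G$ in case (I); then (C$_3$) holds and Theorem \ref{main} shows $G$ is highly transitive, whence $\td(G)=\infty$. I do not expect a genuine obstacle here, since the proof is a direct combination of Theorem \ref{main}, the elementary permutation-group facts of Section 4.1, and \cite[Theorem 5.6]{MO}; the only points needing a moment's care are the centralizer argument ruling out an infinite cyclic normal subgroup in an ICC group (case (II)) and the reduction of ``virtually polycyclic'' to the hypotheses of Corollary \ref{sol} (case (III)).
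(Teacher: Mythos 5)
Your argument is correct and essentially identical to the paper's: both rest on the trichotomy of \cite[Theorem 5.6]{MO}, using Theorem \ref{main} in case (I), the observation that the conjugacy class of an element of the infinite cyclic normal subgroup lies in $\{z,z^{-1}\}$ together with Lemma \ref{2-trans}(b) in case (II), and Corollary \ref{sol} in case (III). Your only additions are organizational (phrasing it as a cycle of implications) and the explicit remark that virtually polycyclic groups are residually finite and virtually solvable, which the paper leaves implicit.
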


\begin{proof}
This follows immediately from Theorem \ref{main} and the above trichotomy. Indeed if $G$ satisfies (I), then $\td (G)=\infty$ by Theorem \ref{main}. If $G$ satisfies (II), it is not ICC (note that the conjugacy class of any element $z\in Z$ is contained in $\{ z, z^{-1}\}$) and hence $\td (G)=1$ by part (b) of Lemma \ref{2-trans}. Finally if $G$ satisfies (III), then $G$ cannot act primitively on a countable set by Lemma \ref{sol} and hence $\td (G)=1$. Thus conditions (C$_1$)--(C$_3$) hold simultaneously in case (I) and fail simultaneously in cases (II) and (III).
\end{proof}

Similarly, if $G$ is itself a fundamental group of a compact irreducible $3$-manifold $M$, we obtain the following.

\begin{cor}\label{3d1}
Let $M$ be a compact irreducible $3$-manifold. Then $\pi_1(M)$  admits a highly transitive faithful action on a countably infinite set iff it is not virtually solvable and $M$ is not Seifert fibered.
\end{cor}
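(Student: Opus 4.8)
The plan is to deduce this from Corollary~\ref{3d0} together with the geometric reformulation of condition (II) as (II*), which is valid precisely when $G=\pi_1(M)$ for $M$ compact and irreducible. Set $G=\pi_1(M)$; since $M$ is compact, $G$ is finitely generated, and the refined trichotomy recalled above asserts that exactly one of (I), (II*), (III) holds for $G$. The statement to be proved is then exactly the assertion that case (I) holds if and only if $G$ is not virtually solvable and $M$ is not Seifert fibered, so the whole proof is bookkeeping around the trichotomy.

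First I would dispense with the case $|G|<\infty$: here neither side holds, because a finite group cannot act transitively on an infinite set (hence is not highly transitive) and is in any case virtually solvable (so the right-hand side fails too). Assume now that $G$ is infinite, so that Corollary~\ref{3d0} applies to it. For the forward implication, suppose $G$ admits a highly transitive faithful action on a countably infinite set; this is condition (C$_2$), hence (C$_3$) holds by Corollary~\ref{3d0}, i.e.\ $G$ is acylindrically hyperbolic with trivial finite radical --- which is exactly case (I). By mutual exclusivity in the trichotomy, neither (II*) nor (III) holds, so $M$ is not Seifert fibered and $G$ is not virtually polycyclic; and, since an acylindrically hyperbolic group contains a non-abelian free subgroup, $G$ is not virtually solvable. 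For the converse, if $G$ is not virtually solvable and $M$ is not Seifert fibered, then case (III) is excluded (a virtually polycyclic group is virtually solvable) and case (II*) is excluded by hypothesis, so the trichotomy forces case (I); then $G$ is acylindrically hyperbolic with trivial finite radical, and Theorem~\ref{main} --- equivalently the implication (C$_3$)$\Rightarrow$(C$_2$) of Corollary~\ref{3d0} --- yields the desired highly transitive faithful action.

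I do not expect a real obstacle here, since the hard content is already contained in Corollary~\ref{3d0} and in the \cite{MO} identification of (II) with (II*). The one mildly delicate point is the mismatch between the "virtually polycyclic" conclusion packaged in case (III) and the "virtually solvable" hypothesis in the statement: for the converse direction one merely uses that polycyclic implies solvable in order to rule out (III), while for the forward direction acylindrical hyperbolicity rules out virtual solvability outright, so no structure theory for solvable subgroups of $3$-manifold groups is actually needed.
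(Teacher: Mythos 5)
Your proof is correct and follows essentially the same route the paper intends: Corollary \ref{3d1} is deduced from the trichotomy with (II) replaced by (II*) together with the machinery of Corollary \ref{3d0} and Theorem \ref{main}, exactly as you do (including the observation that acylindrical hyperbolicity rules out virtual solvability via free subgroups). One small overstatement: conditions (II*) and (III) are not mutually exclusive in general (e.g., the $3$-torus is Seifert fibered with polycyclic fundamental group), but your argument only uses that (I) excludes (II*) and (III) and that at least one of the three conditions holds, so nothing breaks.
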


\paragraph{4.4. Graph products.}
It is proved in \cite{MO} that for every subgroup $G$ of a finite graph product $\Gamma \{G_v\}_{v\in V}$, one of the following conditions holds.
\begin{itemize}
\item[(a)] There is a short exact sequence $\{1\}\to K\to G\to S\to \{1\}$, where $K$ is finite and $S$ is isomorphic to a subgroup of some $G_v$.

\item[(b)] $G$ is virtually cyclic;

\item[(c)] $G$ contains two infinite normal subgroups $N_1,N_2\lhd G$ such that $|N_1\cap N_2|<\infty $.

\item[(d)] $G$ is acylindrically hyperbolic.
\end{itemize}

We will also need a result of Cameron.

\begin{lem}[{\cite[Corollary 2.2]{C}}]\label{Cameron}
Suppose that $G\le Sym(\mathbb N)$ is infinite and $k$-transitive for some $k\ge 2$. Let $N$ be a non-trivial normal subgroup of $G$. Then either $N$ is $(k-1)$-transitive or $k=3$ and $N$ is an elementary abelian $2$-group (i.e., a direct sum of copies of $\mathbb Z/2\mathbb Z$).
\end{lem}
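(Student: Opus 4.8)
The plan is to argue by induction on $k$, in the spirit of the classical treatment of normal subgroups of multiply transitive groups (Jordan, Wielandt), while keeping careful track of the fact that the underlying set $\Omega$ on which $G$ acts is \emph{infinite} — this is exactly what rules out the finite phenomenon $A_n\lhd S_n$, where the normal subgroup is only $(n-2)$-transitive. For the base case, since $G$ is $2$-transitive it is primitive by Lemma~\ref{2-trans}(a), so by Lemma~\ref{prim}(a) the non-trivial normal subgroup $N$ is transitive; this is the assertion for $k=2$. For $k\ge 3$ fix $\alpha\in\Omega$, put $\Omega'=\Omega\setminus\{\alpha\}$ (still infinite) and $N_\alpha=N\cap G_\alpha\lhd G_\alpha$, where $G_\alpha$ is $(k-1)$-transitive on $\Omega'$. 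I will repeatedly use the elementary ``lifting'' fact: if $N$ is transitive on $\Omega$ and $N_\alpha$ is $m$-transitive on $\Omega'$, then $N$ is $(m+1)$-transitive on $\Omega$ (given two $(m+1)$-tuples of distinct points, move the first coordinate of each to $\alpha$ using transitivity of $N$, then match the remaining $m$-tuples inside $\Omega'$ using $N_\alpha$).

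Next I would treat the \emph{regular case} $N_\alpha=\{1\}$. Then $G=N\rtimes G_\alpha$ and the orbit map at $\alpha$ identifies $\Omega'$ with $N\setminus\{1\}$, the action of $G_\alpha$ becoming the conjugation action on $N\setminus\{1\}$, hence an action by automorphisms of $N$ that is $(k-1)$-transitive with $k-1\ge 2$. Transitivity forces all non-identity elements of $N$ to have the same order; if that order is $2$, then $N$ has exponent $2$ and is an elementary abelian $2$-group. Otherwise pick $n\in N$ with $n\ne n^{-1}$: by $2$-transitivity, for any distinct $c,d\in N\setminus\{1\}$ there is an automorphism $\phi\in G_\alpha$ with $\phi(n)=c$ and $\phi(n^{-1})=d$, whence $d=\phi(n)^{-1}=c^{-1}$; this forces $|N|\le 3$, contradicting that $N$ is infinite. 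So $N$ is an infinite $\mathbb{F}_2$-vector space $V$ with $G_\alpha\le GL(V)$ acting on $V\setminus\{0\}$. If in addition $k-1\ge 3$, choose linearly independent $v_1,v_2\in V$ and a vector $v_3$ outside their span; any $\phi\in G_\alpha$ fixing $v_1$ and $v_2$ fixes $v_1+v_2\ne v_3$, so the triples $(v_1,v_2,v_1+v_2)$ and $(v_1,v_2,v_3)$ of distinct non-zero vectors cannot be matched — contradicting $3$-transitivity. Hence $k-1\le 2$, i.e. $k=3$, and we land in case (b); in particular $N$ cannot be regular when $k\ge 4$.

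Finally I would treat the \emph{non-regular case} $N_\alpha\ne\{1\}$, which contains the main obstacle. Apply the inductive hypothesis to $G_\alpha$ (which is $(k-1)$-transitive on the infinite set $\Omega'$) and its non-trivial normal subgroup $N_\alpha$: either $N_\alpha$ is $(k-2)$-transitive on $\Omega'$ — and then the lifting fact gives that $N$ is $(k-1)$-transitive, which is the desired conclusion — or $k-1=3$ (so $k=4$) and $N_\alpha$ is an elementary abelian $2$-group. The hard part will be to exclude this last subcase entirely. Here $N_\alpha$ is abelian and, by primitivity of $G_\alpha$ and Lemma~\ref{prim}(a), transitive on $\Omega'$, hence regular on $\Omega'$; therefore $N$ is transitive with point stabilizer transitive on the complement, so $N$ is $2$-transitive, and in fact sharply $2$-transitive since $N_{\alpha,\beta}=(N_\alpha)_\beta=\{1\}$. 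Every point stabilizer of $N$ is then a non-trivial elementary abelian $2$-group (all are conjugate in $N$), so every element of $N$ with a fixed point is an involution; since $N$ is not regular it cannot have exponent $2$ and must contain a fixed-point-free element. The remaining point is to derive a contradiction from the existence of such a sharply $2$-transitive group being normal in a $4$-transitive group on an infinite set; I expect this to come out of the structure theory of sharply $2$-transitive permutation groups, and it is precisely here that infiniteness of $\Omega$ is used — over a finite set the corresponding exclusion is part of the classification of $2$- and $3$-transitive groups. Once this subcase is ruled out, the induction closes: for $k=4$ we always get that $N$ is $3$-transitive, and for $k\ge 5$ the exceptional alternative of the inductive hypothesis is unavailable (and the regular case was already excluded), so $N$ is $(k-1)$-transitive.
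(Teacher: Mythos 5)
First, a point of reference: the paper does not prove this lemma at all — it is quoted verbatim from Cameron's paper (the citation [Corollary 2.2, C]) — so what matters is whether your argument is actually complete, and it is not. Your induction scheme, the lifting step, the base case, and the regular case (the $\mathbb F_2$-vector space analysis, including the triple $(v_1,v_2,v_1+v_2)$ obstruction that pins down $k=3$) are all correct. The genuine gap is exactly the step you flag yourself: excluding the subcase $k=4$, $N_\alpha\neq\{1\}$ elementary abelian, where you correctly deduce that $N$ is sharply $2$-transitive with elementary abelian point stabilizers and then write that a contradiction should ``come out of the structure theory of sharply $2$-transitive permutation groups.'' For \emph{infinite} permutation groups there is no such structure theory to appeal to: an infinite sharply $2$-transitive group need not split as a point stabilizer acting on a regular normal subgroup (this was a long-standing open problem, now known to have a negative answer), so the near-field/\,$\mathrm{AGL}(1,F)$ picture you are implicitly invoking is unavailable, and the case that carries the whole weight of the theorem for $k\ge 4$ is left unproved. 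As written, the proposal proves the statement only for $k=2,3$ and, modulo the missing case, for $k\ge 5$ by the induction you describe.

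The missing step can in fact be closed by an elementary argument that uses only sharp $2$-transitivity of $N$, normality, $4$-transitivity of $G$, and $|\Omega|\ge 6$ (so none of the elementary-abelian structure is needed). For distinct $\alpha,\beta\in\Omega$ let $\sigma_{\alpha\beta}$ be the unique element of $N$ with $\sigma_{\alpha\beta}(\alpha)=\beta$ and $\sigma_{\alpha\beta}(\beta)=\alpha$; it is an involution, and by normality and uniqueness $g\sigma_{\alpha\beta}g^{-1}=\sigma_{g\alpha\, g\beta}$ for all $g\in G$. Since a non-trivial element of a sharply $2$-transitive group fixes at most one point, we may choose $\gamma\notin\{\alpha,\beta\}$ not fixed by $\sigma_{\alpha\beta}$, and set $\delta=\sigma_{\alpha\beta}(\gamma)$; then $(\alpha,\beta,\gamma,\delta)$ are pairwise distinct and, by uniqueness again, $\sigma_{\alpha\beta}=\sigma_{\gamma\delta}$. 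Now pick $\epsilon\notin\{\alpha,\beta,\gamma,\delta\}$ and, using $4$-transitivity, $g\in G$ fixing $\alpha,\beta,\gamma$ with $g(\delta)=\epsilon$. Conjugating by $g$ gives $\sigma_{\alpha\beta}=g\sigma_{\alpha\beta}g^{-1}=g\sigma_{\gamma\delta}g^{-1}=\sigma_{\gamma\epsilon}$, so $\sigma_{\alpha\beta}(\gamma)$ would equal both $\delta$ and $\epsilon$ — a contradiction. With this supplied, your induction closes exactly as you outline; without it, the proof has a hole precisely at the hardest point, and the proposed fallback would not fill it.
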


Similar to the previous cases, we obtain the following.

\begin{cor}\label{GP}
Let $G\le \Gamma \{G_v\}_{v\in V}$ be a countably infinite subgroup of a graph product of a finite family of groups $G_v$, $v\in V$. Suppose that $G$ is not isomorphic to a subgroup of some $G_v$. Then $\td (G)\in \{ 1,2,\infty\}$  and conditions (C$_1$)--(C$_3$) are equivalent. If, in addition, every $G_v$ is residually finite, then  $\td(G)\in \{ 1, \infty\}$.
\end{cor}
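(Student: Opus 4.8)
The plan is to run through the four-way classification of subgroups of a finite graph product recalled just above the statement, deciding in each case whether $\td(G)$ lands in $\{1,2,\infty\}$ (and under the residual finiteness hypothesis, in $\{1,\infty\}$), and simultaneously whether conditions (C$_1$)--(C$_3$) hold or fail together. Since $G$ is assumed not to embed into any vertex group $G_v$, case (a) of the classification forces $K\neq\{1\}$ (otherwise $G\cong S\le G_v$), so $G$ has a non-trivial finite normal subgroup; then $\td(G)\le 1$ by Lemma \ref{prim}(a) together with Lemma \ref{2-trans}(a) (a $2$-transitive infinite group is primitive, hence has no non-trivial finite normal subgroup), and clearly none of (C$_1$)--(C$_3$) can hold. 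Case (b): if $G$ is virtually cyclic it is infinite virtually solvable, so $\td(G)=1$ by Corollary \ref{sol} provided $G$ is residually finite — but $G$ is a subgroup of a graph product of \emph{finite} groups in that sub-statement, hence residually finite; in the general statement a virtually cyclic group is either finite (excluded) or contains an infinite cyclic subgroup of finite index, which is infinite and not ICC after passing to the normal core, so again $\td(G)=1$ by Lemma \ref{2-trans}(b). In all of cases (a) and (b), (C$_3$) fails (finite radical non-trivial, or virtually cyclic hence not acylindrically hyperbolic), and we have just seen $\td(G)=1$, so (C$_1$) and (C$_2$) fail too.

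Case (d) is the positive case: $G$ acylindrically hyperbolic. Here I would split off the finite radical $K(G)$. If $K(G)\neq\{1\}$ then $G$ is not ICC in a strong sense and $\td(G)=1$ exactly as in the first paragraph, and (C$_3$) fails. If $K(G)=\{1\}$, then $G$ satisfies (C$_3$), hence (C$_2$) by Theorem \ref{main}, hence (C$_1$) trivially; so all three conditions hold and $\td(G)=\infty$. This is the clean half. The delicate case is (c): $G$ contains infinite normal subgroups $N_1,N_2$ with $|N_1\cap N_2|<\infty$, i.e.\ $G$ is "virtually product-like" once we pass to a finite-index subgroup that centralizes the finite group $N_1\cap N_2$ and its normalizer — more precisely, replacing $N_i$ by $N_i\cap C$ for a suitable finite-index normal $C\le G$ we may arrange $N_1\cap N_2=\{1\}$, so $G$ is product-like. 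I expect the main obstacle to be pinning down the transitivity degree in this case: the right statement is $\td(G)\le 2$, and I would prove it by contradiction. Suppose $G$ acts faithfully and $3$-transitively on a countable set $\Omega$; then $G$ is primitive (Lemma \ref{2-trans}(a)), so $N_1,N_2$ are both transitive (Lemma \ref{prim}(a)), and $[N_1,N_2]\le N_1\cap N_2=\{1\}$; applying Lemma \ref{min} with $P=G$, $T=N_2$, $R=N_1$ gives that $N_1$ is an infinite minimal normal subgroup of $G$. Now invoke Cameron's Lemma \ref{Cameron}: since $G$ is $3$-transitive and $N_1\lhd G$ is non-trivial, either $N_1$ is $2$-transitive or $N_1$ is an elementary abelian $2$-group. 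If $N_1$ is $2$-transitive it is ICC by Lemma \ref{2-trans}(b), but $N_1$ centralizes the transitive — hence infinite — subgroup $N_2$, so $N_1$ has infinite centre, a contradiction. Hence $N_1$ is elementary abelian; symmetrically so is $N_2$; and being minimal normal and infinite each is an infinite-dimensional $\mathbb F_2$-vector space on which $G$ acts. In this last sub-case I would argue that $\td(G)\le 2$ still holds, or rather that $3$-transitivity cannot persist: a minimal normal elementary abelian $2$-subgroup of a $3$-transitive group forces the degree to be exactly $3$ and $G$ to be an affine-type group $\mathrm{AGL}$-like extension, which is incompatible with the presence of a \emph{second} such normal subgroup $N_2$ with $N_1\cap N_2=\{1\}$ (the socle of an affine $3$-transitive group is the unique minimal normal subgroup). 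This contradiction finishes $\td(G)\le 2$; and a product-like $G$ is never ICC (if $N_1,N_2$ are both infinite and commute, pick $1\neq x\in N_1$; its conjugacy class lies in $x\cdot[G,\langle x\rangle]$ which centralizes $N_2$... more simply, $G$ has non-trivial centre or a small conjugacy class is not forced — here instead use directly that $\td(G)\ge 2$ would already fail by the above), so (C$_1$) fails, and (C$_3$) fails since an acylindrically-hyperbolic-with-trivial-radical group is MIF (Corollary \ref{ah-mif}) hence directly indecomposable and in particular not product-like; so again (C$_1$)--(C$_3$) fail together.

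Finally, under the extra hypothesis that every $G_v$ is residually finite, $G$ itself is residually finite (a subgroup of a graph product of residually finite groups is residually finite). Then in case (c) the product-like group $G$ is covered by Corollary \ref{p-like}, which gives $\td(G)=1$ directly, eliminating the value $2$; in cases (a),(b) we already have $\td(G)=1$; and in case (d) with trivial finite radical $\td(G)=\infty$, while with non-trivial finite radical $\td(G)=1$. Hence $\td(G)\in\{1,\infty\}$. Assembling the four cases proves both assertions of the corollary and the equivalence of (C$_1$)--(C$_3$) throughout.
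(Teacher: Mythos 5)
Your overall route is the same as the paper's: run through the four-case classification from \cite{MO}, dispose of cases (a), (b), (d) via Lemma \ref{2-trans}(b), the finite radical and Theorem \ref{main}, and use Green's theorem plus Corollary \ref{p-like} for the residually finite refinement; those parts are essentially correct. The genuine problems are both inside case (c), which is the only delicate step. In the sub-case where one of $N_1,N_2$ is $2$-transitive, your contradiction rests on the claim that, since $N_1$ centralizes the infinite transitive subgroup $N_2$, ``$N_1$ has infinite centre.'' This does not follow: $Z(N_1)=N_1\cap C_G(N_1)$ and $N_1\cap N_2=\{1\}$, so $N_2\le C_G(N_1)$ says nothing about $Z(N_1)$ (compare $F_2\times\mathbb Z$, where $F_2$ is ICC but has infinite centralizer in the ambient group). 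The paper closes this sub-case by a direct permutation computation: choose distinct $\alpha,\beta,\gamma\in\Omega$, then $a\in N_1$ with $a(\alpha)=\beta$ (transitivity of $N_1$, Lemma \ref{prim}) and $b\in N_2$ with $b(\alpha)=\alpha$, $b(\beta)=\gamma$ ($2$-transitivity of $N_2$), so that $ab(\alpha)=\beta\neq\gamma=ba(\alpha)$, contradicting $[N_1,N_2]\le N_1\cap N_2=\{1\}$. Some argument of this kind is needed and yours is not a valid substitute.

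In the sub-case where neither $N_i$ is $2$-transitive (so both are elementary abelian $2$-groups by Lemma \ref{Cameron}), you appeal to the structure of ``affine-type'' $3$-transitive groups and uniqueness of the socle; nothing of that sort is proved or cited, and for infinite permutation groups it is precisely the kind of assertion that requires proof, so as written this is a gap. The paper's argument is elementary: $N=N_1N_2\cong N_1\times N_2$ is itself an elementary abelian $2$-group and is normal in $G$, so Lemma \ref{min} with $T=R=N$ makes $N$ a minimal normal subgroup, contradicting the fact that $N_1$ is a non-trivial normal subgroup properly contained in $N$. Two smaller points: your preliminary reduction to $N_1\cap N_2=\{1\}$ by passing to a finite-index normal subgroup is unjustified without residual finiteness (a finite normal subgroup need not avoid every finite-index subgroup) and is unnecessary, since if $N_1\cap N_2\neq\{1\}$ then $G$ has a non-trivial finite normal subgroup and $\td(G)=1$ at once; and ``product-like implies not ICC'' is false ($F_2\times F_2$), the correct way to see that (C$_3$) fails in case (c) being either that $\td(G)\le 2$ excludes (C$_2$), hence (C$_3$) by Theorem \ref{main}, or that a product-like group satisfies the non-trivial mixed identity $[[a,x],[b,x]]=1$, contradicting Corollary \ref{ah-mif} (note that ``directly indecomposable'' does not imply ``not product-like'').
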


\begin{proof}
If $G$ satisfies (a), then $K$ must be non-trivial and hence $\td (G)=1$ by part (b) of Lemma \ref{2-trans}. Similarly $\td (G)=1$ in the case (b) as non-trivial virtually cyclic groups are not ICC. Suppose (c) holds. If $N_1\cap N_2\ne \{1\}$, then $G$ contains a non-trivial finite normal subgroup and we obtain $\td (G)=\{ 1\}$ as above. Thus it remains to consider the case $N_1\cap N_2= \{ 1\}$. We are going to prove that $\td (G)\le 2$ in this case.

Arguing by contradiction, suppose that $\td (G)\ge 3$. Let $N$ denote the (normal) subgroup of $G$ generated by $N_1$ and $N_2$. Clearly $N \cong N_1\times N_2$. Let $G$ act faithfully and $3$-transitively on a set $\Omega$. According to Lemma \ref{Cameron}, there are two possibilities.

{\it Case 1.} Neither of the subgroups $N_1$, $N_2$ is $2$-transitive. Then $N$ is an elementary abelian $2$-group, so we can apply Lemma \ref{min} to $T=R=N$ and conclude that $N$ is a minimal normal subgroup of $G$. However $N_i$, $i=1,2$ are proper subgroups of $N$. Contradiction.

{\it Case 2.} Assume now that one of the subgroups $N_1$, $N_2$, say $N_2$, is $2$-transitive.  Let $\alpha, \beta, \gamma\in \Omega$ be distinct elements. $N_1$ is transitive by Lemma \ref{prim}. Hence there exists $a\in N_1$ such that $a(\alpha)=\beta$. Since $N_2$ is $2$-transitive, there exists $b\in N_2$ such that $b(\alpha)=\alpha$ and $b(\beta)=\gamma$. Then $ab(\alpha)=\beta$ while $ba(\alpha)=\gamma$. This contradicts $[N_1, N_2]\le N_1\cap N_2=\{ 1\}$.

Notice that by a theorem of Green \cite{Gre} (see also \cite{HsuW}), a finite graph product of residually finite groups is residually finite. Thus if all multiples are residually finite, so is $G$ and instead of considering cases 1,2 above we can immediately conclude that $\td (G)=1$ by Corollary \ref{p-like}.

Finally, if $G$ satisfies (d), the conclusion follows from Theorem \ref{main} and part (b) of Lemma \ref{2-trans}.
\end{proof}

Note that the case $\td(G)=2$ in the above corollary does realize. The following example is well-known.

\begin{ex}\label{2cc}
Let $C$ be a countably infinite group where all non-trivial elements are conjugate. First examples of countable groups of this kind were constructed by G. Higman, B. Neumann and H.Neumann (see \cite{LS}); finitely generated examples can be found in \cite{Osi10}. Let $G=C\times C$.
Then $G$ naturally acts on $C$ by the rule
$$
(a,b) c = acb^{-1} .
$$
Note that this action is isomorphic to the action of $G$ on cosets $G/D$, where $D=\{ (c,c) \mid c\in C\}$.
It is easy to see that this action is $2$-transitive. Indeed let $(x_1,y_1), (x_2, y_2)$ be two pairs of distinct elements of $C$. Since all nontrivial elements of $C$ are conjugate, there exists $b\in C$ such that $x_2^{-1}y_2=b^{-1}x_1^{-1}y_1b$. Let also $a= x_2b^{-1}x_1^{-1}$. Obviously we have $ax_1b=x_2$ and
$$
ay_1b=x_2b^{-1}x_1^{-1}y_1b= x_2x_2^{-1}y_2 =y_2.
$$
Finally we observe that if $acb^{-1}=c$ for every $c\in C$, then substituting $c=1$ we obtain $a=b$. Hence $c^{-1}ac=a$ for all $c\in C$, i.e., $\{a\}$ is a conjugacy class. Clearly this implies $a=1$ and consequently $b=1$. Thus the action is faithful and we have $\td(G)\ge 2$. On the other hand, if $\td(G)\ge 3$, then both multiples of $G=C\times C$ act $2$-transitively by Lemma \ref{Cameron} and we get a contradiction arguing as as in Case 2 of the proof of Corollary \ref{GP}. Thus $\td (G)=2$.
\end{ex}

If $G$ is a finite graph product of copies of $\mathbb Z$, i.e., a right angled Artin group, then it is either $\mathbb Z$, of decomposes as a direct product of two non-trivial groups, or is acylindrically hyperbolic  \cite{MO}. Since $G$ is torsion free, its finite radical is trivial and we obtain the following.

\begin{cor}\label{RAAG}
A right angled Artin group is highly transitive if and only if it is non-cyclic and directly indecomposable.
\end{cor}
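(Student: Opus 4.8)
The plan is to read off both implications from the structural trichotomy for right angled Artin groups recalled immediately above: every RAAG $G$ is either isomorphic to $\mathbb{Z}$, or decomposes as a direct product of two non-trivial groups, or is acylindrically hyperbolic \cite{MO}. Combined with torsion-freeness of RAAGs, each direction becomes essentially a one-line deduction.

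For the ``if'' direction, assume $G$ is non-cyclic and directly indecomposable. Then $G$ is neither $\mathbb{Z}$ nor a non-trivial direct product, so by the trichotomy it is acylindrically hyperbolic. Since $G$ is torsion free it has no non-trivial finite subgroups, so its finite radical $K(G)$ is trivial; Theorem \ref{main} then yields that $G$ is highly transitive.

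For the ``only if'' direction, assume $G$ is highly transitive, so $\td(G)=\infty$ and in particular $G$ admits a faithful $2$-transitive action. If $G\cong\mathbb{Z}$, then $G$ is infinite and abelian, hence not ICC, contradicting Lemma \ref{2-trans}(b); thus $G$ is non-cyclic. If $G\cong A\times B$ with $A$ and $B$ non-trivial, then $A$ and $B$ are infinite (RAAGs are torsion free), so $G$ is countably infinite and product-like, with the two factors serving as the required normal subgroups of trivial intersection. A finite graph product of residually finite groups is residually finite (Green's theorem \cite{Gre}, as used in the proof of Corollary \ref{GP}), so $G$ is residually finite; Corollary \ref{p-like} then forces $\td(G)=1$, a contradiction. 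Hence $G$ is directly indecomposable, and the two conditions are equivalent.

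I do not expect any real obstacle here: the content is entirely in the trichotomy of \cite{MO} and in Theorem \ref{main}, and the only points to check are that torsion-freeness kills the finite radical and that a non-trivial direct product of torsion-free groups is an infinite, residually finite, product-like group so that Corollary \ref{p-like} applies. As an alternative to invoking residual finiteness, one could instead run the argument of Cases~1 and~2 in the proof of Corollary \ref{GP} on the normal subgroups $A$ and $B$, using torsion-freeness to exclude the elementary abelian $2$-group case in Lemma \ref{Cameron}; but the route via Corollary \ref{p-like} is cleaner.
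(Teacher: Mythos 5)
Your proof is correct and follows essentially the same route as the paper: the trichotomy of \cite{MO} for right angled Artin groups, torsion-freeness to kill the finite radical, and Theorem \ref{main} for the ``if'' direction. The ``only if'' direction via Lemma \ref{2-trans}(b), Green's residual finiteness and Corollary \ref{p-like} is exactly the argument the paper delegates to Corollary \ref{GP} (whose residually finite case is Corollary \ref{p-like}), so there is no substantive difference.
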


\paragraph{4.5. Miscellaneous results.}
Here we discuss applications of our main theorem to some other classes of acylindrically hyperbolic groups.

\begin{lem}
Let $G$ be a group hyperbolic relative to a collection of proper subgroups. Then $G$ is either acylindrically hyperbolic or virtually cyclic.
\end{lem}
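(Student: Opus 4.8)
The plan is to split into two cases according to whether the peripheral collection $\{H_\lambda\}_{\lambda\in\Lambda}$ contains an infinite member. Recall that ``$G$ is hyperbolic relative to $\{H_\lambda\}_{\lambda\in\Lambda}$'' means, in the terminology of Section~2.2, that $\{H_\lambda\}_{\lambda\in\Lambda}\h (G,X)$ for some \emph{finite} $X\subseteq G$ (this is \cite[Proposition 4.28]{DGO}); in particular the index set $\Lambda$ is finite.

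\emph{Case 1: some $H_{\lambda_0}$ is infinite.} Absorbing the remaining peripheral subgroups into the relative generating set keeps the collection hyperbolically embedded (\cite[Corollary 4.27]{DGO}), so $H_{\lambda_0}\h (G,\,X\cup\bigcup_{\mu\neq\lambda_0}H_\mu)$; in particular $H_{\lambda_0}\h G$. Since $H_{\lambda_0}$ is infinite and, being proper, satisfies $H_{\lambda_0}\neq G$, it is a non-degenerate hyperbolically embedded subgroup of $G$. By Theorem~\ref{ah-hes}, $G$ is acylindrically hyperbolic.

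\emph{Case 2: every $H_\lambda$ is finite (this includes the possibility $\Lambda=\emptyset$).} Then $G$ is hyperbolic relative to a finite collection of finite subgroups: with $X$ finite, the Cayley graph $\Gamma(G,\,X\sqcup(\bigsqcup_{\lambda}H_\lambda))$ is hyperbolic and is built over the finite generating set $X\cup\bigcup_\lambda H_\lambda$, so $G$ is a hyperbolic group. It is classical that a hyperbolic group is either virtually cyclic or non-elementary, and in the latter case it contains an element $g$ of infinite order whose maximal elementary subgroup $E(g)$ is infinite, virtually cyclic, proper, and satisfies $E(g)\h G$ (cf. \cite[Theorem 2.19]{DGO} and the classical theory of hyperbolic groups). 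Thus a non-elementary hyperbolic group has a non-degenerate hyperbolically embedded subgroup, and Theorem~\ref{ah-hes} again gives that $G$ is acylindrically hyperbolic. Combining the two cases, $G$ is either virtually cyclic or acylindrically hyperbolic.

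I do not expect a genuine obstacle here: the substance is entirely carried by Theorem~\ref{ah-hes}, together with the standard facts that a group hyperbolic relative to finite subgroups is a hyperbolic group and that a non-elementary hyperbolic group contains an infinite-order element sitting inside a virtually cyclic hyperbolically embedded subgroup. The only points that need a little attention are the convention that the peripheral collection is finite, and the passage from a hyperbolically embedded \emph{collection} to a single non-degenerate hyperbolically embedded subgroup.
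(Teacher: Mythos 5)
Your Case 1 and the finite-collection part of Case 2 follow the paper's argument, but there is a genuine gap in how you set up the two cases. The paper works with the definition of relative hyperbolicity from \cite{Osi06}, which allows the peripheral collection $\{H_\lambda\}_{\lambda\in\Lambda}$ to be infinite (and $G$ to be uncountable and non-finitely generated); the lemma is stated and used in that generality. Your claim that ``$\{H_\lambda\}_{\lambda\in\Lambda}\h(G,X)$ with $X$ finite, in particular $\Lambda$ is finite'' is false: a finite relative generating set does not bound the number of peripheral subgroups. For instance, $G=\ast_{i\in\mathbb N}A_i$ with all $A_i$ finite and non-trivial is hyperbolic relative to $\{A_i\}_{i\in\mathbb N}$ with $X=\emptyset$, yet $\Lambda$ is infinite. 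Precisely in this situation your Case 2 collapses: when every $H_\lambda$ is finite but $\Lambda$ is infinite, the set $X\cup\bigcup_\lambda H_\lambda$ is infinite and $G$ is not even finitely generated, so the reduction to an ordinary (word-)hyperbolic group fails, and this $G$ is neither hyperbolic nor covered by your dichotomy as written.

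The paper closes exactly this case separately: if all $H_\lambda$ are finite and $\Lambda$ is infinite, then by part 1) of \cite[Theorem 2.44]{Osi06} the group $G$ decomposes as a free product of infinitely many non-trivial groups, the action on the associated Bass-Serre tree is acylindrical and non-elementary, and hence $G$ is acylindrically hyperbolic. If you add this third case (or explicitly restrict the definition you use and justify why that suffices for the applications), your argument matches the paper's: Case 1 is the same appeal to \cite[Proposition 4.28]{DGO} plus Theorem \ref{ah-hes}, and the finite-$\Lambda$, all-finite-peripherals case is the same reduction to ordinary hyperbolic groups via \cite[Theorem 2.40]{Osi06}, where the dichotomy ``virtually cyclic or acylindrically hyperbolic'' is standard.
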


\begin{proof}
Let $G$ be hyperbolic with respect to a collection of proper subgroups $\Hl$. We accept the definition of a relatively hyperbolic group from \cite{Osi06}. In particular, we do not assume that $G$ is countable. Withoul loss of generality we can assume that all $H_\lambda$ are non-trivial.

We consider two cases.

{\it Case 1.} There exists an infinite $H\in \Hl$. By \cite[Proposition 4.28]{DGO} we have $H\h G$. Since $H$ is infinite and $H\ne G$, it is non-degenerate and hence $G$ is acylindrically hyperbolic by Theorem \ref{ah-hes}.

{\it Case 2.} Suppose now that all $H_\lambda $ are finite. Again there are two possibilities. First assume that $\Lambda$ is finite. Then by \cite[Theorem 2.40]{Osi06} $G$ is an (ordinary) hyperbolic group. Hence $G$ is either virtually cyclic or acylindrically hyperbolic. Finally assume that $\Lambda $ is infinite. Then by part 1) of \cite[Theorem 2.44]{Osi06} $G$ decomposes as a non-trivial free product of infinitely many non-trivial groups. It is easy to see that the action on the associated Bass-Serre tree is acylindrical and non-elementary. Thus $G$ is acylindrically hyperbolic in this case.
\end{proof}

From this lemma, Theorem \ref{main} and Lemma \ref{2-trans}(b) we obtain the following.

\begin{cor}
Let $G$ be a countable group hyperbolic relative to a collection of proper subgroups. Then $G$ is highly transitive if and only if it is not virtually cyclic and has no non-trivial finite normal subgroups.
\end{cor}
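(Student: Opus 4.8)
The plan is to deduce both implications from the lemma immediately above, Theorem~\ref{main}, and Lemma~\ref{2-trans}(b); no new machinery is needed.

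For the ``if'' direction, I would assume that $G$ is not virtually cyclic and has no non-trivial finite normal subgroup. Since $G$ is hyperbolic relative to a collection of proper subgroups, the lemma above forces $G$ to be acylindrically hyperbolic. Its finite radical $K(G)$ is a finite normal subgroup of $G$, hence trivial by hypothesis. As $G$ is countable, the ``in particular'' clause of Theorem~\ref{main} then yields that $G$ is highly transitive.

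For the ``only if'' direction, I would assume that $G$ is highly transitive. A highly transitive action is on an infinite set, so $G$ is infinite; fixing such a faithful action we may regard $G$ as an infinite $2$-transitive subgroup of some $Sym(\Omega)$, and Lemma~\ref{2-trans}(b) gives that $G$ is ICC. In particular $G$ has no non-trivial finite normal subgroup, since an element of such a subgroup would have a finite non-trivial conjugacy class. It then remains to rule out $G$ being virtually cyclic: if it were, then (being infinite) it would contain an infinite cyclic subgroup of finite index, and the intersection of its conjugates would be an infinite cyclic normal subgroup $Z\lhd G$; since $G$ acts on $Z\cong\mathbb Z$ by automorphisms, the conjugacy class of any $z\in Z$ lies in $\{z,z^{-1}\}$, contradicting the ICC property. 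Hence $G$ is not virtually cyclic, which completes the argument.

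The whole proof is a short bookkeeping exercise once the three cited ingredients are in place, so I do not anticipate any real obstacle; the only step that is not a direct citation is the observation that infinite virtually cyclic groups fail to be ICC, which is exactly the remark already used in the proof of Corollary~\ref{3d0}.
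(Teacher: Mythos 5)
Your proposal is correct and is essentially the paper's own argument: the paper proves this corollary by simply combining the preceding lemma (relatively hyperbolic $\Rightarrow$ acylindrically hyperbolic or virtually cyclic), Theorem~\ref{main}, and Lemma~\ref{2-trans}(b), which is exactly the combination you spell out. Your extra observation that infinite virtually cyclic groups are not ICC is the same remark the paper itself uses in the proof of Corollary~\ref{3d0}, so there is no genuine divergence.
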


\begin{cor}\label{outfn}
$Out(F_n)$ is highly transitive if and only if $n\ge 3$.
\end{cor}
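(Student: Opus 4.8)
The plan is to split into cases according to $n$. For $n=1$, $Out(F_1)\cong \mathbb Z_2$ is finite, hence not highly transitive (a highly transitive group must be infinite). For $n=2$, recall that $Out(F_2)\cong GL(2,\mathbb Z)$, which is virtually free but contains a non-trivial finite normal subgroup — indeed the center of $GL(2,\mathbb Z)$ is $\{\pm I\}$, so $Out(F_2)$ has a non-trivial finite normal (in fact central) subgroup; by Lemma \ref{2-trans}(b) a group of transitivity degree at least $2$ is ICC and so cannot have a non-trivial finite normal subgroup, whence $\td(Out(F_2))=1$ and in particular $Out(F_2)$ is not highly transitive.

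For $n\ge 3$ the strategy is to invoke Theorem \ref{main}, so it suffices to check that $Out(F_n)$ is acylindrically hyperbolic with trivial finite radical. Acylindrical hyperbolicity of $Out(F_n)$ for $n\ge 3$ is known: one can cite the work showing $Out(F_n)$ admits a non-elementary acylindrical action on a hyperbolic complex (e.g. the action on the free factor complex or the co-surface graph, via \cite{DGO}), or simply note that $Out(F_n)$ contains a non-degenerate hyperbolically embedded subgroup and apply Theorem \ref{ah-hes}. It then remains to verify $K(Out(F_n))=\{1\}$ for $n\ge 3$. Here I would use that $Out(F_n)$ is residually finite and, more to the point, that it has trivial center and no non-trivial finite normal subgroup for $n\ge 3$; the cleanest route is to recall that $Out(F_n)$ acts faithfully on (the conjugacy classes of) primitive elements, or to use that $Aut(F_n)$ has a finite-index subgroup (the Torelli-type level subgroup, or the subgroup acting trivially on $H_1(F_n;\mathbb Z/3)$) which is torsion-free, so any finite normal subgroup of $Out(F_n)$ intersects this finite-index subgroup trivially and hence is central; then triviality of $Z(Out(F_n))$ for $n\ge 3$ finishes it.

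I expect the main obstacle to be pinning down the cleanest citation or argument for $K(Out(F_n))=\{1\}$ when $n\ge 3$, since this is the only input not already packaged in the excerpt; the acylindrical hyperbolicity statement is by now standard and Theorem \ref{main} then does all the remaining work. The case $n=2$ is easy but worth stating carefully because it shows the bound $n\ge 3$ is sharp, and it mirrors the role played by finite radicals throughout the paper.
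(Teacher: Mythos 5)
Your overall strategy coincides with the paper's: $Out(F_1)$ is finite, $Out(F_2)\cong GL_2(\mathbb Z)$ has a non-trivial finite normal subgroup so Lemma \ref{2-trans}(b) forces $\td(Out(F_2))=1$, and for $n\ge 3$ you invoke Theorem \ref{main} after checking acylindrical hyperbolicity (which the paper likewise just cites from \cite{DGO,Osi13}) and $K(Out(F_n))=\{1\}$. The one genuine gap is in your argument for the last point. Taking a torsion-free finite-index normal subgroup $G_0\le Out(F_n)$ (e.g.\ the level-$3$ congruence subgroup) and a finite normal subgroup $N$, what you actually obtain is $[N,G_0]\subseteq N\cap G_0=\{1\}$, i.e.\ $N$ centralizes $G_0$; the step ``$N\cap G_0=\{1\}$, hence $N$ is central in $Out(F_n)$'' is a non sequitur and is false in general (take $G=S_3\times\mathbb Z$, $N=A_3$, $G_0=\{1\}\times\mathbb Z$). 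So triviality of $Z(Out(F_n))$ does not finish the argument: what you need is that the centralizer of a finite-index subgroup of $Out(F_n)$ is trivial for $n\ge 3$, a strictly stronger (though true) statement that requires its own citation or proof. Your alternative suggestion, faithfulness of the action on conjugacy classes of primitive elements, has the same defect: a finite normal subgroup has no a priori reason to act trivially on that set, only to have finite orbits.

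The paper sidesteps all of this by quoting $C^\ast$-simplicity of $Out(F_n)$ for $n\ge 3$ \cite{BriHar}, which gives trivial amenable, hence finite, radical in one stroke (with \cite{WZ} mentioned as an alternative source). If you want to keep your more elementary outline, replace ``hence is central'' by ``hence centralizes $G_0$'' and then invoke a result guaranteeing $C_{Out(F_n)}(G_0)=\{1\}$ for every finite-index subgroup $G_0$ when $n\ge 3$; otherwise the cleanest fix is simply to cite \cite{BriHar} as the paper does.
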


\begin{proof}
Recall that $Out(F_n)$ is acylindrically hyperbolic for $n\ge 2$ \cite{DGO,Osi13}. $Out(F_2)=GL_2(\mathbb Z)$ has non-trivial finite radical, hence $\td (Out(F_2))=1$ by Lemma \ref{2-trans} (b). On the other hand, for $n\ge 3$, $Out(F_n)$ has trivial finite (and even amenable) radical. This follows immediately, for example, from $C^\ast$-simplicity of $Out (F_n)$ for $n\ge 3$ proved in \cite{BriHar}. This fact can also be derived from results of \cite{WZ}.
\end{proof}

\begin{cor}\label{1-rel}
Every $1$-relator group with at least $3$ generators is highly transitive.
\end{cor}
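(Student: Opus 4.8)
The plan is to deduce the corollary from Theorem~\ref{main}: I will show that a one-relator group $G=\langle x_1,\dots ,x_n\mid r\rangle$ with $n\ge 3$ is acylindrically hyperbolic and has trivial finite radical. We may take $r$ cyclically reduced; if $r$ involves at most one generator then $G$ is a free product of a cyclic group with a nontrivial free group (or $G$ is free of rank $n$), so the conclusion is clear, and we henceforth assume $r$ involves at least two generators. The following observation will be used repeatedly: by the Freiheitssatz, deleting from $\{x_1,\dots ,x_n\}$ a generator that occurs in $r$ leaves a free subgroup of $G$ of rank $\ge n-1\ge 2$; in particular $F_2\le G$, so $G$ is not virtually cyclic.

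\emph{Triviality of $K(G)$.} If $r$ is not a proper power in $F_n$, then $G$ is torsion-free (Karrass--Magnus--Solitar), so $K(G)=\{1\}$. If $r=s^m$ with $m\ge 2$, then $G$ is a hyperbolic group by B.~B.~Newman's spelling theorem, and its maximal finite subgroups are exactly the conjugates of $\langle s\rangle\cong\mathbb Z/m\mathbb Z$. Since distinct conjugates of $\langle s\rangle$ intersect trivially and $G$ is infinite, a finite normal subgroup, being contained in every maximal finite subgroup, must be trivial; hence $K(G)=\{1\}$ in both cases.

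\emph{Acylindrical hyperbolicity.} If the image of $r$ lies in a proper free factor of $F_n$, equivalently $G$ is a nontrivial free product, then $G$ acts non-elementarily and acylindrically on the Bass--Serre tree of this splitting (edge stabilizers are trivial, and $G\ne\mathbb Z_2\ast\mathbb Z_2$ since $F_2\le G$), so $G$ is acylindrically hyperbolic. Otherwise $G$ is freely indecomposable. Applying a suitable automorphism of $F_n$ (possible since $n\ge 2$) and cyclically reducing, we may assume $x_n$ has zero exponent sum in $r$; then the homomorphism $G\to\mathbb Z$ killing $x_1,\dots ,x_{n-1}$ and sending $x_n\mapsto 1$ is well defined and surjective, and Magnus's classical argument exhibits $G$ as an HNN extension $G=\langle H,t\mid t^{-1}At=B\rangle$, where $H$ is a one-relator group with relator strictly shorter than $r$ and $A,B$ are proper Magnus subgroups of $H$ (hence free). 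As $A$ and $B$ are proper in $H$, the HNN extension is non-ascending, and free indecomposability keeps it non-degenerate; since $G$ is not virtually cyclic, the action of $G$ on the Bass--Serre tree is non-elementary. Using malnormality properties of Magnus subgroups of one-relator groups, one checks that this action is acylindrical, so $G$ is acylindrically hyperbolic by the criterion for acylindrical hyperbolicity of groups acting on trees. Theorem~\ref{main} now yields that $G$ is highly transitive.

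The main obstacle is the last step: verifying that the Magnus--Moldavanskii HNN decomposition of a freely indecomposable one-relator group with $n\ge 3$ generators is an \emph{acylindrical} splitting. This requires Magnus's length-reduction lemma together with quantitative control of the intersections $A\cap A^{g}$ and $A\cap B^{g}$ of Magnus subgroups inside $H$; by contrast, the free-product case and the computation of the finite radical are routine.
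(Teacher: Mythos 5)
Your reduction to Theorem \ref{main} is the right frame, and your treatment of $K(G)$ is essentially sound (torsion-freeness when $r$ is not a proper power; conjugates of $\langle s\rangle$ when $r=s^m$ --- though there you should also rule out the degenerate case that $\langle s\rangle$ is normal, e.g.\ because the centralizer of $s$ is cyclic while $G$ is not virtually cyclic). The genuine gap is exactly the step you flag, and it is worse than unverified: the claim that the Magnus--Moldavanskii splitting of a freely indecomposable one-relator group on $\ge 3$ generators is an acylindrical splitting is false. Take $G=\langle a,b,t\mid [[a,b],t]=1\rangle$. This is a one-relator group on three generators which is freely indecomposable (the element $[a,b]$ lies in no proper free factor of $F(a,b)$, and a Kurosh argument then shows that any free decomposition of $G$ is trivial). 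The relator has zero exponent sum in $t$, and the Magnus rewriting produces precisely the splitting you describe: the base is $H=\langle a_0,b_0,a_1,b_1\mid [a_0,b_0]=[a_1,b_1]\rangle$ (shorter relator), the associated subgroups are the proper Magnus subgroups $A=\langle a_0,b_0\rangle$ and $B=\langle a_1,b_1\rangle$, and $t^{-1}a_0t=a_1$, $t^{-1}b_0t=b_1$. But then $c=[a_0,b_0]\in A$ satisfies $t^{-1}ct=[a_1,b_1]=c$, so $t^{-n}c^kt^{n}=c^k\in H$ for all $n,k$; thus every power of $c$ fixes every vertex $t^nH$ of the Bass--Serre tree, i.e.\ infinitely many elements fix two vertices at arbitrarily large distance, and the action is not acylindrical --- even though $G$ itself is acylindrically hyperbolic. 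Magnus subgroups are also not malnormal in general, so the tool you invoke is not available.

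What is true, and what the paper relies on, is much weaker and suffices. The paper simply quotes \cite{MO} for acylindrical hyperbolicity of one-relator groups with at least three generators; the mechanism there is a criterion for groups acting on trees that does not require the action to be acylindrical (roughly: a minimal non-elementary action admitting two edges whose stabilizers have finite intersection), applied to the Sacerdote--Schupp splitting \cite{SS}: every one-relator group with at least $3$ generators is an HNN extension of a group $H$ with associated subgroups $A,B$ proper in $H$ such that $A\cap B^h=\{1\}$ for some $h\in H$. The paper then uses this same splitting to kill the finite radical in one line: a finite normal subgroup fixes a vertex of the Bass--Serre tree, hence by normality and vertex-transitivity fixes all vertices, hence lies in $A\cap B^h=\{1\}$; your two-case argument for $K(G)$ is an acceptable substitute for this, but the acylindrical hyperbolicity step must be replaced by the \cite{MO} criterion applied to the \cite{SS} splitting (or by a direct appeal to \cite{MO}), since the route through acylindricity of the Magnus splitting cannot be repaired.
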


\begin{proof}
Let $G$ be a $1$-relator group $G$ with at least $3$ generators. It was proved in \cite{MO} that $G$ is acylindrically hyperbolic. Thus we only need to show that $G$ has trivial finite radical. It is shown in \cite{SS} (see the proof of Theorem III there) that every $1$-relator group with at least $3$ generators splits
as an HNN-extension $G = H_{A^t=B}$, where $A,B$ are proper subgroups of $H$, and there exists $h \in H$ such that
$A \cap B^h = \{1\}$. Let $T$ be the Bass-Serre tree associated to this decomposition and let $N\lhd G$ be a finite normal subgroup. Recall that every finite group acting on a tree without inversions fixes a vertex. Hence $N$ fixes a vertex of $T$. But since $N$ is normal and the action of $G$ is transitive, $N$ fixes all vertices of $T$. In particular, $N\le A \cap B^h = \{1\}$.
\end{proof}

\section{Highly transitive actions and mixed identities}

\paragraph{5.1. Mixed identity free groups.} We begin by introducing notation, which will be used throughout the rest of the paper. Let $F_n$ denote the free group of rank $n$ with basis $x_1, \ldots, x_n$. Given a group $G$, we identify elements of the free product $G\ast F_n$ with their normal forms (see Theorem \ref{nform}). Thus we think of elements of $G\ast F_n$ as words in the alphabet $\{x_1^{\pm 1}, \ldots, x_n^{\pm 1}\} \cup G$. Given $w\in G\ast F_n$ and any elements $a_1, \ldots , a_n\in G\ast F_n$, we denote by $w(a_1, \ldots, a_n)$ the word obtained from $w$ by substituting $a_i$ for $x_i$ for all $i=1, \ldots, n$. Alternatively, we can describe $w(a_1, \ldots, a_n)$ as the image of $w$ under the homomorphism $G\ast F_n \to G\ast F_n$ that is identical on $G$ and sends $x_i$ to $a_i$, $i=1, \ldots, n$.

Recall that a group $G$ is said to satisfy a \emph{mixed identity} $w=1$ for some $w\in G\ast F_n$ if $w(g_1, \ldots, g_n)=1$ for all $g_1, \ldots, g_n\in G$.  We say that a mixed identity $w=1$ is \emph{non-trivial} if $w\ne 1$ as an element of $G\ast F_n$.

\begin{rem}\label{1-vs-n}
It is easy to see that if $G\ne \{ 1\}$, then for every $n\in \mathbb N$ there is an embedding $G\ast F_n\to G\ast \langle x\rangle$ which is identical on $G$. (E.g., using normal forms in free products it is easy to show that the subgroup of $G\ast \langle x\rangle$ generated by $G$, $xgx$, \ldots, $x^ngx^n$ is naturally isomorphic to $G\ast F_n$ for every $g\in G\setminus \{ 1\}$.) Thus $G$ satisfies some non-trivial mixed identity $w\in G\ast F_n$ if and only if it satisfies a non-trivial mixed identity $w^\prime \in G\ast \langle x\rangle$ (i.e., with one variable).
\end{rem}

\begin{defn}
A group $G$ is called \emph{mixed identity free} (or MIF, for brevity) if it does not satisfy any non-trivial mixed identity.
\end{defn}

Recall that, for a given group $G$, a \emph{$G$-group} is a group $H$ with a fixed embedding $G\le H$. Let $\mathcal L_G$ denote the first order language of $G$-groups. Thus the alphabet of $\mathcal L_G$ consists of the standard logical symbols, the group operations (multiplication and taking inverse), and constants corresponding to elements of $G$. Let $Th_\forall^G(H)$ denote the universal theory of $H$ as a $G$-group. That is, $Th_\forall^G(H)$ is the set of all universal sentences in $\mathcal L_G$ that hold true in $H$.  Two $G$-groups $H_1$, $H_2$ are \emph{universally equivalent} if $Th_\forall^G(H_1)=Th_\forall^G(H_2)$. In general, being universally equivalent as $G$-groups is stronger that being universally equivalent in the usual sense (i.e., as $\{ 1\}$-groups). For details on $G$-groups and their elementary theory we refer to \cite{BMR}.

\begin{prop}\label{mif}
A countable group $G$ is MIF if and only if $G$ and $G\ast F_n$ are universally equivalent as $G$-groups for all $n\in \mathbb N$.
\end{prop}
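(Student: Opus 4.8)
The plan is to prove both directions, with the harder and more interesting direction being that MIF implies universal equivalence of $G$ with $G\ast F_n$ as $G$-groups. I would reduce to the case $n=1$ at the outset using Remark \ref{1-vs-n}, and write $F=\langle x\rangle$ for brevity.

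\emph{The easy direction} ($G\ast F_n$ universally equivalent to $G$ $\Rightarrow$ $G$ MIF). Suppose $G$ satisfies a non-trivial mixed identity $w=1$ with $w\in G\ast F_n$, $w\neq 1$. Then the universal sentence $\forall x_1\cdots\forall x_n\,(w(x_1,\dots,x_n)=1)$ lies in $Th_\forall^G(G)$ but fails in $G\ast F_n$: witnessed by $x_i\mapsto x_i$, since $w\neq 1$ there. So $Th_\forall^G(G)\neq Th_\forall^G(G\ast F_n)$, contradiction. This handles one direction completely.

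\emph{The main direction} ($G$ MIF $\Rightarrow$ universal equivalence). Since $G$ embeds as a subgroup of $G\ast F_n$ via a $G$-group embedding, every universal $\mathcal L_G$-sentence true in $G\ast F_n$ is automatically true in $G$; that is, $Th_\forall^G(G\ast F_n)\subseteq Th_\forall^G(G)$ always. So the content is the reverse inclusion: every universal $\mathcal L_G$-sentence true in $G$ is true in $G\ast F_n$. Equivalently, passing to negations, every \emph{existential} $\mathcal L_G$-sentence true in $G\ast F_n$ is true in $G$. Such a sentence asserts the existence of a tuple $\bar y = (y_1,\dots,y_m)$ in the ambient $G$-group satisfying a finite conjunction of equalities and inequalities $u_j(\bar y)=1$, $v_l(\bar y)\neq 1$, where $u_j, v_l\in G\ast F_m$ (constants from $G$ allowed). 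Suppose such $\bar y$ exists in $G\ast F_n$, i.e., there are elements $b_1,\dots,b_m\in G\ast F_n$ with $u_j(\bar b)=1$ in $G\ast F_n$ for all $j$ and $v_l(\bar b)\neq 1$ for all $l$. I want to produce such a tuple inside $G$. The key idea: I should find a $G$-group homomorphism $\phi\colon G\ast F_n\to G$ (identical on $G$), i.e., a choice of images in $G$ for the free generators $x_1,\dots,x_n$, such that $\phi(v_l(\bar b))\neq 1$ for every $l$. If such a $\phi$ exists, then $\phi(\bar b)$ is the desired tuple in $G$: the equalities $u_j(\phi(\bar b))=\phi(u_j(\bar b))=\phi(1)=1$ are automatic, and the inequalities are arranged. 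Now, for each fixed $l$, the element $v_l(\bar b)$, after substituting the $b_i$, is an element of $G\ast F_n$ which is \emph{not} $1$; so ``$v_l(\bar b)$ becomes $1$ under every $G$-identical homomorphism $G\ast F_n\to G$'' is precisely saying $v_l(\bar b)=1$ is a non-trivial mixed identity for $G$ (if $v_l(\bar b)\neq 1$ in $G\ast F_n$) — which is impossible since $G$ is MIF. Hence for each $l$ there is a $G$-identical $\phi_l$ with $\phi_l(v_l(\bar b))\neq 1$. The remaining obstacle is to pass from one good homomorphism per inequality to a single $\phi$ that is good for all finitely many $l$ simultaneously.

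\emph{The simultaneous-avoidance step} is the crux and where I expect the real work. One clean approach: consider the diagonal. Pick $g\in G\setminus\{1\}$ and, as in Remark \ref{1-vs-n}, embed a big free product; more to the point, one can combine the $\phi_l$ by a ``generic'' or ``product'' construction. Concretely, form $\psi\colon G\ast F_n \to G\ast F_n$ identical on $G$ and sending $x_i$ to a word in $x_i$'s designed so that, composed with suitable evaluations, none of the $v_l(\bar b)$ die; then the MIF property applied to the single element $\prod_l$ (suitably conjugated to avoid cancellation) $v_l(\bar b)$ — or rather to a commutator/product that is nontrivial in $G\ast F_n$ iff all $v_l(\bar b)$ are nontrivial — yields one $\phi$ working for all $l$ at once. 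A more robust route is induction on the number of inequalities $L$: given $\phi$ working for $v_1,\dots,v_{L-1}$, modify it to also handle $v_L$. The modification uses that the set of $G$-identical homomorphisms is rich: precompose $\phi$ with an automorphism or endomorphism of $G\ast F_n$ (identical on $G$) that conjugates the generators by a new free variable and then specialize, using MIF on a carefully built non-trivial element of $G\ast F_n$ that encodes ``$\phi(v_L)\neq 1$ while still $\phi(v_j)\neq 1$ for $j<L$.'' I would expect the author to package this via a lemma that the MIF property is equivalent to: for any $w_1,\dots,w_L\in (G\ast F_n)\setminus\{1\}$, there is a $G$-identical homomorphism $G\ast F_n\to G$ sending all $w_j$ to non-trivial elements — and the proof of that lemma, reducing $L$ inequalities to one via a product-with-fresh-conjugators trick inside $G\ast F_{n+1}$ (then re-embedding into $G\ast F_n$), is the main obstacle. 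Once that lemma is in hand, the argument above closes, using countability of $G$ only to ensure the relevant sentences range over a set (or not needed at all beyond the statement's hypotheses).
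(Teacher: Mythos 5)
Your proposal is correct, but it takes a genuinely different route from the paper. The paper does not transfer witnesses sentence-by-sentence: it enumerates $G\ast F_n=\{1,w_1,w_2,\ldots\}$ (this is where countability is used), forms the iterated commutators $u_i=[w_1,x^{-1}w_2x,\ldots,x^{-(i-1)}w_ix^{i-1}]\in G\ast F_{n+1}$ with a fresh variable $x$, applies MIF to each $u_i$ to get tuples in $G$ at which $w_1,\ldots,w_i$ all survive, and then assembles this sequence of evaluations into a single $G$-group embedding $G\ast F_n\hookrightarrow G^\omega$ into a non-principal ultrapower; the conclusion follows from the \L os theorem together with the fact that universal sentences pass to substructures. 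Your argument instead handles each existential sentence directly, pushing a witness from $G\ast F_n$ down to $G$ via a retraction that avoids killing the finitely many inequations; the ``simultaneous avoidance'' lemma you single out as the main obstacle is exactly the finite form of the paper's trick: for $c_1,\ldots,c_L\in (G\ast F_n)\setminus\{1\}$ take $u=[c_1,x^{-1}c_2x,\ldots,x^{-(L-1)}c_Lx^{L-1}]\in G\ast F_{n+1}$, which is non-trivial by normal forms in the free product, and since an iterated commutator with a trivial entry is trivial, any MIF-supplied evaluation with $u\ne 1$ keeps every $c_l$ non-trivial. (Do use the commutator rather than the plain product $\prod_l x^{-l}c_lx^l$ you also float: the image of a product can be non-trivial while some factor dies, so the product does not give the implication you need.) What the two approaches buy: yours is more elementary and in fact never uses countability (nor the reduction to $n=1$, which is harmless but unnecessary), while the paper's ultrapower argument produces one embedding $G\ast F_n\hookrightarrow G^\omega$ that disposes of all universal sentences at once and fits the model-theoretic framing of $G$-groups it wants to emphasize. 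The easy direction is identical in both.
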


\begin{proof}
The `if' part is obvious since satisfying a non-trivial mixed identity can be encoded by a universal sentence in $\mathcal L_G$. Thus we only need to prove the `only if' part. Assume that $G$ is MIF. Fix any $n\in \mathbb N$. Let $G\ast F_n=\{ 1, w_1,w_2, \ldots\}$. Let $G\ast F_n\to G\ast F_{n+1}=G\ast F_{n}\ast \langle x\rangle $ be the natural embedding. Let $u_1=w_1$ and for $i\ge 2, \ldots$, let $$u_i=[w_1, x^{-1}w_2x, \ldots, x^{-(i-1)}w_ix^{i-1}],$$ where the commutator is defined inductively by $$[a_1,a_2]=a_1^{-1}a_2^{-1}a_1a_2\;\;\; {\rm and} \;\;\; [a_1, \ldots , a_k, a_{k+1}]=[[a_1, \ldots, a_k], a_{k+1}].$$ Using normal forms, it is easy to see that $u_i$ is a non-trivial element of $G\ast F_{n+1}$. Since $G$ is MIF, for every $i$, there exist $g_{i1}, \ldots, g_{i,n+1}\in G$ such that $u_i(g_{i1}, \ldots, g_{i,n+1})\ne 1$. In particular, this implies that $w_m(g_{i1}, \ldots, g_{in})\ne 1$ for all $m\le i$.

Let $\omega$ be a non-principal ultrafilter on $\mathbb N$ and let $G^\omega$ denote the corresponding ultrapower of $G$. Let $g_j$ denote the element of $G^\omega$ represented by the sequence $(g_{ij})_{i\in \mathbb N}$, $j=1, \ldots, n$. We think of $G^\omega$ as a $G$-group, where the fixed embedding of $G$ is the diagonal one. Let $\e\colon G\ast F_n\to G^\omega $ be the homomorphism whose restriction on $G$ is the diagonal embedding and such that $x_j$ is mapped to $g_j$ for $j=1, \ldots, n$. By the choice of elements $g_{ij}$, we obtain that for every $m$, $w_m(g_{i1}, \ldots, g_{in})\ne 1$ for all but finitely many $i$. Since $\omega$ is non-principal, this implies $\e (w_m)\ne 1$ for every $m$. Hence $\e$ is injective. Thus we obtain a sequence of embeddings of $G$-groups: $G\to G\ast F_n\to G^\omega$. By the \L os theorem, $G$ and $G^\omega$ have the same elementary (in particular, universal) theory as $G$-groups. Since universal properties of algebraic structures are inherited by substructures, we obtain $$Th_\forall ^G (G)= Th_\forall ^G(G^\omega)\subseteq Th_\forall ^G(G\ast F_n)\subseteq Th_\forall ^G(G).$$ Consequently, $Th_\forall ^G(G\ast F_n)=Th_\forall ^G(G)$.
\end{proof}

The next proposition shows that being MIF is a much stronger property than not satisfying any nontrivial identity. Recall that the \emph{girth} of a finitely generated group $G$ is the supremum of all $n\in \mathbb N$ such that for every finite generating set $X$ of $G$, the Cayley graph $\Gamma (G,X)$ contains a cycle of length at most $n$ without self-intersections \cite{Sc}.

\begin{prop}\label{mif-prop}
For any MIF group, the following conditions hold.
\begin{enumerate}
\item[(a)] $G$ is ICC. In particular, $G$ has trivial center and contains no non-trivial finite normal subgroups.
\item[(b)] $G$ is not decomposable as direct products of two non-trivial groups.
\item[(c)] Every non-trivial subnormal subgroup of $G$ is MIF.
\item[(d)] If $G$ is finitely generated, then $G$ has infinite girth.
\end{enumerate}
\end{prop}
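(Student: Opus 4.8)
I would prove the four assertions in turn. Each of (a)--(c) comes down to exhibiting a single non-trivial element $w\in G\ast\langle x_1\rangle$ killed by every homomorphism $G\ast\langle x_1\rangle\to G$ fixing $G$, while (d) is handled through the model-theoretic reformulation of Proposition~\ref{mif}.

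\emph{(a).} If $G$ is not ICC, pick $g\in G\setminus\{1\}$ with finite conjugacy class, so that $m:=[G:C_G(g)]<\infty$, and let $K=\bigcap_{x\in G}xC_G(g)x^{-1}$ be the core of $C_G(g)$, of finite index $d$ (dividing $m!$). Then $x^{d}\in K\le C_G(g)$ for all $x\in G$, i.e.\ $[g,x^{d}]=1$ identically; since $g\ne1$ and $d\ne0$, the element $[g,x_1^{d}]\in G\ast\langle x_1\rangle$ has a four-syllable normal form, hence is non-trivial, giving a non-trivial mixed identity --- contradiction. So $G$ is ICC; the displayed consequences follow since a non-trivial central element, or a non-trivial element of a finite normal subgroup, has a finite conjugacy class, contrary to ICC.

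\emph{(b).} If $G\cong A\times B$ with $A,B\ne\{1\}$, fix $1\ne a\in A$ and $1\ne b\in B$. For $g=(g_A,g_B)\in G$ one has $[g,a]=([g_A,a],1)\in A\times\{1\}$, which commutes with $b\in\{1\}\times B$, so $[[x_1,a],b]=1$ is a mixed identity. It is non-trivial: in the normal form of $[[x_1,a],b]$ in $G\ast\langle x_1\rangle$ the only possible cancellation is the innermost product $a\cdot b$, and $ab\ne1$ because $A\cap B=\{1\}$.

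\emph{(c).} Iterating up a subnormal series reduces the claim to a non-trivial \emph{normal} subgroup $N\lhd G$. First note that $N$ has infinite exponent: if it had exponent $e$, then for $1\ne c\in N$ and any $g\in G$ we get $[c,g]=c^{-1}c^{g}\in N$, hence $[c,g]^{e}=1$, so $[c,x_1]^{e}=1$ would be a non-trivial mixed identity for $G$ (as $[c,x_1]$ has infinite order in $G\ast\langle x_1\rangle$). Now suppose $N$ satisfied a non-trivial mixed identity; by Remark~\ref{1-vs-n} take it in the form $w=c_0x^{k_1}c_1\cdots x^{k_m}c_m\in N\ast\langle x\rangle$ with $m\ge1$ and $c_1,\dots,c_{m-1}\ne1$. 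Choose $c\in N$ with ${\rm ord}(c)>\max_i|k_i|$ and set $W=w(x^{-1}cx)\in G\ast\langle x\rangle$. Since $g^{-1}cg\in N$ for every $g\in G$, $W=1$ is a mixed identity for $G$; and a routine normal-form computation in $G\ast\langle x\rangle$ shows that $N$ and $x^{-1}cx$ generate their free product, with $x^{-1}cx$ of order ${\rm ord}(c)$, so (as $0<|k_i|<{\rm ord}(c)$) the image of $w$ under $N\ast\langle x\rangle\to N\ast\langle x^{-1}cx\rangle$ is a non-trivial reduced word, i.e.\ $W\ne1$ --- contradicting that $G$ is MIF. I expect this last normal-form bookkeeping to be the main (though routine) obstacle; everything else is formal.

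\emph{(d).} Write $G=\langle s_1,\dots,s_m\rangle$ and fix $n$; it suffices to produce a finite generating tuple of $G$ with no relation of length $\le n$. Let $P=G\ast\langle x\rangle$. Being a free product not isomorphic to $\mathbb Z_2\ast\mathbb Z_2$, $P$ has infinite girth, so for some $K$ there is a generating tuple $(h_1,\dots,h_K)$ of $P$ with no relation of length $\le n$; fix words $\omega_1,\dots,\omega_m$ with $\omega_j(h_1,\dots,h_K)=s_j$. Then the existential sentence of $\mathcal L_G$
$$
\Psi:\qquad \exists y_1\cdots\exists y_K\left(\ \bigwedge_{j=1}^{m}\omega_j(y_1,\dots,y_K)=s_j\ \ \wedge\ \ \bigwedge_{1\ne r\in B_{F_K}(n)}r(y_1,\dots,y_K)\ne1\ \right)
$$
(where $B_{F_K}(n)$ is the radius-$n$ ball of the free group $F_K$ in its free basis) holds in $P$. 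Since $G$ is MIF and finitely generated, hence countable, Proposition~\ref{mif} gives that $G$ and $P$ are universally equivalent as $G$-groups, hence also existentially equivalent, so $\Psi$ holds in $G$; a witness $(g_1,\dots,g_K)$ generates $G$ (each $s_j$ is a word in it) and has no relation of length $\le n$. As $n$ was arbitrary, $G$ has infinite girth. The only non-formal input is that $G\ast\langle x\rangle$ has infinite girth --- a known fact about free products --- after which Proposition~\ref{mif} transfers the statement automatically, in particular disposing of the generation requirement, which the bare absence of mixed identities would not obviously handle.
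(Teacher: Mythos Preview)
Your proof is correct in all four parts, and parts (a) and (d) are essentially the paper's arguments (in (d) you phrase directly what the paper does by contradiction, but both hinge on Proposition~\ref{mif} together with the infinite girth of $G\ast\langle x\rangle$).

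Parts (b) and (c) differ genuinely from the paper. In (b) you use the identity $[[x,a],b]=1$, which is simpler than the paper's $[[a,x],[b,x]]=1$; your version already exploits that $[g,a]\in A$ commutes with the \emph{fixed} element $b$, so there is no need to conjugate $b$ as well. In (c) the paper's route is shorter: it substitutes $[a,x]$ (for any fixed $a\in N\setminus\{1\}$) in place of $x$, and since $[a,x]$ has \emph{infinite} order in $G\ast\langle x\rangle$ regardless of the order of $a$, the map $N\ast\langle x\rangle\to G\ast\langle x\rangle$, $x\mapsto[a,x]$, is injective by an easy normal-form check. This bypasses your preliminary step showing $N$ has infinite exponent and the need to choose $c$ depending on the exponents appearing in $w$. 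Your approach via $x^{-1}cx$ is perfectly valid, but the paper's choice of $[a,x]$ buys uniformity in $w$ at no extra cost.
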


\begin{proof}
(a) Let $a\ne 1$ be an element whose conjugacy class in $G$ consists of $n$ elements. Then $G$ satisfies the mixed identity $[x^{n!}, a]=1$.

(b) Let $G=A\times B$. Fix any elements $a\in A$, $b\in B$. Then $G$ satisfies the mixed identity $[[a,x], [b,x]]=1$, which is obviously non-trivial provided $a\ne 1$ and $b\ne 1$.

(c) Clearly it suffices to prove that every nontrivial normal subgroup $N\lhd G$ is MIF. Let $a\in N\setminus \{ 1\}$. Assume that $N$ satisfies an identity $w(x)=1$ for some $w(x)\in N\ast \langle x\rangle$. Since $N$ is normal, $G$ satisfies the identity $w([a,x])=1$. Since $G$ is MIF, $w([a,x])=1$ as an element of $G\ast \langle x\rangle$. By using normal forms, it is easy to see that $N\ast \langle x\rangle$ is isomorphic to the subgroup of $N\ast \langle x\rangle$ generated by $N$ and $[a,x]$  via the map that takes $x$ to $[a,x]$ and is identical on $N$. This map takes $w([a,x])$ to $w(x)$ and hence $w(x)=1$ as an element of $N\ast \langle x\rangle$. Thus $N$ can only satisfy the trivial identity.

(d) Suppose that $G$ has finite girth $k$. Then for every finite generating set $g_1, \ldots, g_n$ of $G$, there exists a non-trivial element $w\in F_n$ of length at most $k$ such that $w(g_1, \ldots, g_n)=1$ in $G$. Consider any generating set $a_1, \ldots, a_n$ of $G\ast \langle x\rangle$ and let $a_i(g)$ denote the image of $a_i$ under the epimorphism of $G\ast \langle x\rangle$ that is identical on $G$ and sends $x$ to an element $g\in G\ast \langle x\rangle$. Since $a_1(g), \ldots, a_n(g)$ generate $G$ for every $g\in G$, the group $G$ satisfies the universal sentence
$$
\forall\, g\; w_1(a_1(g), \ldots , a_n(g))=1\, \vee \, \ldots\, \vee w_s(a_1(g), \ldots , a_n(g))=1,
$$
where $w_1, \ldots, w_s$ is the list of all non-trivial elements of $F_n$ of length at most $k$. By Proposition \ref{mif} we obtain that the same sentence holds true in $G\ast \langle x\rangle$. In particular, it is true for $g=x$. Since $a_i(x)=a_i$, we obtain that there is a non-trivial relation of length at most $k$ between $a_1, \ldots, a_n$. This is true for any generating set $a_1, \ldots, a_n$ of $G\ast \langle x\rangle$, so we obtain that $G\ast \langle x\rangle$ has finite girth. However this is well-known to be false (for example, see \cite[Example 2.15]{BE}).
\end{proof}

\begin{rem}
The proof of Proposition \ref{mif-prop}(d) actually shows that for all $k$, $G$ has a generating set with $n$ elements and no simple cycles of length $\leq k$, where $n$ is independent of $k$. This is equivalent to $G$ \emph{preforming} the free group $F_n$ in the language of \cite{BE} and to $G$ being without \emph{almost-identities} in the language of \cite{OS}. However, it is unclear if this is actually stronger then the property of infinite girth (see \cite[Question 8.5]{BE}).
\end{rem}

\paragraph{5.2. A dichotomy for highly transitive groups}

\begin{lem}\label{non-rel}
Let $s\in Sym(\mathbb N)$ be an arbitrary permutation, $X$ a finite subset of $\mathbb N$. Then for any permutations $a_1, \ldots, a_k\in Sym(\mathbb N)$ with infinite supports and any $\alpha_1,\ldots, \alpha_k\in \mathbb N$, there exists $t\in Sym(\mathbb N)$ such that $t\vert_X\equiv s\vert_X$ and $a_k t^{\alpha_k}\cdots a_1 t^{\alpha_1} (n)\ne n$ for some $n\in \mathbb N$.
\end{lem}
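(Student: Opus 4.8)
The plan is to construct $t$ and the witness $n$ hand in hand, exploiting that the single value $a_k t^{\alpha_k}\cdots a_1 t^{\alpha_1}(n)$ depends on $t$ at only $\alpha_1+\cdots+\alpha_k$ points. I would build $t$ as an extension of a \emph{finite} partial injection $\tau$ that already contains $s|_X$. Any finite partial injection of $\mathbb N$ extends to an element of $Sym(\mathbb N)$ (the complements of its domain and of its range are both infinite), and every such extension automatically satisfies $t|_X\equiv s|_X$. So it suffices to choose $\tau$ on finitely many points so that $t$ makes the computation of $w(n):=a_k t^{\alpha_k}\cdots a_1 t^{\alpha_1}(n)$ go through values of $t$ already fixed by $\tau$ and end at a point different from $n$.

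I would trace that computation in $k$ blocks. Put $n_0=n$; block $i$ carries $n_{i-1}$ to $m_i:=t^{\alpha_i}(n_{i-1})$ and then $a_i$ carries $m_i$ to $n_i:=a_i(m_i)$, so $w(n)=n_k$. To realize block $i$ I am free to choose intermediate points $n_{i-1}=p_{i,0},p_{i,1},\dots,p_{i,\alpha_i}=m_i$ and declare $\tau(p_{i,j})=p_{i,j+1}$ for $0\le j<\alpha_i$ (here $\alpha_i\ge 1$, as $0\notin\mathbb N$). Start by choosing $n_0$ outside $X\cup s(X)$, then process the blocks in order. In block $i$ choose $p_{i,1},\dots,p_{i,\alpha_i-1}$ to be new points (not among any previously chosen point, not in $X\cup s(X)$, pairwise distinct), and then choose the exit point $m_i=p_{i,\alpha_i}$ \emph{inside $\mathrm{supp}(a_i)$} and such that both $m_i$ and $a_i(m_i)$ are new points. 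This last choice is possible precisely because $\mathrm{supp}(a_i)$ is infinite: if $U$ denotes the finite set of points used so far, then $\mathrm{supp}(a_i)\setminus\bigl(U\cup a_i^{-1}(U)\bigr)$ is infinite, and any $m_i$ in it satisfies $m_i\notin U$, $a_i(m_i)\notin U$, and, since $m_i\in\mathrm{supp}(a_i)$, also $a_i(m_i)\ne m_i$. Hence after block $i$ the forced point $n_i=a_i(m_i)$ is again new, and block $i+1$ can begin in exactly the same way.

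After all $k$ blocks, $\tau$ is a finite partial injection extending $s|_X$ — disjointness of its domain and range from each other and from $X,s(X)$ being guaranteed by the ``new point'' conventions — so $\tau$ extends to some $t\in Sym(\mathbb N)$ with $t|_X\equiv s|_X$. By construction $w(n)=a_k t^{\alpha_k}\cdots a_1 t^{\alpha_1}(n_0)=n_k$, and $n_k$ was chosen to be a new point, in particular $n_k\ne n_0=n$; thus $w(n)\ne n$, as desired. I expect the only genuine subtlety to be keeping the partial injection consistent across blocks — i.e. guaranteeing that the forced point $n_i=a_i(m_i)$ never collides with a point on which $\tau$ has been, or will need to be, defined — and this is exactly what the step ``choose $m_i\in\mathrm{supp}(a_i)$ with $a_i(m_i)$ new'' delivers; everything else is a counting argument, since at each stage only finitely many points of $\mathbb N$ have been touched.
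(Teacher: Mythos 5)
Your proposal is correct and takes essentially the same route as the paper's proof: both choose the trajectory points generically, using the infinitude of $\mathrm{supp}(a_i)$ to pick $m_i$ with $m_i$ and $a_i(m_i)$ fresh, and then prescribe $t$ on finitely many points so that $t^{\alpha_i}$ routes $n_{i-1}$ to $m_i$ while agreeing with $s$ on $X$. The only cosmetic difference is that the paper writes $t$ explicitly as a product $s_0c_1\cdots c_k$ of disjoint cycles, whereas you extend a finite partial injection arbitrarily to a permutation.
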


\begin{proof}
The idea of the proof is to construct a sequence of pairwise distinct numbers $n_0, m_1, n_1,\ldots, m_k, n_k\in \mathbb N$ and a permutation $t\in Sym(\mathbb N)$ such that $t\vert_X\equiv s\vert_X$ and the elements $t^{\alpha _i}$ and $a_i$ act as follows:
\begin{equation}\label{tai}
n_0\stackrel{t^{\alpha_1}}{\longrightarrow}m_1 \stackrel{a_1}{\longrightarrow} n_1 \stackrel{t^{\alpha_2}}{\longrightarrow} m_2 \stackrel{a_2}{\longrightarrow} n_2\stackrel{t^{\alpha_3}}{\longrightarrow} \ldots n_{k-1}\stackrel{t^{\alpha_k}}{\longrightarrow}m_k \stackrel{a_k}{\longrightarrow} n_k
\end{equation}
Then clearly $a_k t^{\alpha_k}\cdots a_1 t^{\alpha_1} (n_0)=n_k\ne n_0$.

We will first construct the desired sequence $n_0, m_1, n_1,\ldots, m_k, n_k$ by induction. Let $n_0$ be any element of $\mathbb N\setminus (X\cup s(X))$. Further for $k\ge i\ge 1$, we let $$Y_{i}=X\cup s(X) \cup \{ n_0\} \cup \{ m_j, n_j\mid 1\le j\le i-1\}$$ and chose any
\begin{equation}\label{ni}
m_i \in supp (a_i)\setminus (Y_{i}\cup a_i^{-1}(Y_{i})).
\end{equation}
Such a choice is always possible since $Y_i$ is finite while $supp(a_i)$ is infinite by our assumption.
We let $n_i=a_i(m_i)$. Note that (\ref{ni}) guarantees that $n_i\ne m_i$ and $\{ n_i, m_i\}$ is disjoint from $Y_i$. This completes the inductive step. Clearly the resulting sequence $n_0, m_1, n_1,\ldots, m_k, n_k$ consists of pairwise distinct numbers and does not contain any elements from $X\cup s(X)$.

Further, for each $i=1, \ldots , k$, we fix arbitrary natural numbers $l_{ij}$, $1\le j\le |\alpha_i|-1$, such that all $l_{ij}$ are pairwise distinct and are not contained in $Y_k$. Again this is possible since $Y_k$ is finite. For $i=1, \ldots , k-1$, we define a cycle $$c_i=(n_{i-1}, l_{i1}, \ldots, l_{i, |\alpha_i|-1}, m_i)^{sgn(\alpha_i)}$$ (if $|\alpha_i|=1$, then $c_i=(n_{i-1}, m_i)^{sgn(\alpha_i)}$). We also denote by $s_0$ any permutation from $Sym(\mathbb N)$ with support $supp(s_0)\subseteq X\cup s(X)$ such that $s_0(x)=s(x)$ for all $x\in X$. We now define $$t=s_0c_1\cdots c_k.$$

Note that the sets $supp (c_1), \ldots , supp(c_k)$, and $X$ are pairwise disjoint by the choice of elements $m_i$, $n_i$, and $l_{ij}$. Hence we have $t(x)=s_0(x)=s(x)$ for every $x\in X$, and $t^{\alpha_i}(n_{i-1})=c_i^{\alpha_i} (n_{i-1})=m_i$ for  $i=1, \ldots , k$. Thus $t^{\alpha _i}$ and $a_i$ act as prescribed by the diagram (\ref{tai}).
\end{proof}

Recall that a subset of a topological space is \emph{comeagre} if it is the intersection of countably many sets with dense interiors. Since $Sym(\mathbb N)$ is a Polish space with respect to the topology of pointwise convergence, every comeagre subset of $Sym(\mathbb N)$ is dense (and, in particular, non-empty) by the Baire Category Theorem. It is customary  to think of comeagre subsets as subsets including ``most" elements of the ambient space.

\begin{defn}\label{gen}
We say that a certain property \emph{holds for a generic element} of a topological space $T$ if it holds for all elements of a comeagre subset of $T$.
\end{defn}

\begin{cor}\label{fp}
Let $G$ be a highly transitive subgroup of $Sym(\mathbb N)$. Then either $G$ contains $Alt(\mathbb N)$ or for a generic element $t\in Sym(\mathbb N)$, the subgroup generated by $G$ and $t$ is naturally isomorphic to $G \ast \langle t \rangle$.
\end{cor}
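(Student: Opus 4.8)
The plan is to show that, once we know $Alt(\mathbb N)\not\le G$, the set of $t\in Sym(\mathbb N)$ for which the natural epimorphism $G\ast\langle t\rangle\twoheadrightarrow\langle G,t\rangle$ (the identity on $G$, sending the generator to $t$) is injective is comeagre. The first ingredient is a structural dichotomy: being highly transitive, $G$ is $2$-transitive, hence primitive by Lemma \ref{2-trans}(a); and a primitive subgroup of $Sym(\mathbb N)$ containing a non-trivial permutation of finite support must contain $Alt(\mathbb N)$ (a form of Jordan's theorem; see, e.g., \cite{DixMor}). So we may assume $Alt(\mathbb N)\not\le G$, in which case \emph{every} non-trivial element of $G$ has infinite support --- exactly the hypothesis of Lemma \ref{non-rel}.

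Next I would reduce injectivity to a countable family of genericity conditions. Every $1\ne u\in G\ast\langle t\rangle$ is conjugate in $G\ast\langle t\rangle$ to a cyclically reduced word $w$, and for any fixed $t$ the images of $u$ and $w$ in $Sym(\mathbb N)$ are conjugate, so it suffices to handle cyclically reduced words. Such a word is of one of three types: (i) an element of $G\setminus\{1\}$, whose image is itself, hence non-trivial; (ii) a non-zero power $t^m$, whose image is non-trivial provided $t$ has infinite order; (iii) a word of (necessarily even) syllable length $\ge 2$, i.e. $w=a_kt^{\alpha_k}\cdots a_1t^{\alpha_1}$ with $k\ge 1$, $a_i\in G\setminus\{1\}$, $\alpha_i\in\Z\setminus\{0\}$. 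For a word $w$ of type (iii) put
\[
U_w=\{\,t\in Sym(\mathbb N)\ :\ a_kt^{\alpha_k}\cdots a_1t^{\alpha_1}(n)\ne n\ \text{for some}\ n\in\mathbb N\,\}.
\]
For fixed $n$ this value depends on only finitely many values of $t$, so $U_w$ is open; and by Lemma \ref{non-rel} (applicable since every $a_i$ has infinite support) $U_w$ meets every basic open subset of $Sym(\mathbb N)$, hence is dense.

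Likewise, for each $m\ge 1$ the set $V_m=\{t: t^m\ne 1\}$ is open and dense, as any finite partial permutation extends to a permutation with an arbitrarily long cycle. Since $G$ is countable, so is $G\ast\langle t\rangle$, and there are only countably many words of type (iii); thus
\[
W=\ \bigcap_{w\ \text{of type (iii)}}U_w\ \cap\ \bigcap_{m\ge 1}V_m
\]
is comeagre in $Sym(\mathbb N)$ by the Baire Category Theorem. For any $t\in W$, the natural epimorphism $G\ast\langle t\rangle\to\langle G,t\rangle$ is injective: a cyclically reduced conjugate $w$ of a given $1\ne u$ lies in case (i), (ii), or (iii), and its image in $Sym(\mathbb N)$ --- hence that of $u$ --- is non-trivial (in case (ii) because $t$ has infinite order, in case (iii) because $t\in U_w$). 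Therefore $\langle G,t\rangle\cong G\ast\langle t\rangle$ naturally for all $t$ in the comeagre set $W$.

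The substantive inputs are just Lemma \ref{non-rel} and the Jordan-type dichotomy, and I expect no serious obstacle in the assembly. The one point meriting care is the reduction to cyclically reduced words: one must verify that the list (i)--(iii) is exhaustive, that words of syllable length $\ge 2$ genuinely have the alternating form with all $G$-syllables non-trivial and all $t$-exponents non-zero required by Lemma \ref{non-rel}, and that passing to a conjugate is harmless because evaluation at $t$ turns conjugation in $G\ast\langle t\rangle$ into conjugation in $Sym(\mathbb N)$.
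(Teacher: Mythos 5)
Your proof is correct and follows essentially the same route as the paper: the Wielandt/Jordan dichotomy reduces to the case where every non-trivial element of $G$ has infinite support, and then Lemma \ref{non-rel} together with a Baire category argument over the countably many relevant words yields the comeagre set of elements $t$ with $\langle G,t\rangle\cong G\ast\langle t\rangle$. Your explicit treatment of cyclic reduction and of pure powers of $t$ (the sets $V_m$) merely makes precise normalization steps that the paper leaves implicit.
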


\begin{proof}
Recall that by the Wielandt theorem {\cite[Theorem 5.2]{Hol}} every infinite primitive subgroup of $Sym(\mathbb N)$ containing a non-trivial permutation with finite support contains $Alt(\mathbb N)$. Thus we can assume that every non-trivial element of $G$ has infinite support.

Let $w$ be a non-trivial element of the free product $G\ast \langle x\rangle$. Recall that for $g\in G$, $w(g)$ denotes the image of $w$ under the homomorphism $G\ast \langle x\rangle\to G$ that is identical on $G$ and sends $x$ to $g$. Note that the set of solutions of any equation in a topological group is closed and hence the set $A_w=\{ g\in Sym(\mathbb N) \mid w(g)\ne 1\}$ is open. By Lemma \ref{non-rel}, $A_w$ is also dense; indeed the lemma claims that we can find an element from $A_w$ in any open neighborhood of any element $s\in G$. Hence the set $$A=\bigcap\limits_{w\in G \ast \langle x\rangle\setminus \{1\}} A_w$$ is comeager. Clearly $\langle G,t\rangle \cong G\ast \langle t\rangle $ for all $t\in A$.
\end{proof}

We are now ready to prove the main result of this section.

\begin{thm}\label{dichoto}
Let $G$ be a highly transitive countable group. Then one of the following two mutually exclusive conditions holds.
\begin{enumerate}
\item[(a)] $G$ contains a normal subgroup isomorphic to $Alt(\mathbb N)$.
\item[(b)] G is MIF.
\end{enumerate}
\end{thm}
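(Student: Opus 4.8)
Since the two alternatives are declared mutually exclusive, the proof splits into showing that (a) and (b) cannot hold simultaneously, and that at least one of them must hold. I would dispose of mutual exclusivity first, and for this it suffices to check that $Alt(\mathbb N)$ is \emph{not} MIF: if $G$ were MIF and contained a normal subgroup $N\cong Alt(\mathbb N)$, then $N$ would be a nontrivial subnormal subgroup of $G$, hence MIF by Proposition \ref{mif-prop}(c), a contradiction. To see that $Alt(\mathbb N)$ is not MIF, fix the $3$-cycle $a=(1\,2\,3)$; for every $g\in Alt(\mathbb N)$ the element $agag^{-1}=a\cdot a^{g^{-1}}$ is a product of two $3$-cycles, hence a permutation supported on at most $6$ points, so its order divides $\mathrm{lcm}(1,\dots,6)=60$. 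Thus $(axax^{-1})^{60}=1$ holds identically on $Alt(\mathbb N)$, whereas $(axax^{-1})^{60}$ is a nontrivial element of $Alt(\mathbb N)\ast\langle x\rangle$ (a nonzero power of a cyclically reduced word of syllable length $4$, since $a\neq 1$). So $Alt(\mathbb N)$ satisfies a nontrivial mixed identity and the two conditions are incompatible.

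For the dichotomy itself I would prove the contrapositive: if $G$ is not MIF, then $G$ contains a normal subgroup isomorphic to $Alt(\mathbb N)$. Fix a faithful highly transitive action of $G$ on $\mathbb N$ and view $G$ as a dense subgroup of $Sym(\mathbb N)$. If $G$ contains a nontrivial element of finite support, then since $G$ is infinite and primitive (Lemma \ref{2-trans}(a)) the Wielandt theorem \cite[Theorem 5.2]{Hol} gives $Alt(\mathbb N)\le G$; as $Alt(\mathbb N)\lhd Sym(\mathbb N)$, this is a normal subgroup of $G$ isomorphic to $Alt(\mathbb N)$, and we are done. It therefore remains to rule out the possibility that every nontrivial element of $G$ has infinite support while $G$ is not MIF.

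So suppose every nontrivial element of $G$ has infinite support. By Remark \ref{1-vs-n}, failure of MIF yields a nontrivial $w\in G\ast\langle x\rangle$ with $w(g)=1$ for all $g\in G$. Since $G$ does not contain $Alt(\mathbb N)$ (which has elements of finite support), Corollary \ref{fp} produces $t\in Sym(\mathbb N)$ with $\langle G,t\rangle$ naturally isomorphic to $G\ast\langle t\rangle$; under this isomorphism $w$ corresponds to the permutation $w(t)$, so $w(t)\neq 1$ and there is $n_0\in\mathbb N$ with $w(t)(n_0)\neq n_0$. The crucial observation is that computing the value $w(t)(n_0)$ consults $t$ (and $t^{-1}$) at only finitely many points: there is a finite set $Y\subseteq\mathbb N$ such that any $h\in Sym(\mathbb N)$ agreeing with $t$ on $Y$ satisfies $w(h)(n_0)=w(t)(n_0)$. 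By high transitivity of the $G$-action (equivalently, density of $G$ in $Sym(\mathbb N)$) there is $g\in G$ with $g|_Y=t|_Y$; then $w(g)(n_0)=w(t)(n_0)\neq n_0$, so $w(g)\neq 1$, contradicting that $w=1$ is a mixed identity for $G$. This rules out the remaining case and finishes the proof.

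I expect this last transfer step to be the main obstacle, or at least the only point requiring genuine care. Corollary \ref{fp} only asserts that the mixed identity fails for \emph{some permutation} of $\mathbb N$, not for an element of $G$, so one cannot directly substitute a generic $t$ into the identity to reach a contradiction. The remedy is to observe that the inequality $w(t)(n_0)\neq n_0$ is a finitary event depending on only finitely many values of $t$, and then to pull it back into $G$ using density; this is precisely where high transitivity of $G$ — rather than merely the existence of a free-product-like overgroup of $G$ inside $Sym(\mathbb N)$ — is essential.
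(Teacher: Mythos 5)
Your proposal is correct and follows essentially the same route as the paper: Corollary \ref{fp} (Wielandt plus a generic free factor $t$), transfer of the failure of the mixed identity from $t$ back to $G$ using density of $G$ in $Sym(\mathbb N)$, and mutual exclusivity via the non-MIF-ness of $Alt(\mathbb N)$ combined with Proposition \ref{mif-prop}(c). The only cosmetic differences are that you phrase the transfer step as a finitary/open-condition argument where the paper notes that the solution set of $w=1$ is closed in the topological group $Sym(\mathbb N)$, and that you exhibit the explicit one-variable identity $(axax^{-1})^{60}=1$ for $Alt(\mathbb N)$ where the paper uses a universal sentence built from four disjoint $3$-cycles together with Proposition \ref{mif}.
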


\begin{proof}
We identify $G$ with a dense subgroup of $Sym(\mathbb N)$. By Corollary \ref{fp}, either $G$ contains a normal subgroup isomorphic to $Alt(\mathbb N)$ or there exists $t\in Sym(\mathbb N)$ such that the subgroup generated by $G$ and $t$ is naturally isomorphic to $G \ast \langle t \rangle$. In the latter case $G$ cannot satisfy any non-trivial mixed identity. Indeed, by Remark \ref{1-vs-n} it suffices to show that $G$ does not satisfy any non-trivial mixed identity with one variable. Let $w\in G\ast \langle x\rangle \setminus\{1\}$ and suppose that $w(g)=1$ for all $g\in G$. Then $w(g)=1$ for all $g\in Sym(\mathbb N)$ since the set of solutions of any equation in a topological group is closed and $G$ is dense in $Sym(\mathbb N)$. However $w(t)\ne 1$, a contradiction.

It remains to prove that conditions (a) and (b) are mutually exclusive. Assume that $G$ contains $Alt(\mathbb N)$ as a normal subgroup. Let $a=(1,2,3)$, $b=(4,5,6)$, $c=(7,8,9)$, $d=(10,11,12)$. Then for every $g\in Alt(\mathbb N)$, the elements $g^{-1}ag$, $g^{-1}bg$, $g^{-1}cg$, and $g^{-1}dg$ are $3$-cycles with disjoint supports. Hence the support of at least one of them is disjoint from $\{1,2,3\}$. This means that at least one of the commutators $[a^g,a]$, $[b^g,a]$, $[c^g,a]$, $[d^g,a]$ is trivial. Thus $Alt(\mathbb N) $ satisfies the universal sentence
\begin{equation}\label{comm}
\forall \,g\; ([a^g,a]=1\vee  [b^g,a]=1 \vee [c^g,a]=1\vee [d^g,a]=1),
\end{equation}
which obviously fails in $Alt(\mathbb N) \ast \langle x\rangle$ for $g=x$. By Proposition \ref{mif}, this implies that $Alt(\mathbb N)$ is not MIF. (In this particular case we can easily prove this fact directly by taking the iterated commutator of the four commutators in (\ref{comm}).) Finally, by part (c) of Proposition \ref{mif-prop}, we conclude that $G$ is not MIF either.
\end{proof}

It is known that any acylindrically hyperbolic group contains non-cyclic free subgroups and hence does not satisfy any non-trivial identity \cite{DGO}. It is also known that a non-cyclic torsion free hyperbolic group does not satisfy any non-trivial mixed identity \cite{AR}. We can generalizes these results as follows.

\begin{cor}\label{ah-mif}
Let $G$ be an acylindrically hyperbolic group. Then $G/K(G)$ is MIF. In particular, if $K(G)=\{ 1\}$, then $G$ is MIF.
\end{cor}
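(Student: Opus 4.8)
The plan is to read the statement off from Theorem~\ref{main} and the dichotomy of Theorem~\ref{dichoto}; once these are in hand, the corollary reduces to excluding one alternative of the dichotomy, plus a routine passage to countable groups. First I would reduce to the ``in particular'' clause: by Lemma~\ref{finrad} the quotient $G/K(G)$ is acylindrically hyperbolic with trivial finite radical, so it suffices to prove that \emph{every acylindrically hyperbolic group with trivial finite radical is MIF} -- applying this to $G/K(G)$ then gives the general assertion. Henceforth assume $K(G)=\{1\}$.

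Second, I would reduce to the case that $G$ is countable. Suppose for contradiction that $G$ satisfies a non-trivial mixed identity $w=1$ with $w\in G\ast F_n$. Only finitely many elements of $G$ occur as constants in $w$. Fixing an acylindrical non-elementary action of $G$ on a hyperbolic space, let $H\le G$ be generated by those constants together with two independent loxodromic elements (which exist by Theorem~\ref{tri}). Then $H$ is countable and acts non-elementarily and acylindrically on the same space, hence is acylindrically hyperbolic; enlarging $H$ inside $G$ if necessary and using standard properties of non-elementary acylindrical actions (cf.\ \cite{DGO, Osi13}), one may also assume $K(H)=\{1\}$. Since all constants of $w$ lie in $H$, the element $w$ is already non-trivial in $H\ast F_n$, while $H$ inherits the identity $w=1$; so $H$ is not MIF. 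Once the countable case below is established, this contradicts what it says about $H$, so $G$ must be MIF.

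Finally, let $G$ be countable, acylindrically hyperbolic, with $K(G)=\{1\}$. By Theorem~\ref{main}, $G$ admits a highly transitive faithful action, so Theorem~\ref{dichoto} gives two possibilities: either $G$ contains a normal subgroup isomorphic to $Alt(\mathbb N)$, or $G$ is MIF. The first is impossible, because an infinite normal subgroup of an acylindrically hyperbolic group is again acylindrically hyperbolic \cite{Osi13} and therefore contains a non-abelian free subgroup \cite{DGO}, whereas $Alt(\mathbb N)$ is a torsion group. Hence $G$ is MIF, completing the proof.

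I expect the only step needing real care to be the reduction in the second paragraph -- arranging that the countable acylindrically hyperbolic subgroup selected by a mixed identity can be taken with trivial finite radical; the rest is a formal combination of Theorems~\ref{main} and \ref{dichoto} with known structural facts. (If one only wants the corollary for countable $G$, this step is unnecessary.)
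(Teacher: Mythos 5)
Your argument is correct, and its core coincides with the paper's proof: reduce to the case $K(G)=\{1\}$ via Lemma \ref{finrad}, obtain a highly transitive faithful action from Theorem \ref{main}, and exclude alternative (a) of Theorem \ref{dichoto} because an infinite normal subgroup of an acylindrically hyperbolic group is itself acylindrically hyperbolic and hence cannot be torsion, while $Alt(\mathbb N)$ is torsion (the paper argues via infinite-order elements from Theorem \ref{tri}; your free-subgroup variant is equally valid). The only genuine difference is your second paragraph: the paper simply invokes Theorem \ref{main}, which is stated for countable groups, without addressing possibly uncountable $G$, whereas you make the reduction to the countable case explicit. That reduction is sound, and the step you flag can be completed in a few lines: since $K(G)=\{1\}$, no nontrivial $k\in G$ lies in a finite normal subgroup, i.e.\ the subgroup generated by the $G$-conjugates of $k$ is infinite, so for each nontrivial $k$ one can choose countably many conjugating elements whose corresponding conjugates of $k$ generate an infinite subgroup; adjoining these witnesses for every nontrivial element and passing to the union of the resulting increasing chain of countable subgroups yields a countable $H$ containing the constants and your two independent loxodromics (so its action on the same space is still acylindrical and non-elementary), in which no nontrivial element lies in a finite normal subgroup, i.e.\ $K(H)=\{1\}$. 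Thus your write-up matches the paper's argument and in addition fills in a countability point that the paper leaves implicit.
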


\begin{proof}
We first note that the quotient group $G/K(G)$ is acylindrically hyperbolic by Lemma \ref{finrad}, so it suffices to deal with the case $K(G)=\{ 1\}$. In this case $G$ is highly transitive by Theorem \ref{main}. Further, it is proved in \cite[Corollary 1.5]{Osi13} that every infinite normal subgroup of $G$ is acylindrically hyperbolic. $Alt(\mathbb N)$ is not acylindrically hyperbolic since every acylindrically hyperbolic group contains infinite order elements by Theorem \ref{tri}. Hence $G$ cannot contain $Alt(\mathbb N)$ as a normal subgroup. To complete the proof it remains to refer to Theorem \ref{dichoto}.
\end{proof}

A natural question arising from comparing Corollary \ref{ah-mif} and Theorem \ref{dichoto} is whether every highly transitive MIF group is acylindrically hyperbolic. We will show that the answer is negative by making use of a result from \cite{FMS}.

Let $A$ be a subgroup of a group $G$. Recall that for a subset $S\subseteq G$, the core of $A$ with respect to $S$ is defined by the formula $Core_S(A)=\bigcap\limits_{s\in S} s^{-1}As$. Further, $A$ is called \emph{highly core free} in $G$ if for every finite $F\subseteq G$, every $n\in \mathbb N$, and every subsets $S_1, \ldots, S_n\subseteq G$ such that
\begin{equation}\label{core}
G=AF \cup S_1 \cup \ldots \cup S_n,
\end{equation}
there exists $1\le k\le n$ such that $Core_{S_k}(A)=\{ 1\}$. The following is Theorem 2.2 in \cite{FMS}.

\begin{lem}\label{FMS}
Let $H$ be an HNN-extension of a countable group $G$ with associated subgroups $A$ and $B$. Assume that $A$ and $B$ are highly core free in $G$. Then $H$ is highly transitive.
\end{lem}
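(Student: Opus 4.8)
We indicate a proof along the lines of the argument for Theorem \ref{main} (the result is \cite[Theorem 2.2]{FMS}). Write $H=\langle G,t\mid tat^{-1}=\phi(a),\ a\in A\rangle$, where $\phi\colon A\to B$ is the defining isomorphism, and recall that Britton's lemma supplies normal forms in $H$, so that $G$ embeds in $H$ and $\langle t\rangle$ is infinite cyclic. As in the proof of Theorem \ref{main}, it suffices to build an ascending chain of subgroups $A=L_0\le L_1\le\cdots$ of $H$ together with elements $u_1,u_2,\ldots\in H$ such that, with $L_\infty=\bigcup_iL_i$ and $\Omega=H/L_\infty$, the action of $H$ on $\Omega$ is faithful and highly transitive. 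High transitivity is arranged by the standard enumeration device: listing all pairs $(\bar a_i,\bar b_i)\in H^{k_i}\times H^{k_i}$, whenever the triple $(\bar a_i,\bar b_i,L_i)$ is admissible one picks $g_i\in H$ and sets $L_{i+1}=\langle L_i,\ b_1^{-1}g_ia_1,\ldots,b_{k_i}^{-1}g_ia_{k_i}\rangle$, which forces $g_ia_jL_{i+1}=b_jL_{i+1}$ for all $j$. Faithfulness of the limit action is arranged, exactly as in Section 3, by insisting that for each $j$ the conjugate $h_j^{u_j}$ of the $j$-th element of $H$ stays outside $L_i$ for every $i\ge j$.

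The point that prevents the chain from degenerating to $L_\infty=H$ is an inductive-step lemma playing the role of Proposition \ref{indstep}: there should be a class $\mathcal{C}$ of subgroups of $H$ such that (1) $A\in\mathcal{C}$; (2) for every $L\in\mathcal{C}$ the coset action of $H$ on $H/L$ is faithful; and (3) for every $L\in\mathcal{C}$, every admissible triple $(\bar a,\bar b,L)$, and every finite $B\subseteq H\setminus L$, there exist $g\in H$ and $L'\in\mathcal{C}$ with $L\le L'$, $ga_iL'=b_iL'$ for all $i$, and $L'\cap B=\emptyset$. The natural candidate for $\mathcal{C}$ is the class of subgroups of $H$ that are highly core free \emph{in $H$}, in the sense of the definition preceding the statement with $G$ replaced by $H$: taking $F=\{1\}$, $n=1$, $S_1=H$ in that definition shows at once that such an $L$ has $Core_H(L)=\{1\}$, i.e. that (2) holds, and the highly-core-free property is robust enough to serve as a workable bookkeeping invariant along the enlargement $L\rightsquigarrow L'$.

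This is where the hypothesis enters. The base case (1) amounts to deducing that $A$ is highly core free in $H$ from the assumption that $A$ and $B$ are highly core free in $G$; this is done via Britton's lemma, or equivalently via the action of $H$ on its Bass-Serre tree, by projecting a covering $H=AF\cup S_1\cup\cdots\cup S_n$ to the vertex set of the tree and invoking the hypothesis on $G$ (and on $B$) at a single vertex. The main obstacle is step (3): one must produce $g\in H$ realizing the prescribed coset moves and then certify, by a ping-pong argument in $H$ exploiting the many ``independent directions'' furnished by the triviality of enough cores of $A$ and $B$ inside $G$, that the enlarged subgroup $L'=\langle L,b_1^{-1}ga_1,\ldots\rangle$ is again highly core free in $H$ and still avoids $B$. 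Once (1)--(3) are in place, the chain $A=L_0\le L_1\le\cdots$ is assembled as in the proof of Theorem \ref{main}: at stage $i$ one uses faithfulness of the action of $H$ on $H/L_i$ to choose $u_{i+1}$ with $h_{i+1}^{u_{i+1}}\notin L_i$, sets $B=\{h_1^{u_1},\ldots,h_{i+1}^{u_{i+1}}\}$, and applies the lemma; the resulting action of $H$ on $\Omega=H/L_\infty$ is then faithful and highly transitive.
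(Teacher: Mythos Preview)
The paper does not prove this lemma at all: it is quoted verbatim as \cite[Theorem 2.2]{FMS} and used as a black box in the proof of Corollary \ref{BS12}. So there is no ``paper's own proof'' for you to match.

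That said, your sketch is not a proof but a plan, and the plan leaves its crucial step unverified. Everything hinges on your item (3): given $L$ highly core free in $H$, an admissible $(\bar a,\bar b,L)$, and a finite $B\subseteq H\setminus L$, produce $g$ and a highly-core-free $L'\ge L$ with $ga_iL'=b_iL'$ and $L'\cap B=\emptyset$. You assert this can be done ``by a ping-pong argument in $H$ exploiting the many independent directions furnished by the triviality of enough cores of $A$ and $B$ inside $G$,'' but you give no indication of how to choose $g$, no mechanism for controlling the normal form of elements of $L'$, and no argument that the highly-core-free property survives the enlargement. In the proof of Theorem \ref{main} this role is played by Proposition \ref{indstep}, whose proof occupies most of Section 3 and relies on the small-cancellation machinery of \cite{H13}; here you have neither that machinery nor a substitute for it. Similarly, your base case (1), that $A$ highly core free in $G$ implies $A$ highly core free in $H$, is plausible but not argued: ``projecting a covering to the Bass--Serre tree and invoking the hypothesis at a single vertex'' is a slogan, not a proof.

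For comparison, the argument in \cite{FMS} does not proceed by building an ascending chain of subgroups. It uses a Baire-category argument in a suitable Polish space of actions of $H$, showing that faithful highly transitive actions form a dense $G_\delta$; the highly-core-free hypothesis on $A$ and $B$ is precisely what is needed to establish density of the relevant open sets. If you want to give an independent proof in the style of Section 3, you would need to actually carry out the inductive step, and that is where all the content lies.
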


\begin{cor}\label{BS12}
Let $$H=\langle a,b,t\mid b^{-1}ab=a^2, [a,t]=1\rangle.$$ Then $H$ is highly transitive and MIF, but not acylindrically hyperbolic.
\end{cor}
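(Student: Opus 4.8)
The plan is to verify the three assertions in turn. Put $G_0:=\langle a,b\rangle\le H$. The two defining relations of $H$ say precisely that $H$ is the HNN extension of $G_0$ with stable letter $t$ and associated subgroups $A=B=\langle a\rangle$, amalgamated by the identity isomorphism (the relation $[a,t]=1$ being exactly $tat^{-1}=a$); and $G_0$ is the Baumslag--Solitar group $BS(1,2)$, which I identify with $\mathbb Z[1/2]\rtimes\mathbb Z$ via $a\mapsto 1$, where conjugation by $b$ is multiplication by $2$. In particular $G_0$ embeds in $H$, and $H$ is torsion free, being an HNN extension of the torsion free group $BS(1,2)$ (every element of finite order in an HNN extension is conjugate into the base).

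For high transitivity I would apply Lemma~\ref{FMS} to the HNN decomposition above, so it suffices to show that $A=\langle a\rangle$ is highly core free in $BS(1,2)$. Let $\pi\colon BS(1,2)\to\mathbb Z$ be the quotient by $\mathbb Z[1/2]=\langle\!\langle a\rangle\!\rangle$. Since $\mathbb Z[1/2]$ is abelian, a direct computation gives $s^{-1}As=2^{\pi(s)}\mathbb Z$ for every $s$; thus $s^{-1}As$ depends only on $\pi(s)$, and $Core_S(A)=\bigcap_{k\in\pi(S)}2^k\mathbb Z$, which is trivial exactly when $\pi(S)$ is unbounded above. If now $BS(1,2)=AF\cup S_1\cup\dots\cup S_n$ with $F$ finite, then $\pi(AF)=\pi(F)$ is finite, so $\pi(S_1)\cup\dots\cup\pi(S_n)$ contains all sufficiently large integers; as there are only finitely many pieces, $\pi(S_k)$ is unbounded above for some $k$, and then $Core_{S_k}(A)=\{1\}$. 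Hence $A$ is highly core free, and Lemma~\ref{FMS} yields that $H$ is highly transitive.

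For the MIF assertion: $H$ is highly transitive and countable, so by Theorem~\ref{dichotomy} either $H$ contains a normal subgroup isomorphic to $Alt(\mathbb N)$, or $H$ is MIF. The first is impossible since $H$ is torsion free whereas $Alt(\mathbb N)$ contains $3$-cycles. So $H$ is MIF.

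Finally, suppose toward a contradiction that $H$ admits a non-elementary acylindrical action on a hyperbolic space $S$. First, $\langle a,t\rangle\cong\mathbb Z^2$: a relation $a^it^j=1$ maps under the retraction $t\mapsto 1$ onto $BS(1,2)$ to $a^i=1$, forcing $i=0$ as $a$ has infinite order there, and then under $H\twoheadrightarrow H/\langle\!\langle a\rangle\!\rangle\cong F_2$ to $t^j=1$, forcing $j=0$. The restriction of the action to $\langle a,t\rangle$ is acylindrical; since $\mathbb Z^2$ is not virtually cyclic and cannot contain two independent loxodromic elements (commuting loxodromics have the same limit set), Theorem~\ref{tri} forces $\langle a,t\rangle$, and hence $\langle a\rangle$, to be elliptic. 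Fix a bounded $\langle a\rangle$-invariant set $Q$, e.g.\ a $\langle a\rangle$-orbit. The key observation is that $\langle a\rangle$ is $s$-normal in $H$: for every $g\in H$ there are integers $m\ge1$ and $n\ne0$ with $ga^mg^{-1}=a^n$. One proves this by induction on the number of occurrences of $t^{\pm1}$ in a word representing $g$ as an alternating product of elements of $BS(1,2)$ and letters $t^{\pm1}$: across a $BS(1,2)$-syllable $s$ one uses $s^{-1}a^{\,2^{|\pi(s)|}}s\in\langle a\rangle$, while pushing a power of $a$ past $t^{\pm1}$ costs nothing because $t$ centralizes $a$. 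Consequently $a^n\in P_g:=\langle a\rangle\cap g\langle a\rangle g^{-1}$ with $n\ne0$, so $P_g$ is infinite for every $g$. Now $Q$ and $gQ$ are bounded $P_g$-invariant sets of equal diameter $\mathrm{diam}\,Q$; if $d(Q,gQ)$ were at least the acylindricity constant $R(\mathrm{diam}\,Q)$ then, choosing $x\in Q$ and $y\in gQ$, the infinitely many elements of $P_g$ would all move $x$ and $y$ by at most $\mathrm{diam}\,Q$ while $d(x,y)\ge R(\mathrm{diam}\,Q)$, contradicting acylindricity. Hence $d(Q,gQ)$ is bounded uniformly in $g$, so the orbit of a point of $Q$ under $H$ is bounded and $H$ is elliptic, contradicting non-elementarity. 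Thus $H$ is not acylindrically hyperbolic. I expect the one genuinely technical point to be this $s$-normality claim, where one must track how exponents get multiplied by powers of $2$ as $a$ is pushed across successive $BS(1,2)$-syllables; everything else follows from the quoted results together with the standard acylindricity estimate on bounded invariant sets of infinite subgroups.
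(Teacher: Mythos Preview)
Your proof is correct. The arguments for high transitivity and for MIF are essentially identical to the paper's: you verify that $\langle a\rangle$ is highly core free in $BS(1,2)$ via the projection to $\langle b\rangle$ (your explicit computation $s^{-1}As=2^{\pi(s)}\mathbb Z$ is just a more detailed version of the paper's one-line remark), apply Lemma~\ref{FMS}, and then use torsion-freeness together with Theorem~\ref{dichotomy}.

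Where you genuinely diverge is in showing $H$ is not acylindrically hyperbolic. The paper simply observes that $\langle a\rangle$ is $s$-normal in $H$ and invokes \cite[Corollary 1.5]{Osi13}, which says that every $s$-normal subgroup of an acylindrically hyperbolic group is itself acylindrically hyperbolic; since $\mathbb Z$ is not, $H$ cannot be. You instead reprove the relevant special case by hand: you first force $\langle a\rangle$ to be elliptic in any acylindrical action via the $\mathbb Z^2$ subgroup $\langle a,t\rangle$ and Theorem~\ref{tri}, then use $s$-normality (which you verify by an explicit induction on HNN length) together with a direct acylindricity estimate to propagate ellipticity from $\langle a\rangle$ to all of $H$. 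This is more self-contained and makes the mechanism transparent, at the cost of length; the paper's route is shorter but hides the work inside the cited result. Both arguments rest on the same structural fact, namely the $s$-normality of $\langle a\rangle$.
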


\begin{proof}
It is easy to see that the subgroup $A=\langle a\rangle $ is highly core free in $G=\langle a,b\mid a^b=a^2\rangle$. Indeed for every finite $F\subseteq G$ and every decomposition (\ref{core}), there exists at least one $k$ such that the natural projection of $S_k$ to $\langle b\rangle$ contains infinitely many positive powers of $b$. Obviously this implies $Core_{S_k}(A)=\{ 1\}$. Hence $H$ is highly transitive by Lemma \ref{FMS}. Since $G$ is torsion free so is $H$. Thus $H$ does not contain a subgroup isomorphic to $Alt(\mathbb N)$, so $H$ is MIF by Theorem \ref{dichoto}.

Observe that $A=\langle a\rangle$ is an $s$-normal subgroup of $H$; that is, $A\cap A^h$ is infinite for every $h\in H$. By \cite[Corollary 1.5]{Osi13}, any $s$-normal subgroup of an acylindrically hyperbolic group is acylindrically hyperbolic. This implies that $H$ is not acylindrically hyperbolic.
\end{proof}

Finally, we record the following explicit construction of finitely generated highly transitive subgroups satisfying condition (a) of Theorem \ref{dichoto}.

\begin{prop}\label{ht-A}
For every finitely generated infinite group $Q$, there exists a finitely generated group $G$ such that $FSym(\mathbb N)\lhd G \le Sym(\mathbb N)$ and $G/FSym(\mathbb N) \cong Q$. In particular, $G$ is highly transitive.
\end{prop}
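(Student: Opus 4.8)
The plan is to realize $G$ as a semidirect-product-like extension of $FSym(\mathbb N)$ by $Q$ sitting inside $Sym(\mathbb N)$, exploiting the fact that $FSym(\mathbb N)$ admits a natural "regular-like" action on which $Q$ acts by permuting coordinates. Concretely, fix a finite generating set $q_1,\dots,q_m$ of $Q$ and let $Q$ act on itself by left multiplication; this gives an embedding $Q\hookrightarrow Sym(Q)$ and, since $Q$ is countably infinite, we may identify $Q$ with $\mathbb N$ as sets, so $Q\le Sym(\mathbb N)$. This copy of $Q$ normalizes $FSym(\mathbb N)=FSym(Q)$ inside $Sym(\mathbb N)$: conjugating a finitary permutation by an element of $Q$ just relabels its (finite) support, producing another finitary permutation. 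Hence $G_0:=\langle FSym(\mathbb N), Q\rangle = FSym(\mathbb N)\rtimes Q \le Sym(\mathbb N)$, and $G_0/FSym(\mathbb N)\cong Q$ because $Q\cap FSym(\mathbb N)=\{1\}$ (a nontrivial left translation of an infinite group has infinite support, in fact empty fixed-point set).

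The remaining issue is finite generation: $FSym(\mathbb N)$ is not finitely generated, so $G_0$ need not be either, and we must enlarge it without destroying the other properties. First I would note that $FSym(\mathbb N)$ is generated by the set of all transpositions, and more economically by the $Q$-conjugates of a single transposition $\tau=(e,q_1)$ (where $e\in Q$ is the identity), \emph{provided} $Q$ acts in a way that moves the pair $\{e,q_1\}$ around enough to reach all pairs. In general left multiplication is transitive on $Q$ but not $2$-transitive, so the $Q$-orbit of $\tau$ need not be all transpositions. To fix this, before forming the extension I would first replace the left-regular action of $Q$ by a faithful action of $Q$ on a countable set $\Omega$ that is transitive but in which one can still reach a generating set of transpositions using finitely many auxiliary permutations; alternatively, and more simply, take $G:=\langle Q,\ \tau_1,\tau_2\rangle$ where $\tau_1,\tau_2$ are two explicitly chosen transpositions such that the $\langle Q,\tau_1,\tau_2\rangle$-orbit of a transposition already exhausts all transpositions. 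Since left multiplication by $Q$ is transitive, it suffices to have, together with finitely many extra transpositions, enough "mixing at a basepoint" to generate the point stabilizer's worth of transpositions — e.g. choosing $\tau_2$ so that conjugates of $\tau_1$ by $\langle Q,\tau_2\rangle$ give all transpositions $(e,q)$, $q\in Q$, whence $Q$-conjugation spreads these to all transpositions. Then $\langle \tau_1,\tau_2\rangle^{\langle Q\rangle}$ generates $FSym(\mathbb N)$, so $FSym(\mathbb N)\le G$, and $G = FSym(\mathbb N)\cdot Q$ is finitely generated by $q_1,\dots,q_m,\tau_1,\tau_2$.

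To finish, I would verify the three required properties for this $G$. Normality of $FSym(\mathbb N)$ in $G$: $FSym(\mathbb N)$ is normal in all of $Sym(\mathbb N)$ (it is the set of permutations with finite support, a characteristic subgroup), so a fortiori normal in $G$. The quotient: every element of $G$ is a product of a finitary permutation and an element of $Q$, so $G/FSym(\mathbb N)$ is a quotient of $Q$; conversely the map $G\to Sym(\mathbb N)/FSym(\mathbb N)$ restricted to the copy of $Q$ is injective because $Q\cap FSym(\mathbb N)=\{1\}$, giving $G/FSym(\mathbb N)\cong Q$ exactly. High transitivity: any subgroup of $Sym(\mathbb N)$ containing $Alt(\mathbb N)$ acts highly transitively (this is elementary and noted in the introduction), and $G\supseteq FSym(\mathbb N)\supseteq Alt(\mathbb N)$. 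I expect the main obstacle to be the finite-generation step, specifically arranging that finitely many transpositions together with the $Q$-action generate \emph{all} of $FSym(\mathbb N)$ rather than a proper subgroup; the cleanest route is probably to first pass to a suitable transitive faithful $Q$-action (or to adjoin one or two permutations that upgrade transitivity of the $Q$-action to enough $2$-transitivity on a basepoint's fiber) and then invoke the fact that $FSym(\mathbb N)$ is the normal closure of a single transposition under any $2$-transitive group.
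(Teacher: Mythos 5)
Your overall strategy is the right one and matches the paper's: embed $Q$ into $Sym(Q)\cong Sym(\mathbb N)$ via the left regular representation $\lambda$, adjoin finitely many transpositions, and check that the resulting group contains $FSym(Q)$, intersects $\lambda(Q)$ trivially, and is therefore $FSym(Q)\rtimes Q$, hence highly transitive. The gap is in the finite-generation step, where you cap the auxiliary transpositions at two and try to arrange that ``conjugates of $\tau_1$ by $\langle Q,\tau_2\rangle$ give all transpositions $(e,q)$.'' Neither half of this works in general. If $G=\langle \lambda(Q),\tau_1,\tau_2\rangle$ with $\tau_1=(p_1,p_2)$, $\tau_2=(p_3,p_4)$, then $G\cap FSym(Q)$ is exactly the subgroup $M$ generated by the $\lambda(Q)$-conjugates of $\tau_1,\tau_2$ (it is normalized by $\lambda(Q)$ and contains $\tau_1,\tau_2$, and $\lambda(Q)\cap FSym(Q)=\{1\}$). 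A group generated by transpositions is the direct sum of full finitary symmetric groups on the connected components of the graph whose edges are the transposed pairs; here the edges are $\{g p_1,g p_2\}$ and $\{g p_3,g p_4\}$, $g\in Q$, i.e.\ the Cayley graph of $Q$ with respect to $a=p_1^{-1}p_2$, $b=p_3^{-1}p_4$. So $M\supseteq FSym(Q)$ if and only if $\langle a,b\rangle=Q$, which fails whenever $Q$ is not $2$-generated; for such $Q$ your group simply does not contain $FSym(\mathbb N)$. Moreover the stronger goal you set yourself is unattainable: conjugates of $\tau_1$ by elements of $\langle \lambda(Q),\tau_2\rangle$ move the pair $\{p_1,p_2\}$ only within $Q$-translates of the pair of cosets $(p_1\langle b\rangle, p_2\langle b\rangle)$, so one can only reach transpositions $(e,q)$ with $q$ in a single double coset $\langle b\rangle a^{\pm1}\langle b\rangle$, not all $q\in Q$. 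The appeal to $2$-transitivity is likewise a detour: you never need the orbit of one transposition to exhaust all transpositions, only that the generated subgroup be all of $FSym$.

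The repair is small and is exactly what the paper does: if $x_1,\dots,x_k$ generate $Q$, adjoin the $k$ transpositions $a_i=(e,x_i)$, one per generator, and set $G=\langle \lambda(x_1),\dots,\lambda(x_k),a_1,\dots,a_k\rangle$. The $\lambda(Q)$-conjugates of the $a_i$ are the transpositions $(g,gx_i)$, i.e.\ the edge transpositions of the Cayley graph of $Q$ with respect to $\{x_1,\dots,x_k\}$, which is connected; transpositions along a path compose/conjugate to give the transposition of its endpoints, so these generate all of $FSym(Q)$. Your remaining verifications (normality of $FSym$ in $Sym(\mathbb N)$, $\lambda(Q)\cap FSym(Q)=\{1\}$ because a nontrivial left translation is fixed-point free, hence $G=FSym(Q)\rtimes\lambda(Q)$ and $G/FSym(\mathbb N)\cong Q$, and high transitivity from $Alt(\mathbb N)\le FSym(\mathbb N)\le G$) are correct as written and complete the proof once the generation step is fixed this way.
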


\begin{proof}
We identify $Sym(\mathbb N)$ and $Sym (Q)$. Let $X=\{ x_1, \ldots, x_k\}$ be a generating set of $Q$ and let $\lambda\colon Q\to Sym(Q)$ be the embedding induced by the left regular action.  Let $a_i$ denote the transposition $(1, x_i)\in Sym (Q)$ and let $G=\langle \lambda(x_1), \ldots, \lambda(x_k), a_1, \ldots, a_k\rangle\le Sym(Q)$.

Let us show that $G$ contains $FSym(Q)$. Let $t_{g,h}=(g,h)$, $g,h\in Q$, be a transposition. Assume first that $g^{-1}h=x_i$ for some $i$. Then $t_{g,h}=\lambda(g)a_i\lambda(g^{-1})$. Similarly if $g^{-1}h=x_i^{-1}$, then $t_{g,h}=t_{h,g}=\lambda(h)a_i\lambda(h^{-1})$. In both cases $t_{g,h}\in G$. In the general case, there exist elements $g_0=g, g_1, \ldots, g_n=h$, where $n=|g^{-1}h|_X$ and $g_{i+1}=g_ix_{j_i}^{\pm 1}$ for some $x_{i_j}\in X$. Then $t_{g,h}=t_{g_0,g_1}\cdots t_{g_{n-1}, g_{n}}$. Thus $t_{g,h}\in G$ for every $g,h\in Q$ and therefore $FSym(Q)\le G$.

Clearly $FSym(Q)$ is normal in $G$ since it is normal in $Sym(Q)$. Note also that $\lambda(Q)\cap FSym(Q)=\{ 1\}$. Indeed every element of $\lambda(Q)$ has single (infinite) orbit since the action of $Q$ on itself is the left regular one, while every element of $FSym(Q) $ moves only finitely many elements. Thus $G=FSym(Q) \rtimes \lambda(Q)\cong FSym(Q) \rtimes Q$.
\end{proof}

\begin{cor}\label{torsion}
There exist infinite finitely generated torsion groups admitting highly transitive faithful actions.
\end{cor}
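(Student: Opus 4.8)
The plan is to apply Proposition \ref{ht-A} with $Q$ taken to be an infinite finitely generated torsion group, and then to verify that the group it produces is itself torsion.

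First I would recall the (classical but non-elementary) fact that infinite finitely generated torsion groups exist: for instance the first Grigorchuk group, a Golod--Shafarevich group, or an infinite free Burnside group of large odd exponent. Fixing such a group $Q$ and feeding it to Proposition \ref{ht-A}, I obtain a finitely generated group $G$ with $FSym(\mathbb N)\lhd G\le Sym(\mathbb N)$ and $G/FSym(\mathbb N)\cong Q$; the proposition also records that $G$ is highly transitive, and the inclusion $G\le Sym(\mathbb N)$ makes this highly transitive action of $G$ on $\mathbb N$ faithful. Since $Q$ is infinite and is a quotient of $G$, the group $G$ is infinite, and it is finitely generated by construction.

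It remains to check that $G$ is a torsion group, and this is the crux (though an easy one): $G$ is an extension of the torsion group $Q$ by the group $FSym(\mathbb N)$, which is itself torsion since every finitary permutation has finite support and hence finite order. Concretely, given $g\in G$, let $m$ be the order of the image of $g$ in $Q$; then $g^m\in FSym(\mathbb N)$, so $g^m$ has some finite order $k$, and therefore $g^{mk}=1$. Hence $G$ is torsion, which produces the desired infinite finitely generated torsion group admitting a faithful highly transitive action, proving Corollary \ref{torsion}.

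The only nontrivial ingredient here is external --- the existence of infinite finitely generated torsion groups --- while everything else is the elementary ``torsion-by-torsion is torsion'' observation above, so I do not anticipate any real difficulty. (This route says nothing about finite presentability, which is to be expected, since it is a well-known open problem whether infinite finitely presented torsion groups exist at all.)
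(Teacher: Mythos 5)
Your proposal is correct and follows the paper's proof exactly: the paper also takes a finitely generated infinite torsion group $Q$ and applies Proposition \ref{ht-A}, leaving the easy ``torsion-by-torsion is torsion'' verification implicit, which you have simply spelled out.
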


\begin{proof}
Take a finitely generated infinite torsion group $Q$ and apply the proposition.
\end{proof}

\paragraph{5.3. Mixed identities in limits of acylindrically hyperbolic groups.}
Our next goal is to show how Corollary \ref{ah-mif} can be used to construct examples of MIF groups which are very far from being acylindrically hyperbolic. We first recall the definition of the space of marked group presentations. It is a particular case of a more general construction due to Chabauty;  for groups it was first studied by Grigorchuk \cite{Gri}.

Let $F_k$ be the free group with basis $X=\{x_1, \ldots,
x_k\}$ and let $\mathcal G_k$ denote the set of all normal subgroups of
$F_k$. Given $M,N\lhd F_k$, let
$$
\d(M,N)=\left\{
\begin{array}{ll}
\max
\left\{ \left.\frac1{|w|_X}\;\right|\; w\in N\vartriangle M
\right\},
& {\rm if} \; M\ne N\\
0,& {\rm if}\; M=N.
\end{array}
\right.
$$
It is easy to see that $(\mathcal G_k, \d)$ is a compact Hausdorff
totally disconnected (ultra)metric space \cite{Gri}.

One can naturally identify $\mathcal G_k$ with the set of all
\emph{marked $k$-generated groups}, i.e., pairs
$(G, (x_1,\ldots, x_k))$, where $G$ is a group and $(x_1, \ldots , x_k)$
is a generating $k$-tuple of $G$. For this reason the space $\mathcal G_k$ with the metric defined
above is called the \emph{space of marked groups with $k$ generators}.

Let $\ah$ denote the set of all marked $k$-generated acylindrically hyperbolic groups with trivial finite radical and let $\ahc$ denote its closure in $\mathcal G_k$. In the proposition below, the word `generic' is used in the sense of Definition \ref{gen}.

\begin{prop}\label{lim-mif}
Let $\mathcal C$ be a subset of $\mathcal G_k$ for some $k\ge 2$ such that $\mathcal C\cap \ah$ is dense in $\mathcal C$. Then a generic presentation from $\mathcal C$ defines a MIF group.
\end{prop}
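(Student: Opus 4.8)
The plan is to show that for each fixed non-trivial element of a free product with a free group, the set of presentations in $\mathcal C$ which fail to satisfy the corresponding mixed identity is open and dense in $\mathcal C$; a countable intersection then gives a comeagre set of presentations defining MIF groups. Fix $k\ge 2$ and work in the ambient space $\mathcal G_k$ with the ultrametric $\d$. Recall (Remark \ref{1-vs-n}) that a group is MIF if and only if it satisfies no non-trivial mixed identity with a single variable $x$, so it suffices to control words $w\in F_k\ast\langle x\rangle$ where $F_k$ is the "abstract" copy generating the marked group. Enumerate $F_k\ast\langle x\rangle\setminus\{1\}=\{w_1,w_2,\ldots\}$. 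For a marked group $(G,(x_1,\ldots,x_k))\in\mathcal C$ and a fixed $w=w_j$, write $w$ in normal form $w=u_0 x^{\e_1}u_1\cdots x^{\e_m}u_m$ with $u_i\in F_k$; substituting the generators $x_1,\ldots,x_k$ of $G$ for the abstract ones, $w$ becomes an element $w(x)$ of $G\ast\langle x\rangle$, and the mixed identity $w=1$ fails in $G$ precisely when $w(g)\ne 1$ for some $g\in G$. Define
\[
\mathcal U_j=\{\,(G,(x_1,\ldots,x_k))\in\mathcal C \;:\; w_j=1 \text{ is not a mixed identity of } G\,\}.
\]
It suffices to prove each $\mathcal U_j$ is open and dense in $\mathcal C$, for then $\bigcap_j\mathcal U_j$ is comeagre in $\mathcal C$ and every group it contains is MIF.

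For openness: suppose $(G,(x_1,\ldots,x_k))\in\mathcal U_j$, so there is $g=g(x_1,\ldots,x_k)\in G$, a word in the generators of length $\ell$ say, with $w_j(g)\ne 1$ in $G$. The relation $w_j(g)\ne 1$ only involves finitely many words in $x_1^{\pm 1},\ldots,x_k^{\pm 1}$ of bounded length $L=L(w_j,\ell)$; if $(G',(x_1',\ldots,x_k'))$ is a marked group with $\d$-distance to $(G,(x_1,\ldots,x_k))$ small enough that the kernels of $F_k\to G$ and $F_k\to G'$ agree on the ball $B_X(L)$, then $w_j(g')\ne 1$ in $G'$ as well for the corresponding element $g'$. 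Hence a whole $\d$-ball around $(G,(x_1,\ldots,x_k))$ lies in $\mathcal U_j$, so $\mathcal U_j$ is open.

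For density: let $(G,(x_1,\ldots,x_k))\in\mathcal C$ be arbitrary and let $\e>0$. Since $\mathcal C\cap\ah$ is dense in $\mathcal C$, there is a marked acylindrically hyperbolic group $(G_0,(x_1,\ldots,x_k))$ with trivial finite radical within $\d$-distance $\e$ of $(G,(x_1,\ldots,x_k))$. By Corollary \ref{ah-mif}, $G_0$ is MIF, so in particular $w_j=1$ fails in $G_0$: there is $g\in G_0$ with $w_j(g)\ne 1$. Therefore $(G_0,(x_1,\ldots,x_k))\in\mathcal U_j$, and it lies within $\e$ of the given presentation. Hence $\mathcal U_j$ is dense. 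This completes the argument.

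The main subtlety is bookkeeping in the openness step: one must check that "$w_j(g)\ne 1$" is genuinely an open condition in the space of marked groups, i.e. that it is witnessed by finitely many relations/non-relations among bounded-length words in the generators, so that it persists under passing to a nearby marked group whose defining kernel coincides with the original one on a large enough ball. Everything else is a direct invocation of Corollary \ref{ah-mif} together with the Baire category theorem in the compact (hence Polish) space $\mathcal G_k$. Note that $\mathcal C$ itself need not be Baire or closed, but $\overline{\mathcal C}$ is a closed subset of the compact metric space $\mathcal G_k$, hence Polish, and the sets $\mathcal U_j\cup(\overline{\mathcal C}\setminus\mathcal C)$ (or, more carefully, the relative comeagreness of $\bigcap_j\mathcal U_j$ in $\mathcal C$) can be handled by working in the closure of $\mathcal C$ and intersecting; the genericity statement is understood relative to $\mathcal C$ with its subspace topology as in Definition \ref{gen}.
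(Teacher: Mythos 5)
Your reduction to one-variable words, the openness of your sets, and the final Baire category bookkeeping are all fine, but the density step has a genuine gap: from the fact that $G_0$ is MIF you cannot conclude that $G_0\in\mathcal U_j$. The word $w_j$ is non-trivial as an element of $F_k\ast\langle x\rangle$, but its image $\bar w_j$ in $G_0\ast\langle x\rangle$ (obtained by substituting the marked generators for the abstract ones) may be trivial, for instance when some of the constants $u_i$ in its normal form, or products of consecutive constants, die in $G_0$; in that case $w_j(g)=1$ for every $g\in G_0$, and this contradicts nothing, since MIF only forbids mixed identities given by elements that are non-trivial in $G_0\ast\langle x\rangle$. Concretely, take $k=2$, $w_j=[x_1^2,x]$ (or simply $w_j=x_1^2$), and $\mathcal C=\{(\mathbb Z_2\ast\mathbb Z,(a,b))\}$ with $a$ of order $2$: this single marked group is acylindrically hyperbolic with trivial finite radical, so $\mathcal C\cap\mathcal{AH}^0_k$ is dense in $\mathcal C$ and the group is MIF, yet $\bar w_j=[a^2,x]=1$ in $G_0\ast\langle x\rangle$, so $\mathcal U_j\cap\mathcal C=\emptyset$ is not dense. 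Thus your sets $\mathcal U_j$ need not be dense, and their intersection can even miss all of $\mathcal C$ although the proposition holds there.

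The paper's proof repairs exactly this point: for each $w_i$ it takes $\mathcal D_i$ to be the set of marked groups $G$ such that \emph{either} $\bar w_i=1$ in $G\ast\langle x\rangle$ \emph{or} $\bar w_i(g)\ne 1$ in $G$ for some $g\in G$. Both alternatives are decided by (non-)membership of finitely many explicit words of $F_k$ in the kernel defining $G$, hence are locally constant on $\mathcal G_k$, so each $\mathcal D_i$ is open (your openness argument adapts verbatim). Every MIF group lies in every $\mathcal D_i$, so by Corollary \ref{ah-mif} we have $\mathcal{AH}^0_k\subseteq\mathcal D_i$, and density of $\mathcal C\cap\mathcal{AH}^0_k$ in $\mathcal C$ now gives density of $\mathcal D_i\cap\mathcal C$. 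Finally, a group $G$ in $\bigcap_i\mathcal D_i$ is still MIF, because every non-trivial element of $G\ast\langle x\rangle$ is the image $\bar w_i$ of some $w_i$ for which the first alternative fails, so the second provides a witness. With your $\mathcal U_j$ replaced by these sets, the rest of your argument goes through as written.
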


\begin{proof}
Let $F_{k+1}=F(x_1, \ldots, x_k)\ast \langle x\rangle=\{ 1, w_1, w_2, \ldots \}$. If $G$ is a marked $k$-generated group (i.e., a quotient group of $F_k$), we denote by $\bar w_i$ the natural image of $w_i$ in $G\ast \langle x\rangle$. Let $\mathcal D_i$ be the set of all marked $k$-generated groups $G$ such that $\bar w_i=1$ in $G\ast \langle x\rangle$ or $\bar w_i(g)\ne 1$ in $G$ for some $g\in G$. It is easy to see that if a certain marked $k$-generated group $G$ belongs to $\mathcal D_i$, then all marked $k$-generated groups from some open neighborhood of $G$ belong to $\mathcal D_i$. Thus $\mathcal D_i$ is open. Further, let $\mathcal D=\bigcap_i \mathcal D_i$. Clearly $\mathcal D$ consists of MIF groups (see Remark \ref{1-vs-n}). By Corollary \ref{ah-mif}, $\ah \subseteq \mathcal D$. Thus $\mathcal D$ is an intersection of countably many open dense subsets of $\mathcal C$, i.e., it is comeagre in $\mathcal C$.
\end{proof}

By Proposition \ref{mif}, every countable MIF group is universally equivalent to an acylindrically hyperbolic group (namely, the free product $G\ast F_n$). The following corollary of Proposition \ref{lim-mif} shows that universal equivalence cannot be replaced by elementary equivalence and even with $\forall\exists$-equivalence here.

\begin{cor}\label{2cc}
There exist a finitely generated MIF group with exactly $2$ conjugacy classes. In particular, there exists a MIF group that is not elementary equivalent (not even $\forall\exists$-equivalent) to an acylindrically hyperbolic group.
\end{cor}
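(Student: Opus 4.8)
The strategy is to realize such a group as a \emph{generic} point of an appropriate closed subspace of the space $\mathcal{G}_2$ of marked $2$-generated groups, combining Proposition \ref{lim-mif} with a Baire category argument modelled on Osin's construction of groups with finitely many conjugacy classes \cite{Osi10}. Let $\mathcal{T}\subseteq\mathcal{G}_2$ be the set of marked torsion-free $2$-generated acylindrically hyperbolic groups; since torsion-free groups have trivial finite radical, $\mathcal{T}\subseteq\ah$, and $\mathcal{T}\neq\emptyset$ because it contains $F_2$. Put $\mathcal{C}=\overline{\mathcal{T}}$, the closure in $\mathcal{G}_2$; this is a compact, hence Baire, space, and $\mathcal{C}\cap\ah\supseteq\mathcal{T}$ is dense in $\mathcal{C}$, so by Proposition \ref{lim-mif} a generic presentation in $\mathcal{C}$ defines a MIF group. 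Since the intersection of two comeagre subsets of a non-empty Baire space is again comeagre, hence non-empty, it now suffices to show that a generic presentation in $\mathcal{C}$ also defines a group with exactly two conjugacy classes: such a group is automatically finitely generated (being $2$-generated), and this proves the first assertion.

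First, the easy points. Every group in $\mathcal{C}$ is torsion-free, because ``containing a non-trivial element of finite order'' is an open condition on $\mathcal{G}_2$ (it is the union of the open sets $\{G: w\notin N_G,\ w^n\in N_G\}$ over $w\in F_2$, $n\ge 2$), so torsion-freeness is closed and passes to $\overline{\mathcal{T}}$; in particular a non-trivial group in $\mathcal{C}$ is infinite and has at least two conjugacy classes. The set $\{G\in\mathcal{C}:G\ \text{non-trivial}\}$ is open (its complement is the closed point given by the trivial group) and contains the dense set $\mathcal{T}$, hence is dense in $\mathcal{C}$. For each pair of non-trivial elements $u,v\in F_2$ set
$$\mathcal{E}_{u,v}=\{G\in\mathcal{C}:\bar u=1\text{ or }\bar v=1\text{ or }\bar u\sim_G\bar v\},$$
where $\bar u$ is the image of $u$. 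Each of the three defining conditions cuts out an open subset of $\mathcal{C}$, so $\mathcal{E}_{u,v}$ is open; and since every element of a $2$-generated group is $\bar u$ for some $u\in F_2$, the set $\bigl(\bigcap_{u,v\neq1}\mathcal{E}_{u,v}\bigr)\cap\{G\ \text{non-trivial}\}$ is precisely the set of presentations defining groups with at most, hence exactly, two conjugacy classes. Everything therefore reduces to showing that each $\mathcal{E}_{u,v}$ is dense in $\mathcal{C}$.

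Since $\mathcal{T}$ is dense in $\mathcal{C}$ and neighbourhoods in $\mathcal{G}_2$ are determined by balls, density of $\mathcal{E}_{u,v}$ follows from the following \textbf{one-step lemma}, which I expect to be the main obstacle: given $G_0\in\mathcal{T}$, elements $a,b\in G_0\setminus\{1\}$ and $N\in\mathbb{N}$, there is a quotient $G_1$ of $G_0$ with $G_1\in\mathcal{T}$ such that the quotient map is injective on $B_X(N)$ (so $G_1$ lies in the prescribed neighbourhood of $G_0$) and the images of $a$ and $b$ are conjugate in $G_1$. The proof should follow the pattern of Proposition \ref{indstep} together with small cancellation theory over acylindrically hyperbolic groups (\cite{H13}; cf. \cite{DGO,Osi10}): one passes to $P=G_0\ast\langle t\rangle$, which is acylindrically hyperbolic with $G_0$ a suitable subgroup once a hyperbolic acylindrical Cayley graph is fixed, and then uses a Theorem \ref{SCthm}-type argument to obtain a quotient of $P$ in which $t^{-1}at=b$ while $t$ is collapsed onto a carefully chosen long loxodromic element $w\in G_0$; this collapses $P$ onto a quotient $G_1=G_0/\ll w^{-1}awb^{-1}\rr$ of $G_0$, and for a generic such $w$ the relator is not a proper power, so acylindrical hyperbolicity, triviality of the finite radical and torsion-freeness are preserved (this torsion-free version is exactly what is done in \cite{Osi10}), while a prescribed ball stays embedded by condition (b) of Theorem \ref{SCthm}. (If $a,b$ are not loxodromic for the chosen action one first applies one such step to arrange this.) Granting the lemma, each $\mathcal{E}_{u,v}$ is open and dense in $\mathcal{C}$, a generic group in $\mathcal{C}$ has exactly two conjugacy classes and is MIF, and the first assertion follows.

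For the ``in particular'' statement, let $Q$ be the group just produced. Having exactly two conjugacy classes is expressed by the $\forall\exists$-sentence
$$\forall x\,\forall y\,\exists u\,\exists z\,\bigl(u\neq1\ \wedge\ (x=1\ \vee\ y=1\ \vee\ z^{-1}xz=y)\bigr),$$
which holds in $Q$. On the other hand, every acylindrically hyperbolic group $H$ has at least three conjugacy classes: by Theorem \ref{tri} it contains a loxodromic element $g$, and $1$, $g$, $g^2$ have pairwise distinct stable translation lengths $0<\tau<2\tau$ with respect to the action, hence lie in three distinct conjugacy classes of $H$; so the displayed sentence fails in $H$. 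Therefore $Q$ is not $\forall\exists$-equivalent, and a fortiori not elementarily equivalent, to any acylindrically hyperbolic group --- in particular not to $Q\ast F_n$, even though $Q$ and $Q\ast F_n$ are universally equivalent as $Q$-groups by Proposition \ref{mif}.
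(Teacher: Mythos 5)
Your overall architecture is the same as the paper's: work in the closure $\mathcal C=\overline{\mathcal T}$ of the set $\mathcal T$ of marked torsion-free $2$-generated acylindrically hyperbolic groups, get MIF generically from Proposition \ref{lim-mif}, get two conjugacy classes generically, intersect the two comeagre sets and invoke Baire on the compact space $\mathcal C$; the topological bookkeeping (torsion-freeness is closed, non-triviality and the sets $\mathcal E_{u,v}$ are open, the reduction of density to the ``one-step lemma'') is all correct. The genuine gap is exactly that one-step lemma: given a torsion-free acylindrically hyperbolic $G_0$, nontrivial $a,b\in G_0$ and $N$, produce a \emph{torsion-free} acylindrically hyperbolic quotient in which $a$ and $b$ become conjugate and the quotient map is injective on a prescribed ball. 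You explicitly grant it rather than prove it, and this is precisely the content of the result the paper imports wholesale: \cite[Corollary 1.13]{H13}, which states that a generic group in $\overline{\mathcal{AH}}^{tf}_k$ has two conjugacy classes. So either cite that corollary (in which case your open--dense argument with the $\mathcal E_{u,v}$ becomes unnecessary), or supply a complete proof of the lemma; as written, the heart of the statement is missing.

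Moreover, the sketch you give would not close the gap with the tools quoted in this paper. Theorem \ref{SCthm} imposes only the single relation $t^{-1}w=1$ with $w$ in a suitable subgroup, and its conclusions (a)--(c) say nothing about torsion-freeness of the quotient: nothing prevents the resulting relator $w^{-1}awb^{-1}$ from behaving like a proper power, and killing torsion-freeness destroys membership in $\mathcal C$, which consists of torsion-free groups. (The injectivity issue is minor: part (b) only controls $B_A(N)$ for the auxiliary generating set $A$, but a finite set lies in some such ball, as in the proof of Proposition \ref{indstep}.) Also, before applying an SC-type theorem you would need the intermediate group $\langle G_0,t\mid t^{-1}at=b\rangle$ to be acylindrically hyperbolic with $G_0$ suitable for an acylindrical generating set; this does not follow from the free-product lemmas used in Section 3, since $\langle a\rangle$ and $\langle b\rangle$ need not be hyperbolically embedded, and the HNN results of \cite{MO} require weak malnormality hypotheses that cyclic subgroups of a torsion-free acylindrically hyperbolic group need not satisfy. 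The needed control of proper powers and of torsion in quotients is exactly the finer small cancellation machinery of \cite{H13} (cf. \cite{Osi10}), i.e.\ the proof of the cited corollary. On the positive side, your treatment of the ``in particular'' claim is complete, and your translation-length argument that an acylindrically hyperbolic group has at least three conjugacy classes is correct and even replaces the paper's appeal to conjugacy growth \cite{HO1} by an elementary observation.
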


\begin{proof}
Let $\mathcal {AH}_k^{tf}$ denote the set of all marked presentations of $k$-generated torsion free acylindrically hyperbolic groups. Clearly $\mathcal {AH}_k^{tf}\subseteq \ah$. It was proved in \cite[Corollary 1.13]{H13} that a generic presentation in $\overline{\mathcal {AH}}_k^{tf}$ defines a group with $2$ conjugacy classes. On the other hand, a generic presentation in $\overline{\mathcal {AH}}_k^{tf}$ defines a MIF group by Proposition \ref{lim-mif}. Since intersection of two comeagre sets is comeagre, a generic presentation in $\overline{\mathcal {AH}}_k^{tf}$ defines a group satisfying both properties. It remains to note that $\overline{\mathcal {AH}}_k^{tf}$ is compact being a closed subset of a compact space, and hence comeagre subsets of $\overline{\mathcal {AH}}_k^{tf}$ are non-empty by the Baire category theorem.

Finally we note that groups with exactly $2$ conjugacy classes can be characterized by the $\forall\exists$-formula
$$
\forall x\, \forall y\,  \exists t\,
(x=1 \,\vee\, y=1 \,\vee\, t^{-1}xt=y).
$$
Note also that acylindrically hyperbolic groups have infinitely many conjugacy classes (and, moreover, exponential conjugacy growth, see \cite{HO1}). This implies the second claim of the corollary.
\end{proof}

Our next result provides examples of MIF groups which are not highly transitive.

\begin{cor}\label{mif-tor}
There exists MIF torsion groups of transitivity degree $1$.
\end{cor}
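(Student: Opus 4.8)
The plan is to build a torsion group as a limit of torsion acylindrically hyperbolic groups (so that Proposition \ref{lim-mif} forces the generic limit to be MIF), while simultaneously guaranteeing that the limit group is \emph{residually finite} and \emph{virtually polycyclic-free}—wait, that is the wrong direction. The cleaner route is to arrange that the limit group is residually finite and virtually product-like, or residually finite and virtually solvable, so that Corollary \ref{p-like} or Corollary \ref{sol} kills the transitivity degree; but a residually finite torsion group that is virtually solvable is finite, which is useless. Instead I would aim for a residually finite torsion group to which Corollary \ref{p-like} applies: so I want the generic limit to be virtually product-like (contain two non-trivial normal subgroups intersecting trivially inside a finite-index subgroup). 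That, however, is typically destroyed in limits. The honest approach is therefore different: I would produce torsion $k$-generated acylindrically hyperbolic groups $G_n$ (e.g.\ periodic quotients of non-elementary hyperbolic groups, as in Ol'shanskii's and Ivanov--Ol'shanskii's constructions of torsion acylindrically hyperbolic / relatively hyperbolic groups with trivial finite radical) whose marked presentations accumulate, in the space $\mathcal G_k$, on a \emph{fixed} marked group $\Gamma$ that is residually finite and has $\td(\Gamma)=1$. Then $\{\Gamma\}$ is a closed subset of $\mathcal G_k$ on which the acylindrically hyperbolic torsion groups are dense (trivially, since $\Gamma$ itself is the only point and it is a limit of such groups), so Proposition \ref{lim-mif} applied to $\mathcal C=\{\Gamma\}$—whose unique point is comeagre in itself—gives that $\Gamma$ is MIF.

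More concretely, the key steps in order: (1) Recall or invoke the existence of finitely generated infinite torsion groups that are acylindrically hyperbolic with trivial finite radical; the standard source is Ol'shanskii-type periodic quotients of free (or hyperbolic) groups, and more flexibly the construction in \cite{H13} of torsion acylindrically hyperbolic groups, or infinite free products of finite groups which are acylindrically hyperbolic with trivial finite radical (a free product of two non-trivial finite groups, not $\mathbb Z_2\ast\mathbb Z_2$, is torsion-free—no, it has torsion but its elements of infinite order exist, so this fails to be torsion). The correct model is a periodic Ol'shanskii monster or, better, the class $\mathcal{AH}_k^{tor}$ of marked $k$-generated torsion acylindrically hyperbolic groups with trivial finite radical, which is non-empty for suitable $k$. (2) Show this class is non-empty and, crucially, that its closure $\overline{\mathcal{AH}}_k^{tor}$ in $\mathcal G_k$ contains marked groups with $\td=1$: for instance, a residually finite torsion group (a Grigorchuk-type group, or an infinitely-generated-looking but $k$-marked presentation of a finite group iterated). (3) Apply Proposition \ref{lim-mif} with $\mathcal C=\overline{\mathcal{AH}}_k^{tor}$ (since $\mathcal{AH}_k^{tor}\subseteq \ah$ and is dense in its own closure): a generic presentation in $\mathcal C$ is MIF. (4) Separately, show that a generic—or just \emph{some}—presentation in $\mathcal C$ defines a torsion group of transitivity degree $1$; since MIF groups are ICC and have no finite normal subgroups, $\td=1$ cannot come from Lemma \ref{2-trans}(b), so one needs a genuinely different obstruction, e.g.\ residual finiteness together with being virtually product-like (Corollary \ref{p-like}), or a direct ad hoc argument that the group has no faithful primitive action. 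Intersecting the comeagre MIF-locus with a suitable comeagre or at least non-empty closed sublocus of "bad" torsion groups and invoking Baire category on the compact space $\overline{\mathcal{AH}}_k^{tor}$ finishes the proof.

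The main obstacle, and the step I expect to be delicate, is step (4): exhibiting a single torsion group in $\overline{\mathcal{AH}}_k^{tor}$ with $\td=1$ while it is also MIF. Being MIF is very restrictive (ICC, directly indecomposable, infinite girth, no finite normal subgroups), so the usual cheap certificates for $\td=1$—a center, a finite normal subgroup, virtual solvability—are all unavailable. The resolution is presumably to use a periodic quotient construction in which one controls the subgroup structure so that \emph{some} limit group is residually finite and virtually product-like (hence $\td=1$ by Corollary \ref{p-like}) yet still MIF; alternatively, one arranges that a generic limit in an appropriate closed subfamily is a torsion group all of whose proper quotients are finite or which is hereditarily non-primitive, giving $\td=1$ directly. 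Making the two genericity requirements compatible—so that the intersection of "MIF" and "$\td=1$" loci is non-empty in the compact space $\overline{\mathcal{AH}}_k^{tor}$—is exactly where the work lies, and it is why the statement is phrased as an existence result rather than a description of all such groups.
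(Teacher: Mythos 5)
Your general framework (realize the group as a limit in $\mathcal G_k$, get MIF generically from Proposition \ref{lim-mif}, get torsion from a comeagre set of torsion limits, and finish with Baire category on a compact closure) matches the paper's strategy, but the step you yourself flag as delicate -- producing the obstruction to $\td\ge 2$ -- is a genuine gap, and the specific resolutions you sketch do not work. A group that is virtually product-like cannot be MIF: passing to the normal core $G_0$ of the finite-index subgroup, $G_0$ is MIF by Proposition \ref{mif-prop}(c), yet if $A,B\lhd G_0$ are non-trivial with $A\cap B=\{1\}$ then $[A,B]=1$ and $[[a,x],[b,x]]=1$ is a non-trivial mixed identity for $G_0$. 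So ``residually finite and virtually product-like yet MIF'' is impossible, and Corollary \ref{p-like} can never be the source of $\td=1$ here; your alternative (``all proper quotients finite'' or ``hereditarily non-primitive'') is not backed by any argument. Since MIF rules out every cheap certificate in the paper (center, finite normal subgroups, direct or virtual product structure, virtual solvability), you need a qualitatively different mechanism, and your proposal does not contain one.

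The paper's missing ingredient is twofold. First, instead of an arbitrary family of torsion acylindrically hyperbolic marked groups, one takes the closure $\overline{\mathcal C}_k$ of Ol'shanskii's non-virtually-cyclic hyperbolic groups satisfying the condition $[x^2,y]=1\Rightarrow[x,y]=1$; this condition is closed in $\mathcal G_k$ (its failure is witnessed on finitely many elements, hence open), so it survives to every limit, and by \cite{Ols} torsion groups are dense in $\overline{\mathcal C}_k$, making ``torsion'' and ``MIF'' both comeagre there. Second, the actual $\td=1$ argument: in a limit group $T$ the inherited condition makes every involution central, and MIF forces trivial center, so $T$ is a torsion group of odd exponent elements; hence no subgroup of $T$ surjects onto $\mathbb Z_2$. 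But in any faithful $2$-transitive action, the setwise stabilizer of a $2$-element subset modulo its pointwise stabilizer is $\mathbb Z_2$, so $T$ admits no such action and $\td(T)=1$. This permutation-theoretic obstruction, compatible with (indeed exploiting) MIF-ness rather than fighting it, is exactly what your plan lacks.
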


\begin{proof}
Let  $\mathcal C_k$ be the class of all marked $k$-generated hyperbolic groups $H$ such that
\begin{enumerate}
\item[(*)] $H$ is not virtually cyclic.
\item[(**)] For any $x,y\in H$, $[x^2,y]=1$ implies $[x,y]=1$.
\end{enumerate}
Olshanskii showed that for every $k\ge 2$, the closure $\overline{\mathcal C}_k$ contains a dense subset consisting of torsion groups (this is an immediate consequence of \cite[Theorems 3 and 4]{Ols}). Let $F_k=\{ 1, w_1, w_2, \ldots \}$. For every $i\in \mathbb N$, let $\mathcal T_i $ denote the subset of $\mathcal G_k$ consisting of marked groups $G$ such that $w_i$ has finite order in $G$. Obviously $\mathcal T_i$ is open and the set $\mathcal T=\bigcap_i \mathcal T_i$ consists of torsion groups. Combining this with the Olshanskii's result and Proposition \ref{lim-mif}, we obtain that the set of MIF torsion groups is comeagre in $\overline{\mathcal C_k}$. In particular, there exists a torsion MIF marked $k$-generated group $T\in \overline{\mathcal C}_k$.

Notice that every marked group from $\overline{\mathcal C}_k$ satisfies (**). Indeed suppose that (**) fails in some marked group $G\in \overline{\mathcal C}_k$. Then it fails in any marked group in some open neighborhood of $G$ (for the same elements $x$ and $y$ as in $G$). Hence (**) fails in some group from $\mathcal C_k$, a contradiction. In particular, $T$ satisfies (**) and therefore every involution in $T$ is central. Thus if $T$ has an involution, it has $\td (T)=1$ by Lemma \ref{2-trans} (b). If $T$ does not have involutions, then every element of $T$ has odd order and hence no subgroup of $T$ can surject on $\mathbb Z_2$. On the other hand, in any $2$-transitive permutation group acting on a set $\Omega$, the quotient of the setwise stabilizer of any pair of elements of $\Omega$ modulo the pointwise stabilizer of the same elements is isomorphic  $\mathbb Z_2$. Thus, in any case, we obtain $\td (T)=1$.
\end{proof}

\section{Open questions}

In this section we discuss some open questions about transitivity degree of countable groups. We begin with two problems mentioned in the introduction.

\begin{ques}\label{q1}
Does there exist infinite countable groups of transitivity degree $k$ for every $k\in \mathbb N$?
\end{ques}

There are examples for $k=1,2,3, \infty$. We discuss the cases $k=2$ and $k=3$ below. Starting from $k=4$ the problem seems open.

It is well-known and easy to prove that for every field $F$, the natural action of the $1$-dimensional affine group $\Aff(1,F)$ on $F$ is $2$-transitive. Recall that $\Aff(1,F)$ is the group of transformations of $F^2$ generated by the linear maps $x\mapsto x+a$ and $x\mapsto bx$ for $a\in F$ and $b\in F^\ast$. Note also that if $F$ is infinite, then $\Aff(1,F)$ does not admit any $3$-transitive faithful action. Indeed, it is easy to see that $\Aff(1,F)=F\rtimes F^\ast$. If $\Aff(1,F)$ has a faithful $3$-transitive action on a set $\Omega$ and $F\ne \mathbb F_2$, then the action of $F\lhd \Aff(1,F)$ on $\Omega$ must be $2$-transitive by the Cameron's theorem (Lemma \ref{Cameron}). However this contradicts Lemma \ref{2-trans} (b). Thus $\td (\Aff(1,F))=2$. Another example of groups of transitivity degree $2$ is given in Example \ref{2cc}.

To construct a countable group $G$ of transitivity degree $3$, we slightly modify an example from \cite{Cam1} showing that part (b) of Lemma \ref{Cameron}  does realize.  Let $V$ be the direct sum of countably many copies of $\mathbb Z_2$. We think of $V$ as a vector space over $\mathbb F_2$ with basis $e_1, e_2, \ldots $. Let $A$ be the group of automorphisms of $V$ with finite support. Let $G=A\rtimes V$ be the corresponding split extension.  The action of $V$ on itself by translations and the action of $A$ by automorphisms induces a faithful (affine) action of $G$ on $V$. Let $x,y,z$ be any distinct vectors of $V$. Using the translation action of $V$, we can shift this triple to $0,u,v$, where $u=y-x$, $v=z-x$. Note that $u\ne v$ and both vectors are non-zero; since the field of scalars is $\mathbb F_2$, $u$ and $v$ are linearly independent. Hence there is an automorphism of $V$ with finite support, which maps the triple $0,u,v$ to $0, e_1, e_2$. This shows that $\td (G)\ge 3$. On the other hand, if $\td (G)=4$, then the abelian group $V$ must be $3$-transitive by Lemma \ref{Cameron}, which contradicts Lemma \ref{2-trans} (b).

\begin{ques}
Suppose that $\td (G)=\infty$. Is $G$ highly transitive?
\end{ques}

In general, it is rather difficult to show that a certain group with $\td(G)=\infty$ \emph{does not} admit a highly transitive action. To do this, one can try to find a property of groups which can hold in a $k$-transitive group for any $k$, but never holds for a highly transitive group. We wonder if satisfying a non-trivial mixed identity is such a property. In other words, we would like to know if every countable group with $\td(G)=\infty$ is MIF.

\begin{ques}
Compute transitivity degrees of some interesting particular groups. Concrete examples include $PGL(2,F)$ for a countably infinite field $F$, free Burnside groups, Thompson's group $F$, Baumslag-Solitar groups, etc.
\end{ques}

It is well-known that the action of $PGL(2,F)$ on the corresponding projective space is $3$-transitive. Thus $\td (PGL(2,F))\ge 3$. The action of $PGL(2,F)$ on the projective space is not $4$-transitive if $F$ is infinite, but we cannot exclude the possibility that there exists another faithful action which is $4$-transitive. Thus we do not know what $\td (PGL(2,F))$ is. We do not even know whether $PGL(2,F)$ is highly transitive or not. Note that $PGL(2,F)$ is not acylindrically hyperbolic. Indeed it is either finite or simple by the Jordan-Dixon theorem, while acylindrically hyperbolic groups are never simple \cite{DGO}.

It is easy to see that transitivity degree of any solvable Baumslag-Solitar group is $1$ (e.g., by Corollary \ref{sol}). The same holds for $\langle a,t\mid t^{-1}a^mt=a^{\pm m}\rangle$; indeed in this case the conjugacy class of $a^m$ is finite and we can apply Lemma \ref{2-trans} (b). The first interesting case is $BS(2,3)=\langle a,t\mid t^{-1}a^2t=a^3\rangle$. We do not even know if $BS(2,3)$ is highly transitive.

Alexander Olshanskii (private communication) noted that transitivity degree of a free Burnside group of exponent $n$ is finite and does not exceed the maximal number $k$ such that $n$ is divisible by all positive integers $p\le k$. Indeed if $G$ acts on $\mathbb N$ faithfully and $k$-transitively, then there is a subgroup $H\le G$ (namely, the setwise stabilizer of $\{ 1, \ldots, k\}$) that admits an epimorphism onto the symmetric group $S_k$. Hence for every positive integer $p\le k$, $H$ must contain an element whose order is divisible by $p$. In particular, transitivity degree of a free Burnside group of odd exponent is $1$. It is quite possible that this is also true for all infinite free Burnside groups of sufficiently large exponent.

\begin{ques}
Is being highly transitive a weakly geometric property? That is, if $G$ is quasi-isometric to a highly transitive group, is it commensurable to such a group?
\end{ques}

Note that being highly transitive passes to subgroups of finite index but not to finite extensions.  So if we replace ``commensurable" with ``isomorphic", the answer is negative.

\begin{ques}\label{amen}
Does there exist a countable amenable highly transitive subgroup $A\le Sym(\mathbb N)$ such that $A\cap Alt(\mathbb N) =\{1\}$?
\end{ques}

If we drop the assumption $A\cap Alt(\mathbb N) =\{1\}$, then there are even finitely presented examples.

\begin{ex}\label{Hough}
Fix some $n\in \mathbb N$ and let $\Omega_n$ be
the disjoint union of $n$ copies $\mathbb N_1, \ldots, \mathbb N_n$ of $\mathbb N$. Let $H_n$ be the group of all permutations $s$ of
$\Omega_n$ such that $s$ acts as a translation along each $N_i$ outside a finite set. These groups, commonly called Houghton groups, were introduced in \cite{Hough}. It is known that $H_n$ if finitely generated for $n\ge 2$ and finitely presented for $n\ge 3$. Further it is not hard to show that for $n\ge 2$, the derived subgroup of $H_n$ coincides
with $FSym (\Omega _n)$. Thus $H_n$ is (locally finite)-by-abelian (in particular, it is elementary amenable) and highly transitive. For details, see \cite{Bro}.
\end{ex}

As the first step towards answering Question \ref{amen}, we propose the following.

\begin{ques}
Construct countable (or, better, finitely generated) MIF amenable groups.
\end{ques}

It is likely that lacunary hyperbolic amenable groups constructed in \cite{OOS} are MIF. Recall that a group $G$ is \emph{lacunary hyperbolic} if it is finitely generated and at least one asymptotic cone of $G$ is an $\mathbb R$-tree.  In fact, we conjecture the following.

\begin{conj}
Every lacunary hyperbolic group that is not virtually cyclic and has no finite normal subgroups is MIF.
\end{conj}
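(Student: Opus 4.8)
The plan is to use the Olshanskii--Osin--Sapir description of lacunary hyperbolic groups as suitable direct limits of hyperbolic groups and to realise any non-identity of $G$ ``at finite scale''. Write $G=\varinjlim G_i$, where each $G_i=\langle X\mid\mathcal R_i\rangle$ is hyperbolic over a common finite alphabet $X$, the transition maps $G_i\twoheadrightarrow G_{i+1}$ are the identity on $X$, $G_i$ is $\delta_i$-hyperbolic, and (after passing to a subsequence) the composite $G_i\twoheadrightarrow G$ is injective on the ball $B_X(r_i)$ with $r_i\nearrow\infty$ and $\delta_i=o(r_i)$. Since $G$ is not virtually cyclic and is a quotient of every $G_i$, each $G_i$ is a non-elementary hyperbolic group. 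Crucially, any finite normal subgroup of $G_i$ maps onto a finite normal subgroup of $G$, which is trivial by hypothesis, so $K(G_i)\subseteq\ker(G_i\twoheadrightarrow G)$; since $Z(G_i)\le K(G_i)$ in a non-elementary hyperbolic group, no element of $G_i$ whose image in $G$ is non-trivial can be central in $G_i$. (In the same way $Z(G)=\{1\}$: a central $z\neq 1$ would lift to a central, hence torsion, element of $G_i$ lying in $K(G_i)$ and thus mapping to $1$ in $G$, a contradiction.)

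Fix $w\in G\ast\langle x\rangle\setminus\{1\}$; we must produce $g\in G$ with $w(g)\neq 1$. By Remark \ref{1-vs-n} we may take $w$ in a single variable, and since a mixed identity with $w\in G$ is just a false equation in $G$, we may assume $w\notin G$. Write the reduced form $w=a_0x^{k_1}a_1\cdots a_{m-1}x^{k_m}a_m$ with $a_l\in G$, $k_l\in\Z\setminus\{0\}$, and $a_l\neq 1$ for $0<l<m$; put $L=\max_l|a_l|_X$. For large $i$, lift each $a_l$ to a word $\widehat a_l$ of length $|a_l|_X$ in $G_i$, giving $\widehat w_i=\widehat a_0x^{k_1}\widehat a_1\cdots x^{k_m}\widehat a_m\in G_i\ast\langle x\rangle$, which is non-trivial there. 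The reduction step is: it suffices to find, for some large $i$, a loxodromic $g_i\in G_i$ with $\widehat w_i(g_i)\neq 1$ and $|\widehat w_i(g_i)|_X<r_i$; indeed the image $g$ of $g_i$ in $G$ then satisfies $w(g)=$ (image of $\widehat w_i(g_i)$) $\neq 1$ by injectivity of $G_i\twoheadrightarrow G$ on $B_X(r_i)$.

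To build $g_i$, I would run a ping-pong/general-position argument inside the non-elementary hyperbolic group $G_i$. By the first paragraph the interior $\widehat a_l$ ($0<l<m$) are non-central in $G_i$, so each is either loxodromic with an axis or of finite order with a coarsely-fixed set of diameter $O(\delta_i)$, all situated in a bounded region about the origin. Pick a loxodromic $g_i\in G_i$ of translation length comparable to $\delta_i$ in general position with respect to $\{\widehat a_l\}$: its endpoint pair avoids the $\widehat a_l$-translates of itself, and $g_i$ is not virtually inverted or normalised by any $\widehat a_l$ -- conditions failing only on a ``small'' family of loxodromics, so such $g_i$ exists with $\tau(g_i)=O_w(\delta_i)$. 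Then for $N$ with $N\tau(g_i)$ comparable to $\delta_i$ (constant depending only on $w$), the path spelling $\widehat w_i(g_i^N)=\widehat a_0g_i^{Nk_1}\widehat a_1\cdots\widehat a_{m-1}g_i^{Nk_m}\widehat a_m$ is an alternation of arcs of length $\asymp_w\delta_i$ fellow-travelling $\mathrm{axis}(g_i)$ with the bounded pieces $\widehat a_l$, with no backtracking past $O_w(\delta_i)$ at the junctions; by the local-to-global principle for quasi-geodesics in hyperbolic spaces it is a uniform quasi-geodesic, so $\widehat w_i(g_i^N)\neq 1$, while $|\widehat w_i(g_i^N)|_X\le mL+\sum_l|g_i^{Nk_l}|_X=O_w(\delta_i)<r_i$ for all large $i$. (When $m=1$, i.e. $w=a_0x^{k_1}a_1$, one simply takes any loxodromic of translation length $\asymp_w\delta_i$, since $|g_i^{k_1}|_X$ then exceeds $|\widehat a_0^{-1}\widehat a_1^{-1}|$.)

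The main obstacle is making the hyperbolic geometry of the previous paragraph uniform as $i\to\infty$: the constants $\delta_i$ and $|K(G_i)|$ may grow, and one must guarantee a loxodromic $g_i$ of translation length $O_w(\delta_i)$ whose axis fellow-travels each of its $\widehat a_l$-translates for length only $O_w(\delta_i)$ -- rather than for a length governed by how close the relevant endpoints happen to be -- and that the exceptional family does not swallow all loxodromics of that length. In effect one needs a quantitative assertion that $G_i$ already ``witnesses'' the non-degeneracy of $\widehat w_i$ at scale $O_w(\delta_i)$ with the implied constant independent of $i$; securing this is the crux, and is presumably why the statement is left as a conjecture. For \emph{generic} lacunary hyperbolic groups belonging to a space $\mathcal C$ with $\mathcal C\cap\ah$ dense in $\mathcal C$, this uniformity is sidestepped entirely via Proposition \ref{lim-mif}.
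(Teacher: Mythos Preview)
The statement is a \emph{conjecture} in the paper, not a theorem; the authors do not give a proof. They do, however, outline exactly the heuristic you describe: the conjecture ``should follow'' from the quantitative assertion that in a $\delta$-hyperbolic group with trivial finite radical any non-trivial mixed identity $w$ can be violated by an element of length $\le C(\delta+1)(s(w)+1)$, with $C$ universal, and they suggest obtaining such an element as a high power of a loxodromic $h$ with no $a_i\in E(h)$. Your plan --- pass to the Olshanskii--Osin--Sapir approximating hyperbolic groups $G_i$, lift $w$ to $\widehat w_i$, then produce a loxodromic $g_i$ in general position whose $N$th power gives a local quasi-geodesic witness of $X$-length $O_w(\delta_i)<r_i$ --- is precisely this program, fleshed out on the geometric side. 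You have also correctly located the obstruction: one must show, uniformly in $i$, that a loxodromic of translation length $\asymp\delta_i$ exists whose axis diverges from each $\widehat a_l$-translate after length $O_w(\delta_i)$, and that the finitely many ``bad'' configurations (where some $\widehat a_l$ virtually normalises $\langle g_i\rangle$, or the axes fellow-travel too long) do not exhaust all candidates of that scale. Neither you nor the paper closes this gap; the paper explicitly says ``verifying details of this proof is outside of the scope of our paper.'' So your proposal is not a proof, but it matches the paper's own sketch and correctly isolates why the statement remains conjectural.

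One small correction: your claim that ``any finite normal subgroup of $G_i$ maps onto a finite normal subgroup of $G$'' is fine, and hence $K(G_i)\subseteq\ker(G_i\twoheadrightarrow G)$ as you say; but note this does not by itself bound $|K(G_i)|$, so when you invoke general-position/ping-pong in $G_i$ you cannot assume $K(G_i)$ is trivial, only that it dies in $G$. Your argument uses only that the interior coefficients $\widehat a_l$ are non-central, which is exactly what this gives, so the logic is sound --- just be aware that one cannot simply quote Corollary~\ref{ah-mif} for $G_i$ without first quotienting by $K(G_i)$, and doing so would disturb the injectivity on $B_X(r_i)$.
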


This should follow from the following fact: Every non-trivial mixed identity $w\in G\ast \langle x\rangle$ in a $\delta$-hyperbolic group $G$ with trivial finite radical can be violated by an element $g\in G$ of length at most $C(\delta+1) (s(w)+1)$, where $s(w)$ is the sum of lengths (in $G$) of elements $a_1, \ldots, a_k\in G$ involved in the normal form of $w$. It seems plausible that this result can be proved by taking $g$ to be a high power of an infinite order element $h\in G$ such that no $a_i$ belongs to $E(h)$ (i.e., to the maximal virtually cyclic subgroup of $G$ containing $h$). However verifying details of this proof is outside of the scope of our paper.

Lacunary hyperbolic groups can be thought of as infinitely presented analogues of hyperbolic groups. We note, however, that Chainikov's theorem about highly transitive actions of hyperbolic groups does not extend to lacunary hyperbolic ones.

\begin{ex}
There exists a non-virtually cyclic torsion free lacunary hyperbolic group $G$ such that $td(G)=1$.
Indeed let $G$ be a non-cyclic torsion free lacunary hyperbolic group such that every proper subgroup of $G$ is cyclic. Such groups were constructed in \cite{OOS}. Assume that $G$ acts $2$-transitively on a set $X$ and let $H$ be the stabilizer of some $x\in X$. Then $H$ is infinite cyclic. However Mazurov \cite{Maz} proved that there are no $2$-transitive permutation groups with infinite cyclic point stabilizers.
\end{ex}


\vspace{.5cm}

\noindent {\bf M. Hull:  } MSCS UIC 322 SEO, M/C 249, 851 S. Morgan St.,
Chicago, IL 60607-7045, USA\\
E-mail: {\it mbhull@uic.edu}

\noindent {\bf D. Osin:  } Stevenson Center 1326, Department of Mathematics, Vanderbilt University, Nashville, TN 37240, USA\\
E-mail: {\it denis.osin@gmail.com}

\end{document}